\def\input@path{{./}{../}{/Users/kyo/home/math/mypaper/}}
\newcommand{\version}{Ver.~0.0}
\newcommand{\setversion}[1]{\renewcommand{\version}{Ver.~{#1}}}
\numberwithin{equation}{section}
\newtheorem{theorem}{Theorem}[section]
\newtheorem{lemma}[theorem]{Lemma}
\newtheorem{proposition}[theorem]{Proposition}
\newtheorem{corollary}[theorem]{Corollary}
\newtheorem{fact}[theorem]{Fact}
\theoremstyle{definition}
\newtheorem{example}[theorem]{Example}
\newtheorem{notation}[theorem]{Notation}
\newtheorem{definition}[theorem]{Definition}
\newtheorem{remark}[theorem]{Remark}
\newtheorem{convention}[theorem]{Convention}
\newtheorem{Setting}[theorem]{Setting}
\newcounter{penum}
\newenvironment{penumerate}{%
\par\smallskip
\begin{list}{$\;\;(\thepenum)$}{%
\usecounter{penum}
\setlength{\partopsep}{0pt}
\setlength{\parsep}{1ex}
\setlength{\itemindent}{0pt}
\setlength{\labelsep}{.5em}
\setlength{\labelwidth}{0pt}
\setlength{\leftmargin}{0pt}
\setlength{\rightmargin}{0pt}
\setlength{\itemsep}{0pt}
}}
{\end{list}\par}
\newcommand{\skipover}[1]{}
\newcommand{\smallvstrut}{\vphantom{\Bigm|}}
\newcommand{\Z}{\mathbb{Z}}
\newcommand{\Q}{\mathbb{Q}}
\newcommand{\R}{\mathbb{R}}
\newcommand{\C}{\mathbb{C}}
\newcommand{\Xfv}{\mathfrak{X}}
\newcommand{\Lie}{\mathrm{Lie}}
\newcommand{\GL}{\mathrm{GL}}
\newcommand{\U}{\mathrm{U}}
\newcommand{\Sp}{\mathrm{Sp}}
\newcommand{\OO}{\mathrm{O}}
\newcommand{\Mat}{\mathrm{M}}
\newcommand{\Gal}{\mathrm{Gal}}
\newcommand{\diag}{\qopname\relax o{diag}}
\newcommand{\Stab}{\qopname\relax o{Stab}}
\newcommand{\unitmatrix}{\boldsymbol{1}}
\newcommand{\leftaction}{\mathrel{\raisebox{.4em}[0pt][0pt]{$\curvearrowright$}}}
\newcommand{\sgn}{\qopname\relax o{sgn}}
\newcommand{\vectwo}[2]{{\renewcommand{\arraystretch}{.85}\Bigl(\begin{array}{@{\,}c@{\,}}{#1}\\ {#2}\end{array}\Bigr)}}
\newcommand{\dblFV}{\Xfv}
\newcommand{\lie}[1]{\mathfrak{#1}}
\newcommand{\bborbit}{\mathbb{O}}
\newcommand{\calorbit}{\mathcal{O}}
\newcommand{\scrorbitR}{\mathscr{O}^{\R}}
\newcommand{\scrorbitC}{\mathscr{O}^{\C}}
\newcommand{\transpose}[1]{\,{}^t{#1}}
\newcommand{\conjugate}[1]{\overline{\rule{0pt}{1.2ex} #1}}
\newcommand{\LL}{\langle}
\newcommand{\RR}{\rangle}
\newcommand{\nopicture}[1]{}
\newcommand{\fg}{\mathfrak{g}}
\newcommand{\gl}{\mathfrak{gl}}
\newcommand{\fb}{\mathfrak{b}}
\newcommand{\rank}{\qopname\relax o{rank}}
\newcommand{\sign}{\qopname\relax o{sign}}
\newcommand{\SPI}{\mathrm{SPI}}
\newcommand{\SPIst}{\mathrm{SPI}^{+}}
\newcommand{\SPP}{\mathrm{SPP}}
\newcommand{\PP}{\mathrm{PP}}
\newcommand{\PI}{\mathrm{PI}}
\newcommand{\regOmega}{\Omega^{\circ}}
\newcommand{\regT}{\mathfrak{T}^{\circ}}
\newcommand{\regR}{\mathfrak{R}^{\circ}}
\newcommand{\ee}{\varepsilon}
\newcommand{\LGr}{\qopname\relax o{LGr}}
\newcommand{\HLGr}{\qopname\relax o{HLGr}}
\newcommand{\regMat}{\mathrm{M}^{\circ}}
\newcommand{\LregMat}{\mathrm{LM}^{\circ}}
\newcommand{\Sym}{\qopname\relax o{Sym}}
\newcommand{\Her}{\qopname\relax o{Her}}
\newcommand{\Sgroup}{\mathfrak{S}}
\newcommand{\Ogroup}{\mathrm{O}}
\newcommand{\caseA}{{\upshape(A)}\xspace}
\newcommand{\caseB}{{\upshape(B)}\xspace}
\newcommand{\Gr}{\mathrm{Gr}}
\newcommand{\ech}{\operatorname{\acute{e}ch}}
\newcommand{\MIP}{$ (\dagger) $\;\;}
\newcommand{\maru}{\circ}
\title[Real double flag varieties for the Siegel parabolic subgroups]
{Orbit structures on real double flag varieties for the Siegel parabolic subgroups
}
\author[K.\,Nishiyama and T.\,Tauchi]{Kyo Nishiyama$^{1}$ and Taito Tauchi$^{2}$}
\address{$^{1,2}$\; Department of Mathematics, Aoyama Gakuin University, Sagamihara, Japan}
\email{kyo.nishiyama@gmail.com$^1$,
tauchi@math.aoyama.ac.jp$^2$}
\keywords{double flag variety, Matsuki duality, Galois cohomology, indefinite unitary group, symplectic group, Siegel parabolic subgroup.}
\subjclass{primary 14M15; secondary 05E14, 11E72, 22E15}
\begin{document}

\begin{abstract}
Let $ G $ be a connected reductive algebraic group over $ \R $, and $ H $ its symmetric subgroup.  
For parabolic subgroups $ P_{G} \subset G $ and $ P_{H} \subset H $, 
the product of flag varieties $ \dblFV = H/P_H \times G/P_G $ is called a double flag variety, 
on which $ H $ acts diagonally.

Now let $G$ be either $\U(n,n)$ or $\Sp_{2n}(\R)$.  
We classify the $H$-orbits on $ \dblFV $ in both cases and show that they admit exactly the same parametrization. Concretely, each orbit corresponds to a \emph{signed partial involution}, which can be encoded by simple combinatorial graphs. 
The orbit structure reduces to several families of smaller flag varieties, 
and we find an intimate relation of the orbit decomposition to Matsuki duality and Matsuki-Oshima's notion of clans. 

We also compute the Galois cohomology of each orbit, which exhibits another classification of the orbits by explicit matrix representatives.
%
\skipover{
Let $ G $ be a connected reductive algebraic group over $ \R $, 
and $ H $ its symmetric subgroup.  
For parabolic subgroups $ P_{G} \subset G $ and $ P_{H} \subset H $, 
a product of flag varieties $ \dblFV = H/P_H \times G/P_G $ is called a double flag variety, 
on which $ H $ acts diagonally.

In this article, we consider the cases where $ G = \U(n,n) $ or $ \Sp_{2n}(\R) $, 
and give classifications of the orbit decomposition 
$H\backslash (H/P_{H}\times G/P_{G})$ of the real double flag variety.  
As a consequence, we found that the parametrization of orbits is exactly the same in both cases.  

The classification is explicitly given by signed partial involutions, 
which are represented combinatorially using certain graphs.  
The orbit structure reduces to several families of smaller flag varieties, 
and we find an intimate relation of the orbit decomposition to Matsuki duality and Matsuki-Oshima's notion of clans.      
We also calculate the Galois cohomology of each orbit and clarify the representatives of the orbits 
via the Galois cohomology.  Those representatives are given explicitly in matrix form.
in three different ways.
The methods of the proofs are
a version of Gaussian elimination, Matsuki duality, and Galois cohomology.
Moreover, these three methods are interrelated and also we give a combinatorial interpretation of the classification using some graphs, which is inspired by Matsuki-Oshima's clans.

Let $(G,H,P_{G},B_{H})$ be
$(\U(n,n), \GL_{n}(\C), P_{S}^{J}, B_{n}(\C))$,
where $\U(n,n)$ is the indefinite unitary group, $P_{S}^{J}$ is the Siegel parabolic subgroup of $G = \U(n,n)$, and $B_{n}(\C)$ is a Borel subgroup of the symmetric subgroup $H \simeq \GL_{n}(\C)$ of $G$,
or
$(\Sp_{2n}(\R), \GL_{n}(\R), P_{S}^{\Sigma}, B_{n}(\R))$, 
where $\Sp_{2n}(\R)$ is the symplectic group, $P_{S}^{\Sigma}$ is the Siegel parabolic subgroup of $G = \Sp_{2n}(\R)$, and $B_{n}(\R)$ is a minimal parabolic subgroup of the symmetric subgroup $H \simeq \GL_{n}(\R)$ of $G$.
}
\end{abstract}

\maketitle

\section{Introduction}

Let $ G $ be a connected reductive algebraic group and $ P_{G} $ its parabolic subgroup.  
The variety $ G/P_{G} $ is called a partial flag variety.   
For an involutive automorphism $ \sigma $ of $ G $, 
the fixed point subgroup $ G^{\sigma} $ of $ \sigma $ is called a symmetric subgroup, which is often disconnected.  We denote its connected component of the identity by $ H $  (sometimes it is convenient to consider $ G^{\sigma} $ itself, and if so, we don't insist on the connectedness).  
We also call $ H $ a symmetric subgroup by abuse of terminology.  
The group $ H $ is necessarily reductive, and we choose a parabolic subgroup $ P_{H} $ of $ H $.  
The product of two flag varieties 
$ \dblFV = H/P_{H} \times G/P_{G} $ with the diagonal $ H $-action is called a \emph{double flag variety}, which was introduced in \cite{NO.2011} as a generalization of the flag variety.

The theory of double flag variety has been studied so far over the complex number field $ \C $ by several authors.  
The triple flag varieties are considered to be special cases of the double flag varieties and studied by 
Magyar-Weyman-Zelevinsky \cite{MWZ.2000,MWZ.1999}, Matsuki \cite{Matsuki.2013,Matsuki.2015,Matsuki.arXiv2019} and Finkelberg-Ginzburg-Travkin \cite{Finkelbrg.Ginzburg.Travkin.2009}, 
Travkin \cite{Travkin.2009}.  

If there are only finitely many $ H $-orbits in $ \dblFV $, it is called \emph{of finite type}, and those triple flag varieties appeared in the papers mentioned above are of finite type.  
It is interesting to classify the double flag varieties of finite type.  In most of the cases, classifications still remain open.  
However, if $ P_{G} $ is a Borel subgroup of $ G $ or if $ P_{H} $ is a Borel subgroup of $ H $, there are complete classifications by He-N-Ochiai-Oshima \cite{HNOO.2013}; and 
for the double flag varieties of type AIII, 
complete classifications are obtained by Homma \cite{Homma.2021} and Fresse-N \cite{Fresse.Nishiyama.arXiv2309.17085} with no assumptions on $ P_{G} $ and $ P_{H} $.  
Thus, there is a little progress in the theory, 
but, as we mentioned above, these classifications are only given over $ \C $.   
In addition to that, descriptions of the orbits are still open in most of the cases (however, see \cite{Fresse.N.2023,Fresse.Nishiyama.arXiv2309.17085,Fresse.N.2021} for types AIII and CI).  There are interesting geometric problems such as closure relations, dimension formula, relationships to Schubert calculus or Springer-Steinberg theory.  A part of them has a relatively good understandings (e.g., see \cite{Fresse.N.2023,Fresse.N.2022.arXiv,Fresse.N.2020,Fresse.N.2016}) but still a vast majority is not explored yet.

\bigskip

Notice that the definition of double flag varieties is given for arbitrary base fields.  
In this paper, we consider double flag varieties over the \emph{real number field} $ \R $, when $ G $ is the indefinite unitary group $ \U(n,n) $ or the symplectic group $ \Sp_{2n}(\R) $.  
Over $ \R $, there are a few studies on the theory of double flag varieties.    
We mention Kobayashi-Matsuki \cite{Kobayashi.Mastuki.2014}, Kobayashi-Oshima \cite{Kobayashi.Oshima.2013} and Kobayashi-Speh \cite{Kobayashi.Speh.2015,Kobayashi.Speh.2018}, though the notion of double flag varieties is implicit there.  In N-{\O}rsted \cite{Nishiyama.Orsted.2018}, integral intertwiners are constructed by using Bruhat theory on orbits in the double flag varieties of type CI, where $ G = \Sp_{2n}(\R) $.

\medskip

Let us specify the situation more explicitly.  
The main aim of this article is to give a complete classification of the $H$-orbit decomposition of real double flag varieties $ \dblFV = H /B_{H} \times G/P_{G}$ in the following two cases.

\begin{Setting}\label{Setting-DFVR}
Let us refer the following quartet to $(G,H,P_{G},B_{H})$ according as Cases \caseA or \caseB.
\begin{enumerate}
    \item[\caseA]\label{Setting-DFVR:case:A}
$(\U(n,n), \GL_{n}(\C), P_{S}^{J}, B_{n}(\C))$, where $\U(n,n)$ is the indefinite unitary group, $P_{S}^{J}$ is the Siegel parabolic subgroup of $G = \U(n,n)$, and $B_{n}(\C)$ is a Borel subgroup of the symmetric subgroup $H \simeq \GL_{n}(\C)$ of $G$.

    \item[\caseB] $  (\Sp_{2n}(\R), \GL_{n}(\R), P_{S}^{\Sigma}, B_{n}(\R))$, where $\Sp_{2n}(\R)$ is the symplectic group, $P_{S}^{\Sigma}$ is the Siegel parabolic subgroup of $G = \Sp_{2n}(\R)$, and $B_{n}(\R)$ is a minimal parabolic subgroup of the symmetric subgroup $H \simeq \GL_{n}(\R)$ of $G$.
\end{enumerate}
See Section \ref{Section-Preliminary} for more details.
We call these two cases as Case \caseA and Case \caseB, respectively.
\end{Setting}

If we complexify our real double flag variety $ \dblFV $, we obtain a complex double flag variety $ \dblFV_{\C} $.
The following theorem is well known.

\begin{theorem}[{\cite[Thm.~5, Chap.~III \S 4.4]{Serre-Galois-Cohomology}}]\label{theorem-complex-finite-imply-real-finite}
Let $H_{\C}$ be a complex algebraic group defined over $\R$, which acts on an algebraic variety $V_{\C}$ homogeneously. Suppose this action is also defined over $\R$. Then, the number of $H_{\C}(\R)$-orbits on $V_{\C}(\R)$ is finite.
\end{theorem}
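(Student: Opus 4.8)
The plan is to reduce the statement to the finiteness of the first Galois cohomology set of a reductive (or more precisely, linear algebraic) group over $\R$, via the standard exact-sequence argument for orbits of forms of algebraic groups. Concretely, fix a point $v_0 \in V_\C(\R)$ and let $S_\C = \Stab_{H_\C}(v_0)$ be its stabilizer, a closed algebraic subgroup of $H_\C$ that is automatically defined over $\R$ because both the action and $v_0$ are defined over $\R$. Since $H_\C$ acts transitively on $V_\C$, we may identify $V_\C \simeq H_\C/S_\C$ as varieties over $\R$. The set $V_\C(\R)$ of real points then carries an action of the Galois group $\Gal(\C/\R) \simeq \Z/2\Z$, and the $H_\C(\R)$-orbits on $V_\C(\R)$ are described by the pointed-set exact sequence coming from nonabelian Galois cohomology:
\begin{equation}
H_\C(\R) \longrightarrow V_\C(\R) = (H_\C/S_\C)(\R) \longrightarrow H^1(\Gal(\C/\R), S_\C(\C)) \longrightarrow H^1(\Gal(\C/\R), H_\C(\C)).
\end{equation}
From exactness, the orbit space $H_\C(\R)\backslash V_\C(\R)$ injects into (the kernel of the last map inside) $H^1(\Gal(\C/\R), S_\C(\C))$. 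Hence it suffices to show this Galois cohomology set is finite.

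Next I would invoke the general finiteness theorem for Galois cohomology of linear algebraic groups over local fields — in particular over $\R$ — due to Borel and Serre: for any linear algebraic group $S_\C$ defined over $\R$, the pointed set $H^1(\Gal(\C/\R), S_\C(\C)) = H^1(\R, S_\C)$ is finite. (This is precisely the context in which Serre's book, cited in the statement, develops these results.) Since the stabilizer $S_\C$ of a point on the homogeneous variety is a linear algebraic group over $\R$, this applies directly, and we conclude that there are only finitely many $H_\C(\R)$-orbits on $V_\C(\R)$.

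There is one subtlety worth flagging: the displayed exact sequence requires that $V_\C(\R)$ really be the full set of $\R$-points of the quotient $H_\C/S_\C$, i.e.\ that $(H_\C/S_\C)(\R)$ makes sense as a variety over $\R$ with the expected real points; this is standard when $H_\C/S_\C$ is quasi-projective (automatic here, since $V_\C$ is a flag-type variety, indeed projective in our applications), so the quotient exists in the category of $\R$-varieties and Galois descent for the fibration $H_\C \to H_\C/S_\C$ gives exactness of the sequence. The other point to be careful about is that if $H_\C(\R)$ is disconnected or $V_\C(\R)$ is empty the statement is trivially fine, but the argument above handles these uniformly.

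The main obstacle — really the only nontrivial input — is the finiteness of $H^1(\R, S_\C)$ for an arbitrary linear algebraic $\R$-group $S_\C$; everything else is formal manipulation of the orbit/cohomology exact sequence and Galois descent. Since this finiteness is exactly the content of the cited result in Serre's book (building on work of Borel–Serre on Galois cohomology over local fields), in the write-up I would simply cite it and spend the bulk of the proof setting up the identification $V_\C \simeq H_\C/S_\C$ over $\R$ and extracting the injection $H_\C(\R)\backslash V_\C(\R) \hookrightarrow H^1(\R, S_\C)$ from the long exact sequence.
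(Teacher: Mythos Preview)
The paper does not give its own proof of this statement; it simply records it as a citation to Serre's book. Your argument is the standard one (and essentially what Serre does): identify $V_\C$ with $H_\C/S_\C$ over $\R$ for $S_\C$ the stabilizer of a real point, use the exact sequence of pointed sets to see that $H_\C(\R)$-orbits on $V_\C(\R)$ are parametrized by a subset of $H^1(\R,S_\C)$, and then invoke finiteness of $H^1$ over $\R$ for linear algebraic groups. This is correct, and it is exactly the mechanism the paper itself exploits elsewhere (Lemma~\ref{Lem-Galois-isom-real-points-of-orbit-for-hom-sp-case-flag}, Lemma~\ref{lemma-real-orbits-via-Galois-cohomology-non-homogeneous-case-flag}, Proposition~\ref{Prop-real-orbits-via-Galois-cohomology-in-Appx}).

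Two small remarks on presentation. First, your sentence ``the orbit space injects into (the kernel of the last map inside) $H^1(\Gal(\C/\R),S_\C(\C))$'' is slightly imprecise as written: what is true is that the connecting map $(H_\C/S_\C)(\R)\to H^1(\R,S_\C)$ has fibers equal to the $H_\C(\R)$-orbits, so the orbit set is in bijection with the image of this map (which happens to be the kernel of the next arrow). Either way the bound by $|H^1(\R,S_\C)|$ follows. Second, the finiteness of $H^1(\R,S_\C)$ you need is not the Borel--Serre $p$-adic result but the real case, which Serre treats separately (via reduction to a maximal compact subgroup / finiteness of $H^1$ of a finite group with coefficients in a compact Lie group); since you are citing Serre's book anyway this is harmless, but it is worth distinguishing the two in the write-up.
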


\skipover{
\begin{fact}\label{theorem-complex-finite-imply-real-finite}
    Let $G$ be a real reductive algebraic group, $H$ its symmetric subgroup, $P_{G}$ and $P_{H}$ parabolic subgroups of $G$ and $H$, respectively.
    Moreover, let $G_{\C}, H_{\C}, P_{G_{\C}}$ and $P_{H_{\C}}$ be complexifications of $G, H, P_{G}$ and $P_{H}$, respectively,
    and set $\dblFV_{\C} \coloneqq H_{\C} / B_{H_{\C}} \times G_{\C} / P_{G_{\C}}$.
    Then, $\# ( H_{\C} \backslash \dblFV_{\C} ) < \infty$ implies $\# ( H \backslash \dblFV ) < \infty$, 
i.e., $ \dblFV $ is of finite type if and only if $ \dblFV_{\C} $ is so.
\end{fact}

\begin{remark}
Fact \ref{theorem-complex-finite-imply-real-finite} follows from the theory of Galois cohomology. 
See \cite[Thm.~5, Chap.~III \S 4.4]{Serre-Galois-Cohomology} (and \cite[Example a), Chap.~III \S 4.2]{Serre-Galois-Cohomology}).
\end{remark}
}

By the classification from \cite[Thm.~5.2]{HNOO.2013},
we already know that 
the complex double flag varieties obtained from the complexifications of both of Cases \caseA and \caseB 
are of finite type, namely $\# ( H_{\C} \backslash \dblFV_{\C} ) < \infty$.
Thus, Theorem \ref{theorem-complex-finite-imply-real-finite} implies that their real forms are also of finite type (i.e., $\# ( H \backslash \dblFV ) < \infty$) in these two cases.

Since $ \dblFV $ is of finite type, we are interested in the description of orbits.  
Notice that there is a natural bijection
\begin{equation*}
H \backslash \dblFV \simeq B_{H} \backslash G/P_{G}.  
\end{equation*}
So the classification of orbits amount to the classification of 
$ B_{H} $-orbits on the spherical Grassmannian $ G/P_{G} $.  
These Grassmannians are explicitly described as follows (See Section \ref{Section-Preliminary} for more details):
\begin{enumerate}
    \item[\caseA] The partial flag variety $G/P_{G} = \U(n,n) / P_{S}^{J}$ is isomorphic to the closed subvariety $\HLGr(\C^{2n})$ of the Grassmannian $\Gr_{n}(\C ^{2n})$ consisting of maximally isotropic subspaces:
    \begin{equation*}
\begin{aligned}
        & \U(n,n) / P_{S}^{J} \simeq 
        \HLGr(\C^{2n}) \coloneqq
        \{ W \in \Gr_{n}(\C^{2n}) \mid W \text{ is isotropic in }\C^{(n|n)} \},
\end{aligned}
    \end{equation*}
    where $\C^{(n|n)} $ is a vector space of dimension $ 2 n $ endowed with an indefinite Hermitian inner product of signature $ (n,n) $.  
We call a maximally isotropic subspace $ W $ a \emph{Hermitian Lagrangian subspace}.

    \item[\caseB] The partial flag variety $G/P_{G} = \Sp_{n}(\R) / P_{S}^{\Sigma}$ is isomorphic to the closed subvariety $\LGr(\R^{2n})$ of the Grassmannian $\Gr_{n}(\R^{2n})$ consisting of maximally isotropic subspaces:
    \begin{equation*}
        \Sp_{n}(\R) / P_{S}^{\Sigma} \simeq \LGr(\R^{2n})
        \coloneqq
        \{ W \in \Gr_{n}(\R^{2n}) \mid W \text{ is isotropic in }\R^{2n}_{\Sigma} \},
    \end{equation*}
where $\R^{2n}_{\Sigma} $ is a vector space of dimension $ 2 n $ endowed with a symplectic form.  
So the space $ \LGr(\R^{2n}) $ consists of \emph{Lagrangian subspaces}.  
\end{enumerate}

\subsection{Classification of orbits by partial permutations}

For the precise statement of the classification of $ H \backslash \dblFV \simeq  B_{H} \backslash G /P_{G}$, we need to prepare some notations.

\begin{definition}\label{Def-SPP-and-regOmega}  
    An $n\times n$ matrix $ \tau \in \Mat_{n}(\Z) $ is called a \emph{signed partial permutation} if 
    its entries belong to the set $ \{-1,  0, 1 \} $ and there is at most one nonzero entry in each row and also in each column.
    Then, we write $ \SPP_n $ for the set of signed partial permutations:
    \begin{equation*}
        \SPP_{n} \coloneqq \{ \tau \in \Mat_{n}(\Z) \mid \tau\text{ is a signed partial permutation}  \}.
    \end{equation*}
    Moreover, we set
    \begin{equation*}
        \regOmega_{n}
        \coloneqq
        \left\{
            \omega_{\tau_{1},\tau_{2}} \coloneqq   
            \begin{pmatrix}
                \tau_{1} \\ \tau_{2}
            \end{pmatrix}
            \in \Mat_{2n,n}(\Z)
        \Biggm| 
        \begin{array}{c}
             \tau_{1}, \tau_{2} \in \SPP_{n}  \\
             {}^{t}\tau_{1}\tau_{2} \in \Sym_{n}(\Z) \\
             \rank (\omega_{\tau_{1},\tau_{2}}) = n
        \end{array}
        \right\} ,
    \end{equation*}
where $ \Mat_{p,q}(A) $ denotes the space of $ p \times q $ matrices with their entries in a (commutative) ring $ A $ 
and $ \Sym_n(A) $ the space of symmetric matrices of degree $ n $.    
    Note that the third condition $\rank (\omega_{\tau_{1},\tau_{2}}) = n$ above implies that the image of the linear map $\omega_{\tau_{1},\tau_{2}} \colon \Q^{n} \to \Q^{2n}$ is an $n$-dimensional subspace of $\Q^{2n}$.
Thus, $\omega_{\tau_{1},\tau_{2}}$ gives a point $[\omega_{\tau_{1},\tau_{2}}]\in \Gr_{n}(\Q^{2n}) \subset \Gr_{n}(\R^{2n}) \subset \Gr_{n}(\C^{2n})$, a subspace generated by the row vectors of $ \omega_{\tau_{1},\tau_{2}} $.  
Moreover, the second condition ${}^{t}\tau_{1}\tau_{2} \in \Sym_{n}(\Z)$ 
is equivalent to that $[\omega_{\tau_{1},\tau_{2}}]$ is a Lagrangian subspace contained in $G/P_{G}$ in both Cases \caseA and \caseB. See Lemma \ref{Lem-G/P_S=LM/GL}.
\end{definition}

\begin{remark}\label{Remark-reg-Omega-W-Cn-set}
    It is easy to see that $ \regOmega_{n} $ is stable under the right multiplication of $ \Z_{2}^{n} \rtimes \Sgroup_{n}  $, where  $ (\varepsilon_{1},\dots,\varepsilon_{n}) \in  \Z_{2}^{n} \coloneqq \{\pm 1\}^{n} $ acts on $\regOmega_{n}$ by the right multiplication of the diagonal matrix $\diag(\varepsilon_{1},\dots,\varepsilon_{n}) \in \GL_{n}(\Z)$, 
and the symmetric group $\Sgroup_{n}  $ acts by permuting the rows.  
    Thus, the Weyl group of type $ C_{n} $ acts on $ \regOmega_{n} $ from the right.
    Similarly, $ \regOmega_{n} $ is also stable under the left multiplication of $ \diag(\varepsilon_{1},\dots,\varepsilon_{n},\varepsilon_{1},\dots,\varepsilon_{n}) \in \GL_{2n}(\Z) $ for $(\varepsilon_{1},\dots,\varepsilon_{n}) \in \Z_{2}^{n} $.
\end{remark}

The orbit decomposition $H \backslash \dblFV \simeq B_{H} \backslash G /P_{G}$ can be described in terms of $ \regOmega_{n} $.

\begin{theorem}[Theorem \ref{theorem-Omega=H-G-PS-2}]\label{theorem-Omega=H-G-PS}
Let $(G,H,P_{G},B_{H})$ be as in Case \caseA or \caseB of Setting \ref{Setting-DFVR}. Then, we have a bijection
    \begin{equation*}
    \begin{array}{ccc}
         \Z_{2}^{n} \backslash \regOmega_{n} / ( \Z_{2}^{n} \rtimes \Sgroup_{n} ) 
         & \xrightarrow{\sim} & B_{H} \backslash G / P_{G} \simeq H \backslash \dblFV 
         \\
         \rotatebox{90}{$\in$}
         &&
         \rotatebox{90}{$\in$}
         \\
         \omega_{\tau_{1},\tau_{2}}
         & \mapsto &
         B_{H} [\omega_{\tau_{1},\tau_{2}}],
    \end{array}
    \end{equation*}
    where $ G/P_G $ is considered to be a closed subvariety of the Grassmannian $ \Gr_{n}(\C^{2n}) $ or $ \Gr_{n}(\R^{2n}) $ 
(see Remark \ref{remark:orbit.inclusions} below).  
In particular, the orbit decompositions $ H \backslash \dblFV \simeq B_{H} \backslash G /P_{G}$ share the same representatives in both Cases \caseA and \caseB.
\end{theorem}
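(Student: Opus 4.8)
The plan is to establish the bijection by realizing $G/P_G$ as a space of matrices modulo a group action and then running a normal-form algorithm. First I would use Lemma~\ref{Lem-G/P_S=LM/GL} (referenced above) to identify $G/P_G$, in both Cases \caseA and \caseB, with the set $\LregMat$ of $2n\times n$ real (resp. appropriately structured) matrices $\omega$ of rank $n$ whose row span is Lagrangian, modulo the right action of $\GL_n$ (which encodes the choice of basis of the $n$-dimensional subspace). Under this identification, the $B_H$-action on $G/P_G$ becomes the left action of $B_H$ on $\LregMat/\GL_n$, where $B_H$ sits inside $\GL_{2n}$ in a block-triangular way reflecting the embedding $H \hookrightarrow G$. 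So the whole theorem reduces to: classify $B_H \backslash \LregMat / \GL_n$ and show the set of representatives is exactly $\regOmega_n$, with the residual ambiguity being the left $\Z_2^n$ and right $\Z_2^n \rtimes \Sgroup_n$ as in Remark~\ref{Remark-reg-Omega-W-Cn-set}.

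The core step is a Gaussian-elimination argument. Given $\omega = \vectwo{\tau_1}{\tau_2}$ with $\tau_i$ arbitrary $n\times n$ blocks, I would use column operations (right $\GL_n$-action) together with the allowed left $B_H$-operations to reduce each block to a signed partial permutation matrix. The left action should be rich enough to scale and permute in a controlled way; because $B_H$ is only a Borel (not all of $\GL_n$-worth of operations on each block), one must be careful about which pivots can be cleared, and this is where the ``partial permutation'' shape (rather than a full permutation or a clan-type pattern) emerges. Then I would check that the two algebraic constraints cut out exactly the right subset: the rank condition $\rank\omega = n$ is automatic since $\omega$ represents an $n$-dimensional subspace, and the Lagrangian condition on the row span translates, via the explicit symplectic/Hermitian form, into ${}^t\tau_1\tau_2 \in \Sym_n(\Z)$ — here I would compute the Gram matrix of the rows of $\omega$ against the standard form and read off that it is the obstruction to symmetry of ${}^t\tau_1\tau_2$. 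Finally, I would verify that two elements of $\regOmega_n$ give the same $B_H$-orbit if and only if they differ by the $\Z_2^n \times (\Z_2^n \rtimes \Sgroup_n)$-action, completing the bijection; the ``same representatives in both cases'' assertion then follows because the reduction algorithm and the constraint ${}^t\tau_1\tau_2 \in \Sym_n(\Z)$ are formally identical for \caseA and \caseB, the only difference (ground field $\C$ versus $\R$, Hermitian versus symplectic) being invisible at the level of the integral matrix $\omega_{\tau_1,\tau_2}$.

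I expect the main obstacle to be the well-definedness and injectivity half: showing that the normal form produced by the elimination is genuinely canonical, i.e. that distinct elements of $\Z_2^n \backslash \regOmega_n / (\Z_2^n \rtimes \Sgroup_n)$ really do lie in distinct $B_H$-orbits. Surjectivity (every orbit has a representative in $\regOmega_n$) is the constructive, algorithmic part and should be routine once the list of allowed operations is pinned down; but separating orbits requires an invariant — presumably built from ranks of various submatrices of $\omega$ cut out by the flag $B_H$ stabilizes, in the spirit of the clan/Matsuki--Oshima combinatorics — that is constant on $B_H$-orbits and distinguishes the normal forms. Pinning down exactly this invariant, and checking it is complete, is the delicate point; I would handle it by induction on $n$, peeling off the last step of the flag, which also explains the promised reduction ``to several families of smaller flag varieties.'' The subtlety that $B_H$ is a real Borel in Case \caseB (a minimal parabolic, with the extra sign/component issues of $\GL_n(\R)$ versus $\GL_n(\C)$) must be tracked carefully so that the residual $\Z_2^n$-actions come out the same in both cases.
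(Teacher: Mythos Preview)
Your proposal is correct and follows essentially the same route as the paper: realize $G/P_G$ as $\LregMat_{2n,n}/\GL_n$ via Lemma~\ref{Lem-G/P_S=LM/GL}, then run a Gaussian-elimination normal-form argument, with rank and signature invariants of submatrices supplying the injectivity. The paper's only organizational refinement is to first stratify by the \'echelon type $I = \ech \tau_2$ of the lower block and reduce each stratum to the Borel action on Hermitian (resp.\ symmetric) matrices $B_m \backslash \Her_m$ (Lemma~\ref{Lem-(B-Her)=(B-LM-I/GL)}), so that both the elimination and the separating invariants (Lemmas~\ref{Lemma-rank-sign-B-invariant}--\ref{Lemma-rank-and-sign-separate-SPIst}, Proposition~\ref{Prop-SPI=(B-Her)}) live on a single $m\times m$ matrix rather than on the full $2n\times n$ array; the match with $\regOmega_n$ is then made by a parallel combinatorial reduction (Proposition~\ref{Prop-Omega-I=SPI}) and a commutative square.
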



\begin{remark}\label{remark:orbit.inclusions}
We have natural inclusions
    \begin{equation*}
    \xymatrix{
        \LGr(\R^{2n}) 
        \ar@{^{(}->}[r]
        \ar[d]^-{\wr}
        &
        \HLGr(\C^{2n}) 
        \ar@{^{(}->}[r]
        \ar[d]^-{\wr}
        &
        \Gr_{n}(\C^{2n})
        \ar[d]^-{\wr}
        \\
        \Sp_{2n}(\R) / P_{S}^{\Sigma}
        \ar@{^{(}->}[r]
        &
        \U(n,n) / P_{S}^{J}
        \ar@{^{(}->}[r]
        &
        \GL_{2n}(\C) / P_{(n,n)},
    }
    \end{equation*}
    where $P_{(n,n)}$ is the maximal parabolic subgroup of $\GL_{2n}(\C)$ corresponding to the partition $2n=n+n$.
    This induces a map 
    \begin{equation*}
    \begin{array}{ccc}
         B_{n}(\R) \backslash \Sp_{2n}(\R) / P_{S}^{\Sigma}
         & \xrightarrow{\sim} &
         B_{n}(\C) \backslash \U(n,n) / P_{S}^{J} \\
         \rotatebox{90}{$\in$} & & \rotatebox{90}{$\in$} \\
         \mathcal{O} & \mapsto & B_{n}(\C) \mathcal{O} 
    \end{array} ,
    \end{equation*}
which, in fact, is a bijection by Theorem \ref{theorem-Omega=H-G-PS}.  
Therefore, each $B_{n}(\R)$-orbit on $\Sp_{2n}(\R) / P_{S}^{\Sigma}$ 
generates a unique $B_{n}(\C)$-orbit on $\U(n,n) / P_{S}^{J}$, and 
conversely, each $B_{n}(\C)$-orbit on $\U(n,n) / P_{S}^{J}$ cuts out a single $B_{n}(\R)$-orbit on $\Sp_{2n}(\R) / P_{S}^{\Sigma}$.  
This phenomenon is observed even in the case of complex double flag varieties, though the orbit correspondence is merely an injection (\cite{Fresse.N.2021}).  
Thus our result claims such orbit correspondence carries over to real points (at least in our present case).
\end{remark}

\subsection{A family of smaller symmetric spaces}

In the course of the proof of 
Theorem \ref{theorem-Omega=H-G-PS}, 
we notice 
that the orbit space $B_{H} \backslash G / P_{S}$ naturally contains a family of the orbit spaces of smaller full flag varieties with respect to the actions of \emph{real} symmetric subgroups.  
Let $ [n] = \{ 1, 2, \dots, n \} $ denote the set of positive integers up to $ n $, 
and define $ \binom{[n]}{m} $ as the collection of subsets $ I \subset [n] $ with the cardinality $ \# I = m $.

\begin{theorem}[Theorem \ref{theorem-DFV-contaion-KGB-caseA}]\label{theorem-DFV-contain-KGB}
\begin{penumerate}
        \item
        Let $(G,H,P_{G},B_{H}) = (\U(n,n), \GL_{n}(\C), P_{S}^{J}, B_{n}(\C)) $ be in Case \caseA. 
Then, we have a bijection 
        \begin{equation*}
            H \backslash \dblFV \simeq B_{H} \backslash G /P_{G}
            \simeq
\coprod_{m = 0}^n 
\coprod_{\substack{p, q \geq 0 \\ p + q \leq m}} 
\;
\coprod_{I \in \binom{[n]}{m}, \; J \in \binom{[m]}{p + q}}
	       B_{p+q}(\C)  \backslash  \GL_{p+q}(\C) / \U(p,q),
        \end{equation*}  
        where $B_{p+q}(\C)$ is a Borel subgroup of $\GL_{p+q}(\C)$
        and $\U(p,q)$ is the indefinite unitary group with signature $(p,q)$.  

        \item
Let $(G,H,P_{G},B_{H}) = (\Sp_{2n}(\R), \GL_{n}(\R), P_{S}^{\Sigma}, B_{n}(\R)) $ be in Case \caseB.
Then, we have a bijection 
        \begin{equation*}
            H \backslash \dblFV \simeq B_{H} \backslash G /P_{G}
            \simeq
\coprod_{m = 0}^n 
\coprod_{\substack{p, q \geq 0 \\ p + q \leq m}} 
\;
\coprod_{I \in \binom{[n]}{m}, \; J \in \binom{[m]}{p + q}}
	       B_{p+q}(\R)  \backslash  \GL_{p+q}(\R) / \Ogroup(p,q),
        \end{equation*}  
        where $B_{p+q}(\R)$ is a minimal parabolic subgroup of $\GL_{p+q}(\R)$
        and $\Ogroup(p,q)$ is the indefinite orthogonal group with signature $(p,q)$.
    \end{penumerate}
\end{theorem}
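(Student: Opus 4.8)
The plan is to bootstrap from Theorem \ref{theorem-Omega=H-G-PS}, which already identifies $H\backslash\dblFV$ with the double coset space $\Z_2^n\backslash\regOmega_n/(\Z_2^n\rtimes\Sgroup_n)$, and to organize this latter set combinatorially. First I would attach to each element $\omega_{\tau_1,\tau_2}\in\regOmega_n$ its pair of ``supports'': the set $I\subset[n]$ of columns in which at least one of $\tau_1,\tau_2$ has a nonzero entry (the rank condition $\rank\omega_{\tau_1,\tau_2}=n$ forces all $n$ columns of at least one block to be hit, but the relevant bookkeeping parameter turns out to be the size $m$ of a certain index set, and $I\in\binom{[n]}{m}$), and then, among those $m$ active indices, the subset $J\in\binom{[m]}{p+q}$ recording which of them are ``paired'' in a way governed by the symmetry constraint ${}^t\tau_1\tau_2\in\Sym_n(\Z)$. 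The point is that the left $\Z_2^n$-action and the $\Sgroup_n$ part of the right action permute and sign-change rows/columns, so they act transitively on the choices of $I$ and $J$ of fixed cardinality; hence modding out reduces the classification to a fixed standard choice of $(I,J)$, which is the source of the two outer coproducts over $m$, over $(p,q)$ with $p+q\le m$, and over $I,J$.

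Next, for a fixed standard $(I,J)$, I would show that the remaining data—the nonzero entries of $\tau_1,\tau_2$ sitting in the $p+q$ ``paired'' positions, together with their signs—is exactly the data of a signed partial involution of the expected shape, and that the residual group action (the diagonal $\Z_2$-copies that survive after fixing $I$, together with the residual symmetric-group factor on the $p+q$ paired coordinates) matches precisely the action whose quotient describes $B_{p+q}\backslash\GL_{p+q}/\U(p,q)$ in Case \caseA, respectively $B_{p+q}(\R)\backslash\GL_{p+q}(\R)/\Ogroup(p,q)$ in Case \caseB. Here the natural strategy is to invoke the known clan/Matsuki-type parametrization: the $B_k\backslash\GL_k(\C)/\U(p,q)$ double cosets are indexed by clans of signature $(p,q)$ (Matsuki--Oshima), and similarly $B_k(\R)\backslash\GL_k(\R)/\Ogroup(p,q)$ is indexed by the same clan-type combinatorial objects with the appropriate real-form adjustment. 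So the final step is to exhibit a signed-partial-involution$\leftrightarrow$clan dictionary on the $p+q$ paired coordinates and check it is a bijection compatible with the residual actions on both sides. Since Case \caseA and Case \caseB share the same $\regOmega_n$ by Theorem \ref{theorem-Omega=H-G-PS}, the only difference between (1) and (2) is which real symmetric subgroup appears, so essentially one argument handles both.

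I would carry out the steps in this order: (i) define the maps $\omega_{\tau_1,\tau_2}\mapsto(m,I,p,q,J)$ and verify invariance under $\Z_2^n\backslash(\,\cdot\,)/(\Z_2^n\rtimes\Sgroup_n)$; (ii) for fixed $(I,J)$ identify the fiber with a space of $(p+q)\times(p+q)$ signed partial involutions modulo a residual group; (iii) match that residual quotient with the clan parametrization of $B_{p+q}\backslash\GL_{p+q}/\U(p,q)$, resp.\ $B_{p+q}(\R)\backslash\GL_{p+q}(\R)/\Ogroup(p,q)$; (iv) assemble the disjoint union. The main obstacle I expect is step (ii)--(iii): namely, pinning down exactly which residual group survives the double-coset quotient once $I$ and $J$ are fixed—one must be careful that the $\Z_2$-signs are not over- or under-counted, since a sign on a paired coordinate interacts with the symmetry condition ${}^t\tau_1\tau_2\in\Sym_n(\Z)$—and then verifying that this residual quotient is genuinely the $B_{p+q}$-double coset space of the symmetric space $\GL_{p+q}/\U(p,q)$ (resp.\ $\GL_{p+q}(\R)/\Ogroup(p,q)$) rather than some variant with a different component group. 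This is where the link to Matsuki duality and clans does the real work, and where I would expect the bulk of the bookkeeping, possibly aided by the explicit graph encoding of signed partial involutions introduced earlier.
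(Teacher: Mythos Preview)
Your route is genuinely different from the paper's, and there is a concrete error in step~(i).

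The paper's argument is geometric and does not pass through $\regOmega_n$ or Theorem~\ref{theorem-Omega=H-G-PS} at all. From Section~\ref{Section-Orbit-decomposition} one already has $B_H\backslash G/P_S \simeq \coprod_{I\subset[n]} B_m\backslash\Her_m(\C)$ with $m=\#I$ (Lemma~\ref{Lem-B-G-P_S=coprod-B-Her}); then Section~\ref{Section-Matsuki-duality} stratifies $\Her_m(\C)$ by signature, realizes each stratum $\Her_m^{(p,q;r)}(\C)$ as the homogeneous space $\GL_m(\C)/Q_{(p,q;r)}$ via Sylvester's law (Lemma~\ref{Lem-G-P=Her-rst}), fibers this over the Grassmannian $\GL_m(\C)/P_{(p+q,r)}$, and combines the Bruhat decomposition on the base with a general fiberwise orbit lemma (Lemma~\ref{Lem-orbit-decomp-fiber-bundle}) to extract exactly $B_{p+q}\backslash\GL_{p+q}(\C)/\U(p,q)$, with $J$ labeling the Bruhat cell. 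Case~(B) is the identical argument with $\Sym_m(\R)$ and $\OO(p,q)$. Clans play no role in the proof; they enter only afterward as a separate combinatorial layer.

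Your description of $I$ in step~(i) is wrong: ``columns in which at least one of $\tau_1,\tau_2$ has a nonzero entry'' is always all of $[n]$ by the rank condition. The correct $I$ is the set of \emph{row} indices appearing in $\tau_2$ (that is, $\ech\tau_2$), and it is an \emph{invariant} of the $\Z_2^n\times(\Z_2^n\rtimes\Sgroup_n)$-action (Lemma~\ref{Lem-Omega-I-stable}), not something the group permutes transitively within a fixed cardinality as you claim---this is precisely why the coproduct over $I$ appears. The obstacle you flag in (ii)--(iii) is also where the two approaches diverge most sharply: you would obtain the bijection only by matching both sides to $\Gamma(p,q)$ separately, importing Matsuki--Oshima for Case~(A) and an analogous real statement for Case~(B), whereas the paper's homogeneous-space realization makes $\U(p,q)$ (resp.\ $\OO(p,q)$) appear directly as an isotropy subgroup, yielding an explicit map rather than an indirect cardinality match.
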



\begin{remark}
    The indices $ I $ and $ J $ in the unions in Theorem \ref{theorem-DFV-contain-KGB} do not affect the orbit spaces 
$ B_{p+q}(\C)  \backslash  \GL_{p+q}(\C) / \U(p,q) $ or 
$ B_{p+q}(\R)  \backslash  \GL_{p+q}(\R) / \Ogroup(p,q) $,  
so that they only give the multiplicities $ \mu_{p,q} $ of the orbit spaces (they must be counted as many as multiplicities).  
The multiplicity can be calculated as
\begin{equation*}
\mu_{p,q} = \sum_{m = p + q}^n \dbinom{n}{m} \dbinom{m}{p+q} = 2^{n - (p + q)} \dbinom{n}{p + q} .  
\end{equation*}
Actually those indices $ I $ and $ J $  have good meanings, which can be explained in terms of the Weyl group. 
    Namely, the index $I$ can be interpreted as the representatives of $(\Z_{2}^{n} \rtimes \Sgroup_{n})/\Sgroup_{n}$,
    which is related to 
the partial flag variety $G/P_{S}$, and
    the index $J$ can be interpreted as the representatives of 
$ \Sgroup_{m}/ (\Sgroup_{p + q}\times  \Sgroup_{m - (p + q)})$, which is related to 
the partial flag variety $\GL_{m}(\C) / P_{(p + q, m-(p + q))}$.
    See Remarks \ref{Remark-sigma-I-correspond-coprod} and \ref{Remark-sigma-J-correspond-coprod} for more details.
\end{remark}

\subsection{Graphs, Matsuki correspondence and Matsuki-Oshima's clans}

Note that the orbit spaces in
Theorem \ref{theorem-DFV-contain-KGB} are related to yet another orbit spaces in the full flag varieties by the celebrated Matsuki correspondence (also called Matsuki duality).

\begin{theorem}[{\cite{Matsuki.1979}, see Section \ref{subsec:KGB.Matsuki.duakity}}]\label{theorem-Matsuki-Duality-Intro}
There exist bijections which reverses the closure ordering:
    \begin{equation*}
    \begin{array}{ccc}
        B_{p+q}(\C)  \backslash  \GL_{p+q}(\C) / \U(p,q) 
        & \simeq & 
        B_{p+q}(\C)  \backslash  \GL_{p+q}(\C) / \GL_{p}(\C) \times \GL_{q}(\C),
        \\[1ex]
        B_{p+q}(\R)  \backslash  \GL_{p+q}(\R) / \Ogroup(p,q)
        & \simeq & 
        B_{p+q}(\R)  \backslash  \GL_{p+q}(\R) / \GL_{p}(\R) \times \GL_{q}(\R).
    \end{array}
    \end{equation*}
\end{theorem}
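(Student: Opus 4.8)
The plan is to recognize each of the two equalities as an instance of the Matsuki correspondence for a pair of commuting involutions, so that the theorem follows by quoting \cite{Matsuki.1979} (recalled in Section \ref{subsec:KGB.Matsuki.duakity}). First I would pass from double cosets to orbits on a flag variety: for any group $G$ and subgroups $B, H$, the inversion map $g \mapsto g^{-1}$ is a homeomorphism of $G$ carrying $BgH$ to $Hg^{-1}B$, hence identifies $B\backslash G/H$ with the set of $H$-orbits on $G/B$ as posets for the closure order. Applying this on both sides of each asserted bijection, it suffices to produce an \emph{order-reversing} bijection between the $\U(p,q)$-orbits and the $(\GL_p(\C)\times\GL_q(\C))$-orbits on the full flag variety of $\GL_{p+q}(\C)$, and likewise, in the real case, between the $\Ogroup(p,q)$-orbits and the $(\GL_p(\R)\times\GL_q(\R))$-orbits on the full flag variety of $\GL_{p+q}(\R)$.

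Next I would exhibit the required commuting involutions. In Case \caseA, put $\theta(g) = I_{p,q}\, g\, I_{p,q}$ and $\sigma(g) = I_{p,q}\, {}^{t}\conjugate{g}^{-1}\, I_{p,q}$ on $\GL_{p+q}(\C)$, where $I_{p,q}$ is the diagonal signature matrix; then $\theta$ is holomorphic with fixed point group $\GL_p(\C)\times\GL_q(\C)$, while $\sigma$ is anti-holomorphic with fixed point group $\U(p,q)$. Since $I_{p,q}^2 = 1$, the involutions $\theta$ and $\sigma$ commute, and $\theta$ restricts on $\U(p,q)$ to the Cartan involution $g \mapsto {}^{t}\conjugate{g}^{-1}$ attached to the maximal compact subgroup $\U(p)\times\U(q)$. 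Thus the triple $\bigl(\GL_{p+q}(\C),\,\U(p,q),\,\GL_p(\C)\times\GL_q(\C)\bigr)$ is exactly the data to which Matsuki duality applies, and the theorem yields the desired closure-order-reversing bijection on the flag variety. In Case \caseB I would argue identically with $\GL_{p+q}(\R)$ in place of $\GL_{p+q}(\C)$: the commuting involutions $\theta(g) = I_{p,q}\, g\, I_{p,q}$ and $\sigma(g) = I_{p,q}\, {}^{t}g^{-1}\, I_{p,q}$ have fixed point groups $\GL_p(\R)\times\GL_q(\R)$ and $\Ogroup(p,q)$, and $\theta$ restricts to a Cartan involution of $\Ogroup(p,q)$ with maximal compact $\Ogroup(p)\times\Ogroup(q)$; Matsuki's correspondence in the real setting then applies verbatim.

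The main point to verify is that these concrete pairs genuinely satisfy the hypotheses of Matsuki's theorem: commutativity of $\theta$ and $\sigma$ (immediate from $I_{p,q}^2 = 1$), existence of a torus that is simultaneously $\theta$- and $\sigma$-stable, and $\theta$ being a Cartan involution of the real form $G^\sigma$ — all standard for $\U(p,q)$ and $\Ogroup(p,q)$ with the explicit involutions above. The one mild subtlety to keep track of is that $\Ogroup(p,q)$ and $\GL_p(\R)\times\GL_q(\R)$ are disconnected, so one must invoke the version of Matsuki duality phrased for the full fixed-point groups $G^\sigma$ and $G^\theta$ rather than their identity components; this is precisely the form recalled in Section \ref{subsec:KGB.Matsuki.duakity}, so no extra work is needed. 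Granting this, both bijections and the reversal of closure order follow immediately, with no further computation.

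Finally, I would note — as a remark rather than as the proof, since the reduction above is shorter — that one can alternatively bypass Matsuki's theorem and establish both bijections combinatorially: parametrize the $(\GL_p(\C)\times\GL_q(\C))$-orbits and the $\U(p,q)$-orbits on $\GL_{p+q}(\C)/B_{p+q}(\C)$ each by $(p,q)$-clans in the sense of Matsuki--Oshima, parametrize the two real orbit sets by the corresponding signed symbols, and realize the order-reversing bijection by an explicit involution of clans. This is the route that meshes with the clan picture used elsewhere in the paper, but it is longer and I would not make it the principal argument.
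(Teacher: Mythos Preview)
Your proposal is correct and follows essentially the same approach as the paper. The paper treats Theorem~\ref{theorem-Matsuki-Duality-Intro} as a direct citation of Matsuki's work, and in Section~\ref{subsec:KGB.Matsuki.duakity} and the proofs of Theorems~\ref{thm-Matsuki-duality} and~\ref{Thm-Matsuki-Oshima-classification:CaseB} simply identifies Case~\caseA as the classical KGB setup (with $G_{\R}=\U(p,q)$, $K_{\C}=\GL_p(\C)\times\GL_q(\C)$) and Case~\caseB as an instance of the general real Matsuki duality; your argument makes the same identification, only spelling out the commuting involutions explicitly.
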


The right-hand side of the first isomorphism of Theorem \ref{theorem-Matsuki-Duality-Intro} is classified combinatorially in \cite[\S4]{Matsuki.Oshima.1990} 
by the notion of clans.  
Let us explain it in detail (see Theorem \ref{theorem-KGB-clan} below for the statement).  

\begin{definition}[{\cite[\S4]{Matsuki.Oshima.1990}}]
\label{Def-Gamma(p,q)}
Let $p,q\in\Z_{\geq 0}$. Then, we define a set $\Gamma(p,q)$ of graphs consisting of $p+q$ vertexes,
which are decorated by $+,-$, or arcs according to the following rules.
\begin{enumerate}
	\item[\upshape{(arc)}] The two end points of each arc are different.
	\item[\upshape{(sgn)}] $ \# \{ \text{vertices decorated by $+$} \} - \# \{ \text{vertices decorated by $-$} \} = p - q $.
\end{enumerate}
More rigorously, we set (recall the notation $ [n] = \{ 1, 2, \dots, n \} $)
\begin{equation*}
	\Gamma(p,q) \coloneqq
	\left\{
        \gamma \colon [p + q] 
\to [p + q] \cup \{+,-\}
	\Bigm| 
	   \begin{array}{c}
        	\text{$\gamma$ satisfies} \\ \text{(arc) and (sgn)}
	   \end{array}
	\right\},
\end{equation*}
where
\begin{enumerate}
	\item[\upshape{(arc)}]  if $\gamma(i)=j \in [p+q]$, then we have $i\neq j$ and $\gamma(j)=i$,
	
	\item[\upshape{(sgn)}]  $ \# \left( \gamma^{-1}(+) \right)  - \# \left( \gamma^{-1}(-) \right) = p-q $,
\end{enumerate}
which paraphrase the same conditions above (so we use the same symbols).
We call a graph belonging to $\Gamma(p,q)$ a $(p,q)$-\textit{clan}.    
\end{definition}

\begin{example}\label{Ex-Gamma(p,q)}
Let $\gamma\colon \{1,2,\dots,5\} \to \{1,2,\dots,5\} \cup \{+,-\}$
be a function defined by
\begin{equation*}
        \gamma(1) = -,\quad
        \gamma(2) = 4,\quad
        \gamma(3) = +,\quad
        \gamma(4) = 2,\quad
        \gamma(5) = -.
\end{equation*}
Then we have $\gamma \in\Gamma(3,2)$. 
Its graphical representation is given by
\begin{equation*}
    \gamma
    =
    \xymatrix{
        1_{-} &
        2 \ar@{-}@/^10pt/[rr] &
        3_{+} &
        4     &
        5_{-}.
    }
\end{equation*}
\end{example}

\begin{theorem}[{\cite[\S4]{Matsuki.Oshima.1990}}]\label{theorem-KGB-clan}
There exists a bijection:
    \begin{equation*}
        B_{p+q}(\C)  \backslash  \GL_{p+q}(\C) / \GL_{p}(\C) \times \GL_{q}(\C)
        \simeq 
        \Gamma(p,q).
    \end{equation*}
\end{theorem}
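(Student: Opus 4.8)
\textbf{Proof proposal for Theorem \ref{theorem-KGB-clan}.}
The plan is to realize the orbit space $B_{p+q}(\C)\backslash\GL_{p+q}(\C)/\GL_p(\C)\times\GL_q(\C)$ via the geometric model of the Grassmannian rather than via double cosets directly. Write $N=p+q$ and fix the involution $\theta$ on $\GL_N(\C)$ whose fixed-point group is $\GL_p(\C)\times\GL_q(\C)$; concretely $\GL_p(\C)\times\GL_q(\C)$ is the stabilizer of a fixed decomposition $\C^N=V_+\oplus V_-$ with $\dim V_+=p$, $\dim V_-=q$, so that $\GL_N(\C)/(\GL_p(\C)\times\GL_q(\C))$ is identified with the Grassmannian $\Gr_p(\C^N)$ via $g\mapsto g\cdot V_+$. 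Under this identification the double coset space becomes the $B_{p+q}(\C)$-orbit space on $\Gr_p(\C^N)$, where $B_{p+q}(\C)$ is the stabilizer of the standard complete flag $E_\bullet:\ 0\subset E_1\subset\dots\subset E_N=\C^N$ with $E_i=\langle e_1,\dots,e_i\rangle$. So the first step is: classify pairs consisting of a complete flag $E_\bullet$ and a $p$-dimensional subspace $W$, up to simultaneous $\GL_N(\C)$-action, which is the same as classifying $W\in\Gr_p(\C^N)$ up to $B_{p+q}(\C)$.

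The second step is to attach to each such pair the discrete invariant $r_i=\dim(E_i\cap W)$ for $i=0,1,\dots,N$, together with a refinement recording, whenever $r_i=r_{i-1}$ (so $E_i$ does not meet $W$ in a new direction at step $i$) versus $r_i=r_{i-1}+1$; and in the latter case one must further decide how the ``new'' intersection directions at different steps relate to each other inside $W$ under the filtration induced on $W$ by intersecting with a complementary reference flag. The cleanest bookkeeping is exactly Matsuki--Oshima's: each index $i\in[N]$ receives either a sign ($+$ or $-$), or is joined by an arc to a unique earlier index $j<i$. An index $i$ gets a $+$ (resp.\ $-$) when the jump at step $i$ comes from a direction lying in $V_+$ (resp.\ $V_-$) relative to the filtration, and $i$ gets joined to $j$ by an arc when the jump at step $i$ is ``balanced'' against an earlier jump at step $j$ in the sense that a single line of $W$ is spanned by a vector with nonzero components in both the $E_j$ and the $E_i$ graded pieces; the constraint (arc) that endpoints be distinct is automatic, and the constraint (sgn) that the signed count of $+$'s and $-$'s equals $p-q$ records precisely that $W$ is $p$-dimensional inside a space where the ambient signature datum has $\dim V_+-\dim V_-=p-q$. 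So the map $B_{p+q}(\C)\backslash\GL_{p+q}(\C)/\GL_p(\C)\times\GL_q(\C)\to\Gamma(p,q)$ is defined, and one checks it is well defined (the invariants depend only on the double coset).

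The third step is surjectivity and injectivity. For surjectivity, given a clan $\gamma\in\Gamma(p,q)$ one writes down an explicit representative subspace $W_\gamma$: take a basis of $W_\gamma$ indexed by the $+$'s, the $-$'s, and the arcs of $\gamma$, where a $+$ at $i$ contributes the basis vector $e_i$, a $-$ at $i$ contributes $e_i$ as well but placed in the ``$-$'' part of the reference decomposition, and an arc $\{i,j\}$ with $i<j$ contributes the single vector $e_i\pm e_j$ (with a sign chosen so that the span is isotropic/balanced in the relevant sense) -- this is the standard Matsuki--Oshima normal form, and a direct computation of the intersection dimensions $\dim(E_k\cap W_\gamma)$ recovers $\gamma$. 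For injectivity, one runs a Gaussian-elimination / row-reduction argument: given any $W$ with clan $\gamma$, use $B_{p+q}(\C)$ (which acts on the matrix of a basis of $W$ by column operations preserving the flag, i.e.\ upper-triangular operations, together with arbitrary change of basis of $W$) to clear entries and bring the matrix of $W$ into the normal form $W_\gamma$; the available operations are exactly those that do not change the clan, and the normal form is rigid once the clan is fixed. This is the step that carries the real content.

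The main obstacle is the injectivity step: one must check that the echelon/normal form is genuinely unique under the $B_{p+q}(\C)$-action, i.e.\ that no residual upper-triangular freedom remains to further simplify $W_\gamma$ and that two distinct clans cannot give $B_{p+q}(\C)$-equivalent subspaces. The potential subtlety is entirely in the arcs: when two arcs ``cross'' or ``nest'', one has to verify that the cross terms generated by upper-triangular column operations cannot be used to merge or reconnect arcs, and that the sign attached to an arc vector $e_i+e_j$ versus $e_i-e_j$ is forced. This is where appealing to \cite[\S4]{Matsuki.Oshima.1990} directly is cleanest; alternatively, the same normalization is exactly a special instance of the Gaussian-elimination machinery developed elsewhere in this paper (cf.\ the discussion preceding Theorem \ref{theorem-Omega=H-G-PS}), so one may simply cite Matsuki--Oshima and remark that it also follows from the methods used here in the proof of Theorem \ref{theorem-Omega=H-G-PS}.
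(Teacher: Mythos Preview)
There is a genuine error in your geometric setup. You claim that $\GL_{p+q}(\C)/(\GL_p(\C)\times\GL_q(\C))$ is identified with the Grassmannian $\Gr_p(\C^{p+q})$ via $g\mapsto g\cdot V_+$. This is false: the stabilizer of the subspace $V_+$ alone is the maximal parabolic $P_{(p,q)}$, not the Levi $\GL_p(\C)\times\GL_q(\C)$. The block-diagonal subgroup $\GL_p(\C)\times\GL_q(\C)$ is the stabilizer of the \emph{ordered pair} $(V_+,V_-)$, so $\GL_{p+q}(\C)/(\GL_p(\C)\times\GL_q(\C))$ is the variety of splittings $\C^{p+q}=W_+\oplus W_-$ with $\dim W_+=p$, a fiber bundle over $\Gr_p(\C^{p+q})$ with affine fibers. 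Your model would give $B_{p+q}(\C)$-orbits on $\Gr_p(\C^{p+q})$, which by Bruhat are parametrized by $\binom{p+q}{p}$ Schubert cells --- already for $p=q=1$ this gives $2$ orbits, whereas $\Gamma(1,1)=\{+-,\,-+,\,\text{one arc}\}$ has $3$ elements. So the identification collapses distinct orbits and the subsequent invariant you describe (built from $\dim(E_i\cap W)$ alone) cannot separate them.

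The correct model is the dual one: $B_{p+q}(\C)\backslash\GL_{p+q}(\C)/(\GL_p(\C)\times\GL_q(\C))$ is the set of $(\GL_p(\C)\times\GL_q(\C))$-orbits on the full flag variety $\GL_{p+q}(\C)/B_{p+q}(\C)$, and the clan is read off from the interaction of a complete flag $E_\bullet$ with the fixed decomposition $V_+\oplus V_-$ (via both $\dim(E_i\cap V_+)$ and $\dim(E_i\cap V_-)$, together with the relative-position data that produces arcs). Your normal-form and Gaussian-elimination ideas are in the right spirit once transplanted to this model, but as written the argument is applied to the wrong homogeneous space. The paper itself does not give an independent proof here: Theorem~\ref{theorem-KGB-clan} is quoted from \cite[\S4]{Matsuki.Oshima.1990}, and the later Theorem~\ref{Thm-Matsuki-Oshima-classification} simply translates Matsuki--Oshima's symbol sequences (type AI: boys/girls $\pm$ and paired adults $\circ$) into $(p,q)$-clans by drawing an arc for each adult pair.
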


Thus, we have a combinatorial description of $B_{H} \backslash G / P_{G} \simeq H \backslash \dblFV$ in Case \caseA and consequently the same description for Case \caseB 
(see Remark \ref{remark:orbit.inclusions}).  

\skipover{
\begin{corollary}
\label{Cor-BGP=coprod-Gamma(p,q)}
Let $(G,H,P_{G},B_{H})$ be in Case \caseA or Case \caseB of Setting \ref{Setting-DFVR}. Then, we have a bijection 
        \begin{equation*}
            B_{H} \backslash G /P_{G}
            \simeq
\coprod_{k = 0}^n 
\coprod_{\substack{p, q \geq 0 \\ p + q \leq k}} 
            \coprod_{\substack{I \in \binom{[n]}{k} \\ J \in \binom{[k]}{p + q}}}
	        \Gamma(p,q).
        \end{equation*}     
\end{corollary}

\begin{proof}
    This follows from Theorems \ref{theorem-DFV-contain-KGB}, \ref{theorem-Matsuki-Duality-Intro} and \ref{theorem-KGB-clan}.
\end{proof}
}

We generalize the notion of Matsuki-Oshima's clans to give a combinatorial description of the orbit decomposition of the whole double flag varieties 
$H \backslash \dblFV \simeq B_{H} \backslash G / P_{G}$ using graphs.
For the precise statement, we need more notations.  

\begin{definition}\label{Def-Gamma(n)}
For a nonnegative integer $n\in\Z_{\geq 0}$, we define a set $\Gamma(n)$ of graphs consisting of $n$ vertices, all of which are decorated by one of $+,-,c,d$, or arcs according to the following rule.
\begin{enumerate}
	\item[\upshape{(arc)}] The two end points of each arc are different.
\end{enumerate}
More rigorously, we set
\begin{equation*}
	\Gamma(n)
	\coloneqq
	\{
        \gamma \colon [n] \to [n] \cup \{+,-,c,d\}
	\mid 
	   \text{$\gamma$ satisfies (arc)}
    \},
\end{equation*}
where $ [n] = \{1,2,\dots,n\} $ and 
\begin{enumerate}
	\item[\upshape{(arc)}] if $\gamma(i)=j \; (i, j \in [n]) $, then we have $i \neq j$ and $\gamma(j)=i$,
\end{enumerate}
which is a paraphrase of the same condition above (so we use the same symbol).
We call an element of $\Gamma(n)$ a \emph{decorated $n$-clan}.
\end{definition}

\begin{example}\label{Ex-Gamma(n)}
Let $\gamma\colon \{1,2,\dots,7\} \to \{1,2,\dots,7\} \cup \{+,-,c,d\}$ be a function in $ \Gamma(7) $ defined by
\begin{equation*}
        \gamma(1) = d,\quad
        \gamma(2) = -,\quad
        \gamma(3) = c,\quad
        \gamma(4) = 6,\quad
        \gamma(5) = +,\quad
        \gamma(6) = 4,\quad
        \gamma(7) = -.
\end{equation*}
Its graphical representation is given by
\begin{equation*}
    \gamma
    =
    \xymatrix{
        1_{d} &
        2_{-} &
        3_{c} &
        4 \ar@{-}@/^10pt/[rr] &
        5_{+} &
        6     &
        7_{-}.
    }
\end{equation*}
\end{example}

These decorated clans classify the orbits in double flag varieties 
both in Case \caseA and Case \caseB.

\begin{theorem}[Theorem \ref{Thm-Gamma(n)=DFVR}]\label{theorem-H-G-P_G=Gamma(n)}
Let $(G,H,P_{G},B_{H})$ be the quartet in Case \caseA or \caseB of Setting \ref{Setting-DFVR}. Then, we have a bijection
    \begin{equation*}
        H \backslash \dblFV \simeq B_{H} \backslash G / P_{G} \simeq \Gamma(n).
    \end{equation*}    
\end{theorem}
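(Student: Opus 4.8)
The plan is to construct an explicit bijection between the decorated $n$-clans in $\Gamma(n)$ and the orbit space $B_H \backslash G/P_G$ by composing the combinatorial pieces already assembled in Theorems~\ref{theorem-DFV-contain-KGB}, \ref{theorem-Matsuki-Duality-Intro} and \ref{theorem-KGB-clan}, and then repackaging the resulting disjoint union into a single family of graphs. Concretely, Theorem~\ref{theorem-DFV-contain-KGB} gives
\begin{equation*}
B_H \backslash G/P_G \simeq \coprod_{m=0}^{n} \coprod_{\substack{p,q\geq 0\\ p+q\leq m}} \coprod_{I\in\binom{[n]}{m},\, J\in\binom{[m]}{p+q}} B_{p+q}(\C)\backslash \GL_{p+q}(\C)/\U(p,q),
\end{equation*}
and composing with Theorems~\ref{theorem-Matsuki-Duality-Intro} and \ref{theorem-KGB-clan} replaces each summand by $\Gamma(p,q)$. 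So it suffices to exhibit a bijection
\begin{equation*}
\Gamma(n) \;\simeq\; \coprod_{m=0}^{n}\ \coprod_{\substack{p,q\geq 0\\ p+q\leq m}}\ \coprod_{I\in\binom{[n]}{m},\ J\in\binom{[m]}{p+q}} \Gamma(p,q),
\end{equation*}
which is purely combinatorial.

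For this combinatorial bijection I would read off, from a decorated $n$-clan $\gamma\in\Gamma(n)$, the data on the right-hand side as follows. Let $D=\gamma^{-1}(d)$ be the set of vertices labelled $d$; the complement $I=[n]\setminus D$, listed in increasing order, has some cardinality $m$, and recording $I$ accounts for the outer index $I\in\binom{[n]}{m}$ (this matches the interpretation of $I$ as a coset representative of $(\Z_2^n\rtimes\Sgroup_n)/\Sgroup_n$ mentioned after Theorem~\ref{theorem-DFV-contain-KGB}). Among the remaining $m$ vertices, let $C=\gamma^{-1}(c)$; then $J=I\setminus C$ relabelled inside $[m]$ via the order-isomorphism $I\cong[m]$ picks out $J\in\binom{[m]}{p+q}$ where $p+q=\#J$. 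Finally, restricting $\gamma$ to the $p+q$ vertices of $J$ yields a graph decorated only by $+$, $-$, and arcs; one must check the arcs close up within $J$ (i.e. $\gamma$ never matches a $J$-vertex to a non-$J$-vertex — this should be part of how the bijections in Theorem~\ref{theorem-DFV-contain-KGB} are set up, or else a constraint that needs to be folded into the statement), and one reads off $p,q$ from the signs via $p-q = \#\gamma^{-1}(+)-\#\gamma^{-1}(-)$ together with $p+q=\#J$, so $(p,q)$ is determined and the restricted graph lies in $\Gamma(p,q)$. The inverse map glues a $(p,q)$-clan back onto the vertex set $J\subset I\subset[n]$ and fills the other vertices with $c$'s (on $I\setminus J$) and $d$'s (on $[n]\setminus I$); this is manifestly inverse to the forward map, giving the bijection.

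I expect the main obstacle to be bookkeeping rather than depth: one has to verify that the three composed bijections are compatible with the decomposition in the precise way that makes the $\{+,-\}$-part, the $c$-part and the $d$-part of a decorated clan correspond exactly to $J$-vertices, $(I\setminus J)$-vertices and $([n]\setminus I)$-vertices respectively, and in particular that arcs in $\gamma$ only ever connect two $+/-$-type (i.e. $J$-)vertices — equivalently, that vertices labelled $c$ or $d$ are never endpoints of an arc. Depending on how Theorem~\ref{theorem-DFV-contain-KGB} and the underlying description via $\regOmega_n$ are proved, this constraint is either automatic or must be incorporated; if the reference proof of Theorem~\ref{Thm-Gamma(n)=DFVR} tracks the signed partial involution attached to $\omega_{\tau_1,\tau_2}$, the labels $+,-$ correspond to fixed points of the involution with a sign, arcs to $2$-cycles, $c$ to indices where $\tau_1,\tau_2$ columns behave one way, and $d$ the other, and the claim follows directly. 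The remaining step is to confirm the bijection is order-preserving for closure if one wants that refinement, but since the statement of Theorem~\ref{theorem-H-G-P_G=Gamma(n)} only asserts a bijection of sets, I would not pursue closure relations here.
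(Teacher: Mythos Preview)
Your approach is correct and yields a valid proof, but it is a genuinely different route from the one the paper takes in Section~6. The paper's proof of Theorem~\ref{Thm-Gamma(n)=DFVR} is direct: starting from the classification $\Z_2^n \backslash \regOmega_n / (\Z_2^n \rtimes \Sgroup_n) \simeq B_H \backslash G/P_G$ of Theorem~\ref{theorem-Omega=H-G-PS-2}, it writes down an explicit column-by-column recipe sending $\omega_{\tau_1,\tau_2}$ to a decorated clan $\gamma$ (the four rules (i)--(iv) in the proof), checks invariance under the $\Z_2^n \times (\Z_2^n \rtimes \Sgroup_n)$-action, and verifies bijectivity by inspection. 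Your route instead factors through the family-of-smaller-flag-varieties decomposition of Theorem~\ref{theorem-DFV-contain-KGB}, invokes Matsuki duality and the Matsuki--Oshima clan classification for each summand, and then glues the pieces combinatorially; this is exactly the perspective the paper develops later in Section~7 (see Corollary~\ref{Cor-DVR=gamma(p,q)} and Lemma~\ref{Lem-coprod-Gamma(p,q)=Gamma'(m)}), though not as its primary proof. The paper's direct approach has the advantage of producing the explicit orbit-to-graph dictionary in one step; yours makes the link to the classical KGB picture transparent.

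Two small remarks on your write-up. First, your worry that ``arcs close up within $J$'' is a non-issue: by Definition~\ref{Def-Gamma(n)}, a vertex with $\gamma(i)\in\{c,d\}$ is by construction not an arc endpoint, so both ends of any arc automatically lie in $J$. Second, in Case~\caseB your chain invokes Theorem~\ref{theorem-KGB-clan}, which is stated only over~$\C$; you should either appeal to Theorem~\ref{theorem-Omega=H-G-PS} (both cases share the same representatives, cf.\ Remark~\ref{remark:orbit.inclusions}) to transport the Case~\caseA conclusion, or cite the real analogue proved as Theorem~\ref{Thm-Matsuki-Oshima-classification:CaseB}.
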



\begin{corollary}\label{cor:number.of.orbits.HX}
The number of $ H $-orbits in $ \dblFV $ is given by 
	\begin{equation*}
\# H \backslash \dblFV = \#\Gamma(n)
=	\sum_{k=0}^{ \lfloor\frac{n}{2}\rfloor }
(2 k - 1)!! \dbinom{n}{2k} 2^{2 (n - 2 k)}
=
		4^{n}n!
		\sum_{k=0}^{ \lfloor\frac{n}{2}\rfloor}
		\frac{1}{2^{5k}}  \frac{1}{(n-2k)!k!},
	\end{equation*}
	where $\lfloor\frac{n}{2}\rfloor$ is the integral part of $\frac{n}{2}$.
\end{corollary}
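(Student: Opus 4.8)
The plan is to compute $\#\Gamma(n)$ directly as a sum over the number of arcs, and then show the two closed forms agree. First I would stratify $\Gamma(n)$ by the number $k$ of arcs: a decorated $n$-clan with exactly $k$ arcs is obtained by (i) choosing the $2k$ vertices that are endpoints of arcs, which can be done in $\binom{n}{2k}$ ways; (ii) choosing a perfect matching on those $2k$ vertices, of which there are $(2k-1)!!$; and (iii) decorating each of the remaining $n-2k$ vertices by one of the four symbols $+,-,c,d$, giving $4^{n-2k}=2^{2(n-2k)}$ choices. Since the arc condition (arc) imposes no further constraint beyond the endpoints being distinct (which is automatic for a matching), these choices are independent, and summing over $k$ from $0$ to $\lfloor n/2\rfloor$ yields
\begin{equation*}
\#\Gamma(n)=\sum_{k=0}^{\lfloor n/2\rfloor}(2k-1)!!\binom{n}{2k}2^{2(n-2k)} .
\end{equation*}
Combined with Theorem \ref{theorem-H-G-P_G=Gamma(n)}, this already gives $\#H\backslash\dblFV=\#\Gamma(n)$ and the first equality in the displayed formula.

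Next I would convert the sum into the second closed form. Using $(2k-1)!!=\dfrac{(2k)!}{2^k k!}$ and $\binom{n}{2k}=\dfrac{n!}{(2k)!(n-2k)!}$, the general term becomes
\begin{equation*}
(2k-1)!!\binom{n}{2k}2^{2(n-2k)}=\frac{n!}{2^k k!(n-2k)!}\,4^{n-2k}=4^{n}n!\cdot\frac{1}{2^{5k}}\cdot\frac{1}{(n-2k)!\,k!},
\end{equation*}
since $4^{n-2k}/2^k=4^n\cdot 4^{-2k}\cdot 2^{-k}=4^n\cdot 2^{-5k}$. Factoring the constant $4^n n!$ out of the sum gives exactly the claimed expression
\begin{equation*}
\sum_{k=0}^{\lfloor n/2\rfloor}(2k-1)!!\binom{n}{2k}2^{2(n-2k)}=4^{n}n!\sum_{k=0}^{\lfloor n/2\rfloor}\frac{1}{2^{5k}}\frac{1}{(n-2k)!\,k!},
\end{equation*}
with the harmless convention $(2\cdot 0-1)!!=1$.

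There is essentially no hard part here: the only point requiring a little care is the combinatorial bookkeeping in the first step, namely checking that a decorated $n$-clan is \emph{uniquely} reconstructed from the data (set of arc-endpoints, matching, decoration of the rest), i.e. that the encoding $\gamma\colon[n]\to[n]\cup\{+,-,c,d\}$ of Definition \ref{Def-Gamma(n)} is in bijection with such triples. This is immediate from the definition, since $\gamma$ restricted to $[n]$-values is forced by the matching (condition (arc) says $\gamma(i)=j\iff\gamma(j)=i$ with $i\ne j$), and the values in $\{+,-,c,d\}$ are exactly the free decorations of the non-endpoint vertices. If one wishes, one can double-check the formula against small cases: for $n=1$ it gives $4$ (the four clans $1_+,1_-,1_c,1_d$), and for $n=2$ it gives $4^2+1=17$, matching $\Gamma(2)$ directly. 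The algebraic identity in the second step is a routine manipulation of factorials and requires no obstacle to overcome.
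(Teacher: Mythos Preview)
Your proof is correct and is exactly the intended argument: the paper states the corollary without proof, treating the count of $\Gamma(n)$ as an immediate combinatorial exercise following Theorem~\ref{theorem-H-G-P_G=Gamma(n)}. Your stratification by the number of arcs and the subsequent factorial manipulation fill in precisely the details the paper leaves implicit.
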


By chasing the proofs of relevant theorems, we obtain an algorithm for constructing an orbit representative $\omega_{\tau_{1},\tau_{2}} \in \regOmega_{n}$ from a decorated $ n $-clan $ \gamma \in \Gamma(n) $, which give the bijections of Theorem \ref{theorem-H-G-P_G=Gamma(n)}. 
See Example \ref{Ex-Gamma-to-Omega}.

\subsection{Galois cohomology}

Finally, let us explain how to get information of the orbits by using the Galois cohomology.  
This is the third method for classifying the orbits $ H \backslash \dblFV $.   
The key point is the following proposition, which is easily obtained by combining well-known results.

\begin{proposition}\label{fact-real-orbits-via-Galois-cohomology}
Let $G$ be a real reductive algebraic group, $P$ its parabolic subgroup,
and $B$ a real algebraic subgroup of $G$ (not necessarily a parabolic subgroup of $G$).
Write $G_{\C}, P_{\C}$, and $B_{\C}$ for complexifications $G,P$ and $B$, respectively, and set $X_{\C} \coloneqq G_{\C} / P_{C}$.
Let $\Xi\subset B_{\C}\backslash X_{\C} $ be a subset of $B_{\C}$-orbits on $X_{\C}$ having an $\R$-rational point:
\begin{equation*}
	\Xi\coloneqq
	\{
        \mathcal{O} \in B_{\C} \backslash X_{\C}
    \mid
	   \mathcal{O}(\R)\neq \emptyset
    \}.
\end{equation*}
Suppose that the first Galois cohomology $H^{1}(\R, B_{\C})$ is trivial, 
\begin{equation*}
    H^{1}(\R, B_{\C}) = 1.
\end{equation*}
Then, we have a bijection
\begin{equation}
	B \backslash G / P 
	\simeq 
	\coprod_{\mathcal{O}\in\Xi}
	H^{1}(\R, (B_{\C})_{x_{\mathcal{O}}}),
\label{eq-fact-real-orbits-via-Galois-cohomology}
\end{equation}
where $(B_{\C})_{x_{\mathcal{O}}}$ denotes the stabilizer in $B_{\C}$ at $x_{\mathcal{O}} \in \mathcal{O}(\R)$.
\end{proposition}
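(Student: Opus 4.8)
The plan is to deduce the bijection \eqref{eq-fact-real-orbits-via-Galois-cohomology} from the standard nonabelian Galois cohomology formalism for fibers of maps between pointed sets of rational points. First I would observe that $X_{\C} = G_{\C}/P_{\C}$ is a homogeneous space on which $B_{\C}$ acts, and that the $B_{\C}(\R)$-orbits on $X_{\C}(\R)$ are in bijection with the set $\Xi$ of those $B_{\C}$-orbits $\mathcal{O}$ in $X_{\C}$ that meet $X_{\C}(\R)$ (by definition of $\Xi$), because $B_{\C}(\R) = B$ here and $X_{\C}(\R) = (G_{\C}/P_{\C})(\R) = G/P$ — the last equality uses that $H^1(\R, P_{\C})$ is trivial (parabolic subgroups are special, or directly: $P_{\C}$ has a Levi decomposition with special Levi and unipotent radical, both of trivial $H^1$), so $G(\R) \to (G_{\C}/P_{\C})(\R)$ is surjective. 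Hence $B\backslash G/P = B\backslash(G/P) = B\backslash X_{\C}(\R) = \coprod_{\mathcal{O}\in\Xi} \{B\text{-orbits inside }\mathcal{O}(\R)\}$.

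The heart of the argument is then the identification, for each fixed $\mathcal{O}\in\Xi$, of the set of $B$-orbits on $\mathcal{O}(\R)$ with $H^1(\R, (B_{\C})_{x_{\mathcal{O}}})$. Pick $x_{\mathcal{O}}\in\mathcal{O}(\R)$, which exists precisely because $\mathcal{O}\in\Xi$, and write $S_{\C} = (B_{\C})_{x_{\mathcal{O}}}$ for its stabilizer, an algebraic subgroup of $B_{\C}$ defined over $\R$ since $x_{\mathcal{O}}$ is a rational point. Then $\mathcal{O}\cong B_{\C}/S_{\C}$ as $\R$-varieties with $B_{\C}$-action, and the exact sequence of pointed sets in Galois cohomology (Serre, \emph{Galois Cohomology}, Chap.~I, \S5.4, Prop.~36) gives
\begin{equation*}
B(\R)\to (B_{\C}/S_{\C})(\R)\xrightarrow{\ \delta\ } H^1(\R,S_{\C})\to H^1(\R,B_{\C}).
\end{equation*}
The hypothesis $H^1(\R,B_{\C}) = 1$ makes the last map land in a point, so $\delta$ is surjective onto all of $H^1(\R,S_{\C})$; and the general twisting argument shows that the fibers of $\delta$ are exactly the $B$-orbits on $\mathcal{O}(\R)$ (two rational points lie in the same $B$-orbit iff they have the same image under $\delta$). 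This yields the desired bijection $B\backslash\mathcal{O}(\R)\xrightarrow{\sim} H^1(\R,S_{\C})$; taking the disjoint union over $\mathcal{O}\in\Xi$ gives \eqref{eq-fact-real-orbits-via-Galois-cohomology}.

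I would present the two ingredients — surjectivity of $G(\R)\to (G/P)(\R)$ and the orbit/fiber description via $\delta$ — as the two steps, citing Serre for both, since the statement says the proposition "is easily obtained by combining well-known results." The main (really the only) obstacle, and the point that deserves a sentence of care, is justifying that $X_{\C}(\R) = G/P$, i.e.\ that every $\R$-rational point of the flag variety $G_{\C}/P_{\C}$ comes from an $\R$-rational point of $G_{\C}$; this is where one invokes that $H^1(\R,P_{\C})$ is trivial (via the Levi decomposition $P_{\C} = L_{\C}\ltimes U_{\C}$, triviality of $H^1(\R, U_{\C})$ for unipotent $U_{\C}$, and triviality of $H^1(\R, L_{\C})$ — the latter holding because $L_{\C}$ is a Levi of a parabolic and hence "special" in Serre's sense, as $\GL$-factors and more generally Levi subgroups are). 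Everything else is the standard cohomological exact sequence and twisting, which I would not spell out in detail. Note also that the set $\Xi$ and the bijection depend on the choice of base points $x_{\mathcal{O}}$ only up to the canonical identifications coming from inner twisting, so the statement is well-posed.
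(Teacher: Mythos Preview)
Your two-step structure matches the paper's: first identify $X_{\C}(\R)$ with $G/P$, then apply Serre's exact sequence together with $H^{1}(\R, B_{\C}) = 1$ to identify the $B$-orbits in each $\mathcal{O}(\R)$ with $H^{1}(\R, (B_{\C})_{x_{\mathcal{O}}})$. The second step is fine and is exactly what the paper does.

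There is, however, a genuine gap in your justification of the first step. You claim $H^{1}(\R, P_{\C}) = 1$ because the Levi $L_{\C}$ is ``special in Serre's sense,'' but this is false for real parabolics in general. The paper itself supplies a counterexample in the remark following Lemma~\ref{Fact-caseAB-H^1(B,R)=H^1(P,R)=1}: for $G = \U(p, p+1)$ with $P$ a minimal parabolic, $P_{\C}$ is a Borel subgroup of $\GL_{2p+1}(\C)$ whose real Levi contains compact $S^{1}$-factors, and one computes $H^{1}(\R, P_{\C}) \simeq \{\pm 1\}^{b} \neq 1$. Only groups such as $\GL_{n}$, $\SL_{n}$, $\Sp_{2n}$ and extensions thereof are special; an anisotropic torus is not. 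The correct fix, which the paper spells out in the remark following Proposition~\ref{Prop-real-orbits-via-Galois-cohomology-in-Appx}, is to bypass $H^{1}(\R, P_{\C})$ altogether and invoke $(G_{\C}/P_{\C})(\R) = G/P$ directly from \cite[Thm.~4.13]{Borel.Tits.1965} or \cite[Prop.~20.5]{Borel.1991}; this rationality statement for the flag variety holds for any real parabolic in a real reductive group, even when $H^{1}(\R, P_{\C}) \neq 1$. With this correction the rest of your argument goes through unchanged.
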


\begin{proof}
    This is a special case of general results, which are summarized in Proposition \ref{Prop-real-orbits-via-Galois-cohomology-in-Appx} in Appendix \ref{Section-real-points-of-double-coset-decomposition}. Although this seems to be well known to the experts, we give a proof of the proposition 
    for the sake of completeness.
\end{proof}

\medskip 

Let us return to our basic settings, i.e., Cases \caseA and \caseB, and apply the above proposition to them.
We write $(G_{\C},H_{\C},P_{G_{\C}},B_{H_{\C}})$ for the complexification of $(G,H,P_{G},B_{H})$ (See Section \ref{Section-Preliminary} for more details).

\begin{convention}\label{Convention-complexifications} 
In our setting, the complexification $(G_{\C},H_{\C},P_{G_{\C}},B_{H_{\C}})$ becomes the following quartets 
according as Cases \caseA or \caseB.
\begin{enumerate}
    \item[\caseA] $(\GL_{2n}(\C), \GL_{n}(\C) \times \GL_{n}(\C), P_{(n,n)}, B_{n}^{+} \times B_{n}^{-})$, 
    where $P_{(n,n)}$ is the maximal parabolic subgroup of $G_{\C} = \GL_{2n}(\C)$ corresponding to the partition $2n=n+n$, 
    and $B_{n}^{+}$ (resp.~$B_{n}^{-}$) is a subgroup of upper (resp.~lower) triangular matrices of $\GL_{n}(\C)$.  
In this case, the flag variety $G_{\C} / P_{G_{\C}} = \GL_{2n}(\C) / P_{(n,n)}$ can be identified with the Grassmannian 
    \begin{equation*}
        \GL_{2n}(\C) / P_{(n,n)} \simeq  \Gr_{n}(\C^{2n}).
    \end{equation*}

    \item[\caseB] $(\Sp_{2n}(\C), \GL_{n}(\C), P_{S}^{\C}, B_{n}(\C))$, where $P_{S}^{\C}$ is the Siegel parabolic subgroup of $G_{\C} = \Sp_{2n}(\C)$.
The flag variety $G_{\C}/P_{G_{\C}} = \Sp_{n}(\C) / P_{S}^{\C}$ is identified with the Lagrangian Grassmannian
    \begin{equation*}
        \Sp_{n}(\C) / P_{S}^{\C} \simeq \LGr(\C^{2n})
        \coloneqq
        \{ W \in \Gr_{n}(\C^{2n}) \mid W \text{ is Lagrangian in }\C^{2n}_{\Sigma} \},
    \end{equation*}
where $\C^{2n}_{\Sigma} $ is the symplectic vector space of dimension $ 2 n $ (the complexification of the real one, which we are considering).
\end{enumerate}    
\end{convention}

In both Cases \caseA and \caseB, 
the assumptions of Proposition \ref{fact-real-orbits-via-Galois-cohomology} are satisfied 
(see Lemma \ref{Fact-caseAB-H^1(B,R)=H^1(P,R)=1} below). 
Thus, by Proposition \ref{fact-real-orbits-via-Galois-cohomology}, the classification of $B_{H} \backslash G /P_{G}$ is reduced to the description of the double coset space $B_{H_{\C}} \backslash G_{\C} /P_{G_{\C}}$, and the computation of the Galois cohomologies in the right-hand side of \eqref{eq-fact-real-orbits-via-Galois-cohomology}.
The former was obtained in earlier works (\cite{Fresse.N.2020,Fresse.N.2021}), 
which we will summarize below in Theorem \ref{Thm-complex-orbit-decomp-AIII-and-CI}.  

To state the classification of the orbits in $ H_{\C} \backslash \dblFV_{\C} \simeq  B_{H_{\C}} \backslash G_{\C} /P_{G_{\C}}$, we prepare some notations.

\begin{definition}\label{Def-PP-regT-regR}  
    An $n\times n $ matrix $ \tau \in \Mat_{n}(\Z) $ is called a \emph{partial permutation} if 
    its entries belong to the set $ \{ 0, 1 \} $ and there is at most one nonzero entry in each row and also in each column.
    Then, we write $ \PP_{n} $ for the set of partial permutations:
    \begin{equation*}
        \PP_{n} \coloneqq \{ \tau \in \Mat_{n}(\Z) \mid \tau\text{ is a partial permutation}  \}.
    \end{equation*}
    Moreover, we set
    \begin{equation*}
    \begin{split}
        \regT_{n} &\coloneqq
        \left\{
            \tau \coloneqq   
            \begin{pmatrix}
                \tau_{1} \\ \tau_{2}
            \end{pmatrix} \in \Mat_{2n,n}(\Z)
        \biggm| 
            \begin{array}{c}
                \tau_{1}, \tau_{2} \in \PP_{n}  \\  \rank \tau = n
            \end{array}
        \right\},
        \\
        \regR_{n} &\coloneqq
        \left\{
            \tau \coloneqq   
            \begin{pmatrix}
                \tau_{1} \\ \tau_{2}
            \end{pmatrix} \in \Mat_{2n,n}(\Z)
        \Biggm| 
            \begin{array}{c}
                \tau_{1}, \tau_{2} \in \PP_{n}  \\ {}^{t}\tau_{1}\tau_{2} \in \Sym_{n}(\Z) \\ \rank \tau = n
            \end{array}
        \right\}  
        \subset \regT_{n}.
    \end{split} 
    \end{equation*}
    Note that the condition $\rank \tau = n$ implies that the image $[\tau] \coloneqq \operatorname{Im} \tau$, a subspace generated by the row vectors of $ \tau $,  
represents a point in $\Gr_{n}(\Q^{2n}) \subset \Gr_{n}(\C^{2n})$.
    Moreover, the condition ${}^{t}\tau_{1} \tau_{2} \in \Sym_{n}(\Z)$ implies that $[\tau]$ is a Lagrangian subspace belonging to $G_{\C}/P_{G_{\C}}$ in Case \caseB (See Section \ref{Section-Preliminary} for more details).
\end{definition}

\begin{remark}\label{Remark-Weyl-group-action-An-on-regT-regR}
    It is easy to see that $ \regT_{n} $ and $\regR_{n}$ are stable under the right multiplication by the permutation matrix in $ \Sgroup_{n} $ (or permuting rows). 
Thus, the Weyl group of type $A_{n}$ acts on $ \regT_{n} $ and $\regR_{n}$ naturally.
\end{remark}

The following theorem gives the orbit decomposition $H_{\C} \backslash \dblFV_{\C} \simeq B_{H_{\C}} \backslash G_{\C} / P_{G_{\C}}$.  

\begin{theorem}[{\cite[Thms.~0.1 and 4.6]{Fresse.N.2021}}]\label{Thm-complex-orbit-decomp-AIII-and-CI}
\begin{penumerate}
\item
        Let $(G_{\C},H_{\C},P_{G_{\C}},B_{H_{\C}})$ be the complexification of the quartet in Case \caseA of Convention \ref{Convention-complexifications}. Then, we have a bijection which classifies the orbits.
        \begin{equation*}
        \begin{array}{ccc}
         \regT_{n} / \Sgroup_{n} & \xrightarrow{\sim} & B_{H_{\C}} \backslash G_{\C} / P_{G_{\C}}  
         \\
         \rotatebox{90}{$\in$} && \rotatebox{90}{$\in$}
         \\
         \tau & \mapsto & B_{H_{\C}} [\tau].
        \end{array}
        \end{equation*}

\item
        Let $(G_{\C},H_{\C},P_{G_{\C}},B_{H_{\C}})$ be the complexification of the quartet in Case \caseB.   
Then, we have a bijection which classifies the orbits.
        \begin{equation*}
        \begin{array}{ccc}
         \regR_{n} / \Sgroup_{n}  & \xrightarrow{\sim} & B_{H_{\C}} \backslash G_{\C} / P_{G_{\C}}  
         \\
         \rotatebox{90}{$\in$}  && \rotatebox{90}{$\in$}
         \\
         \tau & \mapsto & B_{H_{\C}} [\tau].
        \end{array}
        \end{equation*}
    \end{penumerate}
\end{theorem}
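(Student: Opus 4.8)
The plan is to reduce both cases to an explicit Gaussian-elimination argument on the matrix representatives $\omega_{\tau_1,\tau_2}\in\regOmega_n$, exploiting the bijection $H\backslash\dblFV\simeq B_H\backslash G/P_G$ and the identification of $G/P_G$ with the isotropic Grassmannian ($\HLGr(\C^{2n})$ in Case \caseA, $\LGr(\R^{2n})$ in Case \caseB). Since the target statement is Theorem \ref{Thm-complex-orbit-decomp-AIII-and-CI}, which is quoted from \cite{Fresse.N.2021}, the cleanest route is to recall the argument there: a point of $G_\C/P_{G_\C}$ is the row span $[\tau]$ of a full-rank $2n\times n$ integer matrix $\tau=\binom{\tau_1}{\tau_2}$, and the double coset $B_{H_\C}[\tau]$ depends only on the $B_{H_\C}$-orbit of this span. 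First I would set up the normal form: acting by $B_{H_\C}$ on the left (which in Case \caseA is the pair of triangular groups $B_n^+\times B_n^-$ acting on the two blocks $\tau_1,\tau_2$ separately, and in Case \caseB is $B_n(\C)$ acting diagonally through the embedding $\GL_n(\C)\hookrightarrow\Sp_{2n}(\C)$) together with the right $P_{G_\C}$-action, one can perform row reductions within each block and column operations to bring $\tau_1$ and $\tau_2$ simultaneously into partial-permutation form.

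The key steps, in order: (1) Show that every $B_{H_\C}$-orbit on $G_\C/P_{G_\C}$ meets the set of spans $[\tau]$ with $\tau\in\regT_n$ (Case \caseA) resp. $\tau\in\regR_n$ (Case \caseB) — this is the surjectivity of the map $\tau\mapsto B_{H_\C}[\tau]$, proved by the Gaussian-elimination normalization. The symmetry condition ${}^t\tau_1\tau_2\in\Sym_n(\Z)$ in the definition of $\regR_n$ is exactly the Lagrangian condition for $[\tau]$ to lie in $\Sp_{2n}(\C)/P_S^\C$, as noted in Definition \ref{Def-PP-regT-regR}, so it is automatically satisfied and preserved. (2) Identify the residual ambiguity: two normalized matrices $\tau,\tau'$ give the same double coset iff they differ by a simultaneous permutation of the $n$ columns, i.e. by the right $\Sgroup_n$-action of Remark \ref{Remark-Weyl-group-action-An-on-regT-regR} — this is the injectivity of the induced map from $\regT_n/\Sgroup_n$ (resp. $\regR_n/\Sgroup_n$). (3) Conclude that $\tau\mapsto B_{H_\C}[\tau]$ descends to a bijection on the quotient.

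The main obstacle is step (2), the injectivity: one must verify that once $\tau$ is in partial-permutation normal form, no further nontrivial element of $B_{H_\C}\times P_{G_\C}$ can move it except via column permutations. This requires a careful bookkeeping of how the triangular structure of $B_{H_\C}$ interacts with the pivot positions of $\tau_1$ and $\tau_2$: a would-be nontrivial transformation fixing the span would have to be absorbed into $P_{G_\C}$ on the right, and one shows the pivot pattern rigidifies the matrix up to reordering columns. In Case \caseB there is the extra subtlety that $B_{H_\C}$ acts through a single copy of $B_n(\C)$ on both blocks simultaneously (rather than independently as in Case \caseA), so the normal form has fewer free moves; one must check that the symmetry constraint ${}^t\tau_1\tau_2\in\Sym_n$ cuts down $\regT_n$ to exactly $\regR_n$ and that the $\Sgroup_n$-action is still the full stabilizer ambiguity. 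Since this is a cited result, I would present the reduction and refer to \cite[Thms.~0.1 and 4.6]{Fresse.N.2021} for the detailed verification, emphasizing only the points needed downstream (the explicit representatives and the $\Sgroup_n$-action), which are what Theorems \ref{theorem-Omega=H-G-PS} and \ref{theorem-H-G-P_G=Gamma(n)} build upon.
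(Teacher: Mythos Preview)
The paper does not prove this theorem at all: it is stated as a quoted result from \cite[Thms.~0.1 and 4.6]{Fresse.N.2021} and is used as a black box in the Galois-cohomology arguments of Section~\ref{Section-proof-Thm-Galios-RDFV}. Your proposal correctly recognizes this and sensibly defers to the reference for the detailed verification; the Gaussian-elimination outline you give (normalize $\tau_1,\tau_2$ to partial permutations via the left $B_{H_\C}$-action and right $\GL_n(\C)$-action, then check the residual ambiguity is exactly $\Sgroup_n$) is indeed the approach of \cite{Fresse.N.2021}, so there is nothing to compare against in the present paper.

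One small inaccuracy worth flagging: in Case~\caseB the action of $b\in B_n(\C)\simeq B_{H_\C}$ on the two blocks is $\binom{C}{D}\mapsto\binom{bC}{(\transpose{b})^{-1}D}$, not the same matrix $b$ on both blocks; your phrase ``acts diagonally \ldots on both blocks simultaneously'' could be read either way, but the contragredient action on the lower block is what forces the symmetry condition ${}^t\tau_1\tau_2\in\Sym_n$ to be preserved and is essential for the injectivity step you identify as the main obstacle.
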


Thus, in order to obtain the classification of $B_{H} \backslash G / P_{G}$ by using the Galois cohomology, 
first we have to know which $B_{H_{\C}}$-orbit has an $\R$-rational point, and then to determine the Galois cohomology corresponding to such orbits, 
which is the recipe in Proposition \ref{fact-real-orbits-via-Galois-cohomology}. 

\begin{definition}\label{Def-d-tau}
For $\tau = \begin{pmatrix}
    \tau_{1} \\ \tau_{2}
\end{pmatrix} \in \regT_{n}$, we define $d(\tau) \in \Z_{\geq 0}$ by
    \begin{equation*}
        d(\tau) \coloneqq 
        \#\{ 
            j \in [n] 
        \mid 
            \text{the $j$-th column of }\tau_{1}
            \text{ is equal to}
            \text{ the $j$-th column of }\tau_{2}
        \}.
    \end{equation*}

Note that $d(\tau)$ is invariant with respect to the action of $\Sgroup_{n}$ (described in Remark \ref{Remark-Weyl-group-action-An-on-regT-regR}) so that 
it gives a well defined function on $[\tau]\in\regT_{n} / \Sgroup_{n}$, denote by $ d([\tau])$. 
\end{definition}

We first prove the following lemma.

\begin{lemma}\label{Fact-caseAB-H^1(B,R)=H^1(P,R)=1}
    Let $(G_{\C}, H_{\C}, P_{G_{\C}}, B_{H_{\C}})$ be the complexification of the quartets in Case \caseA or \caseB 
    given in Convention \ref{Convention-complexifications}.
    Then, we have
    \begin{equation*}
        H^{1}(\R, B_{H_{\C}}) =1. 
    \end{equation*}
    Namely, the assumption of Proposition \ref{fact-real-orbits-via-Galois-cohomology} is satisfied.
\end{lemma}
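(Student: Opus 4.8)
The plan is to compute $H^1(\R, B_{H_\C})$ directly from the explicit structure of the Borel (or minimal parabolic) subgroups in each case, using the standard functorial properties of Galois cohomology for non-abelian coefficients. First I would recall that $H^1(\R, G_{\C})$ fits into exact sequences of pointed sets coming from short exact sequences of $\R$-groups, and that $H^1(\R, -)$ vanishes for the additive group $\G_a$ and, by Hilbert 90, for $\GL_m$ over any field, hence in particular $H^1(\R, \G_m) = 1$ and $H^1(\R, \GL_m(\C)) = 1$ (here $\GL_m$ viewed as the Weil restriction $\mathrm{Res}_{\C/\R}\GL_m$, which is again a general linear group over $\R$). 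These are the only building blocks we will need.

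In Case \caseA, $B_{H_\C} = B_n^+ \times B_n^-$ is a product of two Borel subgroups of $\GL_n(\C)$; since $H^1(\R,-)$ of a product is the product of the $H^1$'s, it suffices to treat a single Borel $B_n(\C)$ of $\GL_n(\C)$. Here $\GL_n(\C)$ is to be read as the $\R$-group $\mathrm{Res}_{\C/\R}\GL_n$, and $B_n(\C)$ is the corresponding Weil restriction of the upper triangular Borel; I would write $B_n(\C) = T_n(\C) \ltimes U_n(\C)$ with $T_n(\C) \simeq (\mathrm{Res}_{\C/\R}\G_m)^n$ a torus and $U_n(\C)$ unipotent. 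The unipotent radical $U_n(\C)$ has, as an $\R$-group, a filtration with successive quotients isomorphic to $\mathrm{Res}_{\C/\R}\G_a \simeq \G_a^2$, so by dévissage along this filtration (the long exact cohomology sequence, using that $H^1$ and $H^2$ of $\G_a$ vanish — or more elementarily, that cohomology of a unipotent group over a field of characteristic $0$ is trivial) we get $H^1(\R, U_n(\C)) = 1$. Next, $H^1(\R, \mathrm{Res}_{\C/\R}\G_m) = H^1(\C, \G_m) = 1$ by Shapiro's lemma and Hilbert 90, so $H^1(\R, T_n(\C)) = 1$. Finally the short exact sequence $1 \to U_n(\C) \to B_n(\C) \to T_n(\C) \to 1$ gives an exact sequence of pointed sets $H^1(\R, U_n(\C)) \to H^1(\R, B_n(\C)) \to H^1(\R, T_n(\C))$, and vanishing of the two ends forces $H^1(\R, B_n(\C)) = 1$ (the first map being surjective onto the kernel of the second, and the kernel of the second being all of $H^1(\R, B_n(\C))$). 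Hence $H^1(\R, B_{H_\C}) = 1$ in Case \caseA.

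In Case \caseB, $B_{H_\C}$ is the complexification of a minimal parabolic $B_n(\R)$ of $\GL_n(\R)$; as an $\R$-group this is a Borel subgroup $B_n$ of the split group $\GL_n$ over $\R$, i.e.\ $B_n = T_n \ltimes U_n$ with $T_n \simeq \G_m^n$ split and $U_n$ split unipotent. Then $H^1(\R, \G_m) = 1$ by Hilbert 90, so $H^1(\R, T_n) = 1$; $H^1(\R, U_n) = 1$ since $U_n$ is a successive extension of $\G_a$'s in characteristic zero; and the same dévissage along $1 \to U_n \to B_n \to T_n \to 1$ as above yields $H^1(\R, B_n) = 1$. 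So again $H^1(\R, B_{H_\C}) = 1$, and the assumption of Proposition \ref{fact-real-orbits-via-Galois-cohomology} holds in both cases.

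The only genuinely delicate point is the bookkeeping with non-abelian $H^1$: the cohomology ``sequences'' are merely exact sequences of pointed sets, so one must argue via the twisting construction or via the observation that when the quotient has trivial $H^1$ and the subgroup has trivial $H^1$ after every relevant twist, the middle term is trivial. In our situation this is painless because the unipotent radical has trivial $H^1$ for \emph{every} form (unipotent groups in characteristic $0$), and $\mathrm{Res}_{\C/\R}\G_m$ likewise has trivial $H^1$ after any twist (all its forms over $\R$ are again of the form $\mathrm{Res}_{\C/\R}\G_m$ or $\G_m\times\G_m$, with trivial $H^1$), so the naive dévissage goes through without having to track twisted forms carefully. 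I expect this to be the main (and essentially only) obstacle, and it is a standard one; everything else is Hilbert 90 plus the triviality of unipotent cohomology.
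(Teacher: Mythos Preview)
Your overall strategy --- reduce to the maximal torus via the Levi decomposition and then invoke Hilbert 90 (or Shapiro) --- is correct and is exactly the mechanism the paper itself develops in Lemma~\ref{Lem-for-solvableB-H^1(B,R)=H^1(T,R)-flag} and applies in the Remark following the present lemma. The paper's own proof of this lemma simply cites \cite[Lem.~2.6]{Nishiyama.Tauchi.2024}, so you are supplying more detail than the paper does here.

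There is, however, a slip in your Case~\caseA reduction. You write that ``$B_{H_\C} = B_n^+ \times B_n^-$ is a product of two Borel subgroups'' and invoke ``$H^1(\R,-)$ of a product is the product of the $H^1$'s''. That product rule holds only for products of $\R$-groups, and here the Galois involution $\gamma$ (equation~\eqref{eq-def-gamma-caseA}) \emph{swaps} the two factors: one computes $\gamma(\diag(b,c)) = \diag((c^*)^{-1},(b^*)^{-1})$. So $B_{H_\C}$ is not a product over $\R$; rather it is exactly the Weil restriction $\mathrm{Res}_{\C/\R} B_n$, as you then correctly identify in the next sentence. The reduction to ``a single Borel $B_n(\C)$'' is therefore right, but the justification via products is not --- you should simply say that $B_{H_\C}$ with its $\R$-structure \emph{is} $\mathrm{Res}_{\C/\R} B_n$. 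In fact, Shapiro's lemma then kills Case~\caseA in one line: $H^1(\R, \mathrm{Res}_{\C/\R} B_n) = H^1(\C, B_n) = 1$, without any d\'evissage. Your Levi-decomposition argument is still needed (and correctly executed) for Case~\caseB, where the group is genuinely defined over $\R$ and not a restriction of scalars.
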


\begin{proof}
    In Case \caseA, this fact is proved in \cite[Lem.~2.6]{Nishiyama.Tauchi.2024}.  
    Note that there exists an error in the proof of \cite[Lem.~2.6]{Nishiyama.Tauchi.2024}.  
    For the correct proof, see arXiv version (arXiv:2504.09177).
    Case \caseB can be proved in the same way.
\end{proof}

\begin{remark}
    We note that there exists a Borel subgroup $B_{\C}$ of $\GL_{n}(\C)$ defined over $\R$ whose Galois cohomology is \textit{not} trivial.
In fact, let $G$ be a quasi-split real form of a complex reductive group $G_{\C}$, which is not split, 
    and $P$ a minimal parabolic subgroup of $G$.  
    Then, the complexification $P_{\C}$ of $P$ is a Borel subgroup of $G_{\C}$ since $G$ is quasi-split. Moreover the Levi subgroup $T$ of $P$ is isomorphic to $\GL_{1}(\R)^{a} \times (S^{1})^{b} \times \GL_{1}(\C)^{c}$ for some $a,b,c \in \Z_{\geq0}$ with $b>0$ or $c>0$.  
Let us denote $ B_{\C} = P_{\C} $ and $ B = P $.  
    Then, we have $H^{1}(\R, B_{\C}) \simeq H^{1}(\R,T_{\C})$ by Lemma \ref{Lem-for-solvableB-H^1(B,R)=H^1(T,R)-flag}. Because the Galois cohomology preserves the direct product, we have 
    $H^{1}(\R,T_{\C}) \simeq H^{1}(\R,\GL_{1}(\R)_{\C})^{a} \times H^{1}(\R,S_{\C}^{1})^{b} \times H^{1}(\R,\GL_{1}(\C)_{\C})^{c}$. 
    We note that Hilbert's Theorem 90 implies $H^{1}(\R, \GL_{1}(\R)_{\C} ) = 1$. The similar argument as that in  Lemma \ref{Lem-H^1-tau-3cases-flag} implies
    $H^{1}(\R, \GL_{1}(\C)_{\C} ) = 1$ and $H^{1}(\R, S^{1}_{\C}) \simeq \{\pm 1\}$.
    Summarizing, we have $H^{1}(\R, B_{\C}) \simeq \{\pm 1\}^{b}$.
    Therefore, $H^{1}(\R, B_{\C}) \neq 1$ if $b>0$.
    (For example, consider $G = \U(p,p+1)$ so that $G_{\C} = \GL_{2p+1}(\C)$.)
\end{remark}

\begin{theorem}\label{theorem-Galois-RDFV}
\begin{penumerate}
\item\label{theorem-Galois-RDFV:item:1:nonempty.R.pts}
Suppose we are in Case \caseA of Convention \ref{Convention-complexifications}, 
and let $\mathcal{O}_{\tau}$ be the orbit corresponding to $\tau \in \regT_{n} / \Sgroup_{n}$ via the bijection of Theorem \ref{Thm-complex-orbit-decomp-AIII-and-CI}. 
Then, $\mathcal{O}_{\tau}(\R) \neq \emptyset$ if and only if $\tau \in \regR_{n} / \Sgroup_{n} \subset \regT_{n} / \Sgroup_{n}$.  
In Case \caseB, every orbit $\mathcal{O}_{\tau}$ corresponding to 
$\tau \in \regR_{n} / \Sgroup_{n}$ has nonempty real points.

\item\label{theorem-Galois-RDFV:item:2:Galois.tells.orbits.R}
Suppose we are in Case \caseA or Case \caseB, and  
that the orbit $\mathcal{O}_{\tau}$ is corresponding to $\tau \in \regR_{n} / \Sgroup_{n}$ via the bijection of Theorem \ref{Thm-complex-orbit-decomp-AIII-and-CI}. 
Then the number of $B_{H}$-orbits in the real points $\mathcal{O}_{\tau}(\R)$ of $\mathcal{O}_{\tau}$ is $2^{d(\tau)}$ and they are parametrized by 
the first Galois cohomology 
%
        \begin{equation}\label{eq-thoerem-Galois-RDFV-H^1(B,R)=pm1}
            H^{1}(\R, (B_{H_{\C}})_{[\tau]}) \simeq \{\pm1\}^{d(\tau)}, 
        \end{equation}
        where $d(\tau)\in \Z_{\geq 0}$ is given in Definition \ref{Def-d-tau},
        and $(B_{H_{\C}})_{[\tau]}$ is the stabilizer at $[\tau] \in G_{\C}/P_{G_{\C}}$ in $B_{H_{\C}}$.
As a consequence, we get a classification of 
orbits in the double flag variety:
        \begin{equation*}
            H \backslash \dblFV \simeq B_{H} \backslash G /P_{G}
            \simeq 
            \coprod_{\tau \in \regR_{n} / \Sgroup_{n}} \{\pm 1\}^{d(\tau)} .
        \end{equation*}
The classifications of orbits in Case \caseA and in Case \caseB are literally the same.
\end{penumerate}    
\end{theorem}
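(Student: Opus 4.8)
The plan is to invoke Proposition \ref{fact-real-orbits-via-Galois-cohomology}, whose hypothesis $H^1(\R, B_{H_\C}) = 1$ is granted by Lemma \ref{Fact-caseAB-H^1(B,R)=H^1(P,R)=1}, and then to carry out the two computations the recipe demands: first decide for which complex orbits $\mathcal{O}_\tau$ we have $\mathcal{O}_\tau(\R) \neq \emptyset$, and second compute the Galois cohomology of the stabilizer $(B_{H_\C})_{[\tau]}$ for those orbits. For item \eqref{theorem-Galois-RDFV:item:1:nonempty.R.pts}, the representative $[\tau]$ with $\tau \in \regR_n$ is already a $\Q$-point (hence an $\R$-point) of $\Gr_n(\C^{2n})$, and the extra symmetry condition ${}^t\tau_1\tau_2 \in \Sym_n(\Z)$ is exactly the one characterizing the Lagrangian locus $G_\C/P_{G_\C}$ in Case \caseB (Definition \ref{Def-PP-regT-regR}) — so in Case \caseB every $\mathcal{O}_\tau$ visibly has a real point. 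In Case \caseA, $G_\C/P_{G_\C} = \Gr_n(\C^{2n})$ carries no symmetry constraint, so one direction is again immediate: $\tau \in \regR_n$ gives a real point. The converse — that $\mathcal{O}_\tau(\R) \neq \emptyset$ forces $\tau \in \regR_n$, i.e. forces ${}^t\tau_1\tau_2$ to be symmetric — is the content that needs an argument: one must show that a $B_{H_\C}$-orbit meeting the real points $\Gr_n(\R^{2n})$ actually has its canonical partial-permutation representative inside $\regR_n$. I would do this by descent: choose $x \in \mathcal{O}_\tau(\R)$, run the Gaussian-elimination/normal-form procedure underlying Theorem \ref{Thm-complex-orbit-decomp-AIII-and-CI} over $\R$ (the algorithm uses only $B_{H_\C}(\R) = B_H$-type operations when applied to a real point, since $B_{H_\C}$ is split in Case \caseA, being a product of upper/lower triangular groups), and observe that the resulting representative lies in $\regT_n$ and is real, hence is the same $\tau$; then track what the real structure imposes. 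Concretely, complex conjugation fixes $\mathcal{O}_\tau$, so it permutes the one-dimensional pieces and, combined with the rigidity of the partial-permutation normal form, pins down which $\tau$ can occur — and the symmetric-matrix condition emerges because the real form in Case \caseA is $\GL_n(\C) \hookrightarrow \GL_{2n}(\C)$ realized so that the $\sigma$-twist swaps the two blocks $\tau_1, \tau_2$ with a transpose, forcing ${}^t\tau_1\tau_2$ symmetric on any conjugation-stable orbit.

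For item \eqref{theorem-Galois-RDFV:item:2:Galois.tells.orbits.R}, the task is the cohomology computation $H^1(\R,(B_{H_\C})_{[\tau]}) \simeq \{\pm 1\}^{d(\tau)}$. I would first identify the stabilizer group structure: $(B_{H_\C})_{[\tau]}$ is a solvable (indeed, in an appropriate realization, essentially connected) algebraic group defined over $\R$, and by the dévissage lemma already cited in the excerpt (Lemma \ref{Lem-for-solvableB-H^1(B,R)=H^1(T,R)-flag}) its $H^1$ reduces to $H^1$ of a torus — the ``Levi-type'' part $T_\C$ of the stabilizer. The key structural claim is that this torus splits as a product of one-dimensional factors, one per column $j \in [n]$, whose $\R$-form is determined by whether the $j$-th columns of $\tau_1$ and $\tau_2$ coincide: the $d(\tau)$ ``diagonal'' columns each contribute a factor whose real structure is the circle group $S^1_\C$ (contributing $H^1(\R,S^1_\C) \simeq \{\pm 1\}$, computed as in Lemma \ref{Lem-H^1-tau-3cases-flag}), while the remaining columns contribute factors of type $\GL_1(\R)_\C$ or $\GL_1(\C)_\C$, both with trivial $H^1$ by Hilbert 90 / the same lemma. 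Multiplying, $H^1(\R,(B_{H_\C})_{[\tau]}) \simeq \{\pm 1\}^{d(\tau)}$, and since $d(\tau)$ is $\Sgroup_n$-invariant (Definition \ref{Def-d-tau}) this is well defined on $\regR_n/\Sgroup_n$. Substituting into \eqref{eq-fact-real-orbits-via-Galois-cohomology} with $\Xi = \regR_n/\Sgroup_n$ (by item \eqref{theorem-Galois-RDFV:item:1:nonempty.R.pts}) gives the final disjoint-union formula $H\backslash\dblFV \simeq \coprod_{\tau \in \regR_n/\Sgroup_n}\{\pm1\}^{d(\tau)}$, and since $\regR_n$, $d$, and the proposition are insensitive to which of Case \caseA or \caseB we are in, the two classifications are literally identical.

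The main obstacle I anticipate is the explicit determination of the stabilizer $(B_{H_\C})_{[\tau]}$ and its decomposition over $\R$ into one-dimensional factors indexed by columns — this is where all the combinatorial bookkeeping of the partial permutation $\tau$ (which columns are ``free'', which are ``bound'' by an arc, which are diagonal) must be matched against the concrete matrix form of $B_{H_\C}$ acting on the row span of $\tau$. Getting the real structure of each factor right — distinguishing the $S^1$-type columns (the $d(\tau)$ of them) from the $\GL_1(\R)$- and $\GL_1(\C)$-type columns — requires carefully chasing how complex conjugation acts on the normalized representative, and this is precisely the step where an error could slip in (indeed the excerpt flags an erratum to \cite{Nishiyama.Tauchi.2024} on a closely related point). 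I would handle it by reducing, via the column-block structure of $\tau$, to the three model cases of Lemma \ref{Lem-H^1-tau-3cases-flag} and checking each against a small explicit matrix computation. The nonemptiness converse in Case \caseA (showing $\mathcal{O}_\tau(\R)\neq\emptyset \Rightarrow {}^t\tau_1\tau_2$ symmetric) is the second delicate point, but it follows once the real structure on the normal form is understood, so the two difficulties are really two facets of the same computation.
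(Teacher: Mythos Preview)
Your overall strategy matches the paper's: invoke Proposition~\ref{fact-real-orbits-via-Galois-cohomology} using Lemma~\ref{Fact-caseAB-H^1(B,R)=H^1(P,R)=1}, identify which complex orbits carry real points, then compute $H^1$ of the stabilizers by reducing to a torus and splitting into rank-one pieces via Lemma~\ref{Lem-H^1-tau-3cases-flag}. Two points deserve correction.

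\textbf{Part \eqref{theorem-Galois-RDFV:item:1:nonempty.R.pts}, Case \caseA, the converse.} Your parenthetical ``since $B_{H_\C}$ is split in Case \caseA, being a product of upper/lower triangular groups'' is false. The complexification is indeed $B_{H_\C}=B_n^+\times B_n^-$, but the Galois involution $\gamma$ of \eqref{eq-def-gamma-caseA} swaps the two factors (with conjugate-inverse), so the real form is $B_n(\C)$ embedded via $b\mapsto(b,(b^*)^{-1})$, not a split torus times split unipotent. Consequently ``running Gaussian elimination over $\R$'' is not what happens: the real points of $G_\C/P_{G_\C}=\Gr_n(\C^{2n})$ in Case \caseA are the Hermitian Lagrangian Grassmannian $\HLGr(\C^{2n})$, not $\Gr_n(\R^{2n})$, and the relevant real-group operations are those of $B_n(\C)$ acting as in \S\ref{section:space.Hermitian.matrices}. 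The paper does not argue this way at all; it cites \cite[Fact~3.1]{Nishiyama.Tauchi.2024} for the equivalence $\mathcal{O}_\tau(\R)\neq\emptyset\iff\mathcal{O}_\tau\cap(\Sp_{2n}(\C)/P_S^\C)\neq\emptyset$, and then reads off $\tau\in\regR_n$ from the commutative diagram \eqref{eq-diagram-of-regR-regT-flags}. Your descent idea can be repaired, but as written the justification is wrong.

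\textbf{Part \eqref{theorem-Galois-RDFV:item:2:Galois.tells.orbits.R}, the Levi decomposition of the stabilizer.} You apply Lemma~\ref{Lem-for-solvableB-H^1(B,R)=H^1(T,R)-flag} (correctly) to pass from $(B_{H_\C})_{[\tau]}$ to its Levi, but you then identify that Levi with the torus stabilizer $(T_n^+\times T_n^-)_{[\tau]}$ without comment. This identification --- that $(B_n^+\times B_n^-)_{[\tau]}=(T_n^+\times T_n^-)_{[\tau]}\,(N_n^+\times N_n^-)_{[\tau]}$ --- is not automatic: taking stabilizers need not commute with Levi decomposition. The paper isolates this as Lemma~\ref{Lem-Levi-decomp-of-(B+-B-)-tau} and gives a genuine inductive proof (reducing first to the condition $b_m\eta c_m^{-1}=\eta$ on $\Her_m$-blocks, then inducting on $m$ via the partial-involution structure of $\eta$). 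You flag the stabilizer computation as ``the main obstacle'', which is right, but the specific content of that obstacle is this Levi-compatibility lemma, and it needs its own argument. Once it is in hand, your identification of the four torus types ($S^1$ for the $d(\tau)$ diagonal columns, $\GL_1(\C)$-type otherwise) matches the paper's Lemma~\ref{Lem-H^1-tau-3cases-flag} exactly.
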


In particular, 
we get another enumeration formula of the orbits, namely,
\begin{equation*}
    \# H \backslash \dblFV 
    = \sum_{\tau \in \regR_{n} / \Sgroup_{n}} 2^{d(\tau)} .
\end{equation*}
Compare this to the one in Corollary \ref{cor:number.of.orbits.HX}.

\subsection{Closure relation}
Theorem \ref{theorem-H-G-P_G=Gamma(n)} states a combinatorial description of the orbit decomposition of the double flag variety $H \backslash \dblFV \simeq \Gamma(n)$ using graphs.
It is natural to ask about the closure relation of orbits and describe such relation in terms of graphs.
In this respect, we have the following theorem, which gives sufficient conditions for the closure relations.  
Note that the closure relation of $ H $-orbits in $ \dblFV $ and that of $ B_H $-orbits in $ G/P_G $ are the same (cf.~\cite[Lemma 3.1]{Fresse.N.2023}).

\begin{theorem}\label{Thm-closure-relation}
Let $\gamma, \gamma ' \in \Gamma (n)$ 
and denote by $\mathcal{O}_{\gamma}, \mathcal{O}_{\gamma'} \in H \backslash \dblFV$ the corresponding orbits under the isomorphism $\Gamma(n) \simeq H \backslash \dblFV $ of Theorem \ref{theorem-H-G-P_G=Gamma(n)}.  
We write $\overline{\mathcal{O}}_{\gamma}$ for the closure of $\mathcal{O}_{\gamma}$ in $\dblFV$.
Then, we have $\overline{\mathcal{O}}_{\gamma} \supset \mathcal{O}_{\gamma'}$ if one of the following conditions holds.
In the following conditions, we only indicate the difference between $ \gamma $ and $ \gamma' $.  So, if $ m \in [n] $ is not mentioned in the condition it means 
$ \gamma(m) = \gamma'(m) $.
\begin{enumerate}
\item[\upshape{(i)}] 
There exists $i \in [n]$ such that 
$    \gamma(i) \in \{+,-\} $;
$    \gamma'(i) \in\{c,d\} $.

\item[\upshape{(ii)}] 
There exist $i < j \in [n]$ such that
$    (\gamma(i)=c \textrm{ or } \gamma(j)=d ) $;    
$    \gamma'(i) =\gamma(j) $
and 
$    \gamma'(j) =\gamma(i) $.

\item[\upshape{(iii)}] 
There exist $i < j \in [n]$ such that 
$    \{\gamma(i),\gamma(j)\}=\{+,-\} $; 
$    \gamma'(i) =j $ and $ \gamma'(j) =i $.

\item[\upshape{(iv)}] 
There exist $i < j \in [n]$ such that
$    \gamma(i)=j $ and 
$     \gamma(j)=i $; 
$ (\gamma'(i) \in\{+,-\} $ and $ \gamma'(j) = c ) $ or 
$ (\gamma'(i) =d $ and $ \gamma'(j) \in\{+,-\} ) $.

\item[\upshape{(v)$_{c}$}] There exist $i < j < k \in [n]$ such that
$    \gamma(i)=c $
and 
$    \gamma(j)=k , \gamma(k)=j$;
$    \gamma'(k) = c $
and 
$    \{\gamma'(i),\gamma'(j)\} = \{+,-\} $.

\item[\upshape{(v)$_{d}$}] 
There exist $i < j < k \in [n]$ such that
$    \gamma(i)=j , \; \gamma(j)=i $ 
and 
$    \gamma(k)=d $; 
$    \gamma'(i) = d $
and 
$    \{\gamma'(j),\gamma'(k)\} = \{+,-\} $.

\item[\upshape{(vi)$_{1}$}] 
There exist $i < j < k \in [n]$ such that
$    \gamma(i)\in\{+,-\} $
and 
$    \gamma(j)=k $; 
$    \gamma'(i)=k $ 
and
$ \gamma'(j) = \gamma(i) $.

\item[\upshape{(vi)$_{2}$}] 
There exist $i < j < k \in [n]$ such that
$    \gamma(k)\in\{+,-\} $ 
and
$    \gamma(i)=j , \; \gamma(j) = i $; 
$    \gamma'(i)=k , \; \gamma'(k) = i $
and 
$    \gamma'(j) = \gamma(k) $.

\item[\upshape{(vii)$_{1}$}] 
There exist $i < j < k < \ell \in [n]$ such that
$    \gamma(i)=j, \;  \gamma(j)=i $ 
and 
$    \gamma(k)=\ell, \; \gamma(\ell)=k $;
$    \gamma'(i)=k , \; \gamma'(k) = i $
and 
$    \gamma'(j)=\ell, \; \gamma'(\ell) = j $.

\item[\upshape{(vii)$_{2}$}] 
There exist $i < j < k < \ell \in [n]$ such that
$    \gamma(i)=k , \; \gamma(k) = i $
and 
$    \gamma(j)=\ell , \; \gamma(\ell) = j $; 
$    \gamma'(i)=\ell , \; \gamma'(\ell) = i $
and 
$    \gamma'(j)=k, \; \gamma'(k) = j $.
\end{enumerate}
\end{theorem}

\begin{remark}\label{Remark-graphic-closure-relation}
The graphical interpretation of the conditions in Theorem \ref{Thm-closure-relation} is given by the following.
Here, we use the following notations:
\begin{equation*}
    \varepsilon \in \{+,-\},
	\quad
	\delta \in \{c,d\},
	\quad
	\star \in \{+,-,c,d,\textrm{arc}
	\},
	\quad
	i<j<k<\ell
\end{equation*}
\begin{equation*}
\begin{array}{|c|c|c|c|c|c|}
\hline 
\textrm{Case (i)}\\
\hline \hline
\varepsilon\mapsto\delta\\
\hline \hline
i_{\varepsilon}\rightsquigarrow i_{\delta}
\\
\hline
\end{array}
\quad
\begin{array}{|c|c|c|c|c|c|}
\hline 
\textrm{Case (ii)}\\
\hline \hline
\begin{array}{c}
(c,\star)\mapsto(\star,c)\\
(\star,d)\mapsto(d,\star)
\end{array}
\\
\hline \hline
\xymatrix@R=5pt@C=5pt@M=2pt{
i_{c} & j_{\star}  \\
}
\rightsquigarrow
\xymatrix@R=5pt@C=5pt@M=2pt{
i_{\star}  & j_{c} \\
}\\ \hline
\xymatrix@R=5pt@C=5pt@M=2pt{
i_{\star} & j_{d} \\
}
\rightsquigarrow
\xymatrix@R=5pt@C=5pt@M=2pt{
i_{d} & j_{\star} \\
}
\\
\hline
\end{array}
\quad
\begin{array}{|c|c|c|c|c|c|}
\hline 
\textrm{Case (iii)}\\
\hline \hline
(\pm,\mp)\mapsto\textrm{arc}\\
\hline \hline
\begin{cases}
\xymatrix@R=5pt@C=5pt@M=2pt{
i_{+} & j_{-}
}\\
\xymatrix@R=5pt@C=5pt@M=2pt{
i_{-} & j_{+}
}
\end{cases}	
\rightsquigarrow
\xymatrix@R=5pt@C=5pt@M=2pt{
i\ar@{-}@/^10pt/[r] & j \\
}
\\
\hline
\end{array}
\quad
\begin{array}{|c|c|c|c|c|c|}
\hline 
\textrm{Case (iv)}\\
\hline \hline
\textrm{arc }\mapsto(\varepsilon,c),(d,\varepsilon)\\
\hline \hline
\xymatrix@R=5pt@C=5pt@M=2pt{
i\ar@{-}@/^10pt/[r] & j \\
}
\rightsquigarrow
\begin{cases}
\xymatrix@R=5pt@C=5pt@M=2pt{
i_{\varepsilon} & j_{c}
}\\
\xymatrix@R=5pt@C=5pt@M=2pt{
i_{d} & j_{\varepsilon}
}
\end{cases}
\\
\hline
\end{array}
\end{equation*}
\begin{equation*}
\begin{array}{|c|c|c|c|c|c|}
\hline 
\textrm{Case (v)}\\
\hline \hline
\begin{array}{c}
(c,\textrm{arc})\mapsto(\pm,\mp,c)\\
(\textrm{arc},d)\mapsto(d,\pm,\mp)
\end{array}
\\
\hline \hline
\xymatrix@R=5pt@C=5pt@M=2pt{
i_{c}&j\ar@{-}@/^10pt/[r] & k \\
}
\rightsquigarrow
\begin{cases}
\xymatrix@R=5pt@C=5pt@M=2pt{
i_{+}  & j_{-} & k_{c} \\
} \\
\xymatrix@R=5pt@C=5pt@M=2pt{
i_{-}  & j_{+} & k_{c} \\
} 
\end{cases}
\\
\hline
\xymatrix@R=5pt@C=5pt@M=2pt{
i\ar@{-}@/^10pt/[r] & j &k_{d} \\
}
\rightsquigarrow
\begin{cases}
\xymatrix@R=5pt@C=5pt@M=2pt{
i_{d}  & j_{+} & k_{-} \\
} \\
\xymatrix@R=5pt@C=5pt@M=2pt{
i_{d}  & j_{-} & k_{+} \\
} 
\end{cases}
\\
\hline
\end{array}\quad
\begin{array}{|c|c|c|c|c|c|}
\hline 
\textrm{Case (vi)}\\
\hline \hline
\begin{array}{c}
(\varepsilon ,\textrm{arc})\mapsto(\textrm{arc},\varepsilon)\\
(\textrm{arc},\varepsilon)\mapsto(\varepsilon ,\textrm{arc})
\end{array}
\\
\hline \hline
\xymatrix@R=5pt@C=5pt@M=2pt{
i_{\varepsilon}&j\ar@{-}@/^10pt/[r] & k 
\\
}
\rightsquigarrow
\xymatrix@R=5pt@C=5pt@M=2pt{
i\ar@{-}@/^10pt/[rr]  & j_{\varepsilon} & k \rule[10pt]{0pt}{0.4ex}\\
} \\ \hline
\xymatrix@R=5pt@C=5pt@M=2pt{
i\ar@{-}@/^10pt/[r]&j & k_{\varepsilon} \rule[10pt]{0pt}{0.4ex} \\
}
\rightsquigarrow
\xymatrix@R=5pt@C=5pt@M=2pt{
i\ar@{-}@/^10pt/[rr]  & j_{\varepsilon} & k \\
} \\ \hline
\end{array}
\quad
\begin{array}{|c|c|c|c|c|c|}
\hline 
\textrm{Case (vii)}\\
\hline \hline 
(\textrm{arc, }\textrm{arc})\mapsto
(\textrm{arc, }\textrm{arc})\\
\hline \hline 
\phantom{we}\vspace{-8pt}\\
\xymatrix@R=5pt@C=5pt@M=2pt{
i\ar@{-}@/^10pt/[r] & j & 
k\ar@{-}@/^10pt/[r] & \ell \\
}
\rightsquigarrow
\xymatrix@R=5pt@C=5pt@M=2pt{
i\ar@{-}@/^10pt/[rr] & j\ar@{-}@/^10pt/[rr] & 
k & \ell \\
}
\\ \hline
\phantom{we}\vspace{-8pt}\\
\xymatrix@R=5pt@C=5pt@M=2pt{
i\ar@{-}@/^10pt/[rr] & j\ar@{-}@/^10pt/[rr] & 
k & \ell 
}
\rightsquigarrow
\xymatrix@R=5pt@C=5pt@M=2pt{
i\ar@{-}@/^10pt/[rrr] & j\ar@{-}@/^6pt/[r] & 
k & \ell \\
}
\\
\hline
\end{array}
\end{equation*}
%
%
%
\end{remark}

\begin{example}
Closure relations 
in {$B_{2}(\R)\backslash\Sp_{4}(\mathbb{R})/P_{S}^{\Sigma}$} and {$B_{2}(\C)\backslash\Sp_{4}(\mathbb{C})/P_{S}^{\C}$}.

In the diagram on the left, we write $11$ to indicate the arc 
$
\xymatrix@R=5pt@C=1pt{
1\ar@{-}@/^10pt/[r] & 2 \\
}$, 
and $c+$ instead of writing $ 1_c \; 2_+ \in \Gamma(2) \simeq B_{2}(\R)\backslash\Sp_{4}(\mathbb{R})/P_{S}^{\Sigma}$ (given in Theorem \ref{theorem-H-G-P_G=Gamma(n)}).  
The number of the leftmost side indicates the dimension of the corresponding $B_{2}(\R)$-orbits on $\Sp_{4}(\mathbb{R})/P_{S}^{\Sigma}$.
We note that $B_{2}(\C)\backslash\Sp_{4}(\mathbb{C})/P_{S}^{\C}$ is parametrized by the set obtained by changing the ``$+$'' and ``$-$'' of $\Gamma(2)$ to ``$\maru$'' (see \eqref{eq-comm-digagram-Omega-regR} for the detail) and use such notation in the diagram on the right.
\begin{equation*}
\begin{array}{c}
B_{2}(\R)\backslash\Sp_{4}(\mathbb{R})/P_{S}^{\Sigma}
\\[2ex]
\xymatrix{
3&
++\ar[d]\ar[drrr]& 
-+\ar[d]\ar[dr]\ar[drr]&  & 
+-\ar[dlll]\ar[dl]\ar[dr] & -- \ar[d]\ar[dlll]\\2&
+d\ar[d]\ar[drr] & -d\ar[d]\ar[dr] &
11\ar[drr]\ar[dr]\ar[dl]\ar[dll]& c+\ar[dl] \ar[d]& 
c-\ar[d]\ar[dll] \\1&
d+\ar[dr]\ar[drr]& d-\ar[dr]\ar[d] &cd\ar[d]& +c\ar[d]\ar[dl] & -c\ar[dll]\ar[dl]\\0&
&dd & dc & cc &
}
\end{array}
\qquad
\begin{array}{c}
B_{2}(\C)\backslash\Sp_{4}(\mathbb{C})/P_{S}^{\C}
\\[2ex]
\xymatrix{
   &  {\maru}{\maru}\ar[d]\ar[dl]\ar[dr]  &    \\
{\maru}d\ar[d]\ar[dr] &  11\ar[dr]\ar[dl]  & c{\maru}\ar[d]\ar[dl] \\
d{\maru}\ar[d]\ar[dr] &  cd\ar[d]  & {\maru}c\ar[d]\ar[dl]  \\
dd &  dc  & cc
}	
\end{array}
\end{equation*}
\end{example}

\begin{example}{Closure relations in $B_{2}(\C)\backslash\U(2,2)/P_{S}^{J}$.} 
\begin{equation*}
\xymatrix{
4&
++\ar[d]\ar[drrr]& 
-+\ar[d]\ar[dr]\ar[drr]&  & 
+-\ar[dlll]\ar[dl]\ar[dr] & -- \ar[d]\ar[dlll]\\3&
+d\ar[dd]\ar[drr] & -d\ar[dd]\ar[dr] &
11\ar[ddrr]\ar[ddr]\ar[ddl]\ar[ddll]& c+\ar[dl] \ar[dd]& 
c-\ar[dd]\ar[dll] \\2&
&&cd\ar[dd]&&\\1&
d+\ar[dr]\ar[drr]& d-\ar[dr]\ar[d] && +c\ar[d]\ar[dl] & -c\ar[dll]\ar[dl]\\0&
&dd & dc & cc &
}
\end{equation*}
In this diagram, we also write $11$ for $
\xymatrix@R=1pt@C=1pt{
1\ar@{-}@/^10pt/[r] & 2 \\
}$, and $c+$ for $1_{c}\:2_{+}\in \Gamma(2) \simeq B_{2}(\C)\backslash\U(2,2)/P_{S}^{J}$
(given in Theorem \ref{theorem-H-G-P_G=Gamma(n)}), respectively,
and the number of the leftmost side indicates the dimension of the corresponding $B_{2}(\C)$-orbits on $\U(2,2)/P_{S}^{J}$ as in the previous diagram.
\end{example}

\begin{example}
Closure relations in {$B_{3}(\C)\backslash\Sp_{6}(\mathbb{C})/P_{S}^{\C}$} and {$B_{3}(\R)\backslash\Sp_{6}(\mathbb{R})/P_{S}^{\Sigma}$}. 
\medskip
\begin{equation*}
\begin{array}{c}
B_{3}(\C)\backslash\Sp_{6}(\mathbb{C})/P_{S}^{\C}
\\[-1ex]
\xymatrix@C=10pt{
&&&&&&&{\maru}{\maru}{\maru}\ar[dll]\ar[dl]\ar[d]\ar[dr]&\\
&&&&&{\maru}{\maru}d\ar[dlll]\ar[dl]\ar[d]&11{\maru}\ar[dll]\ar[drr]\ar[d]&{\maru}11\ar[dlllll]\ar[dl]\ar[d]&c{\maru}{\maru}\ar[dlll]\ar[dl]\ar[d]\\
&&{\maru}d{\maru}\ar[dl]\ar[d]\ar[dr]\ar[drr]&&11d\ar[dll]\ar[dl]\ar[dr]&c{\maru}d\ar[dl]\ar[d]&1{\maru}1\ar[dllll]\ar[dlll]\ar[d]\ar[drr]&c11\ar[dlll]\ar[dl]\ar[dr]&{\maru}c{\maru}\ar[dlll]\ar[dll]\ar[dl]\ar[d]\\
&{\maru}dd\ar[dl]\ar[d]&d{\maru}{\maru}\ar[dll]\ar[d]\ar[dr]&1d1\ar[dl]\ar[dr]&cd{\maru}\ar[dlll]\ar[dl]\ar[d]&{\maru}cd\ar[dll]\ar[dl]\ar[d]&1c1\ar[dlll]\ar[d]&cc{\maru}\ar[dll]\ar[d]&{\maru}{\maru}c\ar[dllll]\ar[dll]\ar[dl]\\
d{\maru}d\ar[d]\ar[dr]&cdd\ar[d]&d11\ar[dll]\ar[d]&dc{\maru}\ar[dll]\ar[dl]&{\maru}dc\ar[dll]\ar[d]&ccd\ar[dl]&11c\ar[dllll]\ar[d]&c{\maru}c\ar[dlll]\ar[dl]\\
dd{\maru}\ar[d]\ar[dr]&dcd\ar[d]&d{\maru}c\ar[dl]\ar[d]&&cdc\ar[dll]&&{\maru}cc\ar[dllll]\ar[d]\\
ddd&ddc&dcc&&&&ccc}	
\end{array}
\end{equation*}
In this diagram, the dimension is constant along each horizontal row, and increases by one from bottom to top.
For example, the dimensions of $B_{3}(\C)$-orbits on $\Sp_{6}(\mathbb{C})/P_{S}^{\C}$ corresponding to $ddd$, $d{\maru}d$ and ${\maru}{\maru}{\maru}$ are 
0, 2, and 6, respectively.

\noindent
{$B_{3}(\R)\backslash\Sp_{6}(\mathbb{R})/P_{S}^{\Sigma}$}
\nopagebreak
\begin{equation*}
\resizebox{\linewidth}{!}{
\xymatrix@R=120pt@C=1pt{
&&&&&&&&&&
+++\ar[dllll]\ar[drrrr]&
++-\ar[dlllll]\ar[dr]\ar[drrrr]&
+-+\ar[dlllll]\ar[dll]\ar[d]\ar[drrrr]&
-++\ar[dlllll]\ar[dlll]\ar[dr]&
+--\ar[dlllllll]\ar[dlll]\ar[drrr]&
-+-\ar[dlllllll]\ar[dllll]\ar[dll]\ar[d]&
--+\ar[dlllllll]\ar[dlll]\ar[d]&
---\ar[dllllllll]\ar[d]
\\
&&&&&&
++d\ar[dll]\ar[drrr]&
+-d\ar[dll]\ar[dr]\ar[drrr]&
-+d\ar[dll]\ar[d]\ar[dr]&
--d\ar[dll]\ar[dr]&
11+\ar[dll]\ar[dr]\ar[drrrr]\ar[drrrrrr]&
11-\ar[dlll]\ar[dr]\ar[drrrr]\ar[drrrrrr]&
+11\ar[dllllllll]\ar[dlllllll]\ar[dl]\ar[dr]&
-11\ar[dlllllll]\ar[dllllll]\ar[dl]\ar[d]&
c++\ar[dlllll]\ar[d]&
c+-\ar[dllllll]\ar[dll]\ar[d]&
c-+\ar[dllllll]\ar[dlll]\ar[d]&
c--\ar[dlllllll]\ar[d]
\\
&&&&
+d+\ar[dllll]\ar[dll]\ar[drrr]&
+d-\ar[dlllll]\ar[dll]\ar[dr]\ar[drrr]&
-d+\ar[dlllll]\ar[dll]\ar[d]\ar[dr]&
-d-\ar[dllllll]\ar[dll]\ar[dr]&
11d\ar[dlllll]\ar[dllll]\ar[dll]\ar[dr]\ar[drr]&
c+d\ar[dll]\ar[d]&
c-d\ar[dll]\ar[d]&
1+1\ar[dlllllllll]\ar[dllllllll]\ar[dlllll]\ar[d]\ar[drrr]\ar[drrrrr]&
1-1\ar[dllllllll]\ar[dlllllll]\ar[dllllll]\ar[dl]\ar[drrr]\ar[drrrrr]&
c11\ar[dllllll]\ar[dlllll]\ar[dll]\ar[drr]\ar[drrr]&
+c+\ar[dlllll]\ar[dll]\ar[d]&
+c-\ar[dllllll]\ar[dllll]\ar[dll]\ar[d]&
-c+\ar[dllllll]\ar[dlllll]\ar[dllll]\ar[d]&
-c-\ar[dlllllll]\ar[dllll]\ar[d]
\\
+dd\ar[d]\ar[drr]&
-dd\ar[d]\ar[dr]&
d++\ar[dll]\ar[drr]&
d+-\ar[dlll]\ar[d]\ar[drr]&
d-+\ar[dlll]\ar[dl]\ar[d]&
d--\ar[dllll]\ar[d]&
1d1\ar[dlll]\ar[d]\ar[dr]&
cd+\ar[dlllll]\ar[dlll]\ar[dl]&
cd-\ar[dllllll]\ar[dlll]\ar[dl]&
+cd\ar[dlllll]\ar[dlll]\ar[dl]&
-cd\ar[dlllll]\ar[dlll]\ar[dll]&
1c1\ar[dlllllll]\ar[dllllll]\ar[dll]&
cc+\ar[dllll]\ar[dll]&
cc-\ar[dlllll]\ar[dll]&
++c\ar[dllllllll]\ar[dllll]&
+-c\ar[dlllllllll]\ar[dllllll]\ar[dllll]&
-+c\ar[dlllllllll]\ar[dlllllll]\ar[dllllll]&
--c\ar[dllllllllll]\ar[dllllll]
\\
d+d\ar[d]\ar[drr]&
d-d\ar[d]\ar[dr]&
cdd\ar[d]&
d11\ar[dlll]\ar[dll]\ar[d]\ar[dr]&
dc+\ar[dll]\ar[dl]&
dc-\ar[dlll]\ar[dl]&
+dc\ar[dlll]\ar[dl]&
-dc\ar[dlll]\ar[dll]&
ccd\ar[dlll]&
11c\ar[dllllll]\ar[dlllll]\ar[dlll]\ar[dll]&
c+c\ar[dlllll]\ar[dllll]&
c-c\ar[dllllll]\ar[dllll]
\\
dd+\ar[dr]\ar[drrr]&
dd-\ar[d]\ar[drr]&
dcd\ar[dr]&
d+c\ar[d]\ar[drr]&
d-c\ar[dl]\ar[dr]&
cdc\ar[d]&
+cc\ar[dl]\ar[dr]&
-cc\ar[dll]\ar[d]
\\
&ddd&&ddc&&dcc&&ccc
}	}
\end{equation*}
In this diagram, the dimension is also constant along each horizontal row, and increases by one from bottom to top.
For example, the dimensions of $B_{3}(\R)$-orbits on $\Sp_{6}(\mathbb{R})/P_{S}^{\Sigma}$ corresponding to $ddd$, $d+d$ and $+++$ are 
0, 2, and 6, respectively.
\end{example}

By examining these examples, it seems that there are no other closure relations than the combinations of those stated in Theorem \ref{Thm-closure-relation}.  

\begin{remark}
By Theorem \ref{theorem-DFV-contain-KGB}, the closure relation of orbits in $B_{H} \backslash G / P_{G}$ contains that of 
$B_{p+q}(\C)  \backslash  \GL_{p+q}(\C) / \U(p,q)$ and 
$B_{p+q}(\R)  \backslash  \GL_{p+q}(\R) / \Ogroup(p,q)$ in Case \caseA and Case \caseB, respectively.
Moreover, in Case \caseA, this is equivalent to that of the $B_{p+q}(\C)$-orbits on the symmetric space $\GL_{p+q}(\C) / \GL_{p}(\C)\times \GL_{q}(\C)$ over the complex numbers $ \C $ by the Matsuki duality (Theorem \ref{theorem-Matsuki-Duality-Intro}).
For details on the closure relation of Borel orbits on symmetric spaces over an algebraically closed field of characteristic zero, see \cite{Richardson.Springer.1990}.  
\end{remark}

\subsection{Future works}
It seems that the orbit decomposition of the real double flag variety has a number of applications. 
One of them is an application to the study of the intertwining operators between (degenerate) principal series representations, namely, \textit{symmetry breaking operators}.
In fact, there exist observations that the orbit decomposition $B_{H} \backslash G /P_{G}$ can be regarded as a ``parameter space'' of the intertwining operators from the (degenerate) principal series representation on $G/P_{G}$ to that on $H/B_{H}$ 
(See \cite{Kobayashi.Speh.2015,Kobayashi.Speh.2018} and \cite{Nishiyama.Orsted.2018}, for example).
In this respect, we are preparing another article, in which we construct an integral operator corresponding to each open $B_{H}$-orbit on $G/P_{G}$. The results will appear soon.

Besides the parametrization of the orbits, 
closure ordering among $ H $-orbits and their geometric properties (such as singularities of the closure) are to be explored.  
Since, in our case, $ G /P_{G} $ is a \emph{real} spherical $ H $-variety, the harmonic analysis on it will be fruitful.  
See \cite{Delorme.Knop.Kroetz.Schrichtkrull.2021,Kroetz.Schrichtkrull.2017} and \cite{Knop.CMH1995} for example.

\subsection{Brief summary of sections}
Let us explain each section briefly.  
In Section \ref{Section-Preliminary}, we fix some notations about the symmetric pairs $(\U(n,n), \GL_{n}(\C))$ and $(\Sp_{2n}(\R),\GL_{n}(\R))$.
In Section \ref{Section-Orbit-decomposition},
we reduce the problem about the orbit decomposition $H\backslash (H/B_{H}\times G/P_{G})$ of the real double flag variety to the classification of $B_{H}$-orbits on the Lagrangian Grassmannians, and then further reduce 
it to the space of Hermitian (or, symmetric) matrices.
In Section \ref{Section-Gaussian-elimination}, 
the classification of $B_{H}$-orbits on the space of Hermitian (or, symmetric) matrices is given by signed partial involutions and its proof is based on the Gaussian elimination.
In Section \ref{Section-BGP-via-SPP}, we give a proof of Theorem \ref{theorem-H-G-P_G=Gamma(n)},
which gives a combinatorial description of the orbit decomposition $H\backslash (H/B_{H} \times G/P_{G})$.
The classification of orbits can be interpreted in combinatorial way using certain graphs, which are related to clans.  This is explained in Section \ref{Section:orbit-graphs}.  

In Section \ref{Section-proof-Thm-Galios-RDFV}, we prove Theorem \ref{theorem-Galois-RDFV} using the theory of Galois cohomology.
The Matsuki duality and the Matsuki-Oshima's clans are the key features in Section \ref{Section-Matsuki-duality}, in which we prove Theorem \ref{theorem-DFV-contain-KGB}, and we also give another proof of the classification of $B_{H}$-orbits on the space of Hermitian (or, symmetric) matrices.
Theorem \ref{Thm-closure-relation} (closure relation of orbits) is proved in Section 
\ref{section:proof.closure.relations}.  

We also give a proof of a lemma of fiber-wise decomposition of orbits 
in Appendix \ref{Section-proof-of-Lem-orbit-decomp-fiber-bundle}.
Finally, we give a proof of Proposition \ref{fact-real-orbits-via-Galois-cohomology} and also
discuss the properties of real points of double coset decomposition
in Appendix \ref{Section-real-points-of-double-coset-decomposition}.

\section*{Acknowledgments}
The authors thank the anonymous referee for useful suggestions.  
The first author was partially supported by Grant-in-Aid for Scientific Research C (21K03184 and 25K06938), Japan Society for the Promotion of Science (JSPS). 
The second author was partially supported by Grant-in-Aid for Early-Career Scientists (25K17259), Japan Society for the Promotion of Science (JSPS). 

\section{Preliminary}
\label{Section-Preliminary}

\skipover{If we use Section \ref{Section-proof-Thm-Galios-RDFV}, the notation ``$\tau$'' of the involution in this section should be replaced with ``$\gamma$''.}

In this section, we fix some notations about the indefinite unitary group $\U(n,n)$ and 
the symplectic group  $\Sp_{2n}(\R)$.  

\subsection{Case \caseA: the indefinite unitary group \texorpdfstring{$\U(n,n)$}{U(n,n)}}\label{Section-Preliminary-caseA}
Let $ \C^{(n|n)} $ be a complex vector space of dimension $ 2 n $ endowed with an indefinite Hermitian inner product of signature $ (n,n) $  
given by 
\begin{equation}
\label{eq:Hermitian.form.and.Jn}
    \LL u, v \RR = u^* J_n v ,  \quad \text{where } \quad J_{n}\coloneqq\sqrt{-1}
    \begin{pmatrix}
        0 & \unitmatrix_{n} \\
        -\unitmatrix_{n} & 0
    \end{pmatrix}, 
\end{equation}
where we write $ A^{*} = \transpose{\conjugate{A}} $.  
The indefinite unitary group $\U(n,n)$ preserving this inner product is given by
\begin{equation*}
    G  \coloneqq \U(n,n) 
    \coloneqq
    \left\{
        g\in \GL_{2n}(\C)
    \mid 
        g^{*}J_{n}g=J_{n}
    \right\}.
\end{equation*}
Define a maximally isotropic subspace $W_{0}\subset \C^{(n|n)}$ by
\begin{equation*}
    W_{0} \coloneqq \C^{n} \oplus \{0\} \subset \C^{n} \oplus \C^{n} = \C^{(n|n)}.
\end{equation*}
We call a maximally isotropic subspace as a Hermitian Lagrangian subspace.  
The Siegel parabolic subgroup $P_{S}^{J} \subset G = \U(n,n)$ is defined to be the stabilizer of $W_{0}$:  
\begin{equation*} 
    P_{S}^{J} \coloneqq \Stab_{G}(W_{0}) =
    \left\{ 
        \begin{pmatrix}
            a & u \\
            0 & (a^*)^{-1}
        \end{pmatrix} 
    \Bigm|  
        \begin{array}{c}
            a \in \GL_n(\C)   \\
            a^{-1} u \in \Her_n(\C)  
        \end{array}
    \right\} ,
\end{equation*}
where $ \Her_n(\C) $ denotes the space of Hermitian matrices of degree $ n $.  
Since $ G $ acts on the Hermitian Lagrangian subspaces transitively by Witt theorem, 
$ G/P_{S}^{J} $ is naturally identified with the set of Hermitian Lagrangian subspaces in $ \C^{(n|n)} $:
\begin{equation}\label{eq-G/P_S(C)=HLGr}
    G/P_{S}^{J} \simeq \HLGr(\C^{2n}) \coloneqq \{ W\in \Gr_{n}(\C^{2n}) \mid W \text{ is maximally isotropic in } \C^{(n|n)} \}.
\end{equation}
Let $H \subset G = \U(n,n)$ be a symmetric subgroup defined by
\begin{equation}\label{eq-def-H-and-h}
    H \coloneqq 
    \{ 
        h(a) \coloneqq \diag(a, (a^{*})^{-1}) 
    \mid 
        a \in \GL_{n}(\C) 
    \} 
    \simeq \GL_{n}(\C) , 
\end{equation}
which is fixed by an involution $ \sigma(g) = I_{n,n} g I_{n,n} $, 
where $ I_{n,n} = \diag(\unitmatrix_{n}, -\unitmatrix_{n}) $.  
Write $B_{H}$ for the Borel subgroup of $ H $ given by
\begin{equation}\label{eq-def-B_H-caseA}
    B_{H} \coloneqq 
    \{ 
        h(b) = \diag(b, (b^{*})^{-1}) 
    \mid 
        b \in B_{n}(\C) 
    \} 
    \simeq B_{n}(\C),
\end{equation}
where $B_{n}(\C)$ is the subgroup of upper triangular matrices in $\GL_{n}(\C)$.

\subsubsection{Complexification in Case \caseA}

We write
\begin{equation*}
	G_{\C}\coloneqq
	\GL_{2n}(\C)
\end{equation*}
for the complexification of $ G = \U(n,n)$, 
and define an involution $\gamma$ on $G_{\C}$ by
\begin{equation}\label{eq-def-gamma-caseA}
	\gamma(g)
	\coloneqq
	J_{n}^{-1} (g^{*})^{-1} J_{n},
\end{equation}
where $J_{n}$ is defined in \eqref{eq:Hermitian.form.and.Jn}.  
The involution $\gamma \colon G_{\C} \to G_{\C}$ gives an action of the Galois group $\Gal(\C/\R)$ on $G_{\C}$, and by which we regard $G_{\C}$ as an algebraic group defined over $\R$.
Then, we get 
$
	G
	=
	G_{\C}^{\gamma}
    \coloneqq 
    \{ g \in G_{\C} \mid \gamma(g)=g\}
$, 
and thus $ G $ is a real form of $G_{\C}$.  
For the complexification of $ H $, we can take 
\begin{equation*}
	H_{\C}\coloneqq
	\left\{
	   \diag(g_{1},g_{2})
	\mid  
	   g_{1},g_{2}\in \GL_{n}(\C)
	\right\}.
\end{equation*}
For the complexification of $ P_{G} $ and $ B_{H} $, 
we can take a $\gamma$-stable maximal parabolic subgroup $P_{G_{\C}}$ of $G_{\C}$,
and a Borel subgroup $B_{H_{\C}}$ of $H_{\C}$ in the following respectively.
\begin{equation*}
	P_{G_{\C}} \coloneqq
	\left\{
	   \begin{pmatrix}
	       g_{1} & u \\
	       0 & g_{2}
	   \end{pmatrix}
	\Bigm| 
	   \begin{array}{c}
	       g_{1},g_{2} \in \GL_{n}(\C) \\
	       u \in \Mat_{n}(\C)
	   \end{array}
	\right\},
	\qquad
	B_{H_{\C}} \coloneqq
	\left\{
	   \begin{pmatrix}
	       b & 0\\
           0 & c
        \end{pmatrix}
	\Bigm| 
	   \begin{array}{cc}
	       b\in B_{n}^{+} \\
	       c\in B_{n}^{-}
	   \end{array}
	\right\},
\end{equation*}
where $B_{n}^{+}$ (resp.~$B_{n}^{-}$) is the subgroup of $\GL_{n}(\C)$ of upper (resp.~lower) triangular matrices.
\skipover{
Then, we have
\begin{equation*}
	G_{\C}^{\gamma}=G,
	\quad
	H_{\C}^{\gamma}=H,
	\quad
	P_{G_{\C}}^{\gamma}=P_{S}^{J},
	\quad
	B_{H_{\C}}^{\gamma}=B_{H},
\end{equation*}
where the groups on the right-hand sides are real forms defined in Section \ref{Section-Preliminary-caseA}.
}

\subsection{Case \caseB: the symplectic group \texorpdfstring{$ \Sp_{2n}(\R) $}{Sp(R)}}\label{Section-Preliminary-caseB}
Let us consider the symplectic group $\Sp_{2n}(\R)$, namely, in Case \caseB of Setting \ref{Setting-DFVR}.

Let $ \R^{2n}_{\Sigma} $ be a vector space of dimension $ 2 n $ endowed with a symplectic form given by 
\begin{equation*}
    \LL u, v \RR = {}^{t}u \Sigma_{n} v , \quad  \text{where }  \quad
    \Sigma_{n}\coloneqq
    \begin{pmatrix}
        0 & \unitmatrix_{n} \\
        -\unitmatrix_{n} & 0
    \end{pmatrix}.
\end{equation*}
The symplectic group $\Sp_{2n}(\R)$ preserving this inner product is given by
\begin{equation*}
    G  \coloneqq \Sp_{2n}(\R) 
    \coloneqq
    \left\{
        g\in \GL_{2n}(\R)
    \Bigm|
        \transpose{g} \Sigma_{n}g=\Sigma_{n}
    \right\} .
\end{equation*}
Let us choose a Lagrangian subspace (i.e., a maximal isotropic subspace)
\begin{equation*}
    W_{0} \coloneqq \R^{n} \oplus \{0\} \subset \R^{n} \oplus \R^{n} = \R^{2n}_{\Sigma}.
\end{equation*}
Our Siegel parabolic subgroup $P_{S}^{\Sigma} \subset G = \Sp_{2n}(\R)$ is the stabilizer of $ W_0 $:
\begin{equation*}
    P_{S}^{\Sigma} \coloneqq \Stab_{G}(W_{0}) =
    \left\{ 
        \begin{pmatrix}
            a & u \\
            0 & {}^{t} a^{-1}
        \end{pmatrix} 
    \Bigm|  
        \begin{array}{c}
            a \in \GL_{n}(\R)   \\
            a^{-1} u \in \Sym_{n}(\R) 
        \end{array}
    \right\}, 
\end{equation*}
so that we have an isomorphism
    \begin{equation*}
        \Sp_{n}(\R) / P_{S}^{\Sigma} \simeq \LGr(\R^{2n})
        \coloneqq
        \{ W \in \Gr_{n}(\R^{2n}) \mid W \text{ is maximally isotropic in }\R^{2n}_{\Sigma} \}.
    \end{equation*}
We also write $H \subset G = \Sp_{2n}(\R)$ for the symmetric subgroup defined by
\begin{equation*}
    H \coloneqq 
    \{ 
        h(a) \coloneqq \diag(a, {}^{t}a^{-1}) 
    \mid 
        a \in \GL_{n}(\R) 
    \} 
    \simeq \GL_{n}(\R) 
\end{equation*}
and write $B_{H}$ for its Borel subgroup given by
\begin{equation*}
    B_{H} \coloneqq 
    \{ 
        h(b) = \diag(b, {}^{t}b^{-1}) 
    \mid 
        b \in B_{n}(\R) 
    \} 
    \simeq B_{n}(\R),
\end{equation*}
where $B_{n}(\R)$ is the group of upper triangular matrices of $\GL_{n}(\R)$.

\subsubsection{Complexification in Case \caseB}

We write
\begin{equation*}
	G_{\C} \coloneqq \Sp_{2n}(\C)
\end{equation*}
for the complexification of $ G = \Sp_{2n}(\R)$, and define an involution $\gamma$ on $G_{\C}$ by
\begin{equation*}
	\gamma(g) \coloneqq \overline{g},
\end{equation*}
which we consider as the Galois action of $ \Gal(\C/\R) $ on $ G_{\C} $.  
Then, 
$
	G = G_{\C}^{\gamma} :=\{g\in G_{\C} \mid \gamma(g)=g\}
$
is the real form of $ G_{\C} $.  
For the complexifications of $ H, P_G, B_H $, we can take: 
\begin{align*}
	H_{\C}
	&\coloneqq
	\left\{
	   \diag(a,{}^{t}a^{-1})
	\mid
	   a\in \GL_{n}(\C)
	\right\}, 
\\[1ex]
	P_{G_{\C}} &\coloneqq
	\left\{
	   \begin{pmatrix}
	       a & u \\
	       0 & {}^{t}a^{-1}
	   \end{pmatrix}
	\Bigm| 
	   \begin{array}{c}
	       a \in \GL_{n}(\C) \\
	       a^{-1}u \in \Mat_{n}(\C)
	   \end{array}
	\right\}, \qquad
	B_{H_{\C}} \coloneqq
	\left\{
	    \begin{pmatrix}
	        b & 0 \\
            0 & {}^{t}b^{-1}
        \end{pmatrix}
	\Bigm| 
	    b\in B_{n}(\C)
	\right\}.
\end{align*}
\skipover{
Then, we have
\begin{equation*}
	G_{\C}^{\gamma}=G,
	\quad
	H_{\C}^{\gamma}=H,
	\quad
	P_{G_{\C}}^{\gamma}=P_{S}^{\Sigma},
	\quad
	B_{H_{\C}}^{\gamma}=B_{H},
\end{equation*}
where the groups on the right-hand sides are real forms 
defined in Section \ref{Section-Preliminary-caseB}.
}

\section{Reduction of the orbit space}\label{Section-Orbit-decomposition}

In this section, we reduce the whole orbit space $ H\backslash \dblFV \simeq B_{H} \backslash G /P_{G} $ to 
smaller, well known affine spaces.  
This process is literally the same for Cases \caseA and \caseB, 
so we only treat Case \caseA.  
For the unified notation, let us write 
\begin{equation*}
    P_{S} \coloneqq P_{S}^{J} \:\:(= P_{G}),
    \qquad
    B_{n} \coloneqq B_{n}(\C) \:\:(\simeq B_{H}),
\end{equation*}
where the right-hand sides are defined in Section \ref{Section-Preliminary-caseA}.  
We can apply the same argument in this section \textit{mutatis mutandis} to Case \caseB by rereading the symbols according to the list in Notation \ref{Notation-caseA-and-caseB} below.

\begin{notation}\label{Notation-caseA-and-caseB}
The table of corresponding notations for Case \caseA and Case \caseB 
is as follows:
\begin{equation*}
    \begin{array}{|c|c|c|c|c|c|c|c|c|c|} \hline
        &  & G & H & P_{S} & B_{n} &  & a^{*}  
        \\ \hline
        \text{\caseA} & \C & \U(n,n)      & \GL_{n}(\C) & P_{S}^{J}      & B_{n}(\C) & \Her_{n}(\C) & a^{*}  
        \\ \hline
        \text{\caseB} & \R & \Sp_{2n}(\R) & \GL_{n}(\R) & P_{S}^{\Sigma} & B_{n}(\R) & \Sym_{n}(\R) & \transpose{a}  
        \\ \hline
    \end{array}        
\end{equation*}
\end{notation}

The first step toward the orbit decomposition of $B_{H} \backslash G / P_{S}$ is to reduce it to that of $B_{m} \backslash \Her_{m}(\C)$.   
For this reduction,
we take three steps divided into Lemmas \ref{Lem-BGP=BLMGL}, \ref{Lem-orbit-decomp-B-LM-GL=coprod-B-LMI-GL},  and \ref{Lem-(B-Her)=(B-LM-I/GL)}.  
The first step is to consider a matrix realization of the Grassmannian $G/P_{S}$.

\subsection{Matrix realization of  \texorpdfstring{$G /P_{S}$}{G/P}}

In this subsection, we consider a matrix realization of the Grassmannian $ G/P_{S}$, which is well known.

\begin{definition}\label{Def-LM-circ}
For $n\in \Z_{\geq 0}$, we define $\LregMat_{2n,n} \subset \Mat_{2n,n}(\C)$ by
\begin{equation*}
    \LregMat_{2n,n} \coloneqq	
    \left\{
        \omega_{C,D}=
        \begin{pmatrix}
            C \\ D
        \end{pmatrix}       \in \Mat_{2n,n}(\C)
    \Biggm| 
         \begin{array}{c}
              C,D\in\Mat_{n}(\C) \\
              C^{*}D \in  \Her_{n}(\C) \\
              \rank \omega_{C,D} = n
         \end{array}
    \right\}.
\end{equation*}
\end{definition}

\begin{lemma}\label{Lem-G/P_S=LM/GL}\label{Lem-BGP=BLMGL}
   Let $n\in\Z_{\geq 0}$. Then, we have a $G$-equivariant isomorphism
   \begin{equation}
   \begin{array}{ccc}
        \LregMat_{2n,n} / \GL_{n}(\C) & \xrightarrow{\sim} & G/P_{S}  \\
        \rotatebox{90}{$\in$} && \rotatebox{90}{$\in$} \\
        \omega & \mapsto & [\omega],
   \end{array}
   \label{eq-Lem-G/P_S=LM/GL}
   \end{equation}
   where $[\omega] \in G/P_{S} \simeq \HLGr(\C^{2n})$ is the $n$-dimensional subspace of $\C^{(n|n)}$ defined by the image of $\omega \colon \C^{n} \to \C^{2n}$.  
   As a consequence, we get an isomorphism 
\begin{equation*}
        B_{H} \backslash G/P_{S} \simeq B_{H} \backslash \LregMat_{2n,n} / \GL_{n}(\C).
\end{equation*}
\end{lemma}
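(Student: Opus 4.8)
The plan is to establish the matrix realization \eqref{eq-Lem-G/P_S=LM/GL} as a classical fact, and then deduce the $B_H$-orbit statement by taking quotients. First I would recall the standard description of the Grassmannian $\Gr_n(\C^{2n})$ in terms of rank-$n$ matrices: a point of $\Gr_n(\C^{2n})$ is the row span (equivalently, the image of the column map) of a matrix $\omega\in\Mat_{2n,n}(\C)$ of full rank $n$, and two such matrices have the same span if and only if they differ by right multiplication by an element of $\GL_n(\C)$. This gives the well-known identification $\Mat^{\circ}_{2n,n}(\C)/\GL_n(\C)\xrightarrow{\sim}\Gr_n(\C^{2n})$, where $\Mat^{\circ}_{2n,n}(\C)$ is the set of rank-$n$ matrices. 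This identification is clearly $\GL_{2n}(\C)$-equivariant for left multiplication, hence $G$-equivariant by restriction.

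Next I would cut this identification down to $G/P_S\simeq \HLGr(\C^{2n})$ using \eqref{eq-G/P_S(C)=HLGr}. Writing $\omega=\vectwo{C}{D}$ with $C,D\in\Mat_n(\C)$, the subspace $[\omega]=\Image\omega$ is maximally isotropic for the Hermitian form $\LL u,v\RR=u^*J_nv$ exactly when $\omega^*J_n\omega=0$. A direct computation with $J_n=\sqrt{-1}\mattwo{0}{\unitmatrix_n}{-\unitmatrix_n}{0}$ gives $\omega^*J_n\omega=\sqrt{-1}(C^*D-D^*C)$, so the isotropy condition $\omega^*J_n\omega=0$ is equivalent to $C^*D=D^*C$, i.e.\ $C^*D\in\Her_n(\C)$. (Since $\dim[\omega]=n=\tfrac12\dim\C^{(n|n)}$, an isotropic subspace of this dimension is automatically maximally isotropic, so no further condition is needed.) Therefore the full-rank matrices $\omega$ satisfying $C^*D\in\Her_n(\C)$ are precisely those with $[\omega]\in\HLGr(\C^{2n})$, and this set is exactly $\LregMat_{2n,n}$ of Definition \ref{Def-LM-circ}. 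Since the Hermitian condition is preserved under $\omega\mapsto\omega a$ for $a\in\GL_n(\C)$ (as $(\omega a)^*J_n(\omega a)=a^*(\omega^*J_n\omega)a$), the subset $\LregMat_{2n,n}\subset\Mat^{\circ}_{2n,n}(\C)$ is $\GL_n(\C)$-stable, and the identification restricts to the desired $G$-equivariant isomorphism $\LregMat_{2n,n}/\GL_n(\C)\xrightarrow{\sim}G/P_S$. For Case \caseB one substitutes $J_n\rightsquigarrow\Sigma_n$, $a^*\rightsquigarrow\transpose{a}$, $\Her_n\rightsquigarrow\Sym_n$, and the computation $\transpose{\omega}\Sigma_n\omega=\transpose{C}D-\transpose{D}C$ gives the analogous conclusion.

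Finally, the last assertion follows formally: given a $G$-equivariant isomorphism $\Phi\colon\LregMat_{2n,n}/\GL_n(\C)\xrightarrow{\sim}G/P_S$, restricting the $G$-actions to the subgroup $B_H\subset G$ and passing to orbit spaces yields a bijection $B_H\backslash\bigl(\LregMat_{2n,n}/\GL_n(\C)\bigr)\xrightarrow{\sim}B_H\backslash(G/P_S)$, and since $B_H$ acts by left multiplication while $\GL_n(\C)$ acts by right multiplication on $\LregMat_{2n,n}$ (commuting actions), the left-hand side is canonically $B_H\backslash\LregMat_{2n,n}/\GL_n(\C)$. I do not expect any genuine obstacle here; this is a packaging of standard facts. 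The only point requiring a little care is the bookkeeping of the quotient identifications (checking that right $\GL_n(\C)$-multiplication really does correspond to the $P_S$-cosets and that it commutes with the left $B_H$-action), together with the explicit form-computation $\omega^*J_n\omega=\sqrt{-1}(C^*D-D^*C)$ that pins down the condition $C^*D\in\Her_n(\C)$; both are routine.
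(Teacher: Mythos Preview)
Your proposal is correct and follows essentially the same approach as the paper's proof, which is very brief (it simply invokes the identification $G/P_S \simeq \HLGr(\C^{2n})$, notes the map is well-defined, and observes that the columns of $\omega$ give a basis of $[\omega]$ determined up to $\GL_n(\C)$). Your version supplies the details the paper omits, in particular the explicit computation $\omega^*J_n\omega=\sqrt{-1}(C^*D-D^*C)$ that pins down the Hermitian condition defining $\LregMat_{2n,n}$.
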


\begin{proof}
	Recall that the isomorphism $G/P_{S} \simeq \HLGr(\C^{2n})$ given in the equation \eqref{eq-G/P_S(C)=HLGr}.
	Under this identification, it is easy to see the map is well defined.  
    The rows of $ \omega $ gives a basis of $ [\omega] $, which is determined up to the conjugacy by $ \GL_n(\C) $.
\skipover{
    For the inverse, the map sending an element of $ \HLGr(\C^{2n})$ to the matrix whose columns consisting of the basis of it determines the inverse of the map in \eqref{eq-Lem-G/P_S=LM/GL}.
}
\end{proof}

Following Lemma \ref{Lem-BGP=BLMGL}, we will consider the orbit decomposition $B_{H} \backslash \LregMat_{2n,n} / \GL_{n}(\C)$ instead of $B_{H} \backslash G / P_{S}$.

\subsection{Reduction of \texorpdfstring{$B_{H}\backslash\mathrm{LM}^{\circ}_{2n,n}/\GL_{n}(\C)$}{the orbit decomposition}}\label{Section-Orbit-decomp-of-B-LM-GL}

In order to consider the double coset space $B_{H}\backslash\mathrm{LM}^{\circ}_{2n,n}/\GL_{n}(\C)$,
we divide $\mathrm{LM}^{\circ}_{2n,n}$ into $(B_{H},\GL_{n}(\C))$-invariant subsets according as the degeneracy of the orbits.
For this purpose, we prepare some terminologies.


Let $D\in \mathrm{M}_{n}(\C)$.
Then, by the right translation of an appropriate element $g\in \GL_{n}(\C)$, we transform $D$ into reduced column echelon form:
\begin{equation}
Dg =
\begin{pmatrix}
          & & & & & \\
        1 &  & & & & \\
        * & 1 &  & & & \\
        \vdots & * & \ddots  & & & \\
        \vdots & \vdots & \vdots & 1 &  \\
        * & * & * & * &
    \end{pmatrix}.
    \label{eq-Row-echelon-form}
\end{equation}
For $k\in\{1,2,\dots,\rank D\}$, if the $(i_{k},k)$-entry is the leading entry of the $k$-th column of \eqref{eq-Row-echelon-form}, we define
\begin{equation}
	\ech D \coloneqq
	\{i_{1},i_{2},\dots,i_{\operatorname{rank} D}\}.
\label{eq-def-ech}
\end{equation}
In the case that $D=0 \in \Mat_{n}(\C)$, we define  $\ech D\coloneqq \emptyset$.

\begin{example}
For example, we have
    \begin{equation*}
        \ech
        \begin{pmatrix}
            1 & 0 & 0 & 0 \\
            2 & 0 & 0 & 0 \\
            0 & 1 & 0 & 0 \\
            0 & 0 & 1 & 0
        \end{pmatrix}
        =\{1,3,4\},
    \quad
        \ech
        \begin{pmatrix}
            0 & 0 & 0 & 0 \\
            0 & 0 & 0 & 0 \\
            1 & 0 & 0 & 0 \\
            0 & 0 & 0 & 0
        \end{pmatrix}
        =\{3\},
    \quad
        \ech
        \begin{pmatrix}
            0 & 0 & 0 & 0 \\
            0 & 0 & 0 & 0 \\
            0 & 0 & 0 & 1 \\
            0 & 0 & 0 & 0
        \end{pmatrix}
        =\{3\}.
    \end{equation*}
\end{example}

\begin{definition}\label{Def-LM-I}
For $I\subset [n] = \{1,2,\dots,n\}$, we define a subset $\mathrm{LM}_{2n,n}^{I}$ of $\LregMat_{2n,n}$ by
\begin{equation*}
\mathrm{LM}_{2n,n}^{I} \coloneqq	
\left\{
    \omega_{C,D}=
    \begin{pmatrix}
        C \\ D
    \end{pmatrix}  \in \LregMat_{2n,n}
\Bigm| 
        \ech  D = I
\right\}.
\end{equation*}
\end{definition}

\begin{lemma}\label{Lem-LM=coprod-LM-I}\label{Lem-LM-I-stable}\label{Lem-orbit-decomp-B-LM-GL=coprod-B-LMI-GL}
For $I\subset [n]$, 
the subset $ \mathrm{LM}_{2n,n}^{I} $ is stable under the action of $(B_{H},\GL_{n}(\C))$ and 
we have a disjoint decomposition
\skipover{
\begin{equation*}
    \LregMat_{2n,n} = \coprod_{I\subset[n]} \mathrm{LM}_{2p,p}^{I}.
\end{equation*}
}
\begin{equation*}
B_{H}\backslash \LregMat_{2n,n} / \GL_{n}(\C)
= \coprod_{I\subset[n]} B_{H}\backslash \mathrm{LM}_{2n,n}^{I} / \GL_{n}(\C).
\end{equation*}
\end{lemma}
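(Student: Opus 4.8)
The plan is to prove the two assertions in Lemma \ref{Lem-orbit-decomp-B-LM-GL=coprod-B-LMI-GL} separately: first that each $\mathrm{LM}_{2n,n}^{I}$ is stable under the $(B_{H},\GL_{n}(\C))$-action, and then that these pieces induce the claimed disjoint decomposition of the double coset space. For the stability claim, I would start by unwinding the definitions. Given $\omega_{C,D}=\begin{pmatrix} C \\ D\end{pmatrix}\in \mathrm{LM}_{2n,n}^{I}$, an element $h(b)=\diag(b,(b^{*})^{-1})\in B_{H}$ acts on the left, sending $\omega_{C,D}$ to $\begin{pmatrix} bC \\ (b^{*})^{-1}D\end{pmatrix}$, and an element $g\in\GL_{n}(\C)$ acts on the right, sending $\omega_{C,D}$ to $\begin{pmatrix} Cg \\ Dg\end{pmatrix}$. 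So I need to check two things: that the resulting matrix still lies in $\LregMat_{2n,n}$ (the rank and the Hermitian condition $C^{*}D\in\Her_n(\C)$ are preserved — this is already implicit in Lemma \ref{Lem-BGP=BLMGL}, since $\LregMat_{2n,n}$ is $(B_{H},\GL_n(\C))$-stable as it corresponds to $G/P_S$), and that $\ech$ of the bottom block is unchanged, i.e. $\ech\bigl((b^{*})^{-1}Dg\bigr)=\ech(D)$.

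The heart of the argument is therefore the invariance of $\ech$ under $D\mapsto (b^{*})^{-1}Dg$ for $b$ lower (or upper, depending on the convention fixed) triangular invertible and $g\in\GL_n(\C)$ arbitrary. The right multiplication by $g$ is harmless: $\ech D$ was defined precisely by first right-translating $D$ to reduced column echelon form, and this quantity depends only on the column space of $D$ (equivalently, only on the coset $D\,\GL_n(\C)$), so $\ech(Dg)=\ech(D)$. The left multiplication by a triangular matrix is the substantive point: I would argue that multiplying on the left by an invertible lower-triangular matrix $t=(b^{*})^{-1}$ performs downward row operations — adding multiples of higher-indexed rows to lower-indexed rows and rescaling — which do not change the positions $i_1<i_2<\dots$ of the leading (pivot) entries when the matrix is brought to reduced column echelon form. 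Concretely, if $Dg$ is already in the echelon form of \eqref{eq-Row-echelon-form} with pivots in rows $i_1<\dots<i_r$, then $t\,Dg$, after a further suitable right multiplication to restore echelon form, has pivots in exactly the same rows, because the pivot of the $k$-th column sits in the lowest row among $\{i_1,\dots\}$ that a lower-triangular $t$ cannot "fill from above". I expect the cleanest phrasing is: $\ech D$ is determined by the flag of subspaces $\mathrm{Im}(D)\cap\Span{e_j,\dots,e_n}$ (spans of trailing coordinate vectors), which is preserved by the lower-triangular $t$ since $t$ stabilizes each $\Span{e_j,\dots,e_n}$; hence $\ech(tDg)=\ech(D)$.

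Once stability is established, the decomposition statement is essentially formal. The map $\omega_{C,D}\mapsto \ech(D)$ is a well-defined function $\LregMat_{2n,n}\to 2^{[n]}$ (well-defined on all of $\LregMat_{2n,n}$, not just constant on orbits — but we have just shown it \emph{is} constant on $(B_H,\GL_n(\C))$-orbits), and by definition $\mathrm{LM}_{2n,n}^{I}$ is its fiber over $I$. Therefore $\LregMat_{2n,n}=\coprod_{I\subset[n]}\mathrm{LM}_{2n,n}^{I}$ as a set, and since each fiber is a union of $(B_H,\GL_n(\C))$-double cosets, passing to the quotient gives
\begin{equation*}
B_{H}\backslash \LregMat_{2n,n} / \GL_{n}(\C)
= \coprod_{I\subset[n]} B_{H}\backslash \mathrm{LM}_{2n,n}^{I} / \GL_{n}(\C),
\end{equation*}
which is the assertion.

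The main obstacle is the row-operation / $\ech$-invariance step: making precise, without excessive computation, why left multiplication by a triangular invertible matrix does not perturb the echelon pivot positions. I would resolve this by the coordinate-subspace reformulation of $\ech$ sketched above, which converts the claim into the trivial observation that a lower-triangular invertible matrix preserves the flag $\Span{e_n}\subset\Span{e_{n-1},e_n}\subset\dots\subset\C^n$ and hence preserves the jumps of $\dim\bigl(\mathrm{Im}(D)\cap\Span{e_j,\dots,e_n}\bigr)$ as $j$ decreases; these jumps are exactly the elements of $\ech(D)$. (One must be mildly careful to match the triangularity convention of $b\in B_n(\C)$ with the direction of echelon reduction in \eqref{eq-Row-echelon-form}; with the conventions of Section \ref{Section-Preliminary-caseA}, $b$ is upper triangular so $(b^*)^{-1}$ is lower triangular, which is the case treated here.)
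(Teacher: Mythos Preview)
Your proposal is correct and follows the same approach as the paper's proof, which is a one-sentence assertion that the decomposition is by fibers of $\ech$ and that each fiber is $(B_H,\GL_n(\C))$-stable; you simply spell out the stability verification (via the flag-preservation characterization of $\ech$) that the paper leaves implicit. One minor wording slip: your description of lower-triangular left multiplication as ``adding multiples of higher-indexed rows to lower-indexed rows'' has the direction reversed, but your subsequent flag argument is independent of this and is the clean way to see the invariance.
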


\begin{proof}
The right hand side is a disjoint union indexed by $ I = \ech(D) $ of $ \omega_{C,D} $, for which each subset 
$\mathrm{LM}_{2n,n}^{I} $ is $ (B_{H},\GL_{n}(\C)) $-stable.  Thus follows the lemma.
\end{proof}

\subsection{Space of Hermitian matrices}\label{section:space.Hermitian.matrices}

By Lemma \ref{Lem-orbit-decomp-B-LM-GL=coprod-B-LMI-GL},
it is sufficient to identify $B_{H}\backslash \mathrm{LM}_{2n,n}^{I}/ \GL_{n}(\C)$ in order to describe 
the double coset space $B_{H}\backslash \LregMat_{2n,n} / \GL_{n}(\C)$.

As usual, we identify an element $\sigma$ of the symmetric group $\Sgroup_{n}$ with a permutation matrix $\sigma \in \GL_{n}(\C)$ determined by
	\begin{equation*}
		\sigma e_{i} = e_{\sigma(i)}  \qquad (1 \leq i \leq n),
	\end{equation*}
	where $ \{ e_{i} \} $ is the standard basis of $ \C^n $ (or $ \R^n $).  

\begin{definition}\label{Def-sigma-I}
For $I=\{i_{1}<i_{2}<\dots<i_{m}\}\subset[n]
$, we define 
a Grassmannian permutation $\sigma_{I} \in \Sgroup_{n}$ by
\begin{equation*}
    \sigma_{I} \coloneqq 
    \begin{pmatrix}
        1 & 2 & \dots & m & m + 1 & \dots & n
        \\
        i_{1} & i_{2} & \dots & i_{m} & i'_{1} & \dots & i'_{n-m}
    \end{pmatrix},
\end{equation*}
where $\{i'_{1}<i'_{2}<\dots<i'_{n-m}\}\coloneqq[n]\backslash I$.  
The element $ \sigma_I $ is also identified with a permutation matrix, and further, 
by the isomorphism $h\colon \GL_{n}(\C) \to H$, it is considered to be an element in $ H $.
\end{definition}

\begin{remark}\label{Remark-sigma-I-correspond-coprod}
    The index $I$ in the isomorphism of Theorem \ref{theorem-DFV-contain-KGB} corresponds to $I \in \mathcal{P}([n])$ of Definition \ref{Def-sigma-I}, where $\mathcal{P}([n])$ is the power set. 
    Thus, we can interpret the index $I$ of Theorem \ref{theorem-DFV-contain-KGB} as the representatives of $\mathcal{P}([n]) \simeq  \Z_{2}^{n} \simeq (\Z_{2}^{n} \rtimes \Sgroup_{n})/\Sgroup_{n}$,
    which is the Weyl group corresponding to the flag variety $G/P_{S}$. 
    (Note that the restricted root systems of the real groups $\U(n,n)$ and $\Sp_{2n}(\R)$ are type $C_{n}$ and $\GL_{n}(\C)$ and $\GL_{n}(\R)$ are type $A_{n}$).
\end{remark}

Now let us give a description of $B_{H}\backslash \mathrm{LM}_{2n,n}^{I}/ \GL_{n}(\C)$.  
We denote 
\begin{equation*}
\dbinom{[n]}{m} = \{ I \subset [n] \mid \# I = m \} .
\end{equation*}

\begin{lemma}\label{Lem-(B-Her)=(B-LM-I/GL)}
Let $I\subset [n] $ with $m\coloneqq \#I$, i.e., $ I \in \dbinom{[n]}{m} $.
Then, the following map is well-defined and bijective:
\begin{equation}
	\begin{array}{ccc}
		B_{m}\backslash \Her_{m}(\C)
		&\xrightarrow{\sim} &
		B_{H}\backslash \mathrm{LM}_{2n,n}^{I} / \GL_{n}(\C)
		\\
		\rotatebox{90}{$\in$}  && \rotatebox{90}{$\in$}
		\\
		B_{m}z & \mapsto & 
        B_{H} h(\sigma_{I})
		\begin{pmatrix}
			z     & 0 \\
			0     & \unitmatrix_{n-m} \\
			\unitmatrix_{m} & 0 \\
			0     & 0
		\end{pmatrix}  \GL_{n}(\C),
	\end{array}
\label{eq-def-phi_I-bar}
\end{equation}
where the action of $B_{m}$ on $\Her_{m}(\C)$ is given by
	\begin{equation}
		b \cdot z \coloneqq b z b^{*}
	\label{eq-def-B-action-Her}
	\end{equation}
and $h(\sigma_{I}) = \diag(\sigma_{I}, (\sigma_{I}^{*})^{-1}) \in H$.  
\end{lemma}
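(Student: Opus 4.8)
\textbf{Proof plan for Lemma \ref{Lem-(B-Her)=(B-LM-I/GL)}.}
The plan is to exhibit the displayed map explicitly and then check three things in turn: (i) that every $(B_H,\GL_n(\C))$-double coset in $\mathrm{LM}_{2n,n}^I$ has a representative of the stated block form $\omega_z = h(\sigma_I)\,\begin{pmatrix} z & 0\\ 0 & \unitmatrix_{n-m}\\ \unitmatrix_m & 0\\ 0 & 0\end{pmatrix}$ with $z \in \Her_m(\C)$; (ii) that two such matrices $\omega_z, \omega_{z'}$ lie in the same double coset if and only if $z' = b z b^*$ for some $b \in B_m$; and (iii) that the Hermitian condition $C^*D \in \Her_n(\C)$ together with $\ech D = I$ forces exactly this normal form. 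First I would use Lemma \ref{Lem-BGP=BLMGL} and \ref{Lem-orbit-decomp-B-LM-GL=coprod-B-LMI-GL} to reduce to studying a single $\mathrm{LM}_{2n,n}^I$, and I would conjugate by the Grassmannian permutation $h(\sigma_I)$ so as to move the pivot rows of $D$ to the top $m$ positions; after this reduction one may assume $I = \{1,\dots,m\}$.

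For step (i), starting from $\omega_{C,D} \in \mathrm{LM}_{2n,n}^I$ with $\ech D = \{1,\dots,m\}$, I would first act on the right by $\GL_n(\C)$ to bring $D$ into reduced column echelon form, so that $D$ becomes $\begin{pmatrix}\unitmatrix_m & 0\\ 0 & 0\end{pmatrix}$ (the pivots being rows $1,\dots,m$, and the lower $n-m$ rows of $D$ vanishing because $\rank D = m$ with those pivots). Writing $C = \begin{pmatrix} C_{11} & C_{12}\\ C_{21} & C_{22}\end{pmatrix}$ in the corresponding $(m, n-m)$ block decomposition, the rank condition $\rank\omega_{C,D} = n$ forces $C_{22} \in \GL_{n-m}(\C)$; acting further on the right by a block-lower-triangular element of $\GL_n(\C)$ (which preserves the echelon form of $D$) I can clear $C_{12}$ and $C_{21}$ and normalize $C_{22} = \unitmatrix_{n-m}$, $C_{12}=0$. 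The Hermitian condition $C^*D \in \Her_n(\C)$ then reads, in blocks, that the top-left $m\times m$ block $C_{11}^*$ is Hermitian (the off-diagonal and lower blocks being automatically consistent once $D$ has the echelon shape), so $z := C_{11} \in \Her_m(\C)$; the remaining freedom to set $C_{21}=0$ comes from the residual $\GL_n(\C)$-action, yielding precisely $\omega_z$. This shows the map is surjective onto $B_H\backslash\mathrm{LM}_{2n,n}^I/\GL_n(\C)$ and also that it is well-defined (any representative of $B_m z$ lands in the double coset of $\omega_z$, since $b\in B_m$ acts on $\omega_z$ by $h(\operatorname{diag}(b,\unitmatrix_{n-m}))$ on the left and an appropriate $\GL_n(\C)$-element on the right, converting $z$ to $b z b^*$).

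For step (ii), the injectivity, I would suppose $\omega_{z'} = h(b)\,\omega_z\, g$ for $b \in B_n(\C)$ and $g \in \GL_n(\C)$ (after conjugating back, $B_H$ acts via $h(b) = \operatorname{diag}(b,(b^*)^{-1})$) and read off the block equations. Comparing the $D$-parts, the echelon normalization is rigid enough that $g$ and $b$ must be block-lower/upper triangular in a compatible way, and the bottom $m$ block rows of $\omega$ force $b$ to preserve the first-$m$-coordinate flag, i.e., $b \in B_n(\C)$ restricts to some $b_0 \in B_m$ on the top block; tracking the $C$-part then gives $z' = b_0 z b_0^*$. The main obstacle is precisely this bookkeeping: one must verify that no extra identifications creep in from the $(n-m)$-dimensional ``trivial'' directions and that the only surviving symmetry on $z$ is the congruence action of $B_m$ — this requires carefully using both the upper-triangularity of $b$ and the echelon-form constraint simultaneously. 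Once this is done, the three steps assemble into the claimed bijection, and the identical argument applies in Case \caseB by replacing $\Her_m(\C)$ with $\Sym_m(\R)$, $a^*$ with $\transpose a$, and $B_n(\C)$ with $B_n(\R)$ as in Notation \ref{Notation-caseA-and-caseB}.
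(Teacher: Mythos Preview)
Your three-step plan (well-definedness, surjectivity, injectivity via block comparison) is exactly the paper's, but two points need repair.

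First, the shortcut ``conjugate by $h(\sigma_I)$ and assume $I=[m]$'' is not free: left-multiplication by $h(\sigma_I)$ does not commute with the $B_H$-action, since $\sigma_I B_n \sigma_I^{-1} \ne B_n$ in general. The paper therefore keeps $h(\sigma_I)$ explicit throughout and appeals to Lemma~\ref{Lem-B-p-cap-GL-times-GL} (namely $\sigma_I^{-1} B_n \sigma_I \cap (\GL_m(\C) \times \GL_{n-m}(\C)) = B_m \times B_{n-m}$) at the two crucial moments: for well-definedness, to check that $\sigma_I \diag(b_m, \unitmatrix_{n-m}) \sigma_I^{-1} \in B_n$; and for injectivity, to conclude that the top-left block $\beta_1$ of $\sigma_I^{-1} b \sigma_I$ lies in $B_m$. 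Your argument is correct for the literal case $I=[m]$, but transporting it to general $I$ requires exactly this lemma, which you do not invoke.

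Second, two smaller slips in the surjectivity step: reduced column echelon form with pivots in rows $1,\dots,m$ yields $D = \left(\begin{smallmatrix} \unitmatrix_m & 0 \\ X & 0 \end{smallmatrix}\right)$ with $X$ arbitrary (rank $m$ kills the last $n-m$ \emph{columns}, not rows), so clearing $X$ needs the left lower-triangular action of $(b^*)^{-1}$ from $B_H$, as the paper does; and $C_{12}$ cannot be cleared by a block-lower-triangular right action preserving $D$ --- rather, once $D$ is normalized, the Hermitian condition $C^*D \in \Her_n(\C)$ itself forces $C_{12}=0$ and $C_{11}\in\Her_m(\C)$, after which the right action clears $C_{21}$ and normalizes $C_{22}$.
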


To prove this lemma, 
we need one more lemma.

\begin{lemma}\label{Lem-B-p-cap-GL-times-GL}
For $ I \in \dbinom{[n]}{m} $, we have 
%
\begin{equation}\label{eq:BsigmaI.contains.smallerBorels}
    B_{n}^{\sigma_{I}^{-1}} \cap  (\GL_{m}(\C) \times  \GL_{n-m}(\C))
    =
    \left\{
        \begin{pmatrix}
            b_{m} & 0 \\
            0     & b_{n-m}
        \end{pmatrix}
    \Bigm| 
        b_{m} \in B_{m}, b_{n-m} \in B_{n-m}
    \right\},
\end{equation}
where $B_{n}^{\sigma_{I}^{-1}} \coloneqq \sigma_{I}^{-1} B_{n} \sigma_{I}$.  
\end{lemma}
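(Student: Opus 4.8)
For $I \in \binom{[n]}{m}$ with $\sigma_I$ the Grassmannian permutation of Definition~\ref{Def-sigma-I}, we have
\[
B_{n}^{\sigma_{I}^{-1}} \cap (\GL_{m}(\C) \times \GL_{n-m}(\C))
= \left\{ \begin{pmatrix} b_m & 0 \\ 0 & b_{n-m} \end{pmatrix} \Bigm| b_m \in B_m,\ b_{n-m} \in B_{n-m} \right\}.
\]

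**Plan.** The statement is purely about the Borel subgroup $B_n$ of upper-triangular matrices and the parabolic $\GL_m(\C) \times \GL_{n-m}(\C)$, conjugated by a permutation; it has nothing to do with the Hermitian structure, so I would work entirely inside $\GL_n(\C)$ (or $\GL_n(\K)$ for any field). The inclusion $\supseteq$ is the easy direction: I will check directly that $\sigma_I$ conjugates the block-diagonal group $\{\diag(b_m, b_{n-m}) : b_m \in B_m,\ b_{n-m} \in B_{n-m}\}$ into $B_n$. Here it is convenient to think in terms of the root-theoretic description: $B_n$ is the Borel associated to the standard order $1 < 2 < \dots < n$ on the basis vectors, and $\sigma_I^{-1} B_n \sigma_I$ is the Borel associated to the reordered basis $(e_{i_1}, \dots, e_{i_m}, e_{i'_1}, \dots, e_{i'_{n-m}})$. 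A matrix $g$ lies in $\sigma_I^{-1} B_n \sigma_I$ iff $\sigma_I g \sigma_I^{-1}$ is upper-triangular, i.e. iff $g$ is "upper-triangular with respect to the reordered basis". Since within the first block $\{i_1 < \dots < i_m\}$ and within the second block $\{i'_1 < \dots < i'_{n-m}\}$ the reordering preserves the relative order, a block-diagonal matrix $\diag(b_m, b_{n-m})$ with $b_m, b_{n-m}$ upper-triangular is automatically upper-triangular in the reordered basis; this gives $\supseteq$.

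For the reverse inclusion $\subseteq$, suppose $g = \diag(a, c) \in \GL_m(\C) \times \GL_{n-m}(\C)$ also lies in $\sigma_I^{-1} B_n \sigma_I$, i.e. $\sigma_I\, \diag(a,c)\, \sigma_I^{-1} \in B_n$. Conjugation by $\sigma_I$ acts on a block-diagonal matrix simply by relocating the $(k,l)$-entry of the first block to position $(i_k, i_l)$ and the $(k,l)$-entry of the second block to position $(i'_k, i'_l)$; it does not mix the two blocks. So the condition "$\sigma_I \diag(a,c)\sigma_I^{-1}$ is upper-triangular" decouples into: $a_{kl} = 0$ whenever $i_k > i_l$, and $c_{kl} = 0$ whenever $i'_k > i'_l$. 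But $i_k > i_l \iff k > l$ (the $i$'s are listed in increasing order), so the first condition says exactly $a \in B_m$; similarly $c \in B_{n-m}$. This establishes $\subseteq$ and completes the proof.

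**Main obstacle.** There is no real obstacle — the lemma is a bookkeeping statement about how a Grassmannian (i.e. order-preserving-on-each-block) permutation interacts with upper-triangularity. The only thing requiring a moment's care is making the "upper-triangular in the reordered basis" picture precise and checking that conjugation by $\sigma_I$ genuinely sends the $(k,l)$ block-diagonal entries to positions $(i_k,i_l)$ and $(i'_k,i'_l)$ without cross terms; once that is written out, both inclusions follow from the single observation that $\sigma_I$ restricted to $[m]$ and to $[m+1,n]$ is increasing. I would present the argument via the conjugation-of-entries computation rather than invoking abstract root-system language, since it is short and self-contained, and I would remark that the proof is identical over $\R$, which covers Case~\caseB.
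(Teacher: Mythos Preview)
Your proof is correct. The key observation---that $\sigma_I$ restricted to $[m]$ and to $[m+1,n]$ is order-preserving, so conjugation by $\sigma_I$ sends block-diagonal upper-triangular matrices to upper-triangular matrices and conversely---is exactly right, and your entry-by-entry verification is clean.

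The paper takes a different route: it first establishes equality at the level of Lie algebras via a root-space argument (using the regular element $Z = \diag(1,2,\dots,n)$ to describe $\fb_n$ and then computing $\sigma_I^{-1} Z \sigma_I$), and then passes from Lie algebras to groups by invoking connectedness of parabolic subgroups (the left-hand side contains a Borel of $\GL_m \times \GL_{n-m}$, hence is parabolic, hence connected, hence equal to its identity component). Your direct matrix argument is more elementary and avoids this Lie-algebra-to-group step entirely; the paper's approach is perhaps more in the spirit of the root-theoretic language used elsewhere in the article but requires the extra connectedness input. Either way the substance is the same: the Grassmannian permutation $\sigma_I$ preserves the relative order within each block.
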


\begin{proof}
First we prove the Lie algebras of the both hands side agree.  

Take a standard (diagonal) torus $ T $.  
Then the Lie algebras of the both hands side are stable under the conjugation by $ T $.  
Thus they are decomposed into the root spaces with respect to the adjoint action of $ \lie{t} = \Lie T $, and 
we compare the root spaces.  
Define $Z\in \lie{t} $ by
\begin{equation*}
	Z \coloneqq \operatorname{diag}(1,2,\dots,n) \in  \gl_{n}(\C).
\end{equation*}
Then, the root space decomposition of the Lie algebra $\fb_{n}$ of $B_{n}$ is given by
\begin{equation*}
    \fb_{n}
	= \lie{t} \oplus 
\sum_{\alpha(Z) < 0} \lie{g}_{\alpha} , 
\end{equation*}
where $ \alpha $ moves over the roots in $ \Delta(\lie{gl}_n, \lie{t}) $, and 
$ \lie{g}_{\alpha} $ denotes the corresponding root space.  
Moreover, for any $ \alpha \in \Delta(\lie{gl}_n, \lie{t}) $, we have
\begin{equation*}
\alpha(Z) \leq 0 
\iff 
\sigma_{I}^{-1}(\alpha)(\sigma_{I}^{-1} Z \sigma_{I}) \leq 0 
\end{equation*}
which implies
\begin{equation*}
\fb_{n}^{\sigma_{I}^{-1}}  =  \lie{t} \oplus \sum_{\alpha(Z) < 0} \lie{g}_{\alpha}^{\sigma_{I}^{-1}}
= \lie{t} \oplus \sum_{\alpha(Z) < 0} \lie{g}_{\sigma_{I}^{-1}(\alpha)}
= \lie{t} \oplus \sum_{\alpha(\sigma_{I}^{-1} Z \sigma_{I}) < 0} \lie{g}_{\alpha} .
\end{equation*}
Write $I=\{i_{1}<i_{2}<\dots<i_{m}\}$ and
$ [n] \setminus I = \{i'_{1}<i'_{2}<\dots<i'_{n-m}\} $.  
Then, the definition of $\sigma_{I}\in\Sgroup_{n}$  (Definition \ref{Def-sigma-I}) implies
\begin{equation*}
\sigma_{I}^{-1}Z\sigma_{I}
=
\operatorname{diag} (i_{1},i_{2},\dots,i_{m},  i'_{1},i'_{2},\dots,i'_{n-m}) ,
\end{equation*}
and we get
\begin{equation*}
    \fb_{n}^{\sigma_{I}^{-1}} \cap 	
    ( \fg \mathfrak{l}_{m}(\C)  \oplus  \fg \mathfrak{l}_{n-m}(\C) )
    =
    \fb_{m} \oplus \fb_{n-m} .
\end{equation*}
Thus, the left hand side of \eqref{eq:BsigmaI.contains.smallerBorels} contains a Borel subgroup of $ \GL_{m}(\C) \times  \GL_{n-m}(\C) $, and 
hence it is a parabolic subgroup (in fact, it coincides with the Borel subgroup itself).  Since a parabolic subgroup is connected, it coincides with the right hand side.  
\end{proof}

Let us prove Lemma \ref{Lem-(B-Her)=(B-LM-I/GL)}.

\begin{proof}[Proof of Lemma \ref{Lem-(B-Her)=(B-LM-I/GL)}]
We will prove well-definedness, surjectivity, and injectivity of the map \eqref{eq-def-phi_I-bar} in this order.

To prove the well-definedness, 
pick $b_{m}\in B_{m}$ and $z\in \Her_{m}(\C)$.  
We want to show that the images of $z$ and $b_{m}\cdot z= b_{m}zb_{m}^{*}$ 
by the map \eqref{eq-def-phi_I-bar} are equal.  
The image of $b_{m}\cdot z$ 
is 
\begin{align*}
	&
	B_{H}h(\sigma_{I})
	\begin{pmatrix}
		b_{m}zb_{m}^{*} & 0 \\
		0               & \unitmatrix_{n-m} \\
		\unitmatrix_{m}           & 0 \\
		0               & 0
    \end{pmatrix} \GL_{n}(\C)
	\\
	=&
	B_{H} h(\sigma_{I}) h\left(
	   \begin{pmatrix}
		b_{m} & 0 \\
		0     & \unitmatrix_{n-m} 
	   \end{pmatrix}
	\right)
	\begin{pmatrix}
		z     & 0 \\
		0     & \unitmatrix_{n-m} \\
		\unitmatrix_{m} & 0 \\
		0     & 0
	\end{pmatrix}
	\begin{pmatrix}
		b_{m}^{*} & 0 \\
		0         & \unitmatrix_{n-m}
	\end{pmatrix}         \GL_{n}(\C)
	\\
	=&
	B_{H} h 
    \left(
		\sigma_{I}
		\begin{pmatrix}
			b_{m} & 0 \\
			0     & \unitmatrix_{n-m} 
		\end{pmatrix}
		\sigma_{I}^{-1}
	\right)
	h(\sigma_{I})
		\begin{pmatrix}
			z     & 0 \\
			0     & \unitmatrix_{n-m} \\
			\unitmatrix_{m} & 0 \\
			0     & 0
		\end{pmatrix}
    \GL_{n}(\C).
\end{align*}
Note that Lemma \ref{Lem-B-p-cap-GL-times-GL} tells that 
\begin{align*}
	\sigma_{I}
		\begin{pmatrix}
			b_{m} & 0 \\
			0     & \unitmatrix_{n-m}
		\end{pmatrix}
	\sigma_{I}^{-1}
	\in B_{n},
\quad
\text{ hence we get }
\quad
	B_{H} h 
    \left(
		\sigma_{I}
		\begin{pmatrix}
			b_{m} & 0 \\
			0     & \unitmatrix_{n-m} 
		\end{pmatrix}
		\sigma_{I}^{-1}
	\right) = B_H.  
\end{align*}

Let us prove the surjectivity.  
Let $\omega_{C,D} = \vectwo{C}{D} \in \mathrm{LM}_{2n,n}^{I}$.
We will prove that $\omega_{C,D} \in \mathrm{LM}_{2n,n}^{I}$ can be transformed into
\begin{equation*}
    h(\sigma_{I})
		\begin{pmatrix}
			z     & 0 \\
			0     & \unitmatrix_{n-m} \\
			\unitmatrix_{m} & 0 \\
			0     & 0
		\end{pmatrix}
\end{equation*}
for some $z \in \Her_{m}(\C)$ by the actions of $B_{H}$ and $\GL_{n}(\C)$.
Recall that the actions of  $B_{H}$ and $\GL_{n}(\C)$ on $\mathrm{LM}_{2n,n}^{I}$ are given by
\begin{equation*}
	h(b) \omega_{C,D} g
	=
	\begin{pmatrix}
		bCg \\
		(b^{*})^{-1}Dg
	\end{pmatrix}.
\end{equation*}
Thus, after the right translation by an appropriate element $g\in \GL_{n}(\C)$, we can assume that $D$ is in the reduced column echelon form \eqref{eq-Row-echelon-form}.
Moreover, by the left multiplication of some $(b^{*})^{-1}$, 
we can also assume that
\begin{equation*}
    D = 
    \begin{pmatrix}
	e_{i_{1}} & 
	e_{i_{2}} & 
	\dots     & 
	e_{i_{m}} &  
	0         &
	\dots     & 
	0
    \end{pmatrix} ,
\end{equation*}
where $I=\{i_{1}<i_{2}<\dots<i_{m}\}$.
Then, by the definition of $\sigma_{I}\in\Sgroup_{n}$ (Definition \ref{Def-sigma-I}), we have
\begin{equation}
    \sigma_{I}^{-1} D
    =
    \begin{pmatrix}
        \unitmatrix_{m} & 0 \\
        0     & 0
    \end{pmatrix},
\qquad
    \text{or,}
\qquad
    D=
    \sigma_{I}
    \begin{pmatrix}
        \unitmatrix_{m} & 0 \\
        0     & 0
    \end{pmatrix}.
\label{eq-def-sigma_{I}D}
\end{equation}
Let us write 
\begin{equation}
    \sigma_{I}^{-1} C = 
    \begin{pmatrix}
        z_{11} & z_{12} \\
        z_{21} & z_{22}
    \end{pmatrix}
		\quad
			\text{so that,}
		\quad
    C=
    \sigma_{I}
    \begin{pmatrix}
        z_{11} & z_{12} \\
        z_{21} & z_{22}
    \end{pmatrix}.
\label{eq-def-sigma_{I}C}
\end{equation}
Since $\sigma_{I}^{*} ={}^{t}\sigma_{I} =\sigma_{I}^{-1}$,
we get 
\begin{equation}\label{eq-Lem-prf-C^{*}D}
    C^{*}D
    =
    \begin{pmatrix}
        z_{11} & z_{12} \\
        z_{21} & z_{22}
    \end{pmatrix}{\vphantom{\Bigm|}}^{*}
    (\sigma_{I}^{*})  \sigma_{I}
    \begin{pmatrix}
        \unitmatrix_{m} & 0 \\
        0     & 0
    \end{pmatrix}
    =
    \begin{pmatrix}
        z_{11}^{*} & z_{12}^{*} \\
        z_{12}^{*} & z_{22}^{*}
    \end{pmatrix}
    \begin{pmatrix}
        \unitmatrix_{m} & 0 \\
        0     & 0
    \end{pmatrix}     
    =
    \begin{pmatrix}
        z_{11}^{*} & 0 \\
        z_{12}^{*} & 0 
    \end{pmatrix}.
\end{equation}
Moreover, $\omega_{C,D}\in \mathrm{LM}_{2n,n}^{I} \subset \LregMat_{2n,n}$ implies that \eqref{eq-Lem-prf-C^{*}D} is an element of $\Her_{n}(\C)$, and thus, we have 
\begin{equation}\label{eq-z11=z11*-z12=0}
    z_{11} = z_{11}^{*},
    \quad
    z_{12}=0.
\end{equation}
By combining this equation \eqref{eq-z11=z11*-z12=0} with \eqref{eq-def-sigma_{I}D} and \eqref{eq-def-sigma_{I}C},
we have
\begin{equation*}
    \omega_{C,D}
    =
    \begin{pmatrix}
        C \\ D
    \end{pmatrix}
    =
    h(\sigma_{I})
    \begin{pmatrix}
        z_{11} & 0 \\
        z_{21} & z_{22} \\
        \unitmatrix_{m}  & 0 \\
        0      & 0
    \end{pmatrix}.
\end{equation*}
%
Since $ \rank \omega_{C,D} = n $, 
\skipover{
\begin{equation*}
\operatorname{rank}
      \begin{pmatrix}
        z_{11} & 0 \\
        z_{21} & z_{22} \\
        \unitmatrix_{m}  & 0 \\
        0      & 0
      \end{pmatrix}  
= \operatorname{rank} \omega_{C,D}
= n,
\end{equation*}
}
the matrix $z_{22}$ is invertible.
Therefore, by multiplying appropriate element of $\GL_{n}(\C)$ from the right again, we can assume that 
$ z_{22} = \unitmatrix_{n -m} $ and $ z_{21} = 0 $.  
%
\skipover{
\begin{equation*}
    \omega_{C,D} =
    \begin{pmatrix}
        C \\ D
    \end{pmatrix}
    = 
    h(\sigma_{I})^{-1}
    \begin{pmatrix}
        z_{11} & 0       \\
        0      & \unitmatrix_{n-m} \\
        \unitmatrix_{m}  & 0       \\
        0      & 0
    \end{pmatrix}.
\end{equation*}
}
%
Thus we have proved the desired surjectivity because $z_{11}\in\Her_{m}(\C)$.

Finally, we prove the injectivity of the map \eqref{eq-def-phi_I-bar}.
Let $z,z'\in\Her_{n}(\C)$.
Suppose
\skipover{
\begin{equation*}
	B_{H} h(\sigma_{I})
		\begin{pmatrix}
			z     & 0       \\
			0     & \unitmatrix_{n-m} \\
			\unitmatrix_{m} & 0       \\
			0     & 0
		\end{pmatrix}
	\GL_{n}(\C)
	=
	B_{H} h (\sigma_{I})
		\begin{pmatrix}
			z'    & 0       \\
			0     & \unitmatrix_{n-m} \\
			\unitmatrix_{m} & 0       \\
			0     & 0
		\end{pmatrix}
	\GL_{n}(\C).
\end{equation*}
In other words, suppose }
that 
\begin{equation}\label{eq-inj-comparison}
	h(b)
	h(\sigma_{I})
	\begin{pmatrix}
        z     & 0       \\
		0     & \unitmatrix_{n-m} \\
		\unitmatrix_{m} & 0       \\
		0     & 0
	\end{pmatrix} g
	=
	h(\sigma_{I})
	\begin{pmatrix}
		z'    & 0 \\
		0     & \unitmatrix_{n-m} \\
		\unitmatrix_{m} & 0 \\
		0     & 0
	\end{pmatrix}.
\end{equation}
for some $h(b)\in B_{H}$ and $g\in \GL_{n}(\C)$.  
The left-hand side of \eqref{eq-inj-comparison} is equal to
\begin{equation}\label{eq-inj-comparison-2}
	h(b) h(\sigma_{I})
	\begin{pmatrix}
		z     & 0 \\
		0     & \unitmatrix_{n-m} \\
		\unitmatrix_{m} & 0 \\
		0     & 0
	\end{pmatrix} g
    =
    h(\sigma_{I}) h (\sigma_{I}^{-1} b \sigma_{I})
	\begin{pmatrix}
		z     & 0 \\
		0     & \unitmatrix_{n-m} \\
		\unitmatrix_{m} & 0 \\
		0     & 0
	\end{pmatrix} g .
\end{equation}
In order to compute it,
we set
\begin{equation}\label{eq-def-beta-gamma-g}
    \sigma_{I}^{-1} b \sigma_{I}
    \coloneqq
    \begin{pmatrix}
        \beta_{1} & \beta_{2} \\
        \beta_{3} & \beta_{4}
    \end{pmatrix},
\quad
    ((\sigma_{I}^{-1} b \sigma_{I})^{*})^{-1}
    \coloneqq
    \begin{pmatrix}
        \gamma_{1} & \gamma_{2}\\
        \gamma_{3} & \gamma_{4}
    \end{pmatrix},
\quad
    g
    \coloneqq
    \begin{pmatrix}
        g_{1} & g_{2}\\
        g_{3} & g_{4}
    \end{pmatrix}.
\end{equation}
After some computation, we get 
\begin{align*}
\skipover{
    &=	
    h(\sigma_{I})
	\begin{pmatrix}
		\beta_{1} & \beta_{2} & 0 & 0 \\
		\beta_{3} & \beta_{4} & 0 & 0 \\
		0 & 0 & \gamma_{1} & \gamma_{2}\\
		0 & 0 & \gamma_{3} & \gamma_{4}\\
	\end{pmatrix}
	\begin{pmatrix}
		z     & 0 \\
		0     & \unitmatrix_{n-m} \\
		\unitmatrix_{m} & 0 \\
		0     & 0
	\end{pmatrix}
    \begin{pmatrix}
        g_{1} & g_{2}\\
        g_{3} & g_{4}
    \end{pmatrix}
\\}
\eqref{eq-inj-comparison-2} 
    &=	
    h(\sigma_{I})
    \begin{pmatrix}
        \beta_{1}zg_{1} + \beta_{2}g_{3} 
        &   
        \beta_{1}zg_{2} + \beta_{2}g_{4} 
    \\
        \beta_{3}zg_{1} + \beta_{4}g_{3} 
        & 
        \beta_{3}zg_{2} + \beta_{4}g_{4} 
    \\
        \gamma_{1}g_{1} & \gamma_{1}g_{2} 
    \\
        \gamma_{3}g_{1} & \gamma_{3}g_{2}
    \end{pmatrix}.
\end{align*}
%
Comparing with \eqref{eq-inj-comparison}, we get
\begin{equation}\label{eq-comparizon-2}
    \begin{pmatrix}
        \beta_{1}zg_{1} + \beta_{2}g_{3} 
        & 
        \beta_{1}zg_{2} + \beta_{2}g_{4} 
    \\
        \beta_{3}zg_{1} + \beta_{4}g_{3} 
        & 
        \beta_{3}zg_{2} + \beta_{4}g_{4} 
    \\
        \gamma_{1}g_{1} & \gamma_{1}g_{2} 
    \\
        \gamma_{3}g_{1} & \gamma_{3}g_{2}
    \end{pmatrix}	
=	
	\begin{pmatrix}
		z'    & 0 \\
		0     & \unitmatrix_{n-m} \\
		\unitmatrix_{m} & 0 \\
		0     & 0
    \end{pmatrix}, 
\end{equation}
which implies 
\skipover{
By comparing 
the $(3,1)$-entry,
the $(4,1)$-entry,
the $(3,2)$-entry,
the $(2,2)$-entry,
and
the $(1,2)$-entry
of \eqref{eq-comparizon-2} in this order, we have
}
\begin{equation*}
	\gamma_{1}=g_{1}^{-1},
	\quad
	\gamma_{3}=0,
	\quad
	g_{2}=0,
	\quad
	\beta_{4}=g_{4}^{-1},
	\quad
	\beta_{2}=0.
\end{equation*}
Combining these and \eqref{eq-def-beta-gamma-g}, we have
\begin{equation*}
    \begin{pmatrix}
        \beta_{1} & 0\\
        \beta_{3} & \beta_{4}
    \end{pmatrix}
    =
    \left(
        \begin{pmatrix}
            \gamma_{1} & \gamma_{2}\\
            0          & \gamma_{4}
        \end{pmatrix}^{*}
    \right)^{-1}
    =
    \begin{pmatrix}
        \gamma_{1}^{*} & 0\\
        \gamma_{2}^{*} & \gamma_{4}^{*}
    \end{pmatrix}^{-1},
\end{equation*}
which implies
\begin{equation*}
    \beta_{1}=(\gamma_{1}^{*})^{-1},
    \quad
    \beta_{4}=(\gamma_{4}^{*})^{-1}.
\end{equation*}
%
\skipover{
From the $(2,1)$-entry of \eqref{eq-comparizon-2}, we get
\begin{equation*}
	g_{3}
	=
	- \beta_{4}^{-1} \beta_{3} z \beta_{1}^{*}.
\end{equation*}
}
%
Thus, back to the equation \eqref{eq-comparizon-2}, we have
\begin{equation*}
    \begin{pmatrix}
        \beta_{1}z\beta_{1}^{*}  &  0  \\
        0                        & \unitmatrix_{n-m}\\
        \unitmatrix_{m}                    & 0 \\
        0                        & 0
    \end{pmatrix}	
    =	
	\begin{pmatrix}
		z'    & 0 \\
		0     & \unitmatrix_{n-m} \\
		\unitmatrix_{m} & 0 \\
		0     & 0
	\end{pmatrix}.
\end{equation*}
Therefore, for the injectivity of the map \eqref{eq-def-phi_I-bar}, it is sufficient to prove $\beta_{1}\in B_{m}$, which follows from Lemma \ref{Lem-B-p-cap-GL-times-GL} and the definition 
of $\beta_{1}$. 
\end{proof}

Summarizing the results, we get 

\begin{lemma}\label{Lem-B-G-P_S=coprod-B-Her}
    The collection of the maps for $ I \subset [n] $ 
    \begin{equation*}
	\begin{array}{ccc}
        \varphi_{I} \colon B_{\# I}\backslash \Her_{\# I}(\C)
		&\xrightarrow{\sim} &
		B_{H}\backslash \mathrm{LM}_{2n,n}^{I} / \GL_{n}(\C)
		\\
		\rotatebox{90}{$\in$}  && \rotatebox{90}{$\in$}
		\\
		B_{\# I} z & \mapsto &
		B_{H}h(\sigma_{I})
		\begin{pmatrix}
			z     & 0 \\
			0     & \unitmatrix_{n-\# I} \\
			\unitmatrix_{\# I} & 0 \\
			0     & 0
		\end{pmatrix}
		\GL_{n}(\C),
	\end{array}
    \end{equation*}
gives an isomorphism
    \begin{equation*}
\begin{aligned}
        \coprod_{I \subset [n]} \varphi_{I} \colon
        \coprod_{I\subset [n]} B_{\# I} \backslash \Her_{\# I}(\C) 
        \xrightarrow{\;\;\sim\;\;} &
        \coprod_{I\subset [n]} B_{H}\backslash \mathrm{LM}_{2n,n}^{I} / \GL_{n}(\C) 
\\
&= B_{H} \backslash \LregMat_{2n,n} /\GL_{n}(\C) \simeq B_{H} \backslash G /P_{S}.
\end{aligned}
    \end{equation*}
\end{lemma}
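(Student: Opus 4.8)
The statement is essentially a bookkeeping summary: we already have all three ingredients from the preceding lemmas, and the task is to glue them into a single chain of bijections. First I would invoke Lemma~\ref{Lem-BGP=BLMGL} to replace $B_H\backslash G/P_S$ by $B_H\backslash\LregMat_{2n,n}/\GL_n(\C)$, a $G$-equivariant identification that is already proved. Next, Lemma~\ref{Lem-orbit-decomp-B-LM-GL=coprod-B-LMI-GL} furnishes the disjoint decomposition
\begin{equation*}
B_H\backslash\LregMat_{2n,n}/\GL_n(\C)=\coprod_{I\subset[n]}B_H\backslash\mathrm{LM}_{2n,n}^{I}/\GL_n(\C),
\end{equation*}
indexed by $I=\ech(D)$, with each stratum $(B_H,\GL_n(\C))$-stable. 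Finally, for each fixed $I$ with $\#I=m$, Lemma~\ref{Lem-(B-Her)=(B-LM-I/GL)} gives the bijection $\varphi_I\colon B_m\backslash\Her_m(\C)\xrightarrow{\sim}B_H\backslash\mathrm{LM}_{2n,n}^{I}/\GL_n(\C)$ with the explicit formula sending $B_m z$ to the indicated double coset. Taking the coproduct of the $\varphi_I$ over all $I\subset[n]$ and composing with the two identifications above yields the asserted isomorphism $\coprod_{I\subset[n]}B_{\#I}\backslash\Her_{\#I}(\C)\xrightarrow{\sim}B_H\backslash G/P_S$.

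The only point requiring a word of care is the assembly itself: one must check that the coproduct map $\coprod_I\varphi_I$ is well defined as a map into the disjoint union, i.e.\ that the target of $\varphi_I$ really is the $I$-th piece of the decomposition and not some overlapping subset. This is immediate from the fact that the strata $\mathrm{LM}_{2n,n}^{I}$ are pairwise disjoint (Lemma~\ref{Lem-LM=coprod-LM-I}) and that $\varphi_I$ lands exactly in $B_H\backslash\mathrm{LM}_{2n,n}^{I}/\GL_n(\C)$ by construction; disjointness of the images then follows, and bijectivity of the coproduct follows from bijectivity of each $\varphi_I$ together with the disjoint-union decomposition. So in the write-up I would simply say: combine Lemmas~\ref{Lem-BGP=BLMGL}, \ref{Lem-orbit-decomp-B-LM-GL=coprod-B-LMI-GL}, and~\ref{Lem-(B-Her)=(B-LM-I/GL)}, and note that the strata are disjoint so the $\varphi_I$ patch together.

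There is no real obstacle here — the substance was carried by Lemma~\ref{Lem-(B-Her)=(B-LM-I/GL)} (whose proof is the Gaussian-elimination argument already given) and by the root-space computation in Lemma~\ref{Lem-B-p-cap-GL-times-GL}. The present lemma is the \emph{summary} that packages those into the form that will be used downstream (to pass from $\Her$ to signed partial involutions, and then to $\regOmega_n$). Accordingly the proof is one or two sentences: for each $I\subset[n]$ the map $\varphi_I$ of Lemma~\ref{Lem-(B-Her)=(B-LM-I/GL)} is a bijection onto $B_H\backslash\mathrm{LM}_{2n,n}^{I}/\GL_n(\C)$; since by Lemma~\ref{Lem-orbit-decomp-B-LM-GL=coprod-B-LMI-GL} these strata partition $B_H\backslash\LregMat_{2n,n}/\GL_n(\C)$, the disjoint union $\coprod_I\varphi_I$ is a bijection onto $B_H\backslash\LregMat_{2n,n}/\GL_n(\C)$, which by Lemma~\ref{Lem-BGP=BLMGL} is identified with $B_H\backslash G/P_S$. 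The same argument applies verbatim to Case~\caseB after the substitutions of Notation~\ref{Notation-caseA-and-caseB}.
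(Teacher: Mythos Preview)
Your proposal is correct and matches the paper's proof exactly: the paper's proof is the single sentence ``This follows from Lemmas~\ref{Lem-BGP=BLMGL}, \ref{Lem-orbit-decomp-B-LM-GL=coprod-B-LMI-GL}, and~\ref{Lem-(B-Her)=(B-LM-I/GL)},'' and your write-up is a (more detailed) unpacking of precisely that combination. Your extra paragraph about disjointness of the strata is harmless clarification but not needed for the paper's version.
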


\begin{proof}
    This follows from Lemmas \ref{Lem-BGP=BLMGL}, \ref{Lem-orbit-decomp-B-LM-GL=coprod-B-LMI-GL}, and \ref{Lem-(B-Her)=(B-LM-I/GL)}.
\end{proof}

In fact, the cosets appear in the above lemma are finite, which we will prove later.  
So the ``isomorphism'' in the lemma means just a bijection.  

Now the full double coset decomposition $B_{H} \backslash G /P_{S}$ reduces to 
the orbit decomposition $B_{m} \backslash \Her_{m}(\C)$ for various $ m \geq 0 $, which we will consider in the next section.


\subsection{Orbits of Borel subgroups in Hermitian and symmetric matrices}\label{Section-Orbit-decomp-B-backslash-Her}

Let us give orbit decompositions $B_{m}(\C)\backslash \Her_{m}(\C)$ (Case \caseA) and $B_{m}(\R) \backslash \Sym_{m}(\R)$ (Case \caseB) in Proposition \ref{Prop-SPI=(B-Her)}.  
To do so, we need signed partial involutions defined below. 

\begin{definition}\label{Def-SPI-and-SPIst}
For $m\in\Z_{\geq 0}$, we write $\SPI_{m}$ for the set of \emph{signed partial involutions}, defined by 
    \begin{equation*}
    \begin{split}
        \SPI_{m} &\coloneqq 
        \{ 
            \tau \in \SPP_{m} 
        \mid 
            \tau^{2} \text{ is a diagonal matrix whose entries belong to the set }\{0,1\}
        \}\\
        &=
        \SPP_{m} \cap \Sym_{m}(\Z),
    \end{split}
    \end{equation*}
    where $\mathrm{SPP}_{m}$ is the set of signed partial permutations defined in Definition \ref{Def-SPP-and-regOmega}.
    Moreover, we write $\SPIst_{m}$ for the subset of $\SPI_{m}$ consisting all elements whose off-diagonal entries are all nonnegative.
    More precisely, 
    \begin{equation*}
        \SPIst_{m} \coloneqq \{ \tau \in \SPI_{m} \mid  \tau_{ij} \geq 0 \text{ if } i\neq j\},
    \end{equation*}
    where $\tau_{ij}$ is the $(i,j)$-th entry of $\tau \in \Mat_{m}(\Z)$.
    Note that we have inclusions
    \begin{equation*}
        \SPIst_{m} \subset \SPI_{m} \subset \Sym_{m}(\R) \subset \Her_{m}(\C).
    \end{equation*}
\end{definition}

    There exists an action of $\Z_{2}^{m} \coloneqq \{\pm 1\}^{m}$ on $\SPI_{m}$ given by
    \begin{equation}\label{Rem-SPI^st}
        \varepsilon \cdot \tau = \varepsilon \tau \varepsilon,
    \end{equation}
where we regard $ \ee \in \Z_{2}^{m} $ as $ \varepsilon = \diag (\varepsilon_{1},\dots,\varepsilon_{m}) \in \GL_{m}(\C) $ by abuse of notations.
\skipover{
    \begin{equation*}
        \begin{array}{ccc}
            \Z_{2}^{m} & \hookrightarrow &  \GL_{m}(\C)\\
            \rotatebox{90}{$\in$} & & \rotatebox{90}{$\in$}\\
            \varepsilon=(\varepsilon_{1},\dots,\varepsilon_{m}) & \mapsto &  \diag(\varepsilon_{1},\dots,\varepsilon_{m}).
        \end{array}
    \end{equation*}
}
Then there is a natural isomorphism (see Example \ref{Example-SPI^st} given below).
    \begin{equation}\label{eq-Z2-SPI=SPIst}
        \Z_{2}^{m} \backslash \SPI_{m} 
        \simeq 
        \SPIst_{m} .
    \end{equation}
%

\begin{example}\label{Example-SPI^st}
For $(-1,-1,1) \in \Z_{2}^{3}$, we have
    \begin{equation*}
    \begin{split}
        &
        (-1,1,1)
        \cdot 
        \begin{pmatrix}
            0 & 0 & -1  \\
            0 & -1 & 0 \\
            -1& 0 & 0 \\
        \end{pmatrix}
        =
        \begin{pmatrix}
            0 & 0 & 1 \\ 
            0 & -1 & 0 \\
            1& 0 & 0 \\
        \end{pmatrix}  \in \SPIst_{m}.
    \end{split}
    \end{equation*}
Note that the $ (-1) $ on the diagonal cannot be resolved into $ 1 $.
\end{example}

\begin{proposition}
\label{Prop-SPI=(B-Her)}
    Consider the action of the group $B_{m}(\C)$ (resp.~$B_{m}(\R)$) of upper triangular matrices on $\Her_{m}(\C)$ (resp.~$\Sym_{m}(\R)$) given by
    \begin{equation}
        b \cdot z \coloneqq b \, z \, b^{*}
        \qquad
        (\text{resp. } b \cdot z \coloneqq b \, z \transpose{b}).
    \label{eq-def-action-of-B-on-Her(C)-in-Prop}
    \end{equation}
Then we have bijections: 
    \begin{equation}\label{eq-map-from-SPIst-to-B-Her(C)}
\text{\upshape{Case \caseA}} \qquad\qquad
        \begin{array}{ccc}
            \SPIst_{m}  & \xrightarrow{\sim} &   B_{m}(\C) \backslash \Her_{m}(\C)
            \\
            \rotatebox{90}{$\in$}  && \rotatebox{90}{$\in$} 
            \\
            \tau & \mapsto & B_{m}(\C) \tau ,
        \end{array}
        \hspace*{.3\textwidth}
    \end{equation}
    \begin{equation}\label{eq-map-from-SPIst-to-B-Sym(R)}
    \;\;
\text{\upshape{Case \caseB}} \qquad\qquad
        \begin{array}{ccc}
            \SPIst_{m} & \xrightarrow{\sim} &  B_{m}(\R) \backslash \Sym_{m}(\R) 
            \\
            \rotatebox{90}{$\in$} && \rotatebox{90}{$\in$} 
            \\
            \tau & \mapsto & B_{m}(\R) \tau ,
        \end{array}
        \hspace*{.3\textwidth}
    \end{equation}
i.e., the set of signed partial involutions $ \SPIst_{m} $ is a complete system of representatives for the both of $ B_{m}(\C) \backslash \Her_{m}(\C) $ and $ B_{m}(\R) \backslash \Sym_{m}(\R) $.
\end{proposition}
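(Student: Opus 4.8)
The plan is to prove the proposition by a version of Gaussian elimination, carrying out Case \caseA\ in detail; Case \caseB\ is obtained verbatim by replacing $\C$ with $\R$ and $a^{*}$ with $\transpose{a}$ throughout (cf.\ Notation~\ref{Notation-caseA-and-caseB}), the only field-specific input being that a nonzero diagonal entry of a Hermitian, resp.\ symmetric, matrix is a nonzero \emph{real} number whose sign is a congruence invariant. Well-definedness of the maps is clear since $\SPIst_{m}\subset\Her_{m}(\C)$, so it remains to prove that $\tau\mapsto B_{m}(\C)\tau$ (resp.\ $B_{m}(\R)\tau$) is surjective and injective.

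For surjectivity I will induct on $m$, the case $m=0$ being trivial, by reducing the last row and column of a given $z\in\Her_{m}(\C)$. The observation driving the elimination is that an upper triangular matrix $\unitmatrix_{m}+cE_{k\ell}$ with $k\le\ell$ performs the move ``add $c$ times row $\ell$ to row $k$, and simultaneously $\bar c$ times column $\ell$ to column $k$''; thus a later row may be added to an earlier one, and in particular row $m$ is an admissible pivot row, while diagonal matrices in $B_{m}(\C)$ rescale basis vectors. Three cases arise. If the last row and column vanish, then $z=\diag(z',0)$ with $z'\in\Her_{m-1}(\C)$ and induction finishes (putting $0$ in the $m$-th diagonal slot). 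If $z_{mm}\neq0$, rescale $e_{m}$ so that $z_{mm}=\varepsilon\in\{\pm1\}$, then for each $k<m$ add a suitable multiple of row $m$ to row $k$ (together with the conjugate column operation) to clear the rest of the last row and column, reaching $z=\diag(z',\varepsilon)$, and induct. If $z_{mm}=0$ but the last column is nonzero, set $j\coloneqq\max\{k<m:z_{km}\neq0\}$, rescale $e_{m}$ so that $z_{jm}=z_{mj}=1$, and clear the remaining entries of column $m$ (those of index $<j$) by adding multiples of row $j$ to earlier rows; by Hermiticity row $m$ is cleared as well, so rows and columns $j$ and $m$ now carry a hyperbolic pair ($z_{jm}=z_{mj}=1$, $z_{jj}=z_{mm}=0$) orthogonal to the remaining coordinates. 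Using the pinned row $m=\transpose{e_{j}}$ one then clears column $j$, hence row $j$, and finally the entry $z_{jj}$, leaving this hyperbolic pair together with an arbitrary $z'\in\Her_{m-2}(\C)$ on the coordinates $[m]\setminus\{j,m\}$. Since the subgroup of $B_{m}(\C)$ acting trivially on the coordinates $j,m$ and by upper triangular matrices on the rest is again a Borel subgroup of $\GL_{m-2}(\C)$, induction on $z'$ and reassembly produce an element of $\SPIst_{m}$ in the orbit of $z$.

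For injectivity, suppose $\tau,\tau'\in\SPIst_{m}$ with $\tau'=b\tau b^{*}$ for some $b\in B_{m}(\C)$; I claim $\tau=\tau'$, again by induction on $m$. Since the last row of an upper triangular $b$ equals $b_{mm}\transpose{e_{m}}$, one has $\tau'_{mm}=|b_{mm}|^{2}\tau_{mm}$ and $\tau'_{mk}=b_{mm}\sum_{\ell}\tau_{m\ell}\overline{b_{k\ell}}$ for $k<m$. Hence $\tau'_{mm}$ and $\tau_{mm}$ have the same sign and, both lying in $\{0,\pm1\}$, are equal. Reading off the last row then gives three cases paralleling those above: if $\tau_{mm}\neq0$ the comparison forces $b_{km}=0$ for all $k<m$, so $b$ is block diagonal and we reduce to $\Her_{m-1}(\C)$; if $\tau$ has vanishing last row we reduce directly to $\Her_{m-1}(\C)$, the last column of $b$ being immaterial; and if the last row of $\tau$ is $\transpose{e_{j_{0}}}$ with $j_{0}<m$, comparing with the last row of $\tau'$ and using that both $b$ and $b^{-1}$ are upper triangular forces the last row of $\tau'$ to be $\transpose{e_{j_{0}}}$ as well and column $j_{0}$ of $b$ to be a scalar multiple of $e_{j_{0}}$; since in $\SPIst_{m}$ the rows and columns $j_{0},m$ carry the hyperbolic pair orthogonal to the rest, the principal submatrix $\tau[\,([m]\setminus\{j_{0},m\})^{2}\,]$ transforms by the corresponding upper triangular submatrix of $b$, and the inductive hypothesis for size $m-2$ yields equality there, so $\tau=\tau'$.

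The main obstacle is the degenerate case $z_{mm}=0$ with nonzero last column, in both halves of the argument: one must run the elimination — respectively, the comparison of $b$ with the normal form — while staying inside the Borel subgroup, and correctly identify the hyperbolic partner $j$ of the last index, so that the rank-two block splitting off is compatible with the standard flag. Once this bookkeeping is settled the inductions are routine. (Alternatively, injectivity can be derived from the $B_{m}$-invariance of the ranks of the lower-right corner submatrices and the signatures of the lower-right principal submatrices, together with the combinatorial observation that these invariants already determine an element of $\SPIst_{m}$; and the isomorphism \eqref{eq-Z2-SPI=SPIst} then lets one restate the result in terms of $\SPI_{m}$.)
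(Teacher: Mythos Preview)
Your proof is correct. The surjectivity argument is essentially identical to the paper's: both induct on $m$ via Gaussian elimination with the same three-case split on the bottom-right entry $z_{mm}$ (nonzero / zero with zero last column / zero with nonzero last column), and in the third case both normalize the last column to $e_{j}$ with $j$ the largest index of a nonzero entry, then use the resulting ``pinned'' last row to clear row and column $j$. The paper packages the third-case clearing as one explicit $4\times4$-block conjugation, while you do it in individual elementary steps, but the content is the same.

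For injectivity the approaches differ. The paper's primary route (Lemmas~\ref{Lemma-rank-sign-B-invariant} and~\ref{Lemma-rank-and-sign-separate-SPIst}) is to exhibit a family of numerical $B_{m}$-invariants---the ranks $\rank z_{[p]^{c}[q]^{c}}$ of all lower-right rectangular corners and the signatures $\sign z_{[p]^{c}}$ of all lower-right principal submatrices---and then show combinatorially that these already separate elements of $\SPIst_{m}$. Your primary route is a direct inductive comparison of $\tau'=b\tau b^{*}$, peeling off the last row and reducing to size $m-1$ or $m-2$; this is valid (in the hyperbolic case, once you know row $m$ of $\tau'$ has a single nonzero entry and $\tau'_{m,j_{0}}\neq0$, you get $j_{1}=j_{0}$ immediately without invoking $b^{-1}$, and the constraints $b_{14}=b_{23}=b_{34}=0$ then drop out of the other entries of rows $2$ and $4$, yielding the clean $B_{m-2}$-conjugation on the complementary block). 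Your method is more self-contained, needing no auxiliary lemmas; the paper's method has the side benefit of producing an explicit complete system of invariants for the orbit space, which you correctly note as an alternative at the end.
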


The proof is given in the next section.


\section{Gaussian elimination}\label{Section-Gaussian-elimination}

In this section, we prove Proposition \ref{Prop-SPI=(B-Her)} by using a version of Gaussian elimination.   
We only deal with Case \caseA, since we can apply the same proof \textit{mutatis mutandis} to Case \caseB by rereading the symbols according to the list in Notation \ref{Notation-caseA-and-caseB}.  
For the simplicity, we set 
\begin{equation*}
    B_{m} \coloneqq B_{m}(\C)
\end{equation*}
in this section.  


\begin{notation}\label{Convention-submatix}
Recall $ [m] = \{1,2,\dots,m\}$ and $ \dbinom{[m]}{k} $.  
For $ 0 \leq p \leq m $, we put $ [p]^{c} := [m] \setminus [p] = \{ p + 1, \dots, m \} $.  
For a matrix  
\begin{equation*}
A =(a_{ij})_{i\in[m], j \in [n]} \in \Mat_{m, n}(\C) \qquad
\text{ and } \qquad 
I \in \dbinom{[m]}{k}, \;\; J \in \dbinom{[n]}{\ell} , 
\end{equation*}
we write $A_{IJ} \in \Mat_{k, \ell}(\C) $ for the submatrix of $A$ corresponding to the indices in $I$ and $J$:
\begin{equation*}
    A_{IJ} \coloneqq (a_{ij} )_{i\in I, j\in J} \quad \in \Mat_{k, \ell}(\C).
\end{equation*}
%
In the case $I=J$, we also write
\begin{equation*}
    A_{I} \coloneqq A_{I,I} = (a_{ij} )_{i,j\in I} \quad \in \Mat_{k}(\C).
\end{equation*}
\end{notation}

Recall the action of $B_{m}$ on $\Her_{m}(\C)$: 
\(
        b\cdot z = b\,z\,b^{*} \; (b \in B_{m}, \; z \in \Her_{m}(\C)).
\)

\begin{lemma}\label{Lemma-rank-sign-B-invariant}
    The collection of the quantities $\{\rank z_{[p]^{c}[q]^{c}}\}_{0\leq p\leq q < m}$ and $\{\sign z_{[p]^{c}}\}_{0\leq p < m}$ are 
$ B_m $-invariants on $\Her_{m}(\C)$, and they separate the $ B_m $-orbits.  
Here    $\sign$ denotes the signature of a Hermitian matrix,
and $[p]^{c} = [m] \setminus [p] $ (see Notation \ref{Convention-submatix}).

In other words, for $ z, z' \in \Her_{m}(\C) $, 
$ z $ and $ z' $ generate the same $ B_m $-orbit if and only if 
    \begin{align*}
        \rank z_{[p]^{c}[q]^{c}} &= \rank {z'}_{[p]^{c}[q]^{c}} \quad (0\leq p \leq q< m),
\qquad\text{ and }\;
\\
        \sign z_{[p]^{c}} &= \sign {z'}_{[p]^{c}} \quad (0\leq p< m).
    \end{align*}
\end{lemma}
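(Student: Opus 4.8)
The plan is to induct on $m$, peeling off the first row/column at each step. First I would observe that the listed quantities are genuine $B_m$-invariants: if $b \in B_m$ is upper triangular and $z \in \Her_m(\C)$, then for each $p$ the submatrix $(bzb^*)_{[p]^c}$ equals $b_{[p]^c}\, z_{[p]^c}\, (b_{[p]^c})^*$, because the upper-triangular shape of $b$ means rows and columns indexed by $[p]^c = \{p+1,\dots,m\}$ do not interact with the earlier indices on the relevant side; hence $\sign$ of it is unchanged (congruence by the invertible matrix $b_{[p]^c}$) and likewise $\rank (bzb^*)_{[p]^c[q]^c} = \rank z_{[p]^c[q]^c}$ for $p \le q$ by the same block-triangular bookkeeping. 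So the real content is the converse: these invariants are a \emph{complete} system.

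For completeness I would argue as follows. Write a Hermitian matrix $z \in \Her_m(\C)$ in block form with the $(1,1)$-entry $z_{11} \in \R$ split off, the first row $v^* = (z_{12},\dots,z_{1m})$, and the lower-right block $z' = z_{[2,m]} \in \Her_{m-1}(\C)$. The group $B_m$ contains the subgroup fixing $e_1$ up to scalar and acting as $B_{m-1}$ on the last $m-1$ coordinates, together with the "shear" unipotent elements $u$ with $u e_1 = e_1$, $u e_j = e_j + c_j e_1$; and also the diagonal scaling of $e_1$. The case analysis is on whether $z_{11} \ne 0$, or $z_{11} = 0$ but $v \ne 0$, or the whole first row vanishes.
\begin{itemize}
\item[(i)] If $z_{11} \ne 0$, use the shear $u$ and a positive scaling of $e_1$ to clear the first row/column entirely and normalize $z_{11}$ to $\pm 1$ — this is the classical Gaussian step (Schur complement), replacing $z'$ by $z' - z_{11}^{-1} v v^*$. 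The sign of $z_{11}$ is recorded by $\sign z_{[p]^c}$ at $p = 0$ versus $p = 1$, and the reduced matrix $z' - z_{11}^{-1}vv^*$ has its own invariants, which by the block-triangular remark coincide (after the reduction) with the $p,q \ge 1$ invariants of $z$.
\item[(ii)] If $z_{11} = 0$ but $v \ne 0$, then using $B_{m-1}$ (acting on coordinates $2,\dots,m$) I can move the first nonzero entry of $v$ into a canonical slot; one then sees a $2\times 2$ hyperbolic block $\left(\begin{smallmatrix} 0 & * \\ * & * \end{smallmatrix}\right)$ which, after a shear and scaling, becomes $\left(\begin{smallmatrix} 0 & 1 \\ 1 & 0 \end{smallmatrix}\right)$ and can be used to clear the remaining entries of row/column $1$ and of the matched row/column. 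This is exactly the "arc" case of a signed partial involution. Which coordinate $v$ first hits is forced by which submatrices $z_{[p]^c[q]^c}$ jump in rank — i.e. by the rank data.
\item[(iii)] If the entire first row and column of $z$ vanish, then $z$ reduces to $0 \oplus z'$ and we recurse on $z' \in \Her_{m-1}(\C)$; this is the "$0$" case.
\end{itemize}
In each case the $B_m$-orbit of $z$ is reduced to a direct sum of $1\times 1$ blocks $(\pm 1)$, $1\times 1$ zero blocks, and $2\times 2$ hyperbolic blocks $\left(\begin{smallmatrix}0&1\\1&0\end{smallmatrix}\right)$ sitting in prescribed positions — in other words to an element of $\SPIst_m$ — and the positions and signs of these blocks are precisely what the invariants $\{\rank z_{[p]^c[q]^c}\}$ and $\{\sign z_{[p]^c}\}$ see. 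Tracking this carefully shows two matrices with the same invariants reduce to the same normal form, hence lie in the same orbit.

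The main obstacle I anticipate is the bookkeeping in case (ii): one must check that after installing the hyperbolic block and clearing, the induction hypothesis applies cleanly to a genuinely $(m-2)$-dimensional (or appropriately indexed) remaining block, and that the invariants restrict correctly — in particular that the rank jumps $\rank z_{[p]^c[q]^c}$ distinguish "the arc starts at position $1$ and ends at position $i$" for the various $i$, and that no two distinct placements of arcs/signs yield the same full tuple of ranks and signatures. A clean way to handle this is to prove simultaneously, by the same induction, that the reduction terminates in $\SPIst_m$ and that $\tau \in \SPIst_m$ is recovered from its invariants by an explicit formula (e.g. $\sign z_{[p]^c}$ for all $p$ recovers the diagonal signs, and the rank pattern recovers the arcs), which is really the substance of Proposition \ref{Prop-SPI=(B-Her)} that this lemma is meant to feed into; so it may be cleanest to interleave the two proofs rather than treat Lemma \ref{Lemma-rank-sign-B-invariant} in isolation.
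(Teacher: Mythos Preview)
Your proposal is correct and follows essentially the same strategy as the paper: prove invariance by a block-triangular computation, then obtain completeness by Gaussian elimination to the normal forms in $\SPIst_m$ together with the observation that the invariants distinguish those normal forms. Your final remark is exactly right: the paper does \emph{not} prove completeness inside Lemma~\ref{Lemma-rank-sign-B-invariant} itself but postpones it, first proving that the invariants separate elements of $\SPIst_m$ (Lemma~\ref{Lemma-rank-and-sign-separate-SPIst}) and that Gaussian elimination reaches $\SPIst_m$ (Proposition~\ref{Prop-SPI=(B-Her)}), and only afterwards remarks that these two together finish Lemma~\ref{Lemma-rank-sign-B-invariant}. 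The one substantive organizational difference is that the paper peels off the \emph{last} row/column rather than the first; since the invariants are defined via lower-right submatrices $z_{[p]^c[q]^c}$, this choice makes the recursive reconstruction of $\tau\in\SPIst_m$ from its invariants (your ``explicit formula'') cleaner---it runs from $p=m-1$ down to $p=0$. Your first-row version works too, but note that with $b'\in B_{m-1}$ acting via $(b')^*$ on the row $v^*$, you normalize to $e_\ell$ where $\ell$ is the \emph{last} nonzero position of $v$, not the first; the bookkeeping in your case~(ii) should be adjusted accordingly.
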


\begin{proof}
Let $b\in B_{m}$ and $z\in \Her_{m}(\C)$, and write these matrices as block matrices corresponding to the partition $m=p+q+(m-p-q)$:
\begin{equation*}
    b=
    \begin{pmatrix}
        b_{11} & b_{12} & b_{13} \\
        0      & b_{22} & b_{23} \\
        0      & 0      & b_{33}
    \end{pmatrix},
    \quad
    z 
    = \begin{pmatrix}
        z_{11}     & z_{12}     & z_{13} \\
        z_{21} & z_{22}     & z_{23} \\
        z_{31} & z_{32} & z_{33} \\
    \end{pmatrix}
    = \begin{pmatrix}
        z_{11}^{*} & z_{12}     & z_{13} \\
        z_{12}^{*} & z_{22}^{*} & z_{23} \\
        z_{13}^{*} & z_{23}^{*} & z_{33}^{*} \\
    \end{pmatrix}.
\end{equation*}
We get
\begin{equation*}
    \begin{split}
        b \cdot z 
        & =
        \begin{pmatrix}
          \sum_{i = 1}^3 b_{1i}z_{i1} \smallvstrut
            &
          \sum_{i = 1}^3 b_{1i}z_{i2}
            &
          \sum_{i = 1}^3 b_{1i}z_{i3}
            \\
          \sum_{i = 2}^3 b_{2i}z_{i1} \smallvstrut
            &
          \sum_{i = 2}^3 b_{2i}z_{i2}
            &
          \sum_{i = 2}^3 b_{2i}z_{i3}
            \\
            b_{33}z_{31} & b_{33}z_{32} & b_{33}z_{33}
        \end{pmatrix}
        \begin{pmatrix}
            b_{11}^{*} & 0           & 0 \\
            b_{12}^{*} & b_{22}^{*}  & 0 \\
            b_{13}^{*} & b_{23}^{*}  & b_{33}^{*}
        \end{pmatrix}
        \\
        &=
        \begin{pmatrix}
            \star & \star & (b_{11}z_{13} + b_{12}z_{23}     + b_{13}z_{33})b_{33}^{*} 
            \\
            \star & \star & (b_{22}z_{23}     + b_{23}z_{33})b_{33}^{*}
            \\
            \star & \star & b_{33}z_{33}b_{33}^{*}
        \end{pmatrix},
    \end{split}
\end{equation*}
where ``$\star$'' denotes a matrix that does not matter in this proof.
Thus we know
\begin{equation*}
    \sign (b\cdot z)_{[p+q]^{c}} = \sign b_{33}z_{33}b_{33}^{*} = \sign z_{33} = \sign z_{[p+q]^{c}}
\end{equation*}
and
\begin{equation*}
    \rank (b\cdot z)_{[p]^{c}[p+q]^{c}} 
    =
    \rank \begin{pmatrix}
            (b_{22}z_{22} + b_{23}z_{33})b_{33}^{*}
            \\
            b_{33}z_{33}b_{33}^{*}
        \end{pmatrix}
    =
    \rank \begin{pmatrix}
            z_{23}
            \\
            z_{33}
        \end{pmatrix}
    = \rank z_{[p]^{c}[p+q]^{c}},
\end{equation*}
which proved $ \rank z_{[p]^{c}[p+q]^{c}} $ and $ \sgn z_{[p+q]^{c}} $ are in fact $ B_m $-invariants.  
We postpone the proof that they are complete invariants separating the orbits, and prove it later 
after the proof of Proposition \ref{Prop-SPI=(B-Her)}.  
\end{proof}

\begin{lemma}\label{Lemma-rank-and-sign-separate-SPIst}
    The invariants $\{\rank z_{[p]^{c}[q]^{c}}\}_{0\leq p \leq q < m}$ and $\{\sign z_{[p]^{c}}\}_{0\leq p < m}$ of $B_{m}$-orbits separate elements in $\SPIst_{m} \subset \Her_{m}(\C)$, 
so that signed partial involutions in $ \SPIst_{m} $ generate distinct orbits.
\skipover{
    In other words, for $\tau, \tau' \in \SPIst_{m}$ with $\tau \neq \tau'$, there exists $0\leq p<q< m$ such that we have
    \begin{equation*}
        \rank \tau_{[p]^{c}[q]^{c}} \neq \rank \tau_{[p]^{c}[q]^{c}}',
    \end{equation*}
     or $0\leq p< m$ such that 
     \begin{equation*}
        \sign \tau_{[p]^{c}} \neq \sign \tau_{[p]^{c}}'.
    \end{equation*}
}
\end{lemma}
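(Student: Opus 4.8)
The plan is to show that a signed partial involution $\tau \in \SPIst_m$ is completely reconstructed from the numerical data $\{\rank \tau_{[p]^c[q]^c}\}_{0\le p\le q<m}$ together with $\{\sign \tau_{[p]^c}\}_{0\le p<m}$; then two distinct elements of $\SPIst_m$ must differ in at least one of these invariants. Since $\tau$ is a signed partial involution, its structure is combinatorially rigid: each column (equivalently each row) is either zero, or carries a single entry $1$ on the diagonal, or a single entry $-1$ on the diagonal, or participates in an off-diagonal $2$-cycle, i.e.\ there are indices $i<j$ with $\tau_{ij}=\tau_{ji}=1$ (the $\SPIst$ normalization forces the off-diagonal nonzero entries to be $+1$, and a diagonal $-1$ cannot be removed, as noted in Example \ref{Example-SPI^st}). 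So recovering $\tau$ amounts to recovering, for each index $k\in[m]$, which of these four roles it plays and, in the arc case, to whom it is matched.

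First I would read off the diagonal sign data. For a fixed $p$ with $0\le p<m$, the submatrix $\tau_{[p]^c}$ is again a signed partial involution supported on $\{p+1,\dots,m\}$, and its signature equals $(\text{number of }+1\text{ diagonal entries among indices}>p) - (\text{number of }-1\text{ diagonal entries among indices}>p)$, because each off-diagonal $2$-cycle $\begin{pmatrix}0&1\\1&0\end{pmatrix}$ sitting entirely inside $[p]^c$ contributes signature $0$, and an arc with exactly one endpoint $\le p$ contributes a row/column that is zero inside $\tau_{[p]^c}$. Taking successive differences $\sign\tau_{[p-1]^c}-\sign\tau_{[p]^c}$ isolates the contribution of index $p$: it is $+1$, $-1$, or $0$ according as $\tau_{pp}=+1$, $\tau_{pp}=-1$, or $p$ lies on an arc (or is a zero index). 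This determines the position of every diagonal $\pm 1$.

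Next I would recover the arcs (and hence distinguish zero indices from arc endpoints) from the rank data. For $p\le q$, the submatrix $\tau_{[p]^c[q]^c}$ picks out rows indexed by $\{p+1,\dots,m\}$ and columns indexed by $\{q+1,\dots,m\}$; its rank counts exactly the nonzero entries of $\tau$ whose row index is $>p$ and whose column index is $>q$ (nonzero entries of a signed partial permutation in distinct rows and columns are automatically linearly independent). A diagonal $\pm1$ at index $k$ contributes iff $k>q$; an arc $\{i,j\}$ with $i<j$ contributes its two entries $\tau_{ij}$ (row $i$, column $j$) and $\tau_{ji}$ (row $j$, column $i$) according to whether $i>p, j>q$ and $j>p, i>q$ respectively. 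Since the diagonal contributions are already known from the previous step, subtracting them off leaves a function of $(p,q)$ that counts arcs according to the position of their two endpoints relative to the thresholds $p$ and $q$; a standard inclusion–exclusion in $p$ and $q$ (taking second differences) then recovers, for each ordered pair $i<j$, whether $\{i,j\}$ is an arc of $\tau$. This pins down $\tau$ completely, proving that the listed invariants separate the elements of $\SPIst_m$.

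The main obstacle is the bookkeeping in the last step: one must carefully verify that the rank of $\tau_{[p]^c[q]^c}$ is literally the number of surviving nonzero entries (using that $\tau$ is a signed \emph{partial permutation}, so no cancellation occurs), and then run the double inclusion–exclusion cleanly enough to extract each individual arc rather than merely aggregate counts. Everything else — the diagonal sign extraction and the four-way case analysis of an index's role — is routine once the rigidity of signed partial involutions is exploited. I expect this lemma, combined with Lemma \ref{Lemma-rank-sign-B-invariant} and the well-definedness/surjectivity half of Proposition \ref{Prop-SPI=(B-Her)}, to close the loop: distinct elements of $\SPIst_m$ have distinct invariants, hence lie in distinct $B_m$-orbits.
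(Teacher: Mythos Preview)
Your approach is correct and follows the same strategy as the paper: reconstruct $\tau$ entry by entry from the invariants. The paper organises this as a row-by-row induction from $p=m-1$ down to $p=0$: at each step the \emph{triple} signature difference $\sign\tau_{[p]^c}-\sign\tau_{[p+1]^c}$ (note that the paper's $\sign$ is the triple $(n_+,n_-,n_0)$, not the integer $n_+-n_-$ you seem to be using) already distinguishes four cases --- diagonal $+1$, diagonal $-1$, zero row, or ``smaller endpoint of a new arc'' --- and in the last case a single first difference $\rank\tau_{[p]^c[p+j]^c}-\rank\tau_{[p+1]^c[p+j]^c}$ locates the arc partner. Your two-pass version (signature differences for the diagonal $\pm 1$'s, then second differences of ranks to pick out all off-diagonal nonzeros) is a harmless repackaging of the same computation; both rest on the observation that for a signed partial permutation $\rank\tau_{[p]^c[q]^c}$ literally counts nonzero entries with row index $>p$ and column index $>q$, so successive differences peel off individual entries. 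Your use of the integer signature is weaker than the triple but still sufficient, since your rank pass afterward separates zero indices from arc endpoints.
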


\begin{proof}
Let us prove that $\tau \in \SPIst_{m}$ can be recovered from $\{\rank \tau_{[p]^{c}[q]^{c}}\}_{0\leq p \leq q < m}$ and $\{\sign \tau_{[p]^{c}}\}_{0\leq p < m}$.
We will reconstruct the submatrices $\tau_{[p]^{c}}$ of $\tau$ starting from $p=m-1$ to $p=0$ in decreasing order.  
(Note that $\tau_{[0]^{c}}=\tau$.)

For $p=m-1$, since $\tau_{[p]^{c}} = \tau_{\{m\}}$, it is determined by the signature, namely:
    \begin{equation*}
        \tau_{[p]^{c}} = 
        \begin{cases}
            1 & \text{if }\sign \tau_{[p]^{c}} = (1,0,0), \\
            -1& \text{if }\sign \tau_{[p]^{c}} = (0,1,0), \\
            0 & \text{if }\sign \tau_{[p]^{c}} = (0,0,1). 
        \end{cases}
    \end{equation*}

    Assume that we already construct $\tau_{[p+1]^{c}} = \tau_{\{p+2, p+3, \dots, m\}}$. 
    Then, the submatrix $\tau_{[p]^{c}} = \tau_{\{p+1, p+2, \dots, m\}}$ is determined by the formula:
    \begin{equation}
        \tau_{[p]^{c}} =
        \begin{cases}
            \begin{pmatrix}
                1 & 0 \\ 0 & \tau_{[p+1]^{c}}
            \end{pmatrix}
            & \text{ if } \sign \tau_{[p]^{c}} = (1,0,0 ) + \sign\tau_{[p+1]^{c}}, \\
            \begin{pmatrix}
                -1 & 0 \\ 0 & \tau_{[p+1]^{c}}
            \end{pmatrix}
            & \text{ if } \sign \tau_{[p]^{c}} = (0,1,0 ) + \sign\tau_{[p+1]^{c}}, \\
            \begin{pmatrix}
                0 & 0 \\ 0 & \tau_{[p+1]^{c}}
            \end{pmatrix}
            & \text{ if } \sign \tau_{[p]^{c}} = (0,0,1 ) + \sign\tau_{[p+1]^{c}}, \\
            \begin{pmatrix}
                0 & {}^{t}e_{k} \\ e_{k} & \tau_{[p+1]^{c}}
            \end{pmatrix}
            & \text{ if } \sign \tau_{[p]^{c}} = (1,1,-1 ) + \sign\tau_{[p+1]^{c}} ,
        \end{cases}
    \label{eq-def-tau_p-c}
    \end{equation}
    where $e_{k} $ denotes the $k$-th standard basis vector, 
and $k\geq 1$ is an integer defined by
    \begin{equation}
        k \coloneqq \max \{ 1\leq j \leq m-(p+1) \mid \rank \tau_{[p]^{c}[p+j]^{c}} - \rank \tau_{[p+1]^{c}[p+j]^{c}} = 1 \}.
    \label{eq-def-k-in-the-proof}
    \end{equation}
Note that $\tau \in \SPIst_{m}$ implies that 
\begin{equation*}
    \rank \tau_{[p]^{c}[p+j]^{c}} - \rank \tau_{[p+1]^{c}[p+j]^{c}}
    =
    \rank \tau_{\{p+1\}[p+j]^{c}}
\end{equation*}
under Notation \ref{Convention-submatix}.
Therefore, \eqref{eq-def-k-in-the-proof} means that 
the nonzero entry of the $(p+1)$-th row of $\tau$ is the $(p+1, p+1+k)$-th entry (which is equal to the $(1,1+k)$-th entry of $\tau_{[p]^{c}}$).
This completes the proof.
\end{proof}

\begin{example}\label{Ex-formula-of-k}
    Let us consider 
%
    \begin{equation*}
        \tau = 
        \begin{pmatrix}
            0 & 0 & 0 & 0 \\
            0 & 0 & 1 & 0 \\
            0 & 1 & 0 & 0 \\
            0 & 0 & 0 & 1 \\
        \end{pmatrix} \in \SPIst_{4} \subset  \Her_{4}(\C).
    \end{equation*}
    Then, we get
    \begin{equation*}
        \tau_{[0]^{c}} 
= \tau,
        \quad
        \tau_{[1]^{c}} 
=
        \begin{pmatrix}
            0 & 1 & 0 \\
            1 & 0 & 0 \\
            0 & 0 & 1 \\
        \end{pmatrix},
        \quad
        \tau_{[2]^{c}} 
= 
        \begin{pmatrix}
            0 & 0 \\
            0 & 1 \\
        \end{pmatrix},
        \quad
        \tau_{[3]^{c}} 
=
        1,
    \end{equation*}
since $ [0]^{c} = [4], [1]^{c} = \{2,3,4\}, [2]^{c} = \{3,4\}, [3]^{c} = \{4\} $.  
The signatures are given by 
    \begin{equation*}
\begin{aligned}
        \sign \tau_{\{1,2,3,4\}} &= (2,1,1),
        \quad &
        \sign \tau_{\{2,3,4\}} &= (2,1,0),
\\
        \sign \tau_{\{3,4\}} &= (1,0,1),
        \quad &
        \sign \tau_{\{4\}} &= (1,0,0).
\end{aligned}
    \end{equation*}
    Thus, we have
    \begin{equation*}
        \sign \tau_{[1]^{c}} = (1,1,-1) + \sign \tau_{[2]^{c}}
    \end{equation*}
    (namely, if we set $p=1$, then we are in the setting of the fourth case of \eqref{eq-def-tau_p-c}). 
    In this case, consider 
    \begin{equation*}
        \tau_{[1]^{c}[2]^{c}} = 
        \begin{pmatrix}
            1 & 0 \\
            0 & 0 \\
            0 & 1 \\
        \end{pmatrix},
        \quad
        \tau_{[1]^{c}[3]^{c}} = 
        \begin{pmatrix}
            0 \\
            0 \\
            1 \\
        \end{pmatrix}
\quad   \text{ and } \quad
        \tau_{[2]^{c}[2]^{c}} = 
         \begin{pmatrix}
            0 & 0 \\
            0 & 1 \\
        \end{pmatrix},
        \quad
        \tau_{[2]^{c}[3]^{c}} = 
        \begin{pmatrix}
            0 \\
            1
        \end{pmatrix},
    \end{equation*}
    which implies 
    \begin{equation*}
        \rank \tau_{[1]^{c}[2]^{c}} = 2,
        \;\;
        \rank \tau_{[1]^{c}[3]^{c}} = 1
\quad \text{ and } \quad
        \rank \tau_{[2]^{c}[2]^{c}} = 1,
        \;\;
        \rank \tau_{[2]^{c}[3]^{c}} = 1.
    \end{equation*}
    Thus, we have
    \begin{equation*}
        \rank \tau_{[1]^{c}[2]^{c}} - \rank \tau_{[2]^{c}[2]^{c}} = 1\quad
        \text{ and }\quad
        \rank \tau_{[1]^{c}[3]^{c}} - \rank \tau_{[2]^{c}[3]^{c}} = 0,
    \end{equation*}
    which implies $k= 1$ in this case (under the notation of \eqref{eq-def-k-in-the-proof} in the proof of Lemma \ref{Lemma-rank-and-sign-separate-SPIst}).
\end{example}


\begin{proof}[Proof of Proposition \ref{Prop-SPI=(B-Her)}]
From the lemma above, it is clear that 
any two elements $\tau$ and $\tau'$ of $\SPIst_{m}$ belongs to different $B_{m}$-orbits, 
which implies the injectivity of the map \eqref{eq-map-from-SPIst-to-B-Her(C)}.
Thus, let us prove the surjectivity.  

We prove that any element $ z \in \Her_{m}(\C)$ can be transformed into an element of $\SPIst_{m}$ by the action of $B_{m}$.
We prove it by the induction on $ m $.  
Let us show that any $z\in \Her_{m}(\C)$ can be transformed by the action of $B_{m}$ to the one of the elements of the following form:
\begin{equation*}
    \begin{pmatrix}
        \star & 0 \\
        0 & 1
    \end{pmatrix},
    \quad
    \begin{pmatrix}
        \star & 0 \\
        0 & -1
    \end{pmatrix},
    \quad
    \begin{pmatrix}
        \star & 0 \\
        0 & 0
    \end{pmatrix},
    \quad
    \begin{pmatrix}
        \star & 0 & \star & 0 \\
        0     & 0 & 0     & 1 \\
        \star & 0 & \star & 0 \\
        0     & 1 & 0     & 0
    \end{pmatrix},
\end{equation*}
where ``$\star$'' stands for some matrix.
If this is proven, 
an easy induction implies the  proposition.

We write 
\begin{equation*}
    z=
    \begin{pmatrix}
        z_{11}     & z_{12} \\
        z_{12}^{*} & z_{22}
    \end{pmatrix},
    \text{ where }
    z_{11} \in \Her_{m-1}(\C),\:
    z_{12} \in \mathrm{M}_{m-1,1}(\C),\:
    z_{22} \in \Her_{1}(\C) = \mathbb{R}.
\end{equation*}
Similarly, for $b\in B_{m}$, we write 
\begin{equation*}
    b=
    \begin{pmatrix}
        b_{11} & b_{12} \\
        0      & b_{22}
    \end{pmatrix},
    \text{ where }
    b_{11} \in B_{m-1},\:
    b_{12} \in \mathrm{M}_{m-1,1}(\C),\:
    b_{22} \in B_{1} = \GL_{1}(\C).
\end{equation*}
Using these notations, we calculate 
\begin{equation}
    \begin{split}
        b z b^{*} &=
        \begin{pmatrix}
            b_{11} & b_{12} \\
            0      & b_{22}
        \end{pmatrix}
        \begin{pmatrix}
            z_{11}     & z_{12} \\
            z_{12}^{*} & z_{22}
        \end{pmatrix}
        \begin{pmatrix}
            b_{11}^{*} & 0 \\
            b_{12}^{*} & b_{22}^{*}
        \end{pmatrix}
        \\
        &=
        \begin{pmatrix}
            b_{11}z_{11} + b_{12}z_{12}^{*}
            &
            b_{11}z_{12} + b_{12}z_{22} 
            \\
            b_{22}z_{12}^{*} 
            &
            b_{22}z_{22}
        \end{pmatrix}
        \begin{pmatrix}
            b_{11}^{*} & 0 \\
            b_{12}^{*} & b_{22}^{*}
        \end{pmatrix}
        \\
        &=
        \begin{pmatrix}
            (b_{11}z_{11} + b_{12}z_{12}^{*})b_{11}^{*}
            +
            (b_{11}z_{12} + b_{12}z_{22})b_{12}^{*}
            &
            (b_{11}z_{12} + b_{12}z_{22})b_{22}^{*}
            \\
            b_{22}(z_{12}^{*}b_{11}^{*} + z_{22}b_{12}^{*})
            &
            z_{22}|b_{22}|^{2}
        \end{pmatrix}.
    \end{split}
\label{eq-bzb^*}
\end{equation}
Now let us divide the cases.

First we assume $z_{22}\neq 0$. Then by putting $b_{22}=|z_{22}|^{-\frac{1}{2}}$, we can assume $z_{22}=\pm 1$,  
and the equation \eqref{eq-bzb^*} becomes
\begin{equation*}
    \begin{pmatrix}
        (b_{11}z_{11} + b_{12}z_{12}^{*})b_{11}^{*}
        +
        (b_{11}z_{12} \pm b_{12})b_{12}^{*}
        &
        (b_{11}z_{12} \pm b_{12})b_{22}^{*}
        \\
        b_{22}(z_{12}^{*}b_{11}^{*} \pm b_{12}^{*})
        &
        \pm 1
    \end{pmatrix}.  
\end{equation*}
Thus taking $b_{12} = \mp b_{11}z_{12}$, it becomes 
\begin{equation*}
    =
    \begin{pmatrix}
        (b_{11}z_{11} + b_{12}z_{12}^{*})b_{11}^{*} + (b_{11}z_{12} \pm b_{12})b_{12}^{*}
        & 0 \\
        0 & \pm 1
    \end{pmatrix},
\end{equation*}
and we are done.

Next, let us assume $z_{22}=0$.  
In this case, the equation \eqref{eq-bzb^*} becomes
\begin{equation*}
    bzb^{*} = 
    \begin{pmatrix}
        (b_{11}z_{11} + b_{12}z_{12}^{*})b_{11}^{*}   +   b_{11}z_{12}b_{12}^{*}  &  b_{11}z_{12}b_{22}^{*}
        \\
        b_{22}z_{12}^{*}b_{11}^{*} & 0
    \end{pmatrix}.
\end{equation*}
If $z_{12}=0$, we are done.   
Thus we can assume $z_{12}\neq 0$ (and $z_{22}=0)$.  
By choosing appropriate elements $b_{11} \in B_{m-1}$ and $b_{22} \in \GL_{1}(\C)$, we get
%
\begin{equation}
    b_{11}z_{12}b_{22}^{*}=e_{k} \in \mathrm{M}_{m-1,1}(\C),
\end{equation}
where $e_{k}$ is the $k$-th standard basis vector.  
So we can assume $z_{12}=e_{k}$ from the beginning.  
In this case, where $z_{22}=0 $ and $ z_{12}=e_{k}$, we rewrite $z\in \Her_{m}(\C)$ as
\begin{equation*}
    z=
    \begin{pmatrix}
        z_{11}     & z_{12}     & z_{13} & 0 \\
        z_{12}^{*} & z_{22}     & z_{23} & 1 \\
        z_{13}^{*} & z_{23}^{*} & z_{33} & 0 \\
        0          & 1          & 0      & 0
    \end{pmatrix},
    \text{ where }
    z_{11} \in \Her_{k-1}(\C),\:
    z_{22} \in \Her_{1}(\C),\:
    z_{33} \in \Her_{m-k-1}(\C).
\end{equation*}
Then, the conjugation by 
\begin{equation*}
    b=
    \begin{pmatrix}
        1 & 0 & 0 & -z_{12} \\
        0 & 1 & 0 & -z_{22}/2 \\
        0 & 0 & 1 & -z_{23}^{*} \\
        0 & 0 & 0 & 1
    \end{pmatrix} \in B_{m}
%
\quad \text{ results in } \quad 
%
    bzb^{*} 
    = 
\skipover{
    \begin{pmatrix}
        1 & 0 & 0 & -z_{12} \\
        0 & 1 & 0 & -z_{22}/2 \\
        0 & 0 & 1 & -z_{23}^{*} \\
        0 & 0 & 0 & 1
    \end{pmatrix}
    \begin{pmatrix}
        z_{11}     & z_{12}     & z_{13} & 0 \\
        z_{12}^{*} & z_{22}     & z_{23} & 1 \\
        z_{13}^{*} & z_{23}^{*} & z_{33} & 0 \\
        0          & 1          & 0      & 0
    \end{pmatrix}
    \begin{pmatrix}
        1 & 0 & 0 & 0 \\
        0 & 1 & 0 & 0 \\
        0 & 0 & 1 & 0  \\
        -z_{12}^{*} & -z_{22}^{*}/2 & -z_{23} & 1
    \end{pmatrix}
    \\
    &=
    \begin{pmatrix}
        z_{11}     & 0          & z_{13} & 0 \\
        z_{12}^{*} & z_{22}/2   & z_{23} & 1 \\
        z_{13}^{*} & 0          & z_{33} & 0 \\
        0          & 1          & 0      & 0
    \end{pmatrix}
    \begin{pmatrix}
        1 & 0 & 0 & 0 \\
        0 & 1 & 0 & 0 \\
        0 & 0 & 1 & 0  \\
        -z_{12}^{*} & -z_{22}^{*}/2 & -z_{23} & 1
    \end{pmatrix}
    \\
    &=
}
    \begin{pmatrix}
        z_{11}     & 0          & z_{13} & 0 \\
        0          & 0          & 0      & 1 \\
        z_{13}^{*} & 0          & z_{33} & 0 \\
        0          & 1          & 0      & 0
    \end{pmatrix},
\end{equation*}
after tedious calculations, which we omit.
\end{proof}

Now Lemma \ref{Lemma-rank-and-sign-separate-SPIst} together with the proof of Proposition \ref{Prop-SPI=(B-Her)} 
show the invariants in Lemma \ref{Lemma-rank-sign-B-invariant} are complete (we did not use the completeness to prove Lemma \ref{Lemma-rank-and-sign-separate-SPIst} and Proposition \ref{Prop-SPI=(B-Her)}).

\section{Orbits of Borel subgroups in the Hermitian Lagrangian Grassmannian via the signed partial permutations}\label{Section-BGP-via-SPP}

In this section, we prove Theorem \ref{theorem-Omega=H-G-PS}.
We note that the arguments in this section are similar to that of Section \ref{Section-Orbit-decomp-of-B-LM-GL}.
Until Theorem \ref{theorem-Omega=H-G-PS-2}, we do not specify that we are in Case \caseA or \caseB (of Setting \ref{Setting-DFVR}).  
The arguments below equally work for the both cases.

Recall the definition of $\regOmega_{n}$.

\begin{definition}\label{RE:Def-SPP-and-regOmega}\label{Def-Omega-I}
    A matrix $ \tau \in \Mat_{n}(\Z) $ is called a {signed partial permutation} if 
    its entries belong to the set $ \{-1,  0, 1 \} $ and there is at most one nonzero entry in each row and also in each column.  
    We write $ \SPP_n $ for the set of all signed partial permutations and put 
    \begin{equation*}
        \regOmega_{n}
        =
        \left\{
            \omega_{\tau_{1},\tau_{2}} =   
            \begin{pmatrix}
                \tau_{1} \\ \tau_{2}
            \end{pmatrix}
            \in \Mat_{2n,n}(\Z)
        \Bigm|
        \begin{array}{c}
             \tau_{1}, \tau_{2} \in \SPP_{n}  \\
             {}^{t}\tau_{1}\tau_{2} \in \Sym_{n}(\Z) \\
             \rank (\omega_{\tau_{1},\tau_{2}}) = n
        \end{array}
        \right\} .
    \end{equation*}
For $I\subset [n] = \{1,2,\dots,n\}$, we define a subset $\Omega^{I}_{n}$ of $\regOmega_{n}$ by
\begin{equation*}
\Omega^{I}_{n} \coloneqq	
    \left\{
        \omega_{\tau_{1},\tau_{2}}=
        \begin{pmatrix}
            \tau_{1} \\ \tau_{2}
        \end{pmatrix}   \in \regOmega_{n}
    \Bigm| 
         \ech  \tau_{2} = I
    \right\} = \mathrm{LM}_{2n,n}^{I} \cap \Mat_{2n,n}(\Z),
\end{equation*}
where the \'{e}chlon map $\ech $ is defined in the equation \eqref{eq-def-ech}.
\end{definition}

Note that $ \Mat_{2n, n}(\Z) $ inherits the natural action of $ \GL_{2n}(\Z) \times \GL_n(\Z) $ by the left and right multiplications.  
We identify $ \ee = (\ee_1, \dots, \ee_n) \in \Z_{2}^{n} $ with a diagonal matrix in $ \GL_n(\Z) $ and denote it by the same notation.  
Thus $ \ee \in \Z_{2}^{n} $ acts on $ \Mat_{2n, n}(\Z) $, 
and at the same time $ h(\ee) = \diag(\ee, \ee) \in \GL_{2n}(\Z) $ also  acts on it by the left multiplication.  
The set $ \regOmega_n $ is stable under the action of $(\Z_{2}^{n},\Z_{2}^{n}\rtimes \Sgroup_{n})$.


The Lemma \ref{Lem-Omega=coprod-Omega-I} given below is clear from the definitions.

\begin{lemma}\label{Lem-Omega=coprod-Omega-I}\label{Lem-Omega-I-stable}\label{Lem-orbit-decomp-Omega=coprod-Omega-I}
There is a natural disjoint decomposition 
\begin{equation*}
\regOmega_{n} = \coprod_{I\subset[n]} \Omega^{I}_{n}, 
\end{equation*}
which is $(\Z_{2}^{n},\Z_{2}^{n}\rtimes \Sgroup_{n})$-stable.  
\end{lemma}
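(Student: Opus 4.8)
This is the signed-matrix counterpart of Lemma~\ref{Lem-LM=coprod-LM-I}, and I would argue exactly as there. The disjoint decomposition is immediate from the definitions: every $\omega_{\tau_{1},\tau_{2}}\in\regOmega_{n}$ has a well-defined échelon set $\ech\tau_{2}\subset[n]$ (via \eqref{eq-def-ech}), so it lies in $\Omega^{I}_{n}$ for exactly one subset, namely $I=\ech\tau_{2}$; conversely each $\Omega^{I}_{n}$ is a subset of $\regOmega_{n}$ by construction. Hence $\regOmega_{n}=\coprod_{I\subset[n]}\Omega^{I}_{n}$.

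For the stability claim I would first recall that $\regOmega_{n}$ itself is $(\Z_{2}^{n},\Z_{2}^{n}\rtimes\Sgroup_{n})$-stable (Remark~\ref{Remark-reg-Omega-W-Cn-set}), so it suffices to show that the assignment $\omega_{\tau_{1},\tau_{2}}\mapsto\ech\tau_{2}$ is constant on each orbit. Under the action the lower block is sent to $\ee\,\tau_{2}\,w$, where $\ee=\diag(\ee_{1},\dots,\ee_{n})$ is the diagonal matrix coming from the left action of $h(\ee)=\diag(\ee,\ee)$, and $w\in\Z_{2}^{n}\rtimes\Sgroup_{n}\subset\GL_{n}(\Z)$ is the right factor. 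Right multiplication by $w\in\GL_{n}$ leaves $\ech$ unchanged, since the reduced column echelon form depends only on the linear span $V$ of the columns of $\tau_{2}$, which is invariant under right multiplication by $\GL_{n}$. For the left diagonal factor I would use the characterization
\[
\ech\tau_{2}=\bigl\{\,i\in[n]\ \big|\ \dim\pi_{\le i}(V)>\dim\pi_{\le i-1}(V)\,\bigr\},
\]
where $\pi_{\le i}\colon\C^{n}\to\C^{i}$ is the projection onto the first $i$ coordinates (with $\pi_{\le 0}$ the zero map); this follows from \eqref{eq-Row-echelon-form} by observing that $\dim\pi_{\le i}(V)=\#(\ech\tau_{2}\cap[i])$. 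Since $\pi_{\le i}(\ee V)=\diag(\ee_{1},\dots,\ee_{i})\,\pi_{\le i}(V)$ and $\diag(\ee_{1},\dots,\ee_{i})$ is invertible, all these dimensions, and therefore $\ech(\ee\tau_{2})$, agree with those of $\tau_{2}$. Combining the two observations, $\ech$ is orbit-constant, every $\Omega^{I}_{n}$ is stable, and $\regOmega_{n}=\coprod_{I\subset[n]}\Omega^{I}_{n}$ is the asserted decomposition.

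There is essentially no obstacle here; the only point one should not overlook is that the $\Sgroup_{n}$-part of the action is realized by \emph{right} multiplication (permuting the columns of the $\tau_{i}$), not by a permutation of the rows of $\tau_{2}$ — the latter would move pivot positions and change $\ech$, so the identification $\Omega^{I}_{n}=\mathrm{LM}_{2n,n}^{I}\cap\Mat_{2n,n}(\Z)$ is what makes the statement a direct specialization of Lemma~\ref{Lem-LM=coprod-LM-I}.
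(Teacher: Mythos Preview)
Your proof is correct and follows the same approach as the paper: both identify $\Omega^{I}_{n}$ as the fiber $\ech^{-1}(I)$ of the échelon map, which immediately gives the disjoint decomposition. The paper's proof is terser and leaves the $(\Z_{2}^{n},\Z_{2}^{n}\rtimes\Sgroup_{n})$-stability implicit (relying on the analogy with Lemma~\ref{Lem-LM=coprod-LM-I}), whereas you spell out why $\ech\tau_{2}$ is unchanged under both the left diagonal action and the right signed-permutation action; your dimension-jump characterization of $\ech$ is a clean way to handle the left factor.
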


\begin{proof}
Regarding $\ech $ as a function $\ech \colon \regOmega_{n} \to \mathcal{P}([n])$, where $\mathcal{P}([n])$ is the power set of $[n]$,
we have $\Omega^{I}_{n}=\ech ^{-1}(I)$, which completes the proof.
\end{proof}

Thus, we have a partition of $\regOmega_{n}$ by $(\Z_{2}^{n},\Z_{2}^{n}\rtimes \Sgroup_{n})$-invariant subsets.
For the double coset decomposition $\Z_{2}^{n} \backslash  \Omega^{I}_{n} /(\Z_{2}^{n} \rtimes \Sgroup_{n})$, 
we have an analogue of Lemma \ref{Lem-(B-Her)=(B-LM-I/GL)}.

\begin{proposition}\label{Prop-Omega-I=SPI}
Pick $I \subset [n] $
and put $m\coloneqq \#I$. Then, we get a bijection
    \begin{equation}
        \begin{array}{ccc}
             \Z_{2}^{m} \backslash \SPI_{m}
             & \xrightarrow{\sim} &  
             \Z_{2}^{n} \backslash \Omega^{I}_{n} / (\Z_{2}^{n} \rtimes \Sgroup_{n})
             \\
             \rotatebox{90}{$\in$} & & \rotatebox{90}{$\in$}
             \\
             \tau & \mapsto & 
             \Z_{2}^{n} h(\sigma_{I})
             \begin{pmatrix}
                 \tau & 0 \\ 
                 0    & \unitmatrix_{n-m} \\
                 \unitmatrix_{m}& 0 \\
                 0    & 0
             \end{pmatrix}
             (\Z_{2}^{n} \rtimes \Sgroup_{n}),
        \end{array}
    \label{eq-def-phi_I-bar-tau}
    \end{equation}
    where 
    the action of $\Z_{2}^{m}$ on $\SPI_{m}$ is defined in \eqref{Rem-SPI^st}  
    and $h(\sigma_{I}) = \diag(\sigma_{I}, \sigma_{I}) \in H$.  
\end{proposition}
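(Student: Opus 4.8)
The statement is the integral/arithmetic analogue of Lemma~\ref{Lem-(B-Her)=(B-LM-I/GL)}, so the proof will mirror its structure: establish well-definedness, surjectivity, and injectivity of the map \eqref{eq-def-phi_I-bar-tau}, but now working over $\Z$ (with entries in $\{-1,0,1\}$) and with the Borel $B_m$ replaced by the diagonal sign group $\Z_2^m$, the torus of ``signed monomial matrices'' over $\Z$. The key observation throughout is that, compared to Section~\ref{Section-Orbit-decomp-of-B-LM-GL}, the group $\GL_n(\C)$ acting on the right is replaced by $\Z_2^n\rtimes\Sgroup_n$ (signed permutation matrices in $\GL_n(\Z)$), and the left-hand $B_H$ is replaced by $h(\Z_2^n)$; the computation $h(b)\omega_{\tau_1,\tau_2}g=\bigl(\begin{smallmatrix} b\tau_1 g\\ b\tau_2 g\end{smallmatrix}\bigr)$ (using $h(\ee)=\diag(\ee,\ee)$, which is legitimate precisely because $\ee^{*}{}^{-1}=\ee$ for a sign matrix) carries over verbatim.

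\textbf{Well-definedness.} Given $\varepsilon_m\in\Z_2^m$ and $\tau\in\SPI_m$, I would check that $\tau$ and $\varepsilon_m\cdot\tau=\varepsilon_m\tau\varepsilon_m$ have the same image, by exactly the manipulation used in the proof of Lemma~\ref{Lem-(B-Her)=(B-LM-I/GL)}: insert $h(\sigma_I)h(\diag(\varepsilon_m,\unitmatrix_{n-m}))$ in front of the block matrix, pull the diagonal sign matrix out to the right as $\diag(\varepsilon_m,\unitmatrix_{n-m})$ acting in $\Sgroup_n$-side(s), and note that $\sigma_I\diag(\varepsilon_m,\unitmatrix_{n-m})\sigma_I^{-1}\in\Z_2^n$ so that $h$ of it is absorbed into the left $\Z_2^n$. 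This is the $\Z$-analogue of Lemma~\ref{Lem-B-p-cap-GL-times-GL} in the trivial (diagonal) case, which is immediate: conjugating a diagonal sign matrix supported on the coordinates $I$ by $\sigma_I$ just moves the support to $I$, giving again a diagonal sign matrix.

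\textbf{Surjectivity.} Starting from $\omega_{\tau_1,\tau_2}\in\Omega_n^I$, I would first apply a right multiplication by an element of $\Sgroup_n$ and a left multiplication by $h(\varepsilon)$ to bring $\tau_2$ into the normal form $(e_{i_1}\ \cdots\ e_{i_m}\ 0\ \cdots\ 0)$ with $I=\{i_1<\cdots<i_m\}$; here the crucial point, special to the integral setting, is that echelonizing a $\{-1,0,1\}$-partial permutation matrix requires \emph{only} column permutations and sign changes, not arbitrary $\GL_n$ — this is where $\ech\tau_2=I$ is used and is exactly the content of $\Omega_n^I=\mathrm{LM}_{2n,n}^I\cap\Mat_{2n,n}(\Z)$. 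Then $\sigma_I^{-1}\tau_2=\bigl(\begin{smallmatrix}\unitmatrix_m&0\\0&0\end{smallmatrix}\bigr)$, and writing $\sigma_I^{-1}\tau_1=\bigl(\begin{smallmatrix}z_{11}&z_{12}\\z_{21}&z_{22}\end{smallmatrix}\bigr)$, the condition ${}^t\tau_1\tau_2\in\Sym_n(\Z)$ forces $z_{11}$ symmetric and $z_{12}=0$ exactly as in \eqref{eq-z11=z11*-z12=0}; the rank condition forces $z_{22}\in\GL_{n-m}(\Z)$, but since $z_{22}$ is itself a partial permutation matrix of full rank it must be a signed permutation matrix, so a further right multiplication by an element of $\Z_2^n\rtimes\Sgroup_n$ (acting only on the last $n-m$ coordinates) normalizes $z_{22}=\unitmatrix_{n-m}$, $z_{21}=0$. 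Finally $z_{11}\in\SPP_m\cap\Sym_m(\Z)=\SPI_m$, giving the required preimage.

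\textbf{Injectivity.} This is the step I expect to be the main obstacle, because over $\C$ one could use genuine Borel matrices with free off-diagonal entries to absorb discrepancies, whereas over $\Z$ the only freedom is $\Z_2^n$ on the left and $\Z_2^n\rtimes\Sgroup_n$ on the right, so the argument must be more rigid. I would suppose $h(\varepsilon)h(\sigma_I)M_\tau g=h(\sigma_I)M_{\tau'}$ with $M_\tau=\bigl(\begin{smallmatrix}\tau&0\\0&\unitmatrix\\ \unitmatrix&0\\0&0\end{smallmatrix}\bigr)$, $\varepsilon\in\Z_2^n$, $g\in\Z_2^n\rtimes\Sgroup_n$, rewrite $h(\sigma_I^{-1}\varepsilon\sigma_I)$ as a left factor (it lies in $h(\Z_2^n)$), and block-decompose $\sigma_I^{-1}\varepsilon\sigma_I=\diag(\varepsilon_m,\varepsilon_{n-m})$ and $g=\sigma_I g'\sigma_I^{-1}$-style into blocks as in \eqref{eq-def-beta-gamma-g}; comparing the four block-rows of the resulting $2n\times n$ matrices, the bottom blocks force $g$ to be block-diagonal of the form $\diag(g_m,g_{n-m})$ with $g_m,g_{n-m}$ signed permutation matrices, the $(2,2)$ and $(3,1)$ blocks pin down $g_{n-m}=\varepsilon_{n-m}^{-1}$, and the top-left block collapses to $\varepsilon_m\tau g_m=\tau'$. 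It then remains to show $\varepsilon_m(\cdot)g_m$ with $g_m$ a signed permutation lands $\tau$ in the $\Z_2^m$-orbit of $\tau'$ \emph{only if} $g_m$ is forced to be a sign matrix times the identity — but here one uses that both $\tau,\tau'$ are \emph{symmetric} (elements of $\SPI_m$), so $\tau'={}^t\tau'=g_m^{-1}\,{}^t\tau\,\varepsilon_m=g_m^{-1}\tau\varepsilon_m$, and combining with $\tau'=\varepsilon_m\tau g_m$ gives a constraint that, together with the $\SPI_m$ normal form, forces $g_m\in\Z_2^m$ (absorbed into the left $\Z_2^n$ after conjugation by $\sigma_I$) — equivalently, one invokes that the stabilizer of the flag data forces the $\Sgroup_m$-part of $g_m$ to be trivial. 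I would then conclude $\tau'=\varepsilon_m'\cdot\tau$ for some $\varepsilon_m'\in\Z_2^m$, i.e.\ $[\tau]=[\tau']$ in $\Z_2^m\backslash\SPI_m$, which is the desired injectivity; combined with Lemma~\ref{Lem-Omega-I-stable} and \eqref{eq-Z2-SPI=SPIst} this recovers $\SPIst_m$ as the representative set, matching Proposition~\ref{Prop-SPI=(B-Her)}.
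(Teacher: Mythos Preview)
Your overall plan matches the paper's proof almost exactly: well-definedness, surjectivity, and injectivity, each argued by the same block manipulations as in Lemma~\ref{Lem-(B-Her)=(B-LM-I/GL)} transported to the integral setting. The well-definedness and surjectivity steps are fine (for surjectivity, note that $z_{21}=0$ actually comes for free from the $\SPP$ condition once $z_{22}$ is full rank, since each row of $\sigma_I^{-1}\tau_1$ has at most one nonzero entry; you do not need the right action to kill it).

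The injectivity step, however, contains a misreading that leads you down an unnecessary and incomplete detour. After block-diagonalizing $g=\diag(g_m,g_{n-m})$, you write that ``the $(2,2)$ and $(3,1)$ blocks pin down $g_{n-m}=\varepsilon_{n-m}^{-1}$''. In fact the $(3,1)$ block is $\varepsilon_m g_m=\unitmatrix_m$, which pins down $g_m=\varepsilon_m$ \emph{directly} --- so $g_m$ is already a pure sign matrix, and the top-left block immediately gives $\tau'=\varepsilon_m\tau\varepsilon_m$, finishing the proof. This is exactly how the paper argues: comparing the third block row forces the permutation part $w=1$ and $\varepsilon'=\eta'$, whence $\varepsilon' z\varepsilon'=z'$.

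By missing this, you instead try to extract the constraint on $g_m$ from the symmetry $\tau={}^t\tau$, $\tau'={}^t\tau'$. That route does not work on its own: from $\varepsilon_m\tau g_m=\tau'=g_m^{-1}\tau\varepsilon_m$ alone one cannot conclude $g_m\in\Z_2^m$. For instance with $m=2$, $\tau=\bigl(\begin{smallmatrix}0&1\\1&0\end{smallmatrix}\bigr)$, $\varepsilon_m=\unitmatrix_2$, $g_m=\bigl(\begin{smallmatrix}0&1\\1&0\end{smallmatrix}\bigr)$, both equations hold with $\tau'=\unitmatrix_2$, yet $\tau$ and $\tau'$ lie in different $\Z_2^2$-orbits. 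What rules this out is precisely the $(3,1)$-block constraint $\varepsilon_m g_m=\unitmatrix_m$ that you overlooked. So your symmetry argument is not salvaging a gap; it is a detour around a step you already had in hand.
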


\begin{proof}
We note that the proof given below is similar to that of Lemma \ref{Lem-(B-Her)=(B-LM-I/GL)}.
We will prove the well-definedness, surjectivity, and injectivity of the map \eqref{eq-def-phi_I-bar-tau} in this order.

First, we prove the well-definedness of the map \eqref{eq-def-phi_I-bar-tau}.
Let $\varepsilon\in \Z_{2}^{m}$ and $\tau \in\SPI_{m}$.
Then, it is sufficient to prove that the image of $\tau$ by the map \eqref{eq-def-phi_I-bar-tau} coincides with that of $\varepsilon\cdot \tau= \varepsilon \tau \varepsilon$.
The image of $\varepsilon\cdot \tau= \varepsilon \tau \varepsilon$ by the map \eqref{eq-def-phi_I-bar-tau} is equal to
\begin{align*}
	&
	\Z_{2}^{n} h(\sigma_{I})
	\begin{pmatrix}
		\varepsilon \tau \varepsilon & 0 \\
		0               & \unitmatrix_{n-m} \\
		\unitmatrix_{m}           & 0 \\
		0               & 0
    \end{pmatrix}
    (\Z_{2}^{n} \rtimes \Sgroup_{n})
	\\
	=&
	\Z_{2}^{n} h(\sigma_{I}) h
    \left(
	\begin{pmatrix}
		\varepsilon & 0 \\
		0     & \unitmatrix_{n-m} 
	\end{pmatrix}
	\right)
	\begin{pmatrix}
		\tau     & 0 \\
		0     & \unitmatrix_{n-m} \\
		\unitmatrix_{m} & 0 \\
		0     & 0
	\end{pmatrix}
	\begin{pmatrix}
		\varepsilon & 0 \\
		0         & \unitmatrix_{n-m}
	\end{pmatrix}
	(\Z_{2}^{n} \rtimes \Sgroup_{n})
	\\
	=&
	\Z_{2}^{n} h 
    \left(
		\sigma_{I}
		\begin{pmatrix}
			\varepsilon & 0 \\
			0     & \unitmatrix_{n-m} 
		\end{pmatrix}
		\sigma_{I}^{-1}
	\right)
	h(\sigma_{I})
		\begin{pmatrix}
			\tau     & 0 \\
			0     & \unitmatrix_{n-m} \\
			\unitmatrix_{m} & 0 \\
			0     & 0
		\end{pmatrix}
    (\Z_{2}^{n} \rtimes \Sgroup_{n}).
\end{align*}
Since $ \sigma_{I} \diag(\ee, \unitmatrix_{n - m}) \sigma_{I}^{-1} \in \Z_{2}^{n} $,
the last expression coincides with the image of $\tau$ in \eqref{eq-def-phi_I-bar-tau}.  

Next, we prove the surjectivity.
Let $\omega_{\tau_{1},\tau_{2}} \in \Omega^{I}_{n}$. 
We will prove that this $\omega_{\tau_{1},\tau_{2}} \in \Omega^{I}_{n}$ can be transformed into
\begin{equation*}
    h(\sigma_{I})
		\begin{pmatrix}
			z     & 0 \\
			0     & \unitmatrix_{n-m} \\
			\unitmatrix_{m} & 0 \\
			0     & 0
		\end{pmatrix}
\end{equation*}
for some signed partial involution $z \in \SPI_{m}$ (Definition \ref{Def-SPI-and-SPIst}) by the actions of $\Z_{2}^{n}$ and $\Z_{2}^{n} \rtimes \Sgroup_{n}$.
Recall that the actions of $\Z_{2}^{n}$ and $\Z_{2}^{n} \rtimes \Sgroup_{n}$ on $\Omega^{I}_{n}$ are given by
\begin{equation*}
	\varepsilon \omega_{\tau_{1},\tau_{2}} (\eta,w)
	=
	\begin{pmatrix}
		\varepsilon \tau_{1}\eta w \\
		\varepsilon \tau_{2}\eta w
	\end{pmatrix}
\end{equation*}
where $\varepsilon \in \Z_{2}^{n} $ and $(\eta,w) \in \Z_{2}^{n} \rtimes \Sgroup_{n}$.
Then, by the right action of $\Z_{2}^{n} \rtimes \Sgroup_{n}$, we can assume that 
\begin{equation*}
    \tau_{2} = \begin{pmatrix}
        e_{i_{1}} & e_{i_{2}} & \dots & e_{i_{m}} & 0 & \dots & 0
    \end{pmatrix},
\end{equation*}
where $I=\{i_{1} < \dots < i_{m}\}$.
Then, by the definition of $\sigma_{I}\in\Sgroup_{n}$ (Definition \ref{Def-sigma-I}), we have
\begin{equation}
    \sigma_{I}^{-1} \tau_{2}
    =
    \begin{pmatrix}
        \unitmatrix_{m} & 0 \\
        0     & 0
    \end{pmatrix},
    \quad
    \text{namely,}
    \quad
    \tau_{2}=
    \sigma_{I} 
    \begin{pmatrix}
        \unitmatrix_{m} & 0 \\
        0     & 0
    \end{pmatrix}.
\label{eq-def-sigma_{I}tau_2}
\end{equation}
Moreover, we define $z_{ij}\:(i,j\in\{1,2\})$ by
\begin{equation}
    \begin{pmatrix}
        z_{11} & z_{12} \\
        z_{21} & z_{22}
    \end{pmatrix}
    \coloneqq
    \sigma_{I}^{-1} \tau_{1},
		\quad
			\text{ so that }
		\quad
    \tau_{1}=
    \sigma_{I}
    \begin{pmatrix}
        z_{11} & z_{12} \\
        z_{21} & z_{22}
    \end{pmatrix}.
\label{eq-def-sigma_{I}tau_1}
\end{equation}
Since ${}^{t}\sigma_{I} =\sigma_{I}^{-1}$, we get
\begin{equation}
    {}^{t}\tau_{1}\tau_{2}
    =
    \begin{pmatrix}
        {}^{t}z_{11} & {}^{t}z_{21} \\
        {}^{t}z_{12} & {}^{t}z_{22}
    \end{pmatrix}
    {}^{t}\sigma_{I}
    \sigma_{I}
    \begin{pmatrix}
        \unitmatrix_{m} & 0 \\
        0     & 0
    \end{pmatrix}
=
    \begin{pmatrix}
        {}^{t}z_{11} & {}^{t}z_{21} \\
        {}^{t}z_{12} & {}^{t}z_{22}
    \end{pmatrix}
    \begin{pmatrix}
        \unitmatrix_{m} & 0 \\
        0     & 0
    \end{pmatrix}     
    =
    \begin{pmatrix}
        {}^{t}z_{11} & 0 \\
        {}^{t}z_{12} & 0 
    \end{pmatrix}
\label{eq-Lem-prf-tau_1^{*}tau_2}
\end{equation}
Moreover, $\omega_{\tau_{1},\tau_{2}}\in \regOmega_{n} $ implies that \eqref{eq-Lem-prf-tau_1^{*}tau_2} is an element of $\Sym_{n}(\Z)$,
and thus, we have 
\begin{equation}
    z_{11} = {}^{t}z_{11}
    \quad
    z_{12}=0.
\label{eq-z11=z11*-z12=0-tau}
\end{equation}
By combining this equation \eqref{eq-z11=z11*-z12=0-tau} with \eqref{eq-def-sigma_{I}tau_2} and \eqref{eq-def-sigma_{I}tau_1},
we have
\begin{equation*}
    \omega_{\tau_{1},\tau_{2}}
    =
    \begin{pmatrix}
        \tau_{1} \\ \tau_{2}
    \end{pmatrix}
    =
    h(\sigma_{I})
    \begin{pmatrix}
        z_{11} & 0 \\
        z_{21} & z_{22} \\
        \unitmatrix_{m}  & 0 \\
        0      & 0
    \end{pmatrix}.
\end{equation*}
Note that the multiplication by $h(\sigma_{I})\in H \subset \GL_{2n}(\Q) $ preserves the rank of $\Mat_{2n,n}(\Q)$.
Thus, because $\operatorname{rank} \omega_{\tau_{1},\tau_{2}} = n$ by $\omega_{\tau_{1},\tau_{2}}\in \Omega^{I}_{n} \subset \regOmega_{n} $,
we have
\begin{equation*}
\operatorname{rank}
      \begin{pmatrix}
        z_{11} & 0 \\
        z_{21} & z_{22} \\
        \unitmatrix_{m}  & 0 \\
        0      & 0
    \end{pmatrix}  
= \operatorname{rank} \omega_{\tau_{1},\tau_{2}}
= n,
\end{equation*}
which implies that $z_{22}$ is invertible.
Therefore, by multiplying appropriate element of $\Sgroup_{n}$ from the right, we can assume that
\begin{equation*}
    \omega_{\tau_{1},\tau_{2}} =
    \begin{pmatrix}
        \tau_{1} \\ \tau_{2}
    \end{pmatrix}
    = 
    h(\sigma_{I})
    \begin{pmatrix}
        z_{11} & 0       \\
        0      & \unitmatrix_{n-m} \\
        \unitmatrix_{m}  & 0       \\
        0      & 0
    \end{pmatrix}.
\end{equation*}
This completes the proof of the surjectivity of the map \eqref{eq-def-phi_I-bar-tau} because
\eqref{eq-z11=z11*-z12=0-tau} implies $z_{11}\in \Sym_{m}(\Z) \cap \SPP_{m} = \SPI_{m}$. 

Finally, we prove the injectivity of the map \eqref{eq-def-phi_I-bar-tau}. Let $\tau,\tau'\in\SPI_{m}$.
Suppose
\begin{equation*}
	\Z_{2}^{n} h(\sigma_{I})
		\begin{pmatrix}
			z     & 0       \\
			0     & \unitmatrix_{n-m} \\
			\unitmatrix_{m} & 0       \\
			0     & 0
		\end{pmatrix}
	(\Z_{2}^{n}\rtimes \Sgroup_{n})
	=
	\Z_{2}^{n} h (\sigma_{I})
		\begin{pmatrix}
			z'    & 0       \\
			0     & \unitmatrix_{n-m} \\
			\unitmatrix_{m} & 0       \\
			0     & 0
		\end{pmatrix}
	(\Z_{2}^{n} \rtimes \Sgroup_{n}).
\end{equation*}
In other words, suppose that there exist $\varepsilon \in \Z_{2}^{n}$ and $(\eta,w)\in \Z_{2}^{n}\rtimes \Sgroup_{n}$ such that
\begin{equation}
	h(\varepsilon)
	h(\sigma_{I})
	\begin{pmatrix}
        z     & 0       \\
		0     & \unitmatrix_{n-m} \\
		\unitmatrix_{m} & 0       \\
		0     & 0
	\end{pmatrix}
	(\eta, w)
	=
	h(\sigma_{I})
	\begin{pmatrix}
		z'    & 0 \\
		0     & \unitmatrix_{n-m} \\
		\unitmatrix_{m} & 0 \\
		0     & 0
	\end{pmatrix}.
\label{eq-inj-comparison-tau}
\end{equation}
The left-hand side of this equation \eqref{eq-inj-comparison-tau} is equal to
\begin{equation}
	h(\varepsilon) h(\sigma_{I})
	\begin{pmatrix}
		z     & 0 \\
		0     & \unitmatrix_{n-m} \\
		\unitmatrix_{m} & 0 \\
		0     & 0
	\end{pmatrix}
	(\eta,w)
    =
    h(\sigma_{I}) h (\sigma_{I}^{-1} \varepsilon \sigma_{I})
	\begin{pmatrix}
		z     & 0 \\
		0     & \unitmatrix_{n-m} \\
		\unitmatrix_{m} & 0 \\
		0     & 0
	\end{pmatrix}
    \eta w.
\label{eq-inj-comparison-2-tau}
\end{equation}
In order to compute \eqref{eq-inj-comparison-2-tau}, we set
\begin{equation}
    \sigma_{I}^{-1} \varepsilon \sigma_{I}
    \eqqcolon
    \begin{pmatrix}
        \varepsilon' & 0 \\
        0 & \varepsilon''
    \end{pmatrix},
    \quad
    \eta \eqqcolon
    \begin{pmatrix}
        \eta' & 0 \\
        0 & \eta''
    \end{pmatrix}.
\label{eq-def-varepsilon-eta}
\end{equation}
Then, the right-hand side of \eqref{eq-inj-comparison-2-tau} is equal to
\begin{align*}
    h(\sigma_{I})
	\begin{pmatrix}
		\varepsilon'  & 0 & 0 & 0 \\
		0 & \varepsilon'' & 0 & 0 \\
		0 & 0             & \varepsilon' & 0\\
		0 & 0             & 0 & \varepsilon''\\
	\end{pmatrix}
	\begin{pmatrix}
		z     & 0 \\
		0     & \unitmatrix_{n-m} \\
		\unitmatrix_{m} & 0 \\
		0     & 0
	\end{pmatrix}
    \begin{pmatrix}
        \eta' & 0 \\
        0     & \eta''
    \end{pmatrix} w
    &=	
    h(\sigma_{I})
    \begin{pmatrix}
        \varepsilon'z\eta' & 0 \\
        0                  & \varepsilon''\eta'' \\
        \varepsilon'\eta'  & 0 \\ 
        0                  & 0
    \end{pmatrix} w.
\end{align*}
Because we supposed that this is equal to the right-hand side of \eqref{eq-inj-comparison-tau}, we have
\begin{equation}
    \begin{pmatrix}
        \varepsilon'z\eta' & 0 \\
        0                  & \varepsilon''\eta'' \\
        \varepsilon'\eta'  & 0 \\ 
        0                  & 0
    \end{pmatrix} w	
=	
	\begin{pmatrix}
		z'    & 0 \\
		0     & \unitmatrix_{n-m} \\
		\unitmatrix_{m} & 0 \\
		0     & 0
	\end{pmatrix}.
\label{eq-comparizon-2-tau}
\end{equation}
By comparing entries, we have
\begin{equation*}
	w = 1,\quad
    \varepsilon'=\eta',\quad
    \varepsilon''=\eta''.
\end{equation*}
Thus, we have
\begin{equation*}
    \begin{pmatrix}
        \varepsilon'z\varepsilon'  &  0  \\
        0                        & \unitmatrix_{n-m}\\
        \unitmatrix_{m}                    & 0 \\
        0                        & 0
    \end{pmatrix}	
    =	
	\begin{pmatrix}
		z'    & 0 \\
		0     & \unitmatrix_{n-m} \\
		\unitmatrix_{m} & 0 \\
		0     & 0
	\end{pmatrix},
\end{equation*}
which completes the proof.
\end{proof}

Summarizing, we get 

\begin{lemma}\label{Lem-ZSPI=coprod-ZOmegaZS}
    The union of the maps
    \begin{equation*}
        \begin{array}{ccc}
            \varphi_{I}' \colon
             \Z_{2}^{\# I} \backslash \SPI_{\# I} & \xrightarrow{\sim} &  \Z_{2}^{n} \backslash \Omega^{I}_{n} / (\Z_{2}^{n} \rtimes \Sgroup_{n})
             \\
             \rotatebox{90}{$\in$} & & \rotatebox{90}{$\in$}
             \\
             \tau & \mapsto & 
             \Z_{2}^{n} h(\sigma_{I})
             \begin{pmatrix}
                 \tau & 0 \\ 
                 0    & \unitmatrix_{n-\# I} \\
                 \unitmatrix_{\# I}& 0 \\
                 0    & 0
             \end{pmatrix}
             (\Z_{2}^{n} \rtimes \Sgroup_{n}),
        \end{array}
    \end{equation*}
    defines a bijection 
    \begin{equation*}
        \coprod_{I \subset [n]} \varphi_{I}' 
        \colon
        \coprod_{I\subset [n]} \Z_{2}^{\# I} \backslash \SPI_{\# I} 
        \xrightarrow{\;\sim\;}
        \coprod_{I\subset [n]} \Z_{2}^{n} \backslash \Omega^{I}_{n} / (\Z_{2}^{n} \rtimes \Sgroup_{n}) 
        = 
        \Z_{2}^{n} \backslash \regOmega_{n} / (\Z_{2}^{n} \rtimes \Sgroup_{n}).  
    \end{equation*}
\end{lemma}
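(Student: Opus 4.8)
The statement to prove is Lemma~\ref{Lem-ZSPI=coprod-ZOmegaZS}, which collects the individual bijections $\varphi_I'$ of Proposition~\ref{Prop-Omega-I=SPI} into a single bijection on disjoint unions. The plan is to invoke the structural decomposition already established and then apply the piecewise result. The two ingredients are already in hand: first, Lemma~\ref{Lem-orbit-decomp-Omega=coprod-Omega-I} provides the disjoint decomposition $\regOmega_n = \coprod_{I\subset[n]} \Omega^I_n$ into $(\Z_2^n, \Z_2^n\rtimes\Sgroup_n)$-stable pieces, hence induces a disjoint decomposition of the double coset space $\Z_2^n\backslash\regOmega_n/(\Z_2^n\rtimes\Sgroup_n) = \coprod_{I\subset[n]} \Z_2^n\backslash\Omega^I_n/(\Z_2^n\rtimes\Sgroup_n)$; second, Proposition~\ref{Prop-Omega-I=SPI} gives, for each fixed $I$ with $m=\#I$, a bijection $\varphi_I'\colon \Z_2^{m}\backslash\SPI_{m} \xrightarrow{\sim} \Z_2^n\backslash\Omega^I_n/(\Z_2^n\rtimes\Sgroup_n)$.

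**Carrying out the steps.** First I would recall that since $\regOmega_n = \coprod_{I\subset[n]}\Omega^I_n$ is a partition by subsets each stable under the left $\Z_2^n$-action and the right $\Z_2^n\rtimes\Sgroup_n$-action (Lemma~\ref{Lem-orbit-decomp-Omega=coprod-Omega-I}), every $(\Z_2^n,\Z_2^n\rtimes\Sgroup_n)$-double coset in $\regOmega_n$ lies entirely within a single $\Omega^I_n$; this is immediate because an element of $\Omega^I_n$ cannot be moved into $\Omega^{I'}_n$ for $I'\neq I$ by these actions. Consequently the natural map
\begin{equation*}
\coprod_{I\subset[n]} \Z_2^n\backslash\Omega^I_n/(\Z_2^n\rtimes\Sgroup_n) \longrightarrow \Z_2^n\backslash\regOmega_n/(\Z_2^n\rtimes\Sgroup_n)
\end{equation*}
is a bijection. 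Next, for each $I$, Proposition~\ref{Prop-Omega-I=SPI} furnishes the bijection $\varphi_I'$ between $\Z_2^{\#I}\backslash\SPI_{\#I}$ and the $I$-th summand. Taking the disjoint union of these bijections over all $I\subset[n]$ yields a bijection
\begin{equation*}
\coprod_{I\subset[n]}\varphi_I'\colon \coprod_{I\subset[n]}\Z_2^{\#I}\backslash\SPI_{\#I} \xrightarrow{\;\sim\;} \coprod_{I\subset[n]}\Z_2^n\backslash\Omega^I_n/(\Z_2^n\rtimes\Sgroup_n),
\end{equation*}
since a disjoint union of bijections between indexed families is again a bijection (injectivity and surjectivity are checked summand by summand). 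Composing with the identification above gives the claimed bijection onto $\Z_2^n\backslash\regOmega_n/(\Z_2^n\rtimes\Sgroup_n)$, which is the assertion of the lemma.

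**The main obstacle.** There is essentially no obstacle here: the lemma is a formal bookkeeping consequence of Proposition~\ref{Prop-Omega-I=SPI} and Lemma~\ref{Lem-orbit-decomp-Omega=coprod-Omega-I}, both of which do the real work. The only point requiring a sentence of care is the compatibility claim — that the disjoint decomposition of $\regOmega_n$ descends to a \emph{disjoint} decomposition of the double coset space, i.e., that no double coset straddles two pieces $\Omega^I_n$ and $\Omega^{I'}_n$ — and this follows at once from the $(\Z_2^n,\Z_2^n\rtimes\Sgroup_n)$-stability of each $\Omega^I_n$ asserted in Lemma~\ref{Lem-orbit-decomp-Omega=coprod-Omega-I}. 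Thus the proof is a two-line citation: apply Lemma~\ref{Lem-orbit-decomp-Omega=coprod-Omega-I} to reduce to the summands, and apply Proposition~\ref{Prop-Omega-I=SPI} on each summand.
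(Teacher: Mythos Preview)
Your proposal is correct and matches the paper's approach exactly: the paper's (suppressed) proof is simply ``This follows from Proposition~\ref{Prop-Omega-I=SPI} and Lemma~\ref{Lem-orbit-decomp-Omega=coprod-Omega-I}.'' Your write-up just spells out the bookkeeping behind that one-line citation.
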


\skipover{
\begin{proof}
    This follows from Proposition \ref{Prop-Omega-I=SPI} and Lemma \ref{Lem-orbit-decomp-Omega=coprod-Omega-I}.
\end{proof}
}

Now let us give a proof of Theorem \ref{theorem-Omega=H-G-PS} in Introduction.  
This is one of our main theorems in this article.  
Recall that $ H $-orbits on the double flag variety $ \dblFV $ is in bijection with 
$ B_{H} \backslash G / P_{G}  $.

\begin{theorem}\label{theorem-Omega=H-G-PS-2}
Let $(G,H,P_{G},B_{H})$ be in Case \caseA or \caseB of Setting \ref{Setting-DFVR}. Then, we have bijections
    \begin{equation}
    \begin{array}{cccc}
         \Z_{2}^{n} \backslash \regOmega_{n} / ( \Z_{2}^{n} \rtimes \Sgroup_{n}  ) 
         & \xrightarrow{\;\;\sim\;\;} & 
         \makebox[10ex][l]{$B_{H} \backslash \LregMat_{2n,n} / \GL_{n}(\C) \simeq B_{H} \backslash G / P_{G}  \simeq H \backslash \dblFV$}
         & \hspace*{.35\textwidth}
         \\
         \rotatebox{90}{$\in$} && \rotatebox{90}{$\in$}
         \\
         \omega_{\tau_{1},\tau_{2}}
         & \longmapsto &
         B_{H} [\omega_{\tau_{1},\tau_{2}}]
    \end{array}
    \label{eq-map-theorem-Omega=H-H-PS-2}
    \end{equation}
    where $\regOmega_{n}$ is defined in Definition \ref{Def-SPP-and-regOmega} and the latter isomorphism is given by Lemma \ref{Lem-BGP=BLMGL}.
\end{theorem}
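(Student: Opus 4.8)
The plan is to assemble the bijection \eqref{eq-map-theorem-Omega=H-H-PS-2} from the pieces already established in Sections~\ref{Section-Orbit-decomposition}--\ref{Section-Gaussian-elimination} and in the present section. The right-hand chain of identifications $B_{H} \backslash \LregMat_{2n,n} / \GL_{n}(\C) \simeq B_{H} \backslash G / P_{G} \simeq H \backslash \dblFV$ is already given: the first by Lemma~\ref{Lem-BGP=BLMGL} and the second by the standard identification $H\backslash(H/B_H\times G/P_G)\simeq B_H\backslash G/P_G$ recalled in the introduction. So the real content is to produce a bijection between $\Z_{2}^{n}\backslash\regOmega_{n}/(\Z_{2}^{n}\rtimes\Sgroup_{n})$ and $B_{H}\backslash\LregMat_{2n,n}/\GL_{n}(\C)$ and to check that it is implemented by $\omega_{\tau_1,\tau_2}\mapsto B_H[\omega_{\tau_1,\tau_2}]$.

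First I would run the $I$-stratification in parallel on both sides. On the Hermitian side, Lemma~\ref{Lem-B-G-P_S=coprod-B-Her} gives
\[
\coprod_{I\subset[n]}\bigl(B_{\#I}\backslash\Her_{\#I}(\C)\bigr)\;\xrightarrow{\ \sim\ }\;B_{H}\backslash\LregMat_{2n,n}/\GL_{n}(\C),
\]
with the $I$-th component landing in $B_H\backslash\mathrm{LM}_{2n,n}^{I}/\GL_n(\C)$ via $z\mapsto B_H h(\sigma_I)\,\omega\,\GL_n(\C)$ where $\omega=\bigl(\begin{smallmatrix}z&0\\0&\unitmatrix_{n-m}\\\unitmatrix_m&0\\0&0\end{smallmatrix}\bigr)$. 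On the combinatorial side, Lemma~\ref{Lem-Omega=coprod-Omega-I} gives the decomposition $\regOmega_n=\coprod_{I}\Omega^I_n$, and Lemma~\ref{Lem-ZSPI=coprod-ZOmegaZS} (built from Proposition~\ref{Prop-Omega-I=SPI}) gives
\[
\coprod_{I\subset[n]}\bigl(\Z_{2}^{\#I}\backslash\SPI_{\#I}\bigr)\;\xrightarrow{\ \sim\ }\;\Z_{2}^{n}\backslash\regOmega_{n}/(\Z_{2}^{n}\rtimes\Sgroup_{n}),
\]
with the $I$-th component $\tau\mapsto\Z_2^n h(\sigma_I)\,\omega\,(\Z_2^n\rtimes\Sgroup_n)$ using the \emph{same} matrix shape $\omega$. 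The link between the two sides at the level of parameter sets is Proposition~\ref{Prop-SPI=(B-Her)}: for each $m$ the set $\SPIst_m$ is a complete set of representatives of $B_m\backslash\Her_m(\C)$, and by \eqref{eq-Z2-SPI=SPIst} we have $\Z_2^m\backslash\SPI_m\simeq\SPIst_m$, hence a canonical bijection $\Z_2^m\backslash\SPI_m\simeq B_m\backslash\Her_m(\C)$ sending the class of $\tau$ to $B_m\tau$.

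Then I would chase the diagram: composing $\Z_2^{\#I}\backslash\SPI_{\#I}\xrightarrow{\sim}\Z_2^n\backslash\Omega^I_n/(\Z_2^n\rtimes\Sgroup_n)$, the identification $\Z_2^{\#I}\backslash\SPI_{\#I}\simeq B_{\#I}\backslash\Her_{\#I}(\C)$, and $B_{\#I}\backslash\Her_{\#I}(\C)\xrightarrow{\sim}B_H\backslash\mathrm{LM}_{2n,n}^I/\GL_n(\C)$ gives for each $I$ a bijection whose effect on representatives is exactly $\Z_2^n h(\sigma_I)\,\omega\,(\Z_2^n\rtimes\Sgroup_n)\mapsto B_H h(\sigma_I)\,\omega\,\GL_n(\C)$ for the common matrix $\omega$ built from $\tau\in\SPIst_{\#I}\subset\Her_{\#I}(\C)$. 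Taking the disjoint union over $I$ and using the two decomposition lemmas above, this yields the asserted bijection $\Z_2^n\backslash\regOmega_n/(\Z_2^n\rtimes\Sgroup_n)\xrightarrow{\sim}B_H\backslash\LregMat_{2n,n}/\GL_n(\C)$. To see that this composite is literally $\omega_{\tau_1,\tau_2}\mapsto B_H[\omega_{\tau_1,\tau_2}]$, I would observe that $h(\sigma_I)\in\GL_{2n}(\Z)\subset\GL_{2n}(\C)$ with $\diag(\sigma_I,\sigma_I)$ and $\diag(\sigma_I,(\sigma_I^*)^{-1})$ agreeing since $\sigma_I$ is a permutation matrix (so $(\sigma_I^*)^{-1}=\sigma_I$), so the representative matrices used on the $\Z$-side and the $\C$-side coincide; hence the class of $\omega_{\tau_1,\tau_2}$ maps to the $B_H$-orbit of the same matrix, i.e.\ to $B_H[\omega_{\tau_1,\tau_2}]$ under the identification of Lemma~\ref{Lem-BGP=BLMGL}. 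Finally I would append the remark that all cosets involved are finite (the count in Corollary~\ref{cor:number.of.orbits.HX}), so ``isomorphism'' means ``bijection of finite sets'', and that since every step above is phrased uniformly in Cases \caseA and \caseB (via Notation~\ref{Notation-caseA-and-caseB}), the representatives are literally the same, proving the last sentence of Theorem~\ref{theorem-Omega=H-G-PS}.

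The main obstacle is the bookkeeping in the diagram chase: one must verify that the two families of $I$-indexed bijections (Hermitian side and $\SPI$ side) are genuinely ``the same map written in two languages'', i.e.\ that the intertwining identity $B_m\tau\leftrightarrow\Z_2^m\cdot\tau$ from Proposition~\ref{Prop-SPI=(B-Her)} and \eqref{eq-Z2-SPI=SPIst} is compatible with the two $\varphi_I$, $\varphi_I'$ maps, and that the $\GL_n(\C)$- vs.\ $(\Z_2^n\rtimes\Sgroup_n)$-actions used to normalize $z_{22}$ to $\unitmatrix_{n-m}$ and kill $z_{21}$ in the respective surjectivity arguments produce compatible normal forms. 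None of this is deep, but it must be laid out carefully; everything else is a formal consequence of the cited lemmas.
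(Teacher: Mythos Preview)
Your proposal is correct and follows essentially the same route as the paper: both arguments build the commutative square with $\coprod_I \varphi_I$ and $\coprod_I \varphi_I'$ on the horizontal sides and the identification $\Z_2^m\backslash\SPI_m\simeq\SPIst_m\simeq B_m\backslash\Her_m(\C)$ on the left vertical, then conclude that the right vertical (the map \eqref{eq-map-theorem-Omega=H-H-PS-2}, induced by the inclusion $\regOmega_n\hookrightarrow\LregMat_{2n,n}$) is a bijection. You are a bit more explicit than the paper about why the diagram commutes (in particular your observation that $(\sigma_I^*)^{-1}=\sigma_I$ so the two meanings of $h(\sigma_I)$ agree), which is a useful detail the paper leaves implicit.
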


\begin{proof}
We only give a proof of Case \caseA 
because the same proof works for Case \caseB.
Recall that there exists an inclusion
    \begin{equation}
        \regOmega_{n} \hookrightarrow \LregMat_{2n,n}
    \label{eq-inclusion-regOmega-to-LregMat}
    \end{equation}
by the definitions of $\regOmega_{n}$ and $\LregMat_{2n,n}$ (Definitions \ref{Def-SPP-and-regOmega} and \ref{Def-LM-circ}, respectively).
Obviously, this inclusion induces the map \eqref{eq-map-theorem-Omega=H-H-PS-2}.
Moreover, the definitions of the maps $\varphi_{I}$ and $\varphi_{I}'$ defined in Lemmas \ref{Lem-B-G-P_S=coprod-B-Her} and \ref{Lem-ZSPI=coprod-ZOmegaZS} imply 
that we have a commutative diagram 
\begin{equation*}
    \xymatrix@C=50pt{
    \coprod_{I\subset [n]} B_{\# I} \backslash \Her_{\# I}(\C) 
    \ar[r]^-{\sim}_-{\coprod \varphi_{I}}
    &
    B_{H} \backslash \LregMat_{2n,n} / \GL_{n}(\C)
    \\
    \coprod_{I\subset [n]} \Z_{2}^{\# I} \backslash \SPI_{\# I}
    \ar[u]^-{\eqref{eq-Z2-SPI=SPIst}, \eqref{eq-map-from-SPIst-to-B-Her(C)}}_-{\wr}
    \ar[r]^-{\sim}_-{\coprod \varphi_{I}'}
    &
    \Z_{2}^{n} \backslash \regOmega_{n} / (\Z_{2}^{n} \rtimes \Sgroup_{n}),
    \ar[u]^-{\eqref{eq-map-theorem-Omega=H-H-PS-2}}
    }
\end{equation*}
which shows that \eqref{eq-map-theorem-Omega=H-H-PS-2} is an isomorphism.
\end{proof}

In the above theorem, the symmetry group 
$ \Z_{2}^{n} \rtimes \Sgroup_{n} $ is isomorphic to the type C Weyl group $ W(C_n) $, which 
is the little Weyl group of both of $ G = \U(n, n) $ and $ \Sp_{2n}(\R) $.  
Though there is not a nice interpretation of $ \regOmega_n $ using (restricted) root systems up to now, 
we are expecting such kind of description for the orbit spaces.

\section{Combinatorial descriptions of orbits by graphs}\label{Section:orbit-graphs}

Let us give a combinatorial description of the double coset space $ B_{H} \backslash G /P_{S}$ or 
$ \dblFV/H $ using certain kind of graphs.  
This is a natural generalization of the graphs in \cite{Fresse.N.2023,Fresse.N.2020}.  

Recall the set $ \Gamma(n) $ of decorated graphs with $ n $-vertices from Introduction.  
In a graph $ \gamma \in \Gamma(n) $, each vertex of $ \gamma $ is decorated by $ \{ \pm, c, d \} $ or arcs (loops are not allowed).  In other words, a graph $ \gamma $ is corresponding to a map 
\begin{equation*}
    \gamma : [n] \to [n] \cup \{ +, -, c, d \} ,
\end{equation*}
with the property that 
\begin{itemize}
    \item[\upshape{(arc)}]
    $\gamma(i)=j \in [n]$ implies $i\neq j$ and $\gamma(j)=i$.
\end{itemize}
This condition corresponds to an arc connecting two different vertices $ i $ and $ j $.  
For a subset $ I \subset [n] $, we also define 
\begin{equation}\label{eq-Def-Gamma'(I)}
    \Gamma'(I) = \{ \gamma' : I \to I \cup \{ +, -, c \} \mid \text{with the property (arc)}  \} .
\end{equation}
Then $ \gamma' \in \Gamma'(I) $ extends to $ \gamma \in \Gamma(n) $ by putting 
$ \gamma(k) = d $ for $ k \in [n] \setminus I $.  
In this way, we always identify $ \Gamma'(I) $ with a subset of $ \Gamma(n) $.  Clearly we get 
\begin{equation*}
    \Gamma(n) = \coprod_{I \in [n]} \Gamma'(I).
\end{equation*}

\begin{example}\label{Ex-Gamma'(m)-to-Gamma(n)}
Let $ n = 7 $ and 
consider $ I = [7] \setminus \{ 1 \} = \{2,3,4,5,6,7\} $.  
Define $ \gamma' \in \Gamma'(I) $ as a map 
$\gamma'\colon I \to I \cup \{+,-,c\}$ by
\begin{equation*}
        \gamma'(2) = -,\quad
        \gamma'(3) = c,\quad
        \gamma'(4) = 6,\quad
        \gamma'(5) = +,\quad
        \gamma'(6) = 4,\quad
        \gamma'(7) = -.
\end{equation*}
%
%
If we extend this $ \gamma'$ to $ \gamma \in \Gamma(n) $ indicated above, 
we obtain the $\gamma$ given in Example \ref{Ex-Gamma(n)}.
Namely the graphical representations of these $\gamma$ and $\gamma'$ are given by
     \begin{equation*}
     \begin{array}{c}
        \xymatrix{
            \gamma' = &
            \:    &
        2_{-} &
        3_{c} &
        4 \ar@{-}@/^10pt/[rr] &
        5_{+} &
        6     &
        7_{-},
            \\
            \gamma = &
            1_{d} &
            2_{-} &
            3_{c} &
            4 \ar@{-}@/^10pt/[rr] &
            5_{+} &
            6     &
            7_{-}.
        }
     \end{array}
     \end{equation*}    
\end{example}

We get a graphical interpretation of our main theorem, Theorem \ref{theorem-Omega=H-G-PS-2}.

\begin{theorem}\label{Thm-Gamma(n)=DFVR}
    Let $(G,H,P_{G},P_{H})$ be in Case \caseA or \caseB in Setting \ref{Setting-DFVR}. Then, we have
    \begin{equation*}
        \Gamma(n)
        \simeq
        \Z_{2}^{n} \backslash \regOmega_{n} / (\Z_{2}^{n} \rtimes \Sgroup_{n})
        \simeq 
        B_{H} \backslash G /P_{S} \simeq H \backslash \dblFV ,
    \end{equation*}
    where $\Gamma(n)$ and $\regOmega_{n}$ are defined in Definitions \ref{Def-Gamma(n)} and \ref{Def-SPP-and-regOmega}, respectively.
\end{theorem}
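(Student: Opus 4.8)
The plan is to reduce the statement to the disjoint-union decompositions already established and then to supply one elementary combinatorial bijection. Since Theorem~\ref{theorem-Omega=H-G-PS-2} already provides $\Z_{2}^{n} \backslash \regOmega_{n} / (\Z_{2}^{n} \rtimes \Sgroup_{n}) \simeq B_{H} \backslash G /P_{S} \simeq H \backslash \dblFV$, it suffices to construct a bijection $\Gamma(n) \simeq \Z_{2}^{n} \backslash \regOmega_{n} / (\Z_{2}^{n} \rtimes \Sgroup_{n})$. Both sides decompose as disjoint unions indexed by subsets $I \subset [n]$: on the right, Lemma~\ref{Lem-orbit-decomp-Omega=coprod-Omega-I} gives $\Z_{2}^{n} \backslash \regOmega_{n} / (\Z_{2}^{n} \rtimes \Sgroup_{n}) = \coprod_{I \subset [n]} \Z_{2}^{n} \backslash \Omega^{I}_{n} / (\Z_{2}^{n} \rtimes \Sgroup_{n})$, which by Lemma~\ref{Lem-ZSPI=coprod-ZOmegaZS} and the isomorphism~\eqref{eq-Z2-SPI=SPIst} is identified with $\coprod_{I \subset [n]} \SPIst_{\# I}$; on the left, $\Gamma(n) = \coprod_{I \subset [n]} \Gamma'(I)$, where, by~\eqref{eq-Def-Gamma'(I)}, a decorated $n$-clan lies in $\Gamma'(I)$ exactly when its $d$-decorated vertices are precisely the elements of $[n] \setminus I$. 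So it is enough to produce, for each $I \subset [n]$ with $m \coloneqq \# I$, a bijection $\Gamma'(I) \simeq \SPIst_{m}$, compatibly with these two decompositions.

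For this I would fix the order-preserving identification $I \cong [m]$, which turns $\Gamma'(I)$ into the set of maps $\gamma \colon [m] \to [m] \cup \{+,-,c\}$ satisfying condition (arc), and define $\Phi(\gamma) = \tau \in \Mat_{m}(\Z)$ by $\tau_{ii} = 1$ if $\gamma(i) = +$, $\tau_{ii} = -1$ if $\gamma(i) = -$, $\tau_{ii} = 0$ if $\gamma(i) = c$, $\tau_{ij} = \tau_{ji} = 1$ if $\gamma(i) = j \neq i$, and all remaining entries $0$. Condition (arc) makes $\tau$ a well-defined symmetric matrix with entries in $\{-1,0,1\}$ and at most one nonzero entry in each row and column, and its off-diagonal entries are $0$ or $1$; hence $\tau \in \SPP_{m} \cap \Sym_{m}(\Z) = \SPI_{m}$ with nonnegative off-diagonal part, i.e.\ $\tau \in \SPIst_{m}$ (Definition~\ref{Def-SPI-and-SPIst}). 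Conversely, for $\tau \in \SPIst_{m}$, symmetry together with the ``at most one nonzero entry per row/column'' condition and the sign constraint forces every index $i$ into exactly one of the four states: $\tau_{ii} = 1$ (and the rest of row and column $i$ zero), $\tau_{ii} = -1$, row and column $i$ identically zero, or $\tau_{ij} = \tau_{ji} = 1$ for a unique $j \neq i$ (and the rest of rows and columns $i,j$ zero); recording the corresponding decoration $+$, $-$, $c$, or the arc $\{i,j\}$ recovers a map in $\Gamma'(I)$ inverse to $\Phi$. Thus $\Phi$ is a bijection.

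Finally, I would check that $\Phi$ is compatible with the two disjoint-union decompositions: composing $\Phi$ with the inverse of~\eqref{eq-Z2-SPI=SPIst} and then with the bijection $\varphi'_{I}$ of Lemma~\ref{Lem-ZSPI=coprod-ZOmegaZS} sends $\Gamma'(I)$ bijectively onto the $I$-piece $\Z_{2}^{n} \backslash \Omega^{I}_{n} / (\Z_{2}^{n} \rtimes \Sgroup_{n})$ of $\Z_{2}^{n} \backslash \regOmega_{n} / (\Z_{2}^{n} \rtimes \Sgroup_{n})$; taking the disjoint union over all $I \subset [n]$ and composing with Theorem~\ref{theorem-Omega=H-G-PS-2} yields the desired chain $\Gamma(n) \simeq \Z_{2}^{n} \backslash \regOmega_{n} / (\Z_{2}^{n} \rtimes \Sgroup_{n}) \simeq B_{H} \backslash G /P_{S} \simeq H \backslash \dblFV$. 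There is essentially no deep obstacle here: the entire content is the elementary bijection $\Phi$ and the decompositions it respects. The one point that must be handled with a little care is the bookkeeping that distinguishes the ``empty row/column'' decoration $c$ (for indices lying in the support $I = \ech(\tau_{2})$) from the decoration $d$ (for indices outside it), so that the index set $I$ used on the combinatorial side $\Gamma'(I)$ genuinely matches the index set $I$ used on the matrix side $\Omega^{I}_{n}$.
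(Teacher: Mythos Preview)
Your proof is correct. The paper's own proof takes a slightly different, more direct route: it writes down a single global recipe that sends $\omega_{\tau_1,\tau_2}\in\regOmega_n$ to a graph $\gamma\in\Gamma(n)$ by reading off each column (if $\tau_1(j)=0$ then $\gamma(k)=c$; if $\tau_2(j)=0$ then $\gamma(k)=d$; if both are $\pm e_k$ with the same $k$ then $\gamma(k)=\delta_1\delta_2$; otherwise draw an arc), and then checks that this recipe is invariant under $\Z_2^n\times(\Z_2^n\rtimes\Sgroup_n)$ and bijective, referring back to Proposition~\ref{Prop-Omega-I=SPI} for injectivity. Your argument instead passes through the disjoint-union decompositions on both sides already supplied by Lemmas~\ref{Lem-orbit-decomp-Omega=coprod-Omega-I} and~\ref{Lem-ZSPI=coprod-ZOmegaZS} (together with~\eqref{eq-Z2-SPI=SPIst}), reducing everything to the elementary piecewise bijection $\Gamma'(I)\simeq\SPIst_{\#I}$; this last bijection is in fact exactly the content of the paper's Lemma~\ref{Lem-Gamma'(m)=SPI^st}, proved later in Section~\ref{Section-Matsuki-duality}. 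The two arguments are equivalent (tracing the paper's column recipe on the normalized representative $h(\sigma_I)\left(\begin{smallmatrix}\tau&0\\0&\unitmatrix_{n-m}\\\unitmatrix_m&0\\0&0\end{smallmatrix}\right)$ recovers precisely your $\Phi$), but your version has the virtue of being entirely assembled from results already on the table, while the paper's version gives an explicit orbit-invariant formula without first normalizing.
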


\begin{proof}
    From $ \omega_{\tau_1, \tau_2} \in \regOmega_{n}$, we associate a graph $ \gamma \in \Gamma(n) $ by the following manner.  We denote the $ j $-th column of a matrix $ \tau $ by $ \tau(j) $ in this proof.

\begin{itemize}
\item[\upshape{(i)}]\ 
If $ \tau_1(j) = 0 $, we must have $ \tau_2(j) = \pm e_k $ for some $ k $.  In this case we put $ \gamma(k) = c $.  

\item[\upshape{(ii)}]\ 
Similarly, if $ \tau_2(j) = 0 $, we must have $ \tau_1(j) = \pm e_k $ for some $ k $.  We put $ \gamma(k) = d $. 
\end{itemize}

If $ \tau_1(j) $ and $ \tau_2(j) $ are both nonzero, there are $ k_1, k_2 \in [n] $ such that 
$ \tau_1(j) = \delta_1 e_{k_1} $ and $ \tau_2(j) = \delta_2 e_{k_2} $ for some 
$ \delta_1, \delta_2 = \pm 1 $.  

\begin{itemize}
\item[\upshape{(iii)}]\ 
If $ k_1 = k_2 $, we put $ \gamma(k) = \delta_1 \delta_2 $ for $ k = k_1 = k_2 $. 

\item[\upshape{(iv)}]\ 
If $ k_1 \neq k_2 $, we put $ \gamma(k_1) = k_2 $ and $ \gamma(k_2) = k_1 $.  
\end{itemize}

It is obvious that the procedure (i)--(iv) determines $ \gamma \in \Gamma(n) $ and 
the image $ \gamma $ does not depend on the choice of $ \omega_{\tau_1, \tau_2} $ 
from an orbit of $ \Z_2^n \times (\Z_2^n \ltimes \Sgroup_n) $.  
Thus we have a well-defined map $ \Z_{2}^{n} \backslash \regOmega_{n} / (\Z_{2}^{n} \rtimes \Sgroup_{n}) \to \Gamma(n) $.  

This map is clearly surjective (we can reconstruct $ \omega_{\tau_1, \tau_2} $).  
The injectivity can be shown in the same way as Proposition \ref{Prop-Omega-I=SPI}.  
We omit the details.    
\end{proof}

\begin{example}\label{Ex-Gamma-to-Omega}
    Let $\gamma\in\Gamma(7)$ be the same as Example \ref{Ex-Gamma(n)}.
    Then, this $\gamma$ corresponds to $\omega_{\tau_{1},\tau_{2}}\in\regOmega_{n}$ under the isomorphism in Theorem \ref{Thm-Gamma(n)=DFVR} given by
    \begin{equation*}
        \tau_{1}=
        \begin{pmatrix}
            1 & 0 & 0 & 0 & 0 & 0 & 0 \\
            0 &-1 & 0 & 0 & 0 & 0 & 0 \\
            0 & 0 & 0 & 0 & 0 & 0 & 0 \\
            0 & 0 & 0 & 0 & 0 & 1 & 0 \\
            0 & 0 & 0 & 0 & 1 & 0 & 0 \\
            0 & 0 & 0 & 1 & 0 & 0 & 0 \\
            0 & 0 & 0 & 0 & 0 & 0 & -1 \\
        \end{pmatrix},
        \quad
        \tau_{2}=
        \begin{pmatrix}
            0 & 0 & 0 & 0 & 0 & 0 & 0 \\
            0 & 1 & 0 & 0 & 0 & 0 & 0 \\
            0 & 0 & 1 & 0 & 0 & 0 & 0 \\
            0 & 0 & 0 & 1 & 0 & 0 & 0 \\
            0 & 0 & 0 & 0 & 1 & 0 & 0 \\
            0 & 0 & 0 & 0 & 0 & 1 & 0 \\
            0 & 0 & 0 & 0 & 0 & 0 & 1 \\
        \end{pmatrix},
    \end{equation*}
\end{example}

Let us fix $ I \subset [n] $ and denote $ m = \# I $.  

\begin{lemma}
There is a bijection between $ \Gamma'(I) $ and 
$ \Z_2^n \times (\Z_2^n \ltimes \Sgroup_n) $-orbits in 
$ \Omega_n^I $, which classifies the $ B_H $-orbits 
$ B_H \backslash \mathrm{LM}_{2n, n}^I / \GL_n(\C) $ in Case \caseA.  
While in Case \caseB, it classifies $ B_H \backslash \mathrm{LM}_{2n, n}^I(\R) / \GL_n(\R) $,
where $\mathrm{LM}_{2n, n}^I(\R) \coloneqq \mathrm{LM}_{2n, n}^{I} \cap \Mat_{2n,n}(\R)$.
\end{lemma}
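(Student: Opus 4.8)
The plan is to deduce the lemma by restricting the two bijections already established in this section to the piece indexed by a fixed $I$. First I would recall the bijection of Lemma \ref{Lem-ZSPI=coprod-ZOmegaZS}: the restriction of $\coprod \varphi_I'$ to the $I$-component gives a bijection
\[
    \Z_2^{\# I}\backslash \SPI_{\# I}
    \xrightarrow{\;\sim\;}
    \Z_2^n \backslash \Omega_n^I / (\Z_2^n\rtimes\Sgroup_n),
\]
and on the other side the restriction of $\coprod\varphi_I$ (Lemma \ref{Lem-B-G-P_S=coprod-B-Her}) together with the identification $\Z_2^{\#I}\backslash\SPI_{\#I}\simeq\SPIst_{\#I}\simeq B_{\#I}\backslash\Her_{\#I}(\C)$ of \eqref{eq-Z2-SPI=SPIst} and \eqref{eq-map-from-SPIst-to-B-Her(C)} gives a bijection
\[
    \Z_2^{\#I}\backslash\SPI_{\#I}
    \xrightarrow{\;\sim\;}
    B_H\backslash \mathrm{LM}_{2n,n}^I/\GL_n(\C)
\]
in Case \caseA (and the $\SPI$--$\Sym$ analogue for $B_H\backslash\mathrm{LM}_{2n,n}^I(\R)/\GL_n(\R)$ in Case \caseB, via \eqref{eq-map-from-SPIst-to-B-Sym(R)}). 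Composing, $\Z_2^n\times(\Z_2^n\rtimes\Sgroup_n)$-orbits in $\Omega_n^I$ are in bijection with $B_H\backslash\mathrm{LM}_{2n,n}^I/\GL_n(\C)$; this is just the commutative square in the proof of Theorem \ref{theorem-Omega=H-G-PS-2} restricted to a single index $I$.

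Next I would produce the bijection between $\Gamma'(I)$ and these orbits directly, by the same column-reading recipe used in the proof of Theorem \ref{Thm-Gamma(n)=DFVR}. Given $\omega_{\tau_1,\tau_2}\in\Omega_n^I$, apply steps (i)--(iv) of that proof; since $\ech\tau_2=I$, no column of $\tau_2$ vanishes on the rows indexed outside $I$ in the relevant way, so step (ii) can only assign the label $d$ to indices $k\in[n]\setminus I$, and all other labels land on $I$. Hence the resulting $\gamma$ lies in $\Gamma'(I)$ under the identification \eqref{eq-Def-Gamma'(I)}. Conversely, reading off a representative $\omega_{\tau_1,\tau_2}$ from $\gamma'\in\Gamma'(I)$ lands in $\Omega_n^I$ because the construction places exactly the columns with $\tau_2(j)=0$ among the last $n-m$ columns after the $\sigma_I$-normalization. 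Well-definedness and injectivity on orbits follow exactly as in Theorem \ref{Thm-Gamma(n)=DFVR} and Proposition \ref{Prop-Omega-I=SPI}.

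The cleanest way to phrase this, and the route I would actually write, is: combine the bijection $\Gamma(n)\simeq \Z_2^n\backslash\regOmega_n/(\Z_2^n\rtimes\Sgroup_n)$ of Theorem \ref{Thm-Gamma(n)=DFVR} with the disjoint decompositions $\Gamma(n)=\coprod_{I\subset[n]}\Gamma'(I)$ and $\regOmega_n=\coprod_{I\subset[n]}\Omega_n^I$ (Lemma \ref{Lem-orbit-decomp-Omega=coprod-Omega-I}), and observe that the bijection of Theorem \ref{Thm-Gamma(n)=DFVR} respects these decompositions index by index --- which is immediate from the recipe (i)--(iv), since the label $d$ occurs at $k$ precisely when $k\notin\ech\tau_2$. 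This gives $\Gamma'(I)\simeq \Z_2^n\backslash\Omega_n^I/(\Z_2^n\rtimes\Sgroup_n)$. The final identification with $B_H\backslash\mathrm{LM}_{2n,n}^I/\GL_n(\C)$ (resp.\ its real form) is then Lemma \ref{Lem-ZSPI=coprod-ZOmegaZS} composed with Lemma \ref{Lem-B-G-P_S=coprod-B-Her}, restricted to the $I$-component, exactly as in the commutative square in the proof of Theorem \ref{theorem-Omega=H-G-PS-2}.

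I do not expect a serious obstacle here: every ingredient is already in place, and the only point requiring a line of care is verifying that the graph-assignment of Theorem \ref{Thm-Gamma(n)=DFVR} is compatible with the index $I=\ech\tau_2$, i.e.\ that it carries $\Omega_n^I$ into $\Gamma'(I)$ and back. That compatibility is transparent from step (ii) of the recipe, so the proof is essentially a bookkeeping assembly of Lemmas \ref{Lem-orbit-decomp-B-LM-GL=coprod-B-LMI-GL}, \ref{Lem-B-G-P_S=coprod-B-Her}, \ref{Lem-orbit-decomp-Omega=coprod-Omega-I}, \ref{Lem-ZSPI=coprod-ZOmegaZS} and Theorems \ref{theorem-Omega=H-G-PS-2}, \ref{Thm-Gamma(n)=DFVR}, and I would write it as such, with the one-sentence check on the $d$-labels and a pointer to Example \ref{Ex-Gamma'(m)-to-Gamma(n)} for illustration.
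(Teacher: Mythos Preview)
Your proposal is correct and follows essentially the same route as the paper. The paper's proof simply cites Lemma \ref{Lem-(B-Her)=(B-LM-I/GL)}, Proposition \ref{Prop-SPI=(B-Her)}, and Proposition \ref{Prop-Omega-I=SPI} and writes the chain
\[
    \Gamma'(I) \;\simeq\; \Z_{2}^{n} \backslash \Omega_{n}^I / (\Z_{2}^{n} \rtimes \Sgroup_{n})
    \;\simeq\; B_H \backslash \mathrm{LM}_{2n, n}^I / \GL_n(\C) \;\simeq\; B_m \backslash \Her_m(\C)
    \;\simeq\; \Z_{2}^{m} \backslash \SPI_m ,
\]
leaving the $\Gamma'(I)$ step and the compatibility with the $I$-decomposition implicit from Theorem \ref{Thm-Gamma(n)=DFVR}; your more explicit check that step~(ii) of the recipe assigns the label $d$ exactly on $[n]\setminus I$ is a welcome clarification but not a different argument.
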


\begin{proof}
    This directly follows from 
    Lemma \ref{Lem-(B-Her)=(B-LM-I/GL)}, Proposition \ref{Prop-SPI=(B-Her)}
    and Proposition \ref{Prop-Omega-I=SPI}. 
    In fact, we have already established the following bijections:
\begin{equation*}
    \Gamma'(I) \simeq \Z_{2}^{n} \backslash \Omega_{n}^I / (\Z_{2}^{n} \rtimes \Sgroup_{n}) 
    \simeq B_H \backslash \mathrm{LM}_{2n, n}^I / \GL_n(\C) \simeq B_m \backslash \Her_m(\C) 
    \simeq \Z_{2}^{m} \backslash \SPI_m .
\end{equation*}
\end{proof}

\section{Galois cohomology of orbits}\label{Section-proof-Thm-Galios-RDFV}

As explained in Introduction, the orbit decomposition for double flag varieties 
\begin{align*}
\dblFV_{\C} &= \bigl(\GL_n(\C) \times \GL_n(\C) \bigr) \bigm/ \bigl( B_n^+(\C) \times B_n^-(\C) \bigr) \times \GL_{2n}(\C)/ P_{(n,n)} 
\quad & & (\text{Case \caseA)}
\\
\intertext{ and }
\dblFV_{\C} &= \GL_n(\C) / B_n(\C) \times \Sp_{2n}(\C)/P_S^{\C}
\quad& & (\text{Case \caseB)}
\end{align*}
over complex numbers are already known (\cite{Fresse.N.2023,Fresse.N.2021}).  
Our double flag varieties over $ \R $ are real forms of these complex double flag varieties 
and we are interested in the decomposition of a complex orbit on $ \dblFV_{\C} $ 
when restricted to the real forms.  
In this section, we study those real orbits in $ \dblFV $ using Galois cohomology and 
prove Theorem \ref{theorem-Galois-RDFV}.

\subsection{\texorpdfstring{$\R$}{R}-rational point of \texorpdfstring{$B_{H_{\C}}$}{B}-orbit on \texorpdfstring{$G_{\C}/P_{\C}$}{the complex flag variety}}\label{Section-proof-of-(1)-of-Thm-RDFV}

In this subsection, we prove \eqref{theorem-Galois-RDFV:item:1:nonempty.R.pts} of Theorem \ref{theorem-Galois-RDFV}.  
Before the proof, we note that Theorem \ref{Thm-complex-orbit-decomp-AIII-and-CI} implies that there exists a commutative diagram
\begin{equation}\label{eq-diagram-of-regR-regT-flags}
\vcenter{\xymatrix{
    \regR_{n} / \Sgroup_{n} 
    \ar@{^{(}-_>}[r]
    \ar[d]^-{\wr}_{\text{Thm.}~\ref{Thm-complex-orbit-decomp-AIII-and-CI}}
    &
    \regT_{n} / \Sgroup_{n}
    \ar[d]^-{\wr}_{\text{Thm.}~\ref{Thm-complex-orbit-decomp-AIII-and-CI}}
    \\
    B_{n}(\C) \backslash \Sp_{2n}(\C) / P_{S}^{\C}
    \ar@{^{(}-_>}[r]
    &
    B_{n}^{+} \times B_{n}^{-} \backslash \GL_{2n}(\C) / P_{(n,n)} .
}} 
\end{equation}

First, suppose that we are in Case \caseA.
    Let $\tau \in \regT_{n} / \Sgroup_{n}$ and $\mathcal{O}_{\tau}$ be the $B_{H_{\C}}$-orbit on $G_{\C} / P_{G_{\C}}$ corresponding to $\tau$ via the isomorphism of Theorem \ref{Thm-complex-orbit-decomp-AIII-and-CI}. 
    By \cite[Fact 3.1]{Nishiyama.Tauchi.2024}, we know $\mathcal{O}_{\tau}(\R) \neq \emptyset $ if and only if $\mathcal{O}_{\tau} \cap (\Sp_{2n}(\C) / P_{S}^{\C}) \neq \emptyset$.   The above commutative diagram tells that the last claim is equivalent to 
    $\tau \in \regR_{n} / \Sgroup_{n}$.  
This completes the proof of \eqref{theorem-Galois-RDFV:item:1:nonempty.R.pts} in Theorem \ref{theorem-Galois-RDFV} in Case \caseA.  
    
Next, suppose that we are in Case \caseB.
    Note that all entries of $\tau \in \regR_{n} / \Sgroup_{n}$ belongs to $\Z \subset \R$, and thus, $\tau $ is a real point because the complex conjugation of $G_{\C} = \Sp_{2n}(\C)$ with respect to $G=\Sp_{2n}(\R)$ is merely the usual complex conjugation of each entries. Thus, every orbit $\mathcal{O}_{\tau}$ corresponding to $\tau \in \regR_{n} / \Sgroup_{n}$ has a real point $\tau$ itself. 
    This completes the proof of \eqref{theorem-Galois-RDFV:item:1:nonempty.R.pts} in Theorem \ref{theorem-Galois-RDFV} in Case \caseB.

\subsection{Matrix realization of the Grassmannian}

We only prove \eqref{theorem-Galois-RDFV:item:2:Galois.tells.orbits.R} of Theorem \ref{theorem-Galois-RDFV} 
in Case \caseA, but the statements for Case \caseB hold without changes.  
So let us assume we are in Case \caseA till the end of this section.   We need some preparations.  

First, we give a matrix realization of the Grassmannian $G_{\C} / P_{\C}$, which is an analogue of Lemma \ref{Lem-G/P_S=LM/GL}.

\begin{definition}\label{Def-regM}
For $n\in \Z_{\geq 0}$, we define $\regMat_{2n,n} \subset \Mat_{2n,n}(\C)$ by
\begin{equation*}
    \regMat_{2n,n} \coloneqq	
    \left\{
        \omega_{C,D}=
        \begin{pmatrix}
            C \\ D
        \end{pmatrix}       \in \Mat_{2n,n}(\C)
    \Bigm| 
         \begin{array}{c}
              C,D\in\Mat_{n}(\C) \\
              \rank \omega_{C,D} = n
         \end{array}
    \right\}.
\end{equation*}
\end{definition}

\begin{lemma}\label{Lem-G/P=M/GL}
   Let $n\in\Z_{\geq 0}$. Then, we have a $G_{\C}$-equivariant isomorphism
   \begin{equation*}
   \begin{array}{ccc}
        \regMat_{2n,n} / \GL_{n}(\C) & \xrightarrow{\sim} & G_{\C}/P_{G_{\C}}  \\
        \rotatebox{90}{$\in$} && \rotatebox{90}{$\in$} \\
        \omega & \mapsto & [\omega],
   \end{array}
   \end{equation*}
   where $[\omega] \in G_{\C}/P_{G_{\C}} \simeq \Gr_{n}(\C^{2n})$ is the $n$-dimensional subspace of $\C^{2n}$ defined by the image of $\omega \colon \C^{n} \to \C^{2n}$.  
    By this isomorphism, the action of $B_{n}^{+} \times B_{n}^{-}$ on $[\omega] \in G_{\C} /P_{G_{\C}}$
    is given by 
    \begin{equation}\label{eq-def-action-of-BB-on-regRst}
        (b_1,b_2) \cdot 
        \begin{pmatrix}
            C \\ D
        \end{pmatrix} \GL_{n}(\C)
        =
        \begin{pmatrix}
            b_1 C \\ b_2 D
        \end{pmatrix} \GL_{n}(\C) , 
        \quad
        \text{ where } \;\;
        \omega = \omega_{C,D} = \begin{pmatrix}
        C \\ D
    \end{pmatrix} \in \regMat_{2n,n}.
    \end{equation}
\end{lemma}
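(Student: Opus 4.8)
The plan is to follow the template of Lemma~\ref{Lem-G/P_S=LM/GL} almost verbatim, now with $G_\C = \GL_{2n}(\C)$, so that the isotropy constraint present there (the condition $C^*D \in \Her_n(\C)$) simply drops out and one is left with the classical identification of a partial flag variety with a Grassmannian. First I would invoke, as recorded in Convention~\ref{Convention-complexifications}, the standard isomorphism $G_\C/P_{G_\C} = \GL_{2n}(\C)/P_{(n,n)} \simeq \Gr_n(\C^{2n})$: the group $P_{(n,n)}$, being block-upper-triangular, is the stabilizer of the coordinate subspace $W_0 = \C^n \oplus \{0\} \subset \C^{2n}$, and the coset $gP_{(n,n)}$ is sent to $gW_0$, the span of the first $n$ columns of $g$.

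Next I would verify that $\omega \mapsto [\omega] := \Image(\omega)$ is well-defined and bijective on the quotient $\regMat_{2n,n}/\GL_n(\C)$. Well-definedness: $\rank\omega = n$ forces $\Image(\omega)$ to be $n$-dimensional, and $\Image(\omega g) = \Image(\omega)$ for $g \in \GL_n(\C)$, so the subspace depends only on the right $\GL_n(\C)$-coset of $\omega$. Surjectivity: any $W \in \Gr_n(\C^{2n})$ has a basis $w_1, \dots, w_n$, and the matrix $\omega$ whose columns are the $w_i$ lies in $\regMat_{2n,n}$ with $[\omega] = W$. Injectivity: if $\omega, \omega' \in \regMat_{2n,n}$ satisfy $\Image(\omega) = \Image(\omega')$, then, both having full column rank, they are related by a unique $g \in \GL_n(\C)$ with $\omega' = \omega g$. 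Finally, $G_\C = \GL_{2n}(\C)$ acts on $\regMat_{2n,n}$ by left multiplication and on $\Gr_n(\C^{2n})$ by $g \cdot W = gW$; since $\Image(g\omega) = g\,\Image(\omega)$ and left multiplication commutes with the right $\GL_n(\C)$-action, the map descends to a $G_\C$-equivariant isomorphism $\regMat_{2n,n}/\GL_n(\C) \xrightarrow{\sim} G_\C/P_{G_\C}$.

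The formula \eqref{eq-def-action-of-BB-on-regRst} for the action of $B_n^+ \times B_n^-$ then falls out by restricting this $G_\C$-action along the block-diagonal embedding $H_\C = \GL_n(\C) \times \GL_n(\C) \hookrightarrow \GL_{2n}(\C)$, $(g_1, g_2) \mapsto \diag(g_1, g_2)$, fixed in Section~\ref{Section-Preliminary-caseA}: for $\omega = \begin{pmatrix} C \\ D\end{pmatrix}$ one has $\diag(b_1, b_2)\,\omega = \begin{pmatrix} b_1 C \\ b_2 D\end{pmatrix}$, and passing to $\GL_n(\C)$-cosets gives precisely the stated formula, with $B_n^+$ acting on the $C$-block and $B_n^-$ on the $D$-block.

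I do not expect a genuine obstacle: the statement is a routine repackaging of the column-space description of the Grassmannian. The only thing demanding care is keeping the conventions consistent --- that $P_{(n,n)}$ is taken as the stabilizer of the \emph{top} block $W_0 = \C^n \oplus \{0\}$ (so that $[\omega]$ is the column span of $\begin{pmatrix} C \\ D\end{pmatrix}$ with $C$ on top), and that under $H_\C \hookrightarrow G_\C$ the factor $B_n^+$ lands in the top-left corner and $B_n^-$ in the bottom-right, so that the resulting action really does put $b_1$ on $C$ and $b_2$ on $D$ as written.
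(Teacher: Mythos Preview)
Your proposal is correct and matches the paper's approach exactly: the paper omits the proof and simply refers back to Lemma~\ref{Lem-G/P_S=LM/GL} for the analogous argument, which is precisely what you have unwound in detail. Your write-up is in fact more explicit than what the paper provides.
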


\begin{proof}
We omit the proof.  See the similar arguments for Lemma \ref{Lem-G/P_S=LM/GL}.
\end{proof}

Next, we need to prepare some notations about the parameter set $\regR_{n}$ (Definition \ref{Def-PP-regT-regR}).

\begin{notation}\label{Notation-K-for-tau-in-regR}
    Let $\tau \in \regR_{n}$.
    Then, by definition of $\regR_{n}$, we can write 
    \begin{equation*}
        \tau = 
        \begin{pmatrix}
            \tau_{1} \\ \tau_{2}
        \end{pmatrix}
        =
        \begin{pmatrix}
            e_{i_{1}} & e_{i_{2}} & \dots & e_{i_{n}} \\
            e_{j_{1}} & e_{j_{2}} & \dots & e_{j_{n}} \\ 
        \end{pmatrix},
        \qquad
        \text{ for } i_{k},j_{k} \in \{0,1,\dots,n\},
    \end{equation*}
    where $e_{\ell} \in \Mat_{1,n}(\C)$ denotes the $\ell$-th elementary basis  vector for $1\leq \ell \leq n$, and we set $e_{0} \coloneqq 0 \in \Mat_{1,n}(\C)$. 
    In this setting, we define $K_{c}, K_{d}, K_{1}, K_{2}$ for subsets of $[n] = \{1,2,\dots,n\}$ as follows.
    \begin{equation*}
    \begin{split}
        K_{c} & \coloneqq \{j_{k} \in [n] \mid i_{k} = 0 \}, \\
        K_{d} & \coloneqq \{i_{k} \in [n] \mid j_{k} = 0 \}, \\
        K_{1} & \coloneqq \{i_{k} \in [n] \mid i_{k} = j_{k} \neq 0\}, \\
        K_{2} & \coloneqq \{i_{k} \in [n] \mid i_{k} = j_{l} \neq 0 \neq  j_{k} = i_{l} \text{ for some } l \neq k\}.
    \end{split}
    \end{equation*}
    By definition, they satisfy $K_{c} \coprod K_{d} \coprod K_{1} \coprod K_{2} = [n]$.  
    Note that these sets are invariant with respect to the action of $\Sgroup_{n}$ on $\regR_{n}$ given in Remark \ref{Remark-Weyl-group-action-An-on-regT-regR}, and thus we will also consider this decomposition for an element in $\regR_{n}/\Sgroup_{n}$.
\end{notation}

\begin{example}\label{Ex-K-for-tau-in-regR}
    Consider an element $\tau = \begin{pmatrix}
        \tau_{1} \\ \tau_{2}
    \end{pmatrix} \in \regR_{5}$ given by
\begin{equation*}
    \tau_{1} =
    \begin{pmatrix}
        0 & 0 & 0 & 0 & 1 \\
        0 & 0 & 0 & 1 & 0 \\
        1 & 0 & 0 & 0 & 0 \\
        0 & 0 & 0 & 0 & 0 \\
        0 & 1 & 0 & 0 & 0 \\
    \end{pmatrix},
    \qquad
    \tau_{2} =
    \begin{pmatrix}
        0 & 0 & 0 & 0 & 1 \\
        0 & 0 & 0 & 0 & 0 \\
        0 & 1 & 0 & 0 & 0 \\
        0 & 0 & 1 & 0 & 0 \\
        1 & 0 & 0 & 0 & 0 \\        
    \end{pmatrix}.
\end{equation*}
    Then, the decomposition $[5] = K_{c} \coprod K_{d} \coprod K_{1} \coprod K_{2}$ in Notation \ref{Notation-K-for-tau-in-regR} is given by
    \begin{equation*}
        K_{c} = \{4\}, \quad 
        K_{d} = \{2\}, \quad 
        K_{1} = \{1\}, \quad 
        K_{2} = \{3,5\}.
    \end{equation*}
%
One can easily determine the above decomposition of $[n]$ 
using the graphic representation $\mathcal{G}(\tau)$ given in \cite[Sect.~2.1]{Fresse.N.2023}.  
For $\tau\in \regR_{5}$ considered above 
the graphic representation $\mathcal{G}(\tau)$ is
    \begin{equation*}
        \xymatrix@C=30pt@R=10pt{
        1^{+}
		&
        2^{+}
		&
        3^{+} 
		&
        4^{+} 
		&
		5^{+}
		\phantom{.} 
        \\
        \bullet \ar@{-}[d]
		&
        \text{\textcircled{$\bullet$}}
		&
        \bullet \ar@{-}[drr]
		&
        \bullet 
		&
		\bullet \ar@{-}[dll]
		\phantom{.} 
		\\
		\bullet
		&
        \bullet 
		&
        \bullet 
		&
        \text{\textcircled{$\bullet$}} 
		&
		\bullet
        \phantom{.}
        \\
        1^{-}
		&
        2^{-}
		&
        3^{-} 
		&
        4^{-} 
		&
		5^{-}.
        }
    \end{equation*}
    Namely, the sets $K_{c}, K_{d}, K_{1}$ and $K_{2}$ are equal to the sets of indices corresponding to the diagram
    \begin{equation*}
        \xymatrix@C=30pt@R=10pt{\bullet\phantom{,} \\ \text{\textcircled{$\bullet$}},}\qquad\qquad
        \xymatrix@C=30pt@R=10pt{\text{\textcircled{$\bullet$}}\phantom{,}   \\ \bullet,}\qquad\qquad
        \xymatrix@C=30pt@R=10pt{\bullet\phantom{,}  \ar@{-}@<-2pt>[d] \\ \bullet,}\qquad\qquad
        \xymatrix@C=30pt@R=10pt{\bullet  \ar@{-}[dr] & \bullet\phantom{,}  \ar@{-}[dl]\\ \bullet & \bullet,}
    \end{equation*}
    respectively.
\end{example}

\subsection{Basics from the theory of Galois cohomology}\label{Section-facts-Galois-cohomology-flag}
To obtain the orbit decomposition $B_{H} \backslash G /P_{G}$ over $ \R $, 
it is sufficient to know the following (i)--(iii): 
\begin{itemize}
\item[\upshape{(i)}] the orbit decomposition of its complexification $B_{H_{\C}} \backslash G_{\C} /P_{G_{\C}}$, 
\item[\upshape{(ii)}] identifying which orbit has an $\R$-rational point among orbits in the complexification, and 
\item[\upshape{(iii)}] the Galois cohomologies of stabilizer subgroups of such orbits.
\end{itemize}
The precise statements will be given in Lemma \ref{lemma-real-orbits-via-Galois-cohomology-non-homogeneous-case-flag} below.  Before that we need some basics of the theory of Galois cohomology.  
The following lemmas seem to be well known, and therefore we only give appropriate references.

\begin{lemma}\label{Lem-Galois-coh=gamma-conjugation-flag}
    Let $B$ be a real algebraic group, $B_{\C}$ its complexification, and $\gamma \colon B_{\C} \to B_{\C}$ the complex conjugation of $B_{\C}$ with respect to $B$. 
    Write $B_{\C}^{-\gamma}$ for the set of $(-\gamma)$-fixed points of $B_{\C}$:
    \begin{equation}
        B_{\C}^{-\gamma} \coloneqq \{b\in B_{\C} \mid \gamma(b)^{-1} = b\}.
    \label{def-eq--gamma-fixed-set-flag}
    \end{equation}
    Then, the first Galois cohomology $H^{1}(\R, B_{\C})$ of $B_{\C}$ is isomorphic to the set of equivalence classes on $B_{\C}^{-\gamma}$ with respect to the $\gamma$-conjugation:
    \begin{equation}
        H^{1}(\R, B_{\C}) \simeq B_{\C}^{-\gamma}/ \sim_{\gamma},
    \label{eq-isom-H1=B-gamma-over-gamma-conj}
    \end{equation}
    where we write $\sim_{\gamma}$ for the equivalence relation defined by the $\gamma$-conjugation:
    \begin{equation*}
        x \sim_{\gamma} y \iff x = \gamma(g)^{-1}yg \quad \text{ for some }g\in B_{\C}.
    \end{equation*}
\end{lemma}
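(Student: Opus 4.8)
The plan is to prove Lemma~\ref{Lem-Galois-coh=gamma-conjugation-flag} by unwinding the definition of the (non-abelian) first Galois cohomology $H^{1}(\R, B_{\C})$ in the case where the absolute Galois group is $\Gal(\C/\R) = \{1, \gamma\}$, a group of order two. Recall that $H^{1}(\R, B_{\C})$ is the pointed set of $1$-cocycles $Z^{1}(\Gal(\C/\R), B_{\C}(\C))$ modulo the coboundary equivalence. A $1$-cocycle is a map $s \colon \Gal(\C/\R) \to B_{\C}(\C)$, $\rho \mapsto s_{\rho}$, satisfying $s_{\rho\tau} = s_{\rho}\cdot \rho(s_{\tau})$; here $\gamma$ acts on $B_{\C}(\C)$ through the complex conjugation (which by abuse of notation we also write $\gamma$). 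Since $s_{1} = 1$ is forced by the cocycle condition, a cocycle is entirely determined by the single element $b := s_{\gamma} \in B_{\C}(\C)$, and the cocycle condition for $\rho = \tau = \gamma$ (using $\gamma^{2} = 1$, so $s_{\gamma^{2}} = s_{1} = 1$) reads $1 = b \cdot \gamma(b)$, i.e.\ $\gamma(b) = b^{-1}$, which is exactly the condition $b \in B_{\C}^{-\gamma}$ in \eqref{def-eq--gamma-fixed-set-flag}. This gives a bijection $Z^{1}(\Gal(\C/\R), B_{\C}(\C)) \xrightarrow{\;\sim\;} B_{\C}^{-\gamma}$, $s \mapsto s_{\gamma}$.

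Next I would translate the coboundary relation. Two cocycles $s, s'$ are cohomologous if there is $g \in B_{\C}(\C)$ with $s'_{\rho} = g^{-1}\, s_{\rho}\, \rho(g)$ for all $\rho$; applying this at $\rho = \gamma$ gives $s'_{\gamma} = g^{-1} s_{\gamma} \gamma(g)$. Under the identification above this says precisely $b' = g^{-1} b\, \gamma(g)$, which is the relation $\sim_{\gamma}$ as stated (after renaming $g^{-1}$ versus $\gamma(g)^{-1}$; note that $b' = g^{-1} b \gamma(g)$ is equivalent to $b' = \gamma(h)^{-1} b h$ by taking $h = \gamma(g)$, since $\gamma(h)^{-1} = \gamma(\gamma(g))^{-1} = g^{-1}$, so the two forms of the relation coincide and one should just be careful to write it in the same convention as the statement). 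Hence the bijection on cocycles descends to a bijection of quotient (pointed) sets $H^{1}(\R, B_{\C}) \xrightarrow{\;\sim\;} B_{\C}^{-\gamma}/\sim_{\gamma}$, which is \eqref{eq-isom-H1=B-gamma-over-gamma-conj}.

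Finally, to be thorough I would remark that the map is well-defined as a map of pointed sets (the trivial cocycle corresponds to $b = 1$, which lies in $B_{\C}^{-\gamma}$ since $\gamma(1) = 1 = 1^{-1}$, and whose $\sim_{\gamma}$-class is the distinguished point), and that one must check $B_{\C}^{-\gamma}$ is nonempty and the relation $\sim_{\gamma}$ is genuinely an equivalence relation — both are immediate: reflexivity from $g = 1$, symmetry from replacing $g$ by $\gamma(g)^{-1}$ and using $\gamma(b)=b^{-1}$, transitivity by composing. The main (and really only) point requiring care is bookkeeping the two conventions for the coboundary action (left versus right, $g$ versus $\gamma(g)$) so that the displayed relation $x \sim_{\gamma} y \iff x = \gamma(g)^{-1} y g$ matches the one obtained from the cocycle calculus; there is no genuine mathematical obstacle, and since this is standard I would, in line with the surrounding text, cite a reference (e.g.\ Serre, \emph{Galois Cohomology}, Chap.~I \S5.1 and the discussion of cohomology of groups of order two) and present only this short verification.
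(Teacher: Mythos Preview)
Your proof is correct and follows exactly the approach indicated in the paper: the paper's proof simply says the isomorphism follows from the definition of Galois cohomology by evaluating at the nontrivial element of $\Gal(\C/\R)$, citing Serre, while you spell out that evaluation in detail. The only difference is level of explicitness, not method.
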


\begin{proof}
    This follows directly from the definition of the Galois cohomology \cite[Chap.~II, \S 1.1 and Chap.~I, \S 5.1]{Serre-Galois-Cohomology}, in which the Galois cohomology is considered over arbitrary fields. For our case $\C/\R$, the evaluation of the nontrivial element of $\Gal(\C/\R)$ gives the desired isomorphism \eqref{eq-isom-H1=B-gamma-over-gamma-conj}. See the proof of \cite[Lem.~A.4]{Nishiyama.Tauchi.2024} for details.
\end{proof}

\begin{lemma}\label{Lem-Galois-isom-real-points-of-orbit-for-hom-sp-case-flag}
    Let $B$ be a real algebraic group, and $C$ its real algebraic subgroup.
    Write $B_{\C}, C_{\C}$ for their complexifications (and thus, the set of real points $B_{\C}(\R)$ is equal to $B$) 
    and $\gamma\colon B_{\C} \to B_{\C}$ for the complex conjugation of $B_{\C}$ with respect to $B$ 
    (which also defines the complex conjugation of $C_{\C}$ with respect to $C$).
    Suppose that the first Galois cohomology $H^{1}(\R, B_{\C})$ is trivial:
    \begin{equation*}
        H^{1}(\R, B_{\C})=1.
    \end{equation*}
    Then, the set of $B$-orbits on the real points $(B_{\C}/C_{\C})(\R)$ of $B_{\C}/C_{\C}$ is isomorphic to the first Galois cohomology $H^{1}(\R, C_{\C})$:
    \begin{equation*}
    \begin{array}{ccc}
         B \backslash (B_{\C}/C_{\C})(\R) & \xrightarrow{\;\;\sim\;\;} & H^{1}(\R, C_{\C}) \\
         \rotatebox{90}{$\in$} & & \rotatebox{90}{$\in$} \\
         B g C_{\C} & \mapsto &  [g^{-1} \gamma(g)]_{\gamma},
    \end{array}
    \end{equation*}
    where we identify $H^{1}(\R, C_{\C})$ with $C_{\C}^{-\gamma}/ \sim_{\gamma}$ via Lemma \ref{Lem-Galois-coh=gamma-conjugation-flag}, 
    and $[g^{-1}\gamma(g)]_{\gamma}$ denotes the equivalence class of $g^{-1}\gamma(g) \in C_{\C}^{-\gamma}$ under the $\gamma$-conjugation.
\end{lemma}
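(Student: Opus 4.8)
The plan is to read the statement off the exact sequence of non-abelian Galois cohomology attached to the short exact sequence of algebraic groups over $\R$, namely $1 \to C_\C \to B_\C \to B_\C/C_\C \to 1$ (with $B_\C/C_\C$ regarded as a $B_\C$-homogeneous space), together with the concrete description of $H^1(\R,-)$ by $\gamma$-twisted classes from Lemma \ref{Lem-Galois-coh=gamma-conjugation-flag}. Taking $\Gal(\C/\R)$-fixed points gives an exact sequence of pointed sets $B \to (B_\C/C_\C)(\R) \xrightarrow{\;\delta\;} H^1(\R,C_\C) \to H^1(\R,B_\C)$, in which $\delta$ sends $x = gC_\C$ (a lift $g \in B_\C(\C)$ exists since $B_\C \to B_\C/C_\C$ is smooth and surjective, hence surjective on $\C$-points) to the class of the cocycle $\sigma \mapsto g^{-1}\sigma(g)$. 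As $\Gal(\C/\R) = \langle\gamma\rangle$, this cocycle is determined by $g^{-1}\gamma(g)$, which lies in $C_\C^{-\gamma}$ (immediate from $\gamma^2 = \mathrm{id}$), so under Lemma \ref{Lem-Galois-coh=gamma-conjugation-flag} the connecting map is exactly $\delta(x) = [g^{-1}\gamma(g)]_\gamma$. I would then invoke the standard twisting refinement (Serre, \emph{Galois Cohomology}, Ch.~I, \S\S 5.4--5.5): the fibre of $\delta$ over \emph{any} class is a single $B$-orbit, so $\delta$ descends to an injection $B\backslash(B_\C/C_\C)(\R) \hookrightarrow H^1(\R,C_\C)$, whose image is the kernel of $H^1(\R,C_\C)\to H^1(\R,B_\C)$ — all of $H^1(\R,C_\C)$, since $H^1(\R,B_\C)=1$ by hypothesis.

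Because this is essentially bookkeeping, I would in fact prefer to verify the three properties of the map $BgC_\C \mapsto [g^{-1}\gamma(g)]_\gamma$ directly, so that the section stays self-contained. \emph{Well-definedness}: replacing $g$ by $gc$ with $c \in C_\C$ changes $g^{-1}\gamma(g)$ to $c^{-1}(g^{-1}\gamma(g))\gamma(c)$, which is $\sim_\gamma$ to $g^{-1}\gamma(g)$ via $h = \gamma(c)\in C_\C$; and replacing $x$ by $bx$ with $b \in B$ leaves $g^{-1}\gamma(g)$ unchanged because $\gamma(b)=b$. \emph{Surjectivity}: given $c \in C_\C^{-\gamma}$, view it as a $B_\C$-valued cocycle; triviality of $H^1(\R,B_\C)$ produces $g \in B_\C(\C)$ with $c = g^{-1}\gamma(g)$, and then $x := gC_\C$ is $\gamma$-fixed (since $\gamma(g) = gc \in gC_\C$) with $\delta(x) = [c]_\gamma$. \emph{Injectivity}: from $g'^{-1}\gamma(g') = \gamma(c)^{-1}(g^{-1}\gamma(g))c$ one rewrites the right-hand side as $\tilde g^{-1}\gamma(\tilde g)$ with $\tilde g := g\gamma(c)$, whence $b := \tilde g (g')^{-1}$ is $\gamma$-fixed, i.e.\ $b \in B$, and $bg'C_\C = g C_\C$, so the two double cosets coincide.

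The only point that genuinely requires care is threading the conventions together consistently: the coboundary convention, the form of $\delta$, and the $\gamma$-conjugation in Lemma \ref{Lem-Galois-coh=gamma-conjugation-flag} must all match, and one must confirm that the hypothesis $H^1(\R,B_\C)=1$ is used precisely once — in the coboundary step of surjectivity (equivalently, exactness at $H^1(\R,C_\C)$). Beyond that there is no real obstacle: this is Serre's classical description of the rational points of a homogeneous space specialised to $\C/\R$, and a reference to \cite[Ch.~I, \S 5.4]{Serre-Galois-Cohomology} combined with Lemma \ref{Lem-Galois-coh=gamma-conjugation-flag} suffices to close the argument.
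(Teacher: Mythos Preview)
Your proposal is correct and takes essentially the same approach as the paper: both derive the result from the long exact sequence of non-abelian Galois cohomology in \cite[Chap.~I, \S 5.4, Prop.~36]{Serre-Galois-Cohomology}. The paper's proof is a two-line citation of that exact sequence, whereas you additionally unpack the connecting map and verify well-definedness, surjectivity, and injectivity by hand; this extra detail is sound but not required, since the paper is content to defer entirely to Serre (and to \cite[Fact~A.5]{Nishiyama.Tauchi.2024}).
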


\begin{proof}
    This follows from the long exact sequence of the Galois cohomology \cite[Chap.~I, \S 5.4, Prop.~36]{Serre-Galois-Cohomology}:
    \begin{equation*}
        1 \to C \to B \to (B_{\C}/C_{\C})(\R) \to H^{1}(\R,C_{\C}) \to H^{1}(\R,B_{\C}).
    \end{equation*}
    Note that $C_{\C}(\R) = C$ and $B_{\C}(\R) = B$ by definition.
    See also \cite[Fact.~A.5]{Nishiyama.Tauchi.2024}.
\end{proof}


\begin{lemma}\label{lemma-real-orbits-via-Galois-cohomology-non-homogeneous-case-flag}
Let $B$ be a real algebraic group, and $X$ a real algebraic $B$-variety.
Write $B_{\C}$ and $X_{\C}$ for complexifications of $B$ and $X$ respectively
(hence the set of $\R$-rational points $X_{\C}(\R)$ of $X_{\C}$ coincides with $X$),  
and write $\gamma \colon B_{\C} \to B_{\C}$ for the complex conjugation of $B_{\C}$ with respect to $B$. 
Moreover, let $\Xi\subset B_{\C}\backslash X_{\C} $ be a subset of the orbits 
$ 
	B_{\C}\backslash X_{\C}
$
with nontrivial $\R$-rational points:
\begin{equation*}
	\Xi\coloneqq
	\{
        \mathcal{O} \in B_{\C} \backslash X_{\C}
    \mid
	    \mathcal{O}(\R)\neq \emptyset
    \}.
\end{equation*}
Suppose that the first Galois cohomology $H^{1}(\R, B_{\C})$ is trivial:
\begin{equation*}
    H^{1}(\R, B_{\C}) = 1.
\end{equation*}
Then, there is a bijection
\begin{equation}
\begin{array}{ccc}
	B \backslash X  & \xrightarrow{\;\;\sim\;\;} & 
    \coprod_{\mathcal{O}\in\Xi} H^{1}(\R, (B_{\C})_{x_{\mathcal{O}}})
    \\
    \rotatebox{90}{$\in$}  & & \rotatebox{90}{$\in$}
    \\
    Bgx_{\mathcal{O}} & \mapsto & [g^{-1}\gamma(g)]_{\gamma},
\end{array}
\label{eq-BX=H1(RBx)-flag}
\end{equation}
where $(B_{\C})_{x_{\mathcal{O}}}$ is the stabilizer at $x_{\mathcal{O}} \in \mathcal{O}(\R)$ in $B_{\C}$, and 
we identify $H^{1}(\R, (B_{\C})_{x_{\mathcal{O}}})$ with $(B_{\C})_{x_{\mathcal{O}}}^{-\gamma}/\sim_{\gamma}$ via Lemma \ref{Lem-Galois-coh=gamma-conjugation-flag}, 
and $[g^{-1}\gamma(g)]_{\gamma} \in H^{1}(\R, (B_{\C})_{x_{\mathcal{O}}})$ denotes the equivalence class of $g^{-1}\gamma(g) \in (B_{\C})_{x_{\mathcal{O}}}^{-\gamma}$ under the $\gamma$-conjugation.
\end{lemma}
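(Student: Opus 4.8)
The plan is to reduce the non-homogeneous situation to the homogeneous one (Lemma~\ref{Lem-Galois-isom-real-points-of-orbit-for-hom-sp-case-flag}) orbit by orbit, and then glue the pieces together. First I would observe that since $X = X_{\C}(\R)$ decomposes as a disjoint union of its intersections with the $B_{\C}$-orbits, every $\R$-rational point lies in some $\mathcal{O} \in \Xi$; hence
\begin{equation*}
    B \backslash X = \coprod_{\mathcal{O}\in\Xi} B \backslash \mathcal{O}(\R).
\end{equation*}
So it suffices to produce, for each $\mathcal{O}\in\Xi$, a bijection $B\backslash \mathcal{O}(\R) \xrightarrow{\sim} H^{1}(\R,(B_{\C})_{x_{\mathcal{O}}})$ compatible with the claimed formula. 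Fixing a base point $x_{\mathcal{O}}\in\mathcal{O}(\R)$ (which exists precisely because $\mathcal{O}\in\Xi$), the orbit map gives a $B_{\C}$-equivariant isomorphism $B_{\C}/(B_{\C})_{x_{\mathcal{O}}} \xrightarrow{\sim} \mathcal{O}$ of varieties over $\R$: here $(B_{\C})_{x_{\mathcal{O}}}$ is defined over $\R$ because both $B_{\C}$ and $x_{\mathcal{O}}$ are, and the complex conjugation $\gamma$ on $B_{\C}$ restricts to the complex conjugation on the stabilizer. Taking $\R$-points and invoking Lemma~\ref{Lem-Galois-isom-real-points-of-orbit-for-hom-sp-case-flag} with $C_{\C} = (B_{\C})_{x_{\mathcal{O}}}$ (whose hypothesis $H^{1}(\R,B_{\C})=1$ is exactly what we assumed) yields the bijection
\begin{equation*}
    B \backslash \mathcal{O}(\R) = B\backslash \bigl(B_{\C}/(B_{\C})_{x_{\mathcal{O}}}\bigr)(\R) \xrightarrow{\;\sim\;} H^{1}(\R,(B_{\C})_{x_{\mathcal{O}}}), \qquad Bg\,x_{\mathcal{O}} \mapsto [g^{-1}\gamma(g)]_{\gamma}.
\end{equation*}

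Then I would assemble these maps over $\mathcal{O}\in\Xi$ to get the map \eqref{eq-BX=H1(RBx)-flag} and check it is well defined: a point of $X$ lies in a unique $B_{\C}$-orbit, so the index $\mathcal{O}$ is determined, and within that orbit the formula is the one already justified by Lemma~\ref{Lem-Galois-isom-real-points-of-orbit-for-hom-sp-case-flag}. Bijectivity is then immediate from the disjoint-union decomposition above together with the bijectivity of each constituent map. One small point worth spelling out is the identification of $(B_{\C}/(B_{\C})_{x_{\mathcal{O}}})(\R)$ with $\mathcal{O}(\R)$: a priori $(B_{\C}/(B_{\C})_{x_{\mathcal{O}}})(\R)$ could be larger than the image of $B = B_{\C}(\R)$, but this does not matter here because Lemma~\ref{Lem-Galois-isom-real-points-of-orbit-for-hom-sp-case-flag} is stated for the full set $(B_{\C}/C_{\C})(\R)$ and we only use $\mathcal{O}(\R) = (B_{\C}/(B_{\C})_{x_{\mathcal{O}}})(\R)$ as sets with $B$-action.

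The main obstacle, and the only genuinely subtle point, is verifying that the identification $B\backslash X = \coprod_{\mathcal{O}\in\Xi} B\backslash\mathcal{O}(\R)$ is legitimate and that the orbit map $B_{\C}/(B_{\C})_{x_{\mathcal{O}}}\to\mathcal{O}$ is an isomorphism of $\R$-varieties rather than merely of $\C$-varieties. The first is clear set-theoretically since $X_{\C} = \coprod_{\mathcal{O}\in B_{\C}\backslash X_{\C}}\mathcal{O}$ and intersecting with $X_{\C}(\R)$ commutes with this disjoint union, with the empty intersections exactly discarded by restricting to $\Xi$. For the second, the orbit map is always a $B_{\C}$-equivariant bijective morphism, and in the setting of algebraic groups in characteristic zero it is an isomorphism of varieties; the Galois descent datum on the quotient is the one induced from $B_{\C}$, which agrees with that on $\mathcal{O}\subset X_{\C}$ because $x_{\mathcal{O}}$ is $\gamma$-fixed. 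Once this is in place everything else is a formal consequence of Lemma~\ref{Lem-Galois-isom-real-points-of-orbit-for-hom-sp-case-flag}, and I would present the argument in essentially the four steps above: (1) disjoint-union decomposition of $B\backslash X$ indexed by $\Xi$; (2) reduction of each piece to a homogeneous space via a $\gamma$-fixed base point; (3) application of Lemma~\ref{Lem-Galois-isom-real-points-of-orbit-for-hom-sp-case-flag}; (4) reassembly and compatibility of the explicit cocycle formula.
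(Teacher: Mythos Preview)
Your proposal is correct and follows exactly the same approach as the paper: the paper's proof consists of a single sentence saying to apply Lemma~\ref{Lem-Galois-isom-real-points-of-orbit-for-hom-sp-case-flag} to each $B_{\C}$-orbit $\mathcal{O}\in\Xi$, and your argument is precisely a careful unpacking of that sentence. The additional care you take with the disjoint-union decomposition and the $\R$-structure on the orbit map is sound but not something the paper bothers to spell out.
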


\begin{proof}
This follows from Lemma \ref{Lem-Galois-isom-real-points-of-orbit-for-hom-sp-case-flag} by applying it to each $B_{\C}$-orbit $\mathcal{O} \in \Xi$ on $X_{\C}$.
See also \cite[Corollary. A.6]{Nishiyama.Tauchi.2024}.
\end{proof}

In this article, we use this lemma in the case where the algebraic group $B_{\C}$ is a Borel subgroup, in particular, it is a solvable group.
In such situation, 
the determination of the first Galois cohomology reduces to that of a maximal torus.  
This seems to be well known, but since we cannot find good references, we prove it in more general setting below.

From this point to the end of the following lemma, 
we denote the base field by $ k $, whose   
characteristic we assume to be zero 
($ k $ corresponds to $ \R $ in our situation above), and 
$ B $ denotes an affine algebraic group over $ k $.  
In the following lemma, $ T $ might not be a torus, but it denotes simply a Levi subgroup.



\begin{lemma}\label{Lem-for-solvableB-H^1(B,R)=H^1(T,R)-flag}
Let $k$ be a perfect field, $K$ its algebraic closure.  
    Moreover, let $B_{K}$ be an algebraic group defined over $k$, and assume we have  a Levi decomposition $B_{K}=T_{K}N_{K}$ defined over $k$.
    Then, for the Galois cohomology groups, we have
    \begin{equation*}
        H^{1}(k, B_{K}) \simeq H^{1}(k, T_{K}).
    \end{equation*}
\end{lemma}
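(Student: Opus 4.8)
The statement is the standard fact that for a connected solvable (more generally, a group with a Levi decomposition over $k$) the Galois cohomology is controlled by the Levi factor, because the unipotent radical has trivial cohomology. The plan is to reduce the whole assertion to two ingredients: (1) $H^{1}(k, N_{K}) = 1$ for the unipotent radical $N_{K}$, and (2) the long exact cohomology sequence associated to the split short exact sequence $1 \to N_{K} \to B_{K} \to T_{K} \to 1$. First I would invoke the vanishing $H^{1}(k, N_{K}) = 1$: since $\operatorname{char} k = 0$, the unipotent group $N_{K}$ admits a filtration by normal $k$-subgroups with successive quotients isomorphic to $\mathbb{G}_{a}$ (one can use the lower central series, or the filtration by the descending series of the Lie algebra, all defined over $k$), and $H^{1}(k, \mathbb{G}_{a}) = 0$ by the additive form of Hilbert 90 (or simply because $H^{1}$ of the Galois group with coefficients in a $k$-vector space vanishes, the module being induced / the cohomology of a $\mathbb{Q}$-vector space with a finite group action being torsion and divisible hence zero). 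Then a dévissage up the filtration, using the exact sequence in nonabelian cohomology at each stage (for the non-abelian steps one uses that a central extension by $\mathbb{G}_a$ contributes trivially on $H^1$), gives $H^{1}(k, N_{K}) = 1$; one also needs $H^{0}$ surjectivity at each stage, i.e. $N_{K}(k) \twoheadrightarrow (N_{K}/N_{K}')(k)$, which again follows from $H^{1}(k,\mathbb{G}_a)=0$.

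Next I would feed the Levi decomposition $B_{K} = T_{K} \ltimes N_{K}$ into the long exact sequence of pointed sets in Galois cohomology (Serre, \emph{Galois Cohomology}, Chap.~I, \S5.5):
\begin{equation*}
    1 \to N_{K}(k) \to B_{K}(k) \to T_{K}(k) \to H^{1}(k, N_{K}) \to H^{1}(k, B_{K}) \xrightarrow{\;p_{*}\;} H^{1}(k, T_{K}).
\end{equation*}
Because $H^{1}(k, N_{K}) = 1$, the map $p_{*}\colon H^{1}(k, B_{K}) \to H^{1}(k, T_{K})$ has trivial kernel (in the sense of pointed sets: the fibre over the distinguished point is a single point). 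To upgrade ``trivial kernel'' to ``bijection'' I would use the splitting: the inclusion $T_{K} \hookrightarrow B_{K}$ induces a section $H^{1}(k, T_{K}) \to H^{1}(k, B_{K})$ of $p_{*}$, so $p_{*}$ is surjective; and injectivity of $p_{*}$ follows from the standard twisting argument — for a cocycle class $c \in H^{1}(k, B_{K})$, twisting the exact sequence by $c$ replaces $N_{K}$ by an inner $k$-form $\,{}_{c}N_{K}$, which is again unipotent over a field of characteristic $0$, hence still has trivial $H^{1}$, so the fibre of $p_{*}$ through $c$ is again a single point. Combining surjectivity with the fact that every fibre is a singleton yields the bijection $H^{1}(k, B_{K}) \simeq H^{1}(k, T_{K})$.

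The main obstacle — really the only nontrivial point — is the vanishing $H^{1}(k, N_{K}) = 1$ together with its twisted versions, and making the dévissage up the unipotent filtration clean in the nonabelian setting (keeping track that each quotient is $\mathbb{G}_a$ and that central extensions by $\mathbb{G}_a$ do not obstruct lifting of cocycles). Everything else is a formal manipulation of the exact sequence of pointed sets plus the twisting trick, all of which is in Serre's book; I would simply cite \cite[Chap.~I, \S5.4--5.5 and \S5.2, Lem.~2]{Serre-Galois-Cohomology} for the exact sequence and the twisting argument, and cite the char-$0$ unipotent vanishing (or spell out the one-line $\mathbb{G}_a$ argument). Since in the application $k = \R$ and $N_{K}$ is genuinely unipotent, no subtlety about imperfect fields arises, and I would not belabor the generality beyond what is stated.
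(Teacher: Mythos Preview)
Your proposal is correct and follows essentially the same route as the paper: both first establish $H^{1}(k,N_{K})=1$ by d\'evissage along a filtration with $\mathbb{G}_{a}$-quotients, then combine the exact sequence of pointed sets with the twisting argument for injectivity and the section $T_{K}\hookrightarrow B_{K}$ for surjectivity. The only cosmetic difference is the order in which injectivity and surjectivity are argued.
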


\begin{proof}
First, we will prove
   \begin{equation}\label{eq-H^1(Nc,R)=1-in-the-proof-flag}
       H^{1}(k, N_{K})=1.
   \end{equation}
By \cite[Cor.~15.5 (ii)]{Borel.1991} (because we assume $k$ is perfect), $N_{K}$ is an algebraic group defined over $k$, which is isomorphic to an iterated extension of a one-dimensional additive group $K$.
Therefore, by the long exact sequence of the Galois cohomology \cite[Chap.~I, \S 5.4, Prop.~36]{Serre-Galois-Cohomology}, 
it is sufficient to prove \eqref{eq-H^1(Nc,R)=1-in-the-proof-flag} in the case $N=K$,
in which case, \eqref{eq-H^1(Nc,R)=1-in-the-proof-flag} follows from \cite[Chap.~X, \S~1, Prop.~1]{Serre.1962}.
This completes the proof of \eqref{eq-H^1(Nc,R)=1-in-the-proof-flag}.

   Next, applying  \cite[Chap.~I, \S 5.5, Prop.~38]{Serre-Galois-Cohomology} to $(A,B,C)=(N_{K},B_{K},T_{K})$ (under the notation of [loc.~cit.]), we have an exact sequence (of pointed sets) 
   \begin{equation}
       H^{1}(k, N_{K}) \to H^{1}(k, B_{K}) \to H^{1}(k, T_{K}).
   \label{eq-exact-seq-NBT}
   \end{equation}
   Thus, we have an injection
   \begin{equation}
       H^{1}(k, B_{K}) \hookrightarrow H^{1}(k, T_{K})
    \label{H^1(B,R)-hookrightarrow-H^1(T,R)-flag}
   \end{equation}
   by \eqref{eq-H^1(Nc,R)=1-in-the-proof-flag}. (Note that, by definition, the exact sequence \eqref{eq-exact-seq-NBT} of pointed sets only guarantees that the fiber of the neutral element of the third term comes from the first one. However, in the Galois cohomology context, there exist isomorphisms between the fibers of the second map of \eqref{eq-exact-seq-NBT}, see \cite[Chap.~I, \S 5.4, 2nd paragraph]{Serre-Galois-Cohomology}. Thus \eqref{eq-H^1(Nc,R)=1-in-the-proof-flag} gives the injection \eqref{H^1(B,R)-hookrightarrow-H^1(T,R)-flag}.)
   Moreover, the inclusion $T_{K} \hookrightarrow B_{K}$ induces (by \cite[Chap.~I, \S 5.4]{Serre-Galois-Cohomology}) a map
   \begin{equation}
       H^{1}(k, T_{K}) \to H^{1}(k, B_{K}).
    \label{H^1(T,R)-to-H^1(B,R)-flag}
   \end{equation}
   The composition \eqref{H^1(T,R)-to-H^1(B,R)-flag} and \eqref{H^1(B,R)-hookrightarrow-H^1(T,R)-flag} is induced by the composition $T_{K} \hookrightarrow B_{K} \twoheadrightarrow B_{K}/N_{K} \simeq T_{K}$, which is equal to the identity on $T_{K}$. 
   Thus, the composition \eqref{H^1(T,R)-to-H^1(B,R)-flag} and \eqref{H^1(B,R)-hookrightarrow-H^1(T,R)-flag} is also the identity, 
   which implies that the map \eqref{H^1(B,R)-hookrightarrow-H^1(T,R)-flag} is surjective.
   This completes the proof.
\end{proof}

Now let us return back to our basic settings.  
By the general lemma above, the determination of the first Galois cohomology $ H^{1}(\R, (B_{\C})_{x_{\mathcal{O}}}) $ in the right-hand side of \eqref{eq-BX=H1(RBx)-flag} is reduced to that of the maximal torus of $(B_{\C})_{x_{\mathcal{O}}}$.

\begin{corollary}\label{Cor-Galois-cohomology-of-stabilizer-subgroup-of-B-flag}
Let $B$ be a real algebraic solvable group, $X$ a real algebraic $B$-variety, and $x \in X$.
Write $B_{\C} =T_{\C}N_{\C}$ for the Levi decomposition of a complexification $B_{\C}$ of $B$.
Suppose that the Levi decomposition of the stabilizer subgroup $(B_{\C})_{x}$ at $x\in X$ in $B_{\C}$ is given by $(B_{\C})_{x}=(T_{\C})_{x}(N_{\C})_{x}$.
Then, we have
\begin{equation*}
    H^{1}(\R, (B_{\C})_{x}) \simeq H^{1}(\R, (T_{\C})_{x}).
\end{equation*}
\end{corollary}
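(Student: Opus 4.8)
The plan is to deduce Corollary~\ref{Cor-Galois-cohomology-of-stabilizer-subgroup-of-B-flag} directly from Lemma~\ref{Lem-for-solvableB-H^1(B,R)=H^1(T,R)-flag} by applying the latter with $k = \R$, $K = \C$, and the algebraic group taken to be the stabilizer $(B_\C)_x$ rather than $B_\C$ itself. Lemma~\ref{Lem-for-solvableB-H^1(B,R)=H^1(T,R)-flag} asserts that whenever an algebraic group over $\R$ admits a Levi decomposition defined over $\R$, its first Galois cohomology is isomorphic to that of the Levi factor. So the only thing that needs checking is that the hypothesis of that lemma is met for $(B_\C)_x$, i.e.\ that the decomposition $(B_\C)_x = (T_\C)_x (N_\C)_x$ supplied in the statement of the corollary is genuinely a Levi decomposition defined over $\R$.

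First I would record that $(B_\C)_x$ is defined over $\R$: since $B$ acts on $X$ over $\R$ and $x \in X = X_\C(\R)$ is an $\R$-rational point, the stabilizer subgroup scheme is $\gamma$-stable, hence defined over $\R$ (here $\gamma$ is the complex conjugation of $B_\C$ with respect to $B$, as in Lemma~\ref{Lem-Galois-coh=gamma-conjugation-flag}). Next, since $B$ is solvable, so is $B_\C$, and therefore every closed subgroup, in particular $(B_\C)_x$, is solvable; thus $(B_\C)_x$ has a well-defined unipotent radical and the quotient by it is a torus (a ``Levi factor'' in the degenerate solvable sense used in the lemma's statement). The corollary hypothesizes that this decomposition is realized by the intersections $(T_\C)_x$ and $(N_\C)_x$ with the given global Levi decomposition $B_\C = T_\C N_\C$; I would note that $(N_\C)_x = (B_\C)_x \cap N_\C$ is a normal unipotent subgroup of $(B_\C)_x$ (it is the intersection of the normal unipotent radical $N_\C$ with $(B_\C)_x$, and normality and unipotence are inherited), that $(T_\C)_x = (B_\C)_x \cap T_\C$ is a subgroup of the torus $T_\C$ hence a diagonalizable group, and that the product map $(T_\C)_x \ltimes (N_\C)_x \to (B_\C)_x$ being an isomorphism is exactly the content of the phrase ``the Levi decomposition $\ldots$ is given by'' in the hypothesis. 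Both factors are $\gamma$-stable (being intersections of $\gamma$-stable subgroups $T_\C$, $N_\C$, $(B_\C)_x$), so the decomposition is defined over $\R$.

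With these observations in place the proof is essentially one line: apply Lemma~\ref{Lem-for-solvableB-H^1(B,R)=H^1(T,R)-flag} to the $\R$-group $B_K \coloneqq (B_\C)_x$ with Levi decomposition $(B_\C)_x = (T_\C)_x (N_\C)_x$ to obtain
\begin{equation*}
    H^{1}(\R, (B_{\C})_{x}) \simeq H^{1}(\R, (T_{\C})_{x}),
\end{equation*}
which is precisely the assertion of the corollary. I do not anticipate a genuine obstacle here; the only point that requires a word of care is the one flagged in the statement of Lemma~\ref{Lem-for-solvableB-H^1(B,R)=H^1(T,R)-flag}, namely that ``$T$'' there is allowed to be merely a Levi subgroup rather than a torus, so that the slightly unusual ``Levi decomposition of a solvable group'' appearing in the corollary's hypothesis falls within the lemma's scope. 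One should also make sure the hypothesis of the corollary — that $(B_\C)_x$ really decomposes this way — is flagged as an assumption and not silently derived, since in general the unipotent radical of a stabilizer in a solvable group need not be the intersection with the ambient unipotent radical; however, in the applications (Theorem~\ref{theorem-Galois-RDFV}) this will be verified by the explicit matrix description of the stabilizers, so it is reasonable to keep it as a hypothesis in the corollary. I would therefore keep the proof short, citing Lemma~\ref{Lem-for-solvableB-H^1(B,R)=H^1(T,R)-flag} and remarking only on the $\gamma$-stability of the two factors.
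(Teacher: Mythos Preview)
Your proposal is correct and matches the paper's approach exactly: the paper's proof is the single sentence ``This corollary is an immediate consequence of Lemma~\ref{Lem-for-solvableB-H^1(B,R)=H^1(T,R)-flag}.'' Your additional remarks on $\gamma$-stability of the factors and on keeping the Levi-decomposition condition as a hypothesis are sound elaborations, but the core argument is the same one-line application of that lemma.
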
 

\begin{proof}
This corollary is an immediate consequence of Lemma \ref{Lem-for-solvableB-H^1(B,R)=H^1(T,R)-flag}.
\end{proof}

\subsection{Determination of the Galois cohomology}

As we mentioned in the first paragraph of Section \ref{Section-facts-Galois-cohomology-flag}, we have to consider  three issues (i)--(iii) written there in order to determine the orbit decomposition $B_{H} \backslash G /P_{G}$. 
We already know the answers of first two issues (i) and (ii) by Theorem \ref{Thm-complex-orbit-decomp-AIII-and-CI} and Theorem \ref{theorem-Galois-RDFV} \eqref{theorem-Galois-RDFV:item:1:nonempty.R.pts} 
(which is already proved in Section \ref{Section-proof-of-(1)-of-Thm-RDFV}), respectively.
In this subsection, we deal with the last part (iii), namely, we determine the first Galois cohomology $H^{1}(\R, (B_{n}^{+} \times B_{n}^{-})_{[\tau]})$ for $\tau \in \regR_{n}$.
We reduce this to that of the maximal torus of $(B_{n}^{+} \times B_{n}^{-})_{[\tau]}$.  
To do so, let us discuss the structure of 
the stabilizer $(B_{n}^{+} \times B_{n}^{-})_{[\tau]}$.  

\begin{definition}\label{Def-PI}
For $m\in\Z_{\geq 0}$, we write $\PI_{m}$ for the set of partial involutions:
\begin{equation*}
\begin{split}
    \PI_{m} & \coloneqq \{ \tau \in \PP_{m} \mid \tau^{2} \text{ is a diagonal matrix whose entries belong to the set }\{0,1\}\}\\
    &= \PP_{m} \cap \Sym_{m}(\Z),
\end{split}
\end{equation*}
where $\PP_{m}$ is the set of partial permutations defined in Definition \ref{Def-PP-regT-regR}.
\end{definition}

\begin{lemma}\label{Lem-Levi-decomp-of-(B+-B-)-tau}
 Let $\tau \in \regR_{n}$ (Definition \ref{Def-PP-regT-regR}). Then, the Levi decomposition of the stabilizer subgroup $(B_{n}^{+}\times B_{n}^{-})_{[\tau]}$ at $[\tau] \in \regMat_{2n,n} / \GL_{n}(\C) $ is given by
 \begin{equation}
     (B_{n}^{+}\times B_{n}^{-})_{[\tau]} = (T_{n}^{+}\times T_{n}^{-})_{[\tau]} (N_{n}^{+}\times N_{n}^{-})_{[\tau]},
 \label{eq-BB=TT-NN-case-regR}
 \end{equation}
 where $N_{n}^{+}$ and $N_{n}^{-}$ are the subgroups of upper and lower matrices of $B_{n}^{+}$ and $B_{n}^{-}$, respectively. 
\end{lemma}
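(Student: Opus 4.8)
The plan is to give an explicit description of the stabilizer $(B_n^+ \times B_n^-)_{[\tau]}$ as a subgroup of block‑diagonal and block‑triangular matrices governed by the combinatorial data of $\tau$, and then read off its Levi decomposition directly. First I would use Lemma~\ref{Lem-G/P=M/GL} to realize $[\tau]\in G_\C/P_{G_\C}$ as the column span of $\omega = \begin{pmatrix}\tau_1\\ \tau_2\end{pmatrix}$, and write the stabilizer condition concretely: a pair $(b_1,b_2)\in B_n^+\times B_n^-$ fixes $[\tau]$ if and only if there is $g\in\GL_n(\C)$ with $b_1\tau_1 = \tau_1 g$ and $b_2\tau_2 = \tau_2 g$, using the action \eqref{eq-def-action-of-BB-on-regRst}. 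Since $\tau_1,\tau_2$ are partial permutation matrices, $\tau_1 g$ and $\tau_2 g$ just pick out and permute rows/columns of $g$, so these equations translate into a system of linear constraints on the entries of $b_1$, $b_2$, $g$ indexed by the sets $K_c, K_d, K_1, K_2$ from Notation~\ref{Notation-K-for-tau-in-regR}.

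Next I would organize the entries according to the partition $[n] = K_c \sqcup K_d \sqcup K_1 \sqcup K_2$ (which, after applying a suitable $\sigma\in\Sgroup_n$ — harmless since $K_\bullet$ are $\Sgroup_n$‑invariant and conjugation by a permutation preserves the statement — can be taken to be an interval decomposition, so that $B_n^{\pm}$ really do become triangular in the block coordinates). For indices in $K_1$ the two columns of $\tau_1$ and $\tau_2$ coincide, for $K_2$ they are swapped in pairs, for $K_c$ the $\tau_1$‑column vanishes and for $K_d$ the $\tau_2$‑column vanishes; writing $b_1 = (b_1^{rs})$, $b_2 = (b_2^{rs})$ in the corresponding block form I would solve the constraint equations block by block. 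The upshot should be that $(b_1,b_2)$ lies in the stabilizer iff certain off‑diagonal blocks vanish, certain diagonal blocks of $b_1$ and $b_2$ are forced to be equal (or equal up to the combinatorics of $K_2$, where a transposition is involved), and the remaining blocks are free subject to the triangularity inherited from $B_n^\pm$. In particular the diagonal‑torus part $(T_n^+\times T_n^-)_{[\tau]}$ consists of the pairs of diagonal matrices satisfying exactly the diagonal coincidence/transposition relations coming from $K_1$ and $K_2$, while the unipotent part $(N_n^+\times N_n^-)_{[\tau]}$ consists of the pairs with $1$'s on the diagonal satisfying the off‑diagonal constraints.

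Having written $(B_n^+\times B_n^-)_{[\tau]}$ this explicitly, I would finish by checking that it is the semidirect product of these two subgroups: the unipotent part is normal (it is the intersection of the stabilizer with the unipotent radical $N_n^+\times N_n^-$ of $B_n^+\times B_n^-$, hence normal in the stabilizer), it is connected and unipotent, the torus part is a torus, and every element of the stabilizer factors uniquely as (element of its diagonal part)$\cdot$(element of its unipotent part) because this already holds in the ambient $B_n^+\times B_n^-$ and all the defining equations are compatible with that factorization. This gives \eqref{eq-BB=TT-NN-case-regR}. I expect the main obstacle to be purely bookkeeping: carefully tracking, for the $K_2$‑blocks, how the transposition pairing interacts with the upper‑triangularity of $B_n^+$ versus the lower‑triangularity of $B_n^-$, and verifying that the resulting diagonal subgroup is genuinely a Levi complement (i.e.\ that no "mixed" solvable piece sneaks in); once the block structure is laid out in the $K_c\sqcup K_d\sqcup K_1\sqcup K_2$ coordinates, everything else is a routine consequence of the standard Levi decomposition of a Borel together with the fact that intersecting with a stabilizer cut out by linear equations respects it. (A cleaner alternative, which I might use to avoid some of the casework, is to invoke a general principle: the stabilizer of a point in a flag variety under a Borel is again a "Borel‑type" group — in any case connected and with unipotent radical equal to its intersection with the unipotent radical of the Borel — so that the Levi decomposition is automatic; but making that precise here still requires identifying $(T_n^+\times T_n^-)_{[\tau]}$ and $(N_n^+\times N_n^-)_{[\tau]}$ explicitly, which is exactly the computation above, and which will be needed anyway for the subsequent determination of $H^1(\R,(T_n^+\times T_n^-)_{[\tau]})$.)
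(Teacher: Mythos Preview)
Your overall strategy—write the stabilizer condition explicitly and then check that the diagonal/unipotent factorization in the ambient $B_n^+\times B_n^-$ restricts to the stabilizer—is the right one, and is also what the paper does. But your reduction step has a genuine gap. You propose to apply a permutation $\sigma\in\Sgroup_n$ to arrange $K_c\sqcup K_d\sqcup K_1\sqcup K_2$ as consecutive intervals, saying this is harmless because (a) the $K_\bullet$ are $\Sgroup_n$-invariant and (b) conjugation by a permutation preserves the statement. Neither justification does what you need. For (a): the $\Sgroup_n$-action under which the $K_\bullet$ are invariant is the \emph{column} action on $\regR_n$ (Remark~\ref{Remark-Weyl-group-action-An-on-regT-regR}), which by definition does not move the row-index sets $K_\bullet\subset[n]$ at all; to turn them into intervals you must permute \emph{rows}, i.e.\ act by $h(\sigma)=\diag(\sigma,\sigma)$ on the left. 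For (b): the Levi-decomposition statement is indeed conjugation-invariant, but after conjugating you are asking about the stabilizer of $[h(\sigma)\tau]$ inside $\sigma B_n^+\sigma^{-1}\times\sigma B_n^-\sigma^{-1}$, and this conjugated Borel is \emph{not} block-triangular with respect to your interval partition for a general $\sigma$. So the sentence ``$B_n^{\pm}$ really do become triangular in the block coordinates'' is exactly where the argument breaks.

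The paper circumvents this by a different reduction. It conjugates by the \emph{specific} Grassmannian permutation $\sigma_I$ with $I=\ech\tau_2$, for which Lemma~\ref{Lem-B-p-cap-GL-times-GL} guarantees $\sigma_I^{-1}B_n\sigma_I\cap(\GL_m\times\GL_{n-m})=B_m\times B_{n-m}$; this brings $\tau$ to the normal form involving some $\eta\in\PI_m$ and reduces the stabilizer condition to $a=0$, $\delta=0$, $b_m\eta c_m^{-1}=\eta$ with $b_m\in B_m^+$, $c_m\in B_m^-$. The first two conditions are visibly unipotent, so the whole lemma comes down to showing that $b_m\eta=\eta c_m$ implies $(b_m)_T\,\eta=\eta\,(c_m)_T$. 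The paper proves this by induction on $m$: if $\eta$ is invertible the relation reads $\eta^{-1}b_m\eta=c_m$, and conjugation by a permutation matrix sends diagonal to diagonal and off-diagonal to off-diagonal; if $\eta$ has a zero row/column one strips it off and descends to $\PI_{m-1}$. Your claim that ``all the defining equations are compatible with that factorization'' is precisely this inductive statement, and it is the nontrivial content of the lemma—it does not follow from a general principle about stabilizers in Borels, and it is not automatic once the block structure is written down.
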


\begin{proof}
It is clear that the right-hand side of \eqref{eq-BB=TT-NN-case-regR} is contained in the left-hand side.
Thus,
it is sufficient to prove that 
\begin{equation}\label{eq-sufficient-for-BB=TTNN-regR}
   (b,c) \in (B_{n}^{+}\times B_{n}^{-})_{[\tau]} \quad\text{ implies }\quad (b_{T},c_{T}) \in (T_{n}^{+}\times T_{n}^{-})_{[\tau]},
\end{equation}
where we write $(b_{T},c_{T}) \in T_{n}^{+}\times T_{n}^{-}$ for the torus part of $(b,c) \in B_{n}^{+}\times B_{n}^{-}$.

    In order to prove \eqref{eq-sufficient-for-BB=TTNN-regR}, we first determine the necessary and sufficient condition for $(b,c) \in B_{n}^{+} \times B_{n}^{-}$ to belong the stabilizer $(B_{n}^{+}\times B_{n}^{-})_{[\tau]}$.
    Let $\tau = \begin{pmatrix}\tau_{1} \\ \tau_{2} \end{pmatrix}\in \regR_{n} \subset \regMat_{2n,n}$ and set $I \coloneqq \ech\tau_{2}$, where $\ech$ is defined in \eqref{eq-def-ech}. 
    Then, the similar argument to the proof of the surjectivity of Proposition \ref{Prop-Omega-I=SPI} implies that there exists $\eta \in \PI_{m}$ such that we have
    \begin{equation}
        h(\sigma_{I}) \tau \GL_{n}(\C) 
        = 
        h(\sigma_{I}) 
        \begin{pmatrix}
          \eta & 0 \\ 0 & \unitmatrix_{n-m} \\ \unitmatrix_{m} & 0 \\ 0 & 0
        \end{pmatrix} \GL_{n}(\C)
        \in \regMat_{2n,n} / \GL_{n}(\C),
    \label{eq-h-tau-GL=h-eta-GL}
    \end{equation}
    where $m\coloneqq \# I$ and $h(\sigma_{I}) = \diag(\sigma_{I},(\sigma_{I}^{*})^{-1})$. 
    Then, the action of $(b,c)\in B_{n}^{+} \times B_{n}^{-}$ (which is given by \eqref{eq-def-action-of-BB-on-regRst}) is described as
    \begin{equation}
        (b,c) \cdot h(\sigma_{I}) 
        \begin{pmatrix}
          \eta & 0 \\ 0 & \unitmatrix_{n-m} \\ \unitmatrix_{m} & 0 \\ 0 & 0
        \end{pmatrix} \GL_{n}(\C)
        =
        h(\sigma_{I}) 
        \begin{pmatrix}
            \sigma_{I}^{-1} b \sigma_{I} & 0  \\
            0 & \sigma_{I} c \sigma_{I}^{-1}
        \end{pmatrix}
        \begin{pmatrix}
          \eta & 0 \\ 0 & \unitmatrix_{n-m} \\ \unitmatrix_{m} & 0 \\ 0 & 0
        \end{pmatrix} \GL_{n}(\C).
    \label{eq-(b,c)-act-tau-eta}
    \end{equation}
    In order to compute the right-hand side of \eqref{eq-(b,c)-act-tau-eta},
    we write (by using Lemma \ref{Lem-B-p-cap-GL-times-GL}, or its proof)
    \begin{equation}
        \sigma_{I}^{-1} b \sigma_{I} = 
        \begin{pmatrix}
            b_{m} & a \\ d & b_{n-m}    
        \end{pmatrix},
        \qquad 
        \sigma_{I} c \sigma_{I}^{-1} = 
        \begin{pmatrix}
            c_{m} & \alpha \\ \delta & c_{n-m}    
        \end{pmatrix}
    \label{eq-def-bsigmaI-and-csigmaI}
    \end{equation}
    for some
    \begin{equation*}
        b_{m} \in B_{m}^{+},\quad b_{n-m} \in B_{n-m}^{+}, \quad
        c_{m} \in B_{m}^{-},\quad c_{n-m} \in B_{n-m}^{-}, \quad
    \end{equation*}
    and
    \begin{equation*}
        a, \alpha \in \Mat_{m,n-m}(\C), \quad
        d, \delta \in \Mat_{n-m,m}(\C).
    \end{equation*}
    Then, the right-hand side of \eqref{eq-(b,c)-act-tau-eta} is equal to
    \begin{equation*}
        h(\sigma_{I})
        \begin{pmatrix}
            b_{m} & a & 0 & 0 \\ d & b_{n-m} & 0 & 0 \\ 0 & 0 & c_{m} & \alpha \\ 0 & 0 & \delta & c_{n-m}
        \end{pmatrix}
        \begin{pmatrix}
          \eta & 0 \\ 0 & \unitmatrix_{n-m} \\ \unitmatrix_{m} & 0 \\ 0 & 0
        \end{pmatrix} \GL_{n}(\C)
        =
        h(\sigma_{I})
        \begin{pmatrix}
          b_{m}\eta & a \\ d\eta & b_{n-m} \\ c_{m} & 0 \\ \delta & 0
        \end{pmatrix} \GL_{n}(\C),
    \end{equation*}
    which is equal to
    \begin{equation*}
    h(\sigma_{I})
        \begin{pmatrix}
          b_{m}\eta & a \\ d\eta & b_{n-m} \\ c_{m} & 0 \\ \delta & 0
        \end{pmatrix} 
        \begin{pmatrix}
          c_{m}^{-1} & 0 \\ 0 & b_{n-m}^{-1} 
        \end{pmatrix}
        \GL_{n}(\C)
        =
    h(\sigma_{I})
        \begin{pmatrix}
          b_{m}\eta c_{m}^{-1} & ab_{n-m}^{-1} \\ d\eta c_{m}^{-1} & \unitmatrix_{n-m} \\ \unitmatrix_{m} & 0 \\ \delta c_{m}^{-1} & 0
        \end{pmatrix} \GL_{n}(\C).
    \end{equation*}
    Thus, $(b,c) \in (B_{n}^{+} \times B_{n}^{-})_{[\tau]}$ if and only if there exists 
    $\begin{pmatrix}
            x & y \\ z & w
        \end{pmatrix}
        \in  \GL_{n}(\C)$ such that
    \begin{equation}\label{eq-eta=abcd-xyzw}
        \begin{pmatrix}
          \eta & 0 \\ 0 & \unitmatrix_{n-m} \\ \unitmatrix_{m} & 0 \\ 0 & 0
        \end{pmatrix}
        =
        \begin{pmatrix}
          b_{m}\eta c_{m}^{-1} & ab_{n-m}^{-1} \\ d\eta c_{m}^{-1} & \unitmatrix_{n-m} \\ \unitmatrix_{m} & 0 \\ \delta c_{m}^{-1} & 0
        \end{pmatrix}
        \begin{pmatrix}
            x & y \\ z & w
        \end{pmatrix}.
    \end{equation}
    The right-hand side of \eqref{eq-eta=abcd-xyzw} is equal to
    \begin{equation}\label{eq-abcd-xyzw}
        \begin{pmatrix}
          b_{m}\eta c_{m}^{-1} x + ab_{n-m}^{-1}z & b_{m}\eta c_{m}^{-1} y + ab_{n-m}^{-1} w \\ 
          d\eta c_{m}^{-1}     x +              z & d\eta c_{m}^{-1}     y +               w \\ 
                               x                  &                      y \\ 
          \delta c_{m}^{-1}    x                  & \delta c_{m}^{-1}    y
        \end{pmatrix}.
    \end{equation}
    By comparing \eqref{eq-abcd-xyzw} and the left-hand side of \eqref{eq-eta=abcd-xyzw}, the condition $(b,c) \in (B_{n}^{+} \times B_{n}^{-})_{[\tau]}$ is satisfied if and only if there exists 
    $\begin{pmatrix}
        x & y \\ z & w 
    \end{pmatrix} \in \GL_{n}(\C)$ such that
    \begin{equation}\label{eq-sufficient-cond-stabilizer-LM}
        x = \unitmatrix_{m},     \quad
        y = 0,                   \quad
        z = -d\eta c_{m}^{-1},   \quad
        w = \unitmatrix_{n-m},   \quad
        a = 0,                   \quad
        \delta = 0,              \quad
        b_{m}\eta c_{m}^{-1} = \eta.
    \end{equation}
    Note that we always have $\begin{pmatrix}
        \unitmatrix_{m} & 0 \\ -d\eta c_{m}^{-1} & \unitmatrix_{n-m} 
    \end{pmatrix} \in \GL_{n}(\C)$.
    Therefore, $(b,c) \in (B_{n}^{+} \times B_{n}^{-})_{[\tau]}$ if and only if
    \begin{equation}\label{eq-bc-contained-in-stabilizer-in-case-regR}
        a = 0, \quad \delta = 0, \quad b_{m}\eta c_{m}^{-1} = \eta.
    \end{equation}
    The first two conditions of \eqref{eq-bc-contained-in-stabilizer-in-case-regR} are not related to the torus part $(b_{T},c_{T}) \in T_{n}^{+} \times T_{n}^{-}$ of $(b,c) \in B_{n}^{+} \times B_{n}^{-}$ by  \eqref{eq-def-bsigmaI-and-csigmaI} because the adjoint by $\sigma_{I}^{-1}$ preserves both of the diagonal torus and the root space decomposition (with respect to this diagonal torus).
    This implies that 
    \begin{equation}
        (b_{T},c_{T}) \in (T_{n}^{+} \times T_{n}^{-})_{[\tau]}\quad \iff \quad b_{m}\eta c_{m}^{-1} = \eta.
    \label{eq-bc-contained-in-stabilizer-in-case-regR-2}
    \end{equation}
    Thus, by \eqref{eq-def-bsigmaI-and-csigmaI} and \eqref{eq-bc-contained-in-stabilizer-in-case-regR-2}, in order to prove \eqref{eq-sufficient-for-BB=TTNN-regR}, it is sufficient to show that
    \begin{equation}
        (b,c) \in (B_{m}^{+}\times B_{m}^{-})_{\eta} \quad\text{ implies }\quad (b_{T},c_{T}) \in (T_{m}^{+}\times T_{m}^{-})_{\eta},
    \label{eq-BB=TT-NN-case-Mat}
    \end{equation}
where $(b_{T},c_{T}) \in T_{m}^{+} \times T_{m}^{-}$ is the torus part of $(b,c) \in B_{m}^{+} \times B_{m}^{-}$.
We will prove \eqref{eq-BB=TT-NN-case-Mat} by the induction on $m$.

First, consider the case $m=1$. In this case, we have $B_{m}^{+} \times B_{m}^{-}= T_{m}^{+} \times T_{m}^{-}$, and thus \eqref{eq-BB=TT-NN-case-Mat} is clear.

Next, assume that \eqref{eq-BB=TT-NN-case-Mat} holds in the case $m \leq l$, and assume $m=l+1$. 
In this case, we divide the cases according to whether $\eta \in \PI_{m}$ is invertible or not.

First, we assume that $\eta \in \PI_{m}$ is invertible. Thus, for $(b,c)\in B_{m}^{+} \times B_{m}^{-}$, we have
\begin{equation}
    (b,c) \in (B_{m}^{+}\times B_{m}^{-})_{\eta} \iff
    b\eta c^{-1} = \eta \iff \eta^{-1}b\eta = c.
    \label{eq-betac-1-eta-1betac}
\end{equation}
Because the diagonal and off-diagonal entries are preserved by the adjoint of $\eta$, 
\eqref{eq-betac-1-eta-1betac} implies that $(b,c) \in (B_{m}^{+}\times B_{m}^{-})_{\eta}$ if and only if
\begin{equation*}
    \eta^{-1}b_{T}\eta = c_{T} \quad\text{ and }\quad \eta^{-1}b_{N}\eta = c_{N},
\end{equation*}
where we write $(b,c) \eqqcolon (b_{T},c_{T})(b_{N},c_{N}) \in (T_{m}^{+}\times T_{m}^{-}) (N_{m}^{+}\times N_{m}^{-}) = B_{m}^{+}\times B_{m}^{-}$.
Thus, in this case, \eqref{eq-BB=TT-NN-case-Mat} holds.

Next, we assume that $\eta \in \PI_{m}$ is not invertible. In this case, there exists $1 \leq k \leq m$ such that both of the $k$-th column and $k$-th row are zero by the definition of $\PI_{m}$ (Definition \ref{Def-PI}). Therefore, we can write $\eta$ as a block matrix corresponding to the partition $m=(k-1)+1+(m-k)$ in the following way.
\begin{equation*}
    \eta =
    \begin{pmatrix}
        \eta_{11} & 0 & \eta_{13} \\
        0 & 0 & 0 \\
        \eta_{31} & 0 & \eta_{33}
    \end{pmatrix}.
\end{equation*}
We also write
\begin{equation}
    b = 
    \begin{pmatrix}
        b_{11} & b_{12} & b_{13} \\
        0      & b_{22} & b_{23} \\
        0      & 0      & b_{33}
    \end{pmatrix} \in B_{m}^{+},
    \qquad
    c =
    \begin{pmatrix}
        c_{11} & 0      & 0\\
        c_{21} & c_{22} & 0\\
        c_{31} & c_{32} & c_{33}
    \end{pmatrix} \in B_{m}^{-}.
\label{eq-def-bc-3-times-3-BB}
\end{equation}
Then, we have
\begin{equation}
    b\eta  =
    \begin{pmatrix}
        b_{11}\eta_{11} + b_{13}\eta_{31} & 0 & b_{11}\eta_{13} + b_{13}\eta_{33} \\
        b_{23}\eta_{31}                   & 0 & b_{23}\eta_{33} \\
        b_{33}\eta_{31}                   & 0 & b_{33}\eta_{33}
    \end{pmatrix}
\label{eq-b-eta}
\end{equation}
and
\begin{equation}
    \eta c =
    \begin{pmatrix}
        \eta_{11}c_{11} + \eta_{13}c_{31} & \eta_{13}c_{32} & \eta_{13}c_{33}\\
        0 & 0 & 0\\
        \eta_{31}c_{11} + \eta_{33}c_{31} & \eta_{33}c_{32} & \eta_{33}c_{33}
    \end{pmatrix}.
\label{eq-eta-c}
\end{equation}
We note that $b\eta c^{-1} = \eta$ if and only if $b\eta = \eta c$.
Thus, \eqref{eq-b-eta} and \eqref{eq-eta-c} imply that $(b,c) \in (B_{m}^{+} \times B_{m}^{-1})_{\eta}$ if and only if
\begin{equation}
    b_{23}\eta_{31} = b_{23}\eta_{33} = \eta_{13}c_{32} = \eta_{33}c_{32} = 0, \quad \text{ and } \quad
    b'\eta'(c')^{-1} = \eta',
\label{eq-bc-contained-in-stabilizer-in-case-Mat}
\end{equation}
where we write
\begin{equation*}
    \eta' \coloneqq
    \begin{pmatrix}
        \eta_{11} & \eta_{13} \\
        \eta_{31} & \eta_{33}
    \end{pmatrix} \in \PI_{m-1}, \quad
    b' \coloneqq
    \begin{pmatrix}
        b_{11} & b_{13} \\
        0      & b_{33}
    \end{pmatrix} \in B_{m-1}^{+}, \quad
    c' =
    \begin{pmatrix}
        c_{11} & 0 \\
        c_{13} & c_{33}
    \end{pmatrix} \in B_{m-1}^{-}.
\end{equation*}
Note that the first equation of \eqref{eq-bc-contained-in-stabilizer-in-case-Mat} is not related to the torus part $(b_{T},c_{T}) \in T_{m}^{+} \times T_{m}^{-}$ of $(b,c) \in B_{m}^{+} \times B_{m}^{-}$ by \eqref{eq-def-bc-3-times-3-BB}.
This implies that $(b_{T},c_{T}) \in (T_{m}^{+} \times T_{m}^{-})_{\eta}$ if and only if
    \begin{equation*}
        b'\eta'(c')^{-1} = \eta'.
    \end{equation*}
Because $\eta'\in \PI_{m-1}$, the induction hypothesis completes the proof of \eqref{eq-BB=TT-NN-case-Mat} in this case, and thus, completes the proof of the lemma.
\end{proof}

Now we can determine the first Galois cohomology of the stabilizer.

\begin{lemma}\label{Lem-H^1(BB-tau)=H^1(TT-tau)-flag}
Let $\tau \in \regR_{n}$ (Definition \ref{Def-PP-regT-regR}). Then, we have
\begin{equation*}
    H^{1}(\R,  (B_{n}^{+}\times B_{n}^{-})_{[\tau]}) \simeq H^{1}(\R,  (T_{n}^{+}\times T_{n}^{-})_{[\tau]}),
\end{equation*}
where $T_{n}^{+}$ and $T_{n}^{-}$ are the subgroups of diagonal matrices of $B_{n}^{+}$ and $B_{n}^{-}$, respectively. 
\end{lemma}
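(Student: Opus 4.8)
The plan is to deduce the statement directly from the structure of the stabilizer established in Lemma~\ref{Lem-Levi-decomp-of-(B+-B-)-tau} together with the general reduction recorded in Corollary~\ref{Cor-Galois-cohomology-of-stabilizer-subgroup-of-B-flag} (which in turn rests on Lemma~\ref{Lem-for-solvableB-H^1(B,R)=H^1(T,R)-flag}). Concretely, one regards $B_{n}^{+}\times B_{n}^{-}$ as the complexification $B_{H_{\C}}$ of the solvable real algebraic group $B_{H}\simeq B_{n}(\C)$, acting on $X_{\C}=G_{\C}/P_{G_{\C}}\simeq\Gr_{n}(\C^{2n})$, whose real form is $X=G/P_{G}\simeq\HLGr(\C^{2n})$. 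We work in Case~\caseA, the argument for Case~\caseB being identical (there the Galois action is the entrywise complex conjugation).

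First I would check that $x:=[\tau]$ is an $\R$-rational point of $X$. Since $\tau\in\regR_{n}$ has integer entries and $^{t}\tau_{1}\tau_{2}\in\Sym_{n}(\Z)$, one has $\tau_{1}^{*}\tau_{2}={}^{t}\tau_{1}\tau_{2}\in\Her_{n}(\C)$, so $\tau\in\LregMat_{2n,n}$ and hence $[\tau]$ is a Hermitian Lagrangian subspace by Lemma~\ref{Lem-G/P_S=LM/GL}; that is, $[\tau]\in\HLGr(\C^{2n})=X$. In particular the stabilizer $(B_{H_{\C}})_{[\tau]}$ is a $\gamma$-stable, hence $\R$-defined, subgroup of $B_{H_{\C}}$.

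Next I would verify that the Levi decomposition of $(B_{H_{\C}})_{[\tau]}$ furnished by Lemma~\ref{Lem-Levi-decomp-of-(B+-B-)-tau},
\[
(B_{n}^{+}\times B_{n}^{-})_{[\tau]}=(T_{n}^{+}\times T_{n}^{-})_{[\tau]}\,(N_{n}^{+}\times N_{n}^{-})_{[\tau]},
\]
is itself defined over $\R$: the unipotent radical $(N_{n}^{+}\times N_{n}^{-})_{[\tau]}$ is a characteristic subgroup of $(B_{H_{\C}})_{[\tau]}$, hence $\gamma$-stable, while $(T_{n}^{+}\times T_{n}^{-})_{[\tau]}=(B_{H_{\C}})_{[\tau]}\cap(T_{n}^{+}\times T_{n}^{-})$ is $\gamma$-stable because the diagonal torus $T_{n}^{+}\times T_{n}^{-}$ is preserved by $\gamma$ (the conjugation $\gamma$ preserves the diagonal torus and its root space decomposition, exactly as exploited in the proof of Lemma~\ref{Lem-Levi-decomp-of-(B+-B-)-tau}). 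With this in hand, Corollary~\ref{Cor-Galois-cohomology-of-stabilizer-subgroup-of-B-flag}, applied to $B=B_{H}$, the $B$-variety $X=G/P_{G}$, and the point $x=[\tau]$, yields
\[
H^{1}(\R,(B_{n}^{+}\times B_{n}^{-})_{[\tau]})\simeq H^{1}(\R,(T_{n}^{+}\times T_{n}^{-})_{[\tau]}),
\]
which is the assertion.

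The only point that needs a little care is checking that the hypotheses of Corollary~\ref{Cor-Galois-cohomology-of-stabilizer-subgroup-of-B-flag} are genuinely met for the twisted real structure of Case~\caseA, i.e.\ that both the torus part and the unipotent part of the stabilizer descend to $\R$; but this is immediate once $[\tau]$ has been identified as a real point and one recalls that the diagonal torus is $\gamma$-stable in both cases. I do not expect any serious obstacle beyond this bookkeeping: everything else in the proof is a direct invocation of the preceding lemmas.
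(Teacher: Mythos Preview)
Your proposal is correct and follows essentially the same route as the paper: the paper's proof is a one-line invocation of Corollary~\ref{Cor-Galois-cohomology-of-stabilizer-subgroup-of-B-flag} together with Lemma~\ref{Lem-Levi-decomp-of-(B+-B-)-tau}, and you do the same, with the welcome addition of checking that $[\tau]$ is an $\R$-rational point and that the Levi decomposition of the stabilizer is $\gamma$-stable (details the paper leaves implicit).
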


\begin{proof}
This follows from immediately Corollary \ref{Cor-Galois-cohomology-of-stabilizer-subgroup-of-B-flag} and Lemma \ref{Lem-Levi-decomp-of-(B+-B-)-tau}.   
\end{proof}

We begin with the Galois cohomology of $(T_{n}^{+}\times T_{n}^{-})_{[\tau]}$ with $ n = 1 $ or $ 2 $.

\begin{lemma}\label{Lem-Ttau=1-flag}\label{Lem-H^1-tau-3cases-flag}
Consider the action $T_{n}^{+} \times T_{n}^{-} \leftaction \regMat_{2n,n} / \GL_{n}(\C)$ induced by the action $B_{n}^{+} \times B_{n}^{-} \leftaction \regMat_{2n,n} / \GL_{n}(\C)$ defined in \eqref{eq-def-action-of-BB-on-regRst}. 
Then, we have:
\begin{enumerate}
    \item[\upshape{($K_{c}$)}] For $ n = 1 $ and $\tau = \begin{pmatrix}
        0 \\ 1
    \end{pmatrix} \in \regR_{1}$, we have
    \begin{equation*}
        (T_{1}^{+} \times T_{1}^{-})_{[\tau]} = T_{1}^{+} \times T_{1}^{-}, \quad\text{ and }\quad
        H^{1}(\R, (T_{1}^{+} \times T_{1}^{-})_{[\tau]}) = 1.
    \end{equation*}

    \item[\upshape{($K_{d}$)}] For $ n = 1 $ and $\tau = \begin{pmatrix}
        1 \\ 0
    \end{pmatrix} \in \regR_{1}$, we have
    \begin{equation*}
        (T_{1}^{+} \times T_{1}^{-})_{[\tau]} = T_{1}^{+} \times T_{1}^{-}, \quad\text{ and }\quad
        H^{1}(\R, (T_{1}^{+} \times T_{1}^{-})_{[\tau]}) = 1.
    \end{equation*}

    \item[\upshape{($K_{1}$)}] For $ n = 1 $ and $\tau = \begin{pmatrix}
        1 \\ 1 
    \end{pmatrix} \in \regR_{1}$, we have
    \begin{equation*}
        (T_{1}^{+} \times T_{1}^{-})_{[\tau]} = \Delta(T_{1}),  \:\text{ and }\:
        H^{1}(\R, (T_{1}^{+} \times T_{1}^{-})_{[\tau]}) = \{[(1,1)]_{\gamma},[(-1,-1)]_{\gamma}\},
    \end{equation*}
    where $\Delta\colon T_{1} \hookrightarrow T_{1}^{+} \times T_{1}^{-}$ is the diagonal embedding.

    \item[\upshape{($K_{2}$)}] For $ n = 2 $ and $\tau \in \regR_{2}$ having the form
    \begin{equation*}
        \tau =
        \begin{pmatrix}
            0 & 1 \\ 
            1 & 0 \\
            1 & 0 \\ 
            0 & 1
        \end{pmatrix} \in \regR_{2},
    \end{equation*}
    we have
    \begin{equation*}
        (T_{2}^{+} \times T_{2}^{-})_{[\tau]} = \Delta'(T_{2}),\quad\text{ and }\quad
        H^{1}(\R, (T_{1}^{+} \times T_{1}^{-})_{[\tau]}) = 1,
    \end{equation*}
    where we write $\Delta' \colon T_{2} \to T_{2}^{+} \times T_{2}^{-}$ for an embedding defined by
    \begin{equation*}
        \Delta'(\diag(t_{1},t_{2})) = 
        \left(\diag(t_{1},t_{2}), \diag(t_{2},t_{1})\right).
    \end{equation*}
\end{enumerate}
\end{lemma}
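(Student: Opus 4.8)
The plan is to handle the four cases one at a time, and for each to (i) compute the stabilizer $(T_n^{+}\times T_n^{-})_{[\tau]}$ by an elementary calculation with the action \eqref{eq-def-action-of-BB-on-regRst} on lines and planes, (ii) verify that this stabilizer is $\gamma$-stable and identify the restricted Galois action, and (iii) read off the first Galois cohomology from Lemma~\ref{Lem-Galois-coh=gamma-conjugation-flag}, which presents $H^{1}(\R,-)$ of a group as the set of its $(-\gamma)$-fixed points modulo $\gamma$-conjugation.

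First I would record the Galois data. By \eqref{eq-def-gamma-caseA}, the conjugation $\gamma$ on $H_{\C}=\GL_n(\C)\times\GL_n(\C)$ is $\gamma(g_1,g_2)=((g_2^{*})^{-1},(g_1^{*})^{-1})$, so on the diagonal torus $T_n^{+}\times T_n^{-}=T_n\times T_n$ it becomes the coordinatewise map $(\diag(\mathbf s),\diag(\mathbf u))\mapsto(\diag(\overline{\mathbf u}^{\,-1}),\diag(\overline{\mathbf s}^{\,-1}))$. Then I would compute the stabilizers, all from the action \eqref{eq-def-action-of-BB-on-regRst}. In $(K_c)$ and $(K_d)$ the point $[\tau]$ is the coordinate line $\langle {}^{t}(0,1)\rangle$ resp.\ $\langle {}^{t}(1,0)\rangle$ in $\C^{2}$, which is fixed by every diagonal pair $(t_1,t_2)$ since rescaling a spanning vector does not change a line; hence the stabilizer is all of $T_1^{+}\times T_1^{-}$. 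In $(K_1)$ the point $[\tau]=\langle {}^{t}(1,1)\rangle$ is sent by $(t_1,t_2)$ to $\langle {}^{t}(t_1,t_2)\rangle$, equal to $[\tau]$ exactly when $t_1=t_2$, so the stabilizer is $\Delta(T_1)$. In $(K_2)$ the point $[\tau]$ is the plane in $\C^{4}$ spanned by ${}^{t}(0,1,1,0)$ and ${}^{t}(1,0,0,1)$; requiring $(\diag(s_1,s_2),\diag(u_1,u_2))$ to preserve it forces $u_1=s_2$ and $u_2=s_1$, i.e.\ the image of $\Delta'$.

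Finally the cohomology. Each stabilizer in $(K_c),(K_d),(K_2)$, together with its restricted $\gamma$, is isomorphic as a real algebraic group to the Weil restriction $\mathrm{Res}_{\C/\R}\mathbb{G}_m$: its set of $(-\gamma)$-fixed points is $\{(s,\overline s):s\in\C^{\times}\}$ resp.\ $\{(\diag(t_1,t_2),\diag(\overline t_2,\overline t_1)):t_i\in\C^{\times}\}$, and $\gamma$-conjugation by a group element acts transitively on this set, so Lemma~\ref{Lem-Galois-coh=gamma-conjugation-flag} gives $H^{1}=1$ (equivalently, this is Hilbert's Theorem~90 for a Weil restriction). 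For $(K_1)$ the restricted action on $\Delta(T_1)$ is $t\mapsto\overline t^{\,-1}$, whose real form is the circle $S^{1}=\{t:|t|=1\}$; here the $(-\gamma)$-fixed set is $\{(t,t):t\in\R^{\times}\}$ and $\gamma$-conjugation by $(s,s)$ multiplies $t$ by $|s|^{2}\in\R_{>0}$, so Lemma~\ref{Lem-Galois-coh=gamma-conjugation-flag} yields $H^{1}\simeq\R^{\times}/\R_{>0}=\{[(1,1)]_{\gamma},[(-1,-1)]_{\gamma}\}$, as stated.

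This lemma is largely bookkeeping; the only delicate point is to keep the two ingredients of the Galois twist — the swap of the two $\GL_n$-factors and the conjugate-inverse — straight while identifying each stabilizer's real form and verifying its $\gamma$-stability. Once these real forms are recognized as $\mathrm{Res}_{\C/\R}\mathbb{G}_m$ or $S^{1}$, the cohomology values are classical.
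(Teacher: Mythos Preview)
Your approach is the same as the paper's: compute each stabilizer directly from the action \eqref{eq-def-action-of-BB-on-regRst} and then read off $H^{1}$ via Lemma~\ref{Lem-Galois-coh=gamma-conjugation-flag}. The paper in fact only writes out case $(K_1)$ and declares the others ``similar (and easier)'', so your treatment is more detailed; the extra identification of the stabilizers in $(K_c),(K_d),(K_2)$ as $\mathrm{Res}_{\C/\R}\mathbb{G}_m$ is a clean way to package the vanishing of $H^{1}$.

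One small slip to fix in $(K_2)$: your displayed set $\{(\diag(t_1,t_2),\diag(\overline t_2,\overline t_1)):t_i\in\C^{\times}\}$ is neither contained in $\Delta'(T_2)$ nor $(-\gamma)$-fixed in general. The correct $(-\gamma)$-fixed subset of $\Delta'(T_2)$ is the one-parameter family $\{(\diag(t,\bar t),\diag(\bar t,t)):t\in\C^{\times}\}$, and $\gamma$-conjugation by $\Delta'(\diag(s_1,s_2))$ replaces $t$ by $s_1\bar s_2\,t$, which is transitive on $\C^{\times}$. This does not affect your conclusion, since the identification with $\mathrm{Res}_{\C/\R}\mathbb{G}_m$ already gives $H^{1}=1$ without the explicit cocycle computation.
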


\begin{proof}


We only prove Case ($K_{1}$).  The other cases are similar (and easier). 

    First, we prove that $(T_{1}^{+} \times T_{1}^{-})_{[\tau]} = \Delta(T_{1}) = \{(t,t) \mid t \in T_{1}\}$. Because the action is given by \eqref{eq-def-action-of-BB-on-regRst}, this follows from
    \begin{equation*}
        (t,s) \tau \GL_{1}(\C) =  
        (t,s)
        \begin{pmatrix}
            1 \\ 1 
        \end{pmatrix}
        \GL_{1}(\C)
        =
        \begin{pmatrix}
            t \\ s 
        \end{pmatrix}
        \GL_{1}(\C)
        =\begin{pmatrix}
            ts^{-1} \\ 1 
        \end{pmatrix}\GL_{1}(\C).
    \end{equation*}
    
    Next, we claim that 
    \begin{equation*}
        \Delta(T_{1})^{-\gamma}:= \{(t,t)\in \Delta(T_{1})\mid \gamma(t,t)^{-1} = (t,t) \}   
        =
        \{(t,t)\mid t\in \GL_{1}(\R) \}.
    \end{equation*}
    By the definition of $\gamma$ (given in \eqref{eq-def-gamma-caseA}), this follows from
    \begin{equation*}
        \gamma(t,t)^{-1}=(\overline{t},\overline{t})\qquad \text{ for } t\in \GL_{1}(\C).
    \end{equation*}
    
    By Lemma \ref{Lem-Galois-coh=gamma-conjugation-flag}, we have 
    \begin{equation*}
      H^{1}(\R, \Delta(T_{1})) \simeq \Delta(T_{1})^{-\gamma} / \sim_{\gamma},
    \end{equation*}
    which is equal to $\{[(1,1)]_{\gamma},[(-1,-1)]_{\gamma}\}$, since
    \begin{equation*}
        \gamma(t,t)^{-1}(t,t)=(|t|^{2},|t|^{2})\qquad \text{ for } t\in \GL_{1}(\R).
    \end{equation*}
\end{proof}


By Lemma \ref{Lem-Ttau=1-flag}, it is expected that $H^{1}(\R,  (T_{n}^{+}\times T_{n}^{-})_{[\tau]}) \simeq \{ \pm 1\}^{k_{1}}$, where $k_{1} \coloneqq \# K_{1}$ and $K_{1}$ is the subset of $[n]$ given in Notation \ref{Notation-K-for-tau-in-regR} for $\tau \in \regR_{n}$, or more precisely, it is expected that we have
\begin{equation*}
    H^{1}(\R,  (T_{n}^{+}\times T_{n}^{-})_{[\tau]}) \simeq
    \{[(t,t)]_{\gamma} \in T_{n}^{+} \times T_{n}^{-} \mid t_{i} \in \{-1, 1\} \text{ if } i \in K_{1}, \text{ and }
            t_{i} = 1           \text{ if } i \notin K_{1} \},
\end{equation*}
where we write $t= \diag(t_{1},\dots,t_{n}) \in T_{n}$.
In fact, this expectation is correct.
From now on, we make some preparation in order to prove this for the higher rank case.

\begin{definition}\label{Def-T-tau-flag}
    Let $T_{n}^{+}$ and $T_{n}^{-}$ be the subgroups of diagonal tori of $B_{n}^{+}$ and $B_{n}^{-}$, respectively.
    Then, for $\tau \in \regR_{n}$, we define a finite subset $T(\tau) $ of $T_{n}^{+} \times T_{n}^{-}$ by 
    \begin{equation*}
        T(\tau) \coloneqq 
        \{ (t,t) \in T_{n}^{+} \times T_{n}^{-} 
        \mid 
            t_{i} \in \{-1, 1\} \text{ if } i \in K_{1}, \text{ and }
            t_{i} = 1           \text{ if } i \notin K_{1}
        \},
    \end{equation*}
    where  $K_{1}$ is defined in Notation \ref{Notation-K-for-tau-in-regR} for $\tau \in \regR_{n} $ and we write $t= \diag(t_{1},\dots,t_{n}) \in T_{n}^{+}$.
    Note that we have $T(\tau \sigma) = T(\tau)$ for any $\sigma \in \Sgroup_{n}$, hence 
    $T([\tau]) \; ([\tau] \in \regR_{n} /\Sgroup_{n}) $ is well defined.
\end{definition}

\begin{remark}\label{Rem-Ttau=dtau}
    By definition, we have $T(\tau) \simeq \{\pm 1\}^{d(\tau)}$ for $\tau\in \regR_{n}$ because $\# K_{1} = d(\tau)$, where $d(\tau)$ is defined in Definition \ref{Def-d-tau}.
\end{remark}

\begin{example}\label{Ex-T-tau-flag}
Consider the same $\tau \in \regR_{n}$ as in Example \ref{Ex-K-for-tau-in-regR}. 
Then, we have
\begin{equation*}
    T(\tau) =\left\{
    \left(\unitmatrix_{5},\unitmatrix_{5}\right),
    \left(\diag(-1,\unitmatrix_{4}),\diag(-1,\unitmatrix_{4})\right)
    \right\},
\end{equation*}
where $\unitmatrix_{5} \in \GL_{5}(\C)$ is the identity matrix.
\end{example}

\begin{lemma}\label{Lem-map-from-Ttau-to-Galois-flag}\label{Lem-Ttau=H^1(TT,R)-flag}
    Let $\tau \in \regR_{n}$ 
    and $T(\tau)$ be the subset of $T_{n}^{+}\times T_{n}^{-}$ defined in Definition \ref{Def-T-tau-flag}.
    Then, there exists an isomorphism
    \begin{equation}\label{eq-Lem-map-T-to-H1}
    \begin{array}{ccc}
         T(\tau) & \xrightarrow{\sim} & H^{1}(\R, (T_{n}^{+} \times T_{n}^{-})_{[\tau]}) \\
         \rotatebox{90}{$\in$} & & \rotatebox{90}{$\in$} \\
         (t,t) & \mapsto & [(t,t)]_{\gamma},
    \end{array}
    \end{equation}
    where we identify $H^{1}(\R, (T_{n}^{+} \times T_{n}^{-})_{[\tau]})$ with $(T_{n}^{+} \times T_{n}^{-})_{[\tau]}^{-\gamma} / \sim_{\gamma}$, 
    and $[(t,t)]_{\gamma}$ denotes the equivalence class of $(t,t)$ in $(T_{n}^{+} \times T_{n}^{-})_{[\tau]}^{-\gamma} / \sim_{\gamma}$.
\end{lemma}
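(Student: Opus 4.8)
The plan is to prove Lemma \ref{Lem-map-from-Ttau-to-Galois-flag} by combining the structural results already established with the explicit torus computations. First I would reduce the problem to the torus level: by Lemma \ref{Lem-H^1(BB-tau)=H^1(TT-tau)-flag} (which itself rests on Lemma \ref{Lem-Levi-decomp-of-(B+-B-)-tau} and Corollary \ref{Cor-Galois-cohomology-of-stabilizer-subgroup-of-B-flag}), the first Galois cohomology of the full stabilizer $(B_n^+\times B_n^-)_{[\tau]}$ equals that of $(T_n^+\times T_n^-)_{[\tau]}$, so it suffices to understand the torus stabilizer. Using the $\Sgroup_n$-equivariance noted in Definition \ref{Def-T-tau-flag} and Remark \ref{Remark-Weyl-group-action-An-on-regT-regR}, I may pick the standard representative of $[\tau]$, so that $\tau$ decomposes into blocks indexed by $K_c \coprod K_d \coprod K_1 \coprod K_2$ as in Notation \ref{Notation-K-for-tau-in-regR}.

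The key step is to show that the torus stabilizer $(T_n^+\times T_n^-)_{[\tau]}$ splits as a direct product over these four types of blocks. Tracing through the computation in the proof of Lemma \ref{Lem-Levi-decomp-of-(B+-B-)-tau} — specifically the characterization \eqref{eq-bc-contained-in-stabilizer-in-case-regR-2}, namely $(b_T,c_T)\in(T_n^+\times T_n^-)_{[\tau]} \iff b_m\eta c_m^{-1}=\eta$ — reduces the stabilizer condition to a condition on the diagonal torus acting on a partial involution $\eta\in\PI_m$. Since $\eta$ is a partial involution, its graph breaks into fixed points (contributing to $K_1$), $2$-cycles (contributing to $K_2$), and zero rows/columns (contributing to $K_c$ and $K_d$); the condition $b_m\eta c_m^{-1}=\eta$ decouples accordingly. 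On the $K_c$ and $K_d$ blocks the torus acts freely in the relevant sense and the stabilizer is all of $T_1^+\times T_1^-$ with trivial cohomology (Cases $(K_c)$, $(K_d)$ of Lemma \ref{Lem-H^1-tau-3cases-flag}); on each $K_1$ block the stabilizer is the diagonal $\Delta(T_1)$ with $H^1 = \{\pm 1\}$ (Case $(K_1)$); on each $K_2$ block the stabilizer is the twisted diagonal $\Delta'(T_2)$ with trivial $H^1$ (Case $(K_2)$). Because Galois cohomology commutes with direct products, $H^1(\R, (T_n^+\times T_n^-)_{[\tau]}) \simeq \prod_{i\in K_1} \{\pm 1\} \simeq \{\pm 1\}^{\# K_1} = \{\pm 1\}^{d(\tau)}$, matching $T(\tau)$ by Remark \ref{Rem-Ttau=dtau}.

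Finally I would verify that the specific map $(t,t)\mapsto [(t,t)]_\gamma$ in \eqref{eq-Lem-map-T-to-H1} realizes this isomorphism. For this I need: (a) each $(t,t)\in T(\tau)$ actually lies in $(T_n^+\times T_n^-)_{[\tau]}^{-\gamma}$ — the stabilizer membership is clear since $t_i=\pm 1$ on $K_1$ and $=1$ elsewhere makes $t\eta t^{-1}=\eta$, and the $(-\gamma)$-condition holds because real diagonal entries satisfy $\gamma(t,t)^{-1}=(\bar t,\bar t)=(t,t)$; (b) distinct elements of $T(\tau)$ are inequivalent under $\sim_\gamma$, which follows because $\gamma$-conjugation by $g\in(T_n^+\times T_n^-)_{[\tau]}$ multiplies $(t,t)$ by $\overline{g}^{-1}(\ldots)g$, and on the $\{\pm 1\}$-components this is trivial just as in the rank-one computation $\gamma(t,t)^{-1}(t,t)=(|t|^2,|t|^2)$ from Case $(K_1)$; (c) surjectivity, i.e.\ every class in $(T_n^+\times T_n^-)_{[\tau]}^{-\gamma}/\sim_\gamma$ has a representative in $T(\tau)$, which again reduces block-by-block to the computations of Lemma \ref{Lem-H^1-tau-3cases-flag}. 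The main obstacle I anticipate is bookkeeping: carefully checking that the block decomposition of the torus stabilizer is genuinely a direct product (there are no ``cross terms'' linking a $K_1$ index to a $K_c$ or $K_2$ index in the defining equations), and making the identification of the $-\gamma$-fixed points and $\gamma$-conjugacy classes with $T(\tau)$ precise rather than just plausible. This is essentially a careful induction on $n$ paralleling the structure of the proof of Lemma \ref{Lem-Levi-decomp-of-(B+-B-)-tau}, peeling off one block at a time and invoking Lemma \ref{Lem-H^1-tau-3cases-flag} at the base cases.
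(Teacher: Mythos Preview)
Your proposal is correct and follows essentially the same approach as the paper: decompose the torus stabilizer $(T_n^+\times T_n^-)_{[\tau]}$ as a direct product indexed by $K_c,K_d,K_1,K_2$, apply the rank-one and rank-two computations of Lemma~\ref{Lem-H^1-tau-3cases-flag} to each factor, and use that Galois cohomology preserves direct products. Two small remarks: the reduction from $(B_n^+\times B_n^-)_{[\tau]}$ to $(T_n^+\times T_n^-)_{[\tau]}$ in your first paragraph is not needed here, since the statement is already at the torus level; and where you suggest an induction ``peeling off one block at a time,'' the paper simply asserts the direct-product splitting all at once (this is immediate once the stabilizer condition $b_m\eta c_m^{-1}=\eta$ is seen to be a diagonal condition on a partial involution, so no induction is required).
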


\begin{proof}
    First, we note that the map \eqref{eq-Lem-map-T-to-H1}  is well defined because there exists an inclusion
    \begin{equation}
        T(\tau) \subset (T_{n}^{+} \times T_{n}^{-})_{[\tau]}^{-\gamma},
    \label{eq-Ttau-inclusion-TT_tau^-gamma-flag}
    \end{equation}
    which follows from the definitions of $\gamma$ (given in \eqref{eq-def-gamma-caseA}) 
    and the action $T_{n}^{+} \times T_{n}^{-} \leftaction \regMat_{2n,n} / \GL_{n}(\C)$ 
    (induced by $B_{n}^{+} \times B_{n}^{-} \leftaction \regMat_{2n,n} / \GL_{n}(\C)$ given in \eqref{eq-def-action-of-BB-on-regRst}).

Thus, from now on, we prove that the map \eqref{eq-Lem-map-T-to-H1} is in fact an isomorphism.
    Let $\tau \in \regR_{n}$ and $K_{c} \coprod K_{d} \coprod K_{1} \coprod K_{2} = [n]$ be the decomposition given in Notation \ref{Notation-K-for-tau-in-regR}.
    Set $k_{j} \coloneqq \# K_{j}$ for $j \in \{c,d,1,2\}$.
By Lemma \ref{Lem-Ttau=1-flag},
    it easily follows that 
    \begin{equation*}
        (T_{n}^{+} \times T_{n}^{-})_{[\tau]}
        \simeq 
        (T_{1}^{+} \times T_{1}^{-})^{k_{c}}   \times 
        (T_{1}^{+} \times T_{1}^{-})^{k_{d}}   \times
        \Delta(T_{1})^{k_{1}}   \times
        \Delta'(T_{2})^{k_{2}/2},
    \end{equation*}
    where we use the notation defined in Lemma \ref{Lem-Ttau=1-flag}.
    Because the Galois cohomology preserves the direct product, we get 
    \begin{equation*}
        H^{1}(\R, (T_{n}^{+} \times T_{n}^{-})_{[\tau]})
        \simeq 
        H^{1}(\R,T_{1}^{+} \times T_{1}^{-})^{k_{c}+k_{d}}   \times 
        H^{1}(\R,\Delta(T_{1}))^{k_{1}}   \times
        H^{1}(\R,\Delta'(T_{2}))^{k_{2}/2}.
    \end{equation*}
    By this isomorphism, Lemma \ref{Lem-Ttau=1-flag} tells us that
    \begin{equation*}
    \begin{array}{ccc}
        H^{1}(\R, (T_{n}^{+} \times T_{n}^{-})_{[\tau]}) & \xleftarrow{\;\sim\;} & 1 \times \{[(1,1)]_{\gamma},[(-1,-1)]_{\gamma}\}^{k_{1}} \times 1 \\
        \rotatebox{90}{$\in$} && \rotatebox{90}{$\in$} \\
        ~[(t,t)]_{\gamma}& & (1,(\varepsilon_{1}, \dots \varepsilon_{k_{1}}),1),
    \end{array}
    \end{equation*}
where $\varepsilon_{j} \in \{[(1,1)]_{\gamma},[(-1,-1)]_{\gamma}\}$ and $t =\diag(t_{1}, \dots, t_{n})$ is defined by 
    \begin{equation*}
        t_{j} = 
        \begin{cases}
            -1 & \text{ if $\varepsilon_{j} = [(-1,-1)]_{\gamma}$} \\
            1 & \text{ otherwise}.
        \end{cases}
    \end{equation*}
    From these arguments, we can easily see the inclusion \eqref{eq-Ttau-inclusion-TT_tau^-gamma-flag} gives the 
    isomorphism between $ T(\tau) $ and the Galois cohomology.  Note that $ k_1 = \# K_1 = d(\tau) $.
\end{proof}

Then, we are ready to prove \eqref{theorem-Galois-RDFV:item:2:Galois.tells.orbits.R} of Theorem \ref{theorem-Galois-RDFV}.

\begin{proof}[Proof of Theorem \ref{theorem-Galois-RDFV} \eqref{theorem-Galois-RDFV:item:2:Galois.tells.orbits.R}]
By Remark \ref{Rem-Ttau=dtau}, we have $T(\tau) \simeq \{\pm 1\}^{d(\tau)}$. Thus, Lemmas \ref{Lem-H^1(BB-tau)=H^1(TT-tau)-flag} and \ref{Lem-Ttau=H^1(TT,R)-flag} imply
\begin{equation*}
    H^{1}(\R,(B_{n}^{+} \times B_{n}^{-})_{[\tau]}) \simeq H^{1}(\R,(T_{n}^{+} \times T_{n}^{-})_{[\tau]}) \simeq T(\tau) \simeq  \{\pm 1\}^{d(\tau)},
\end{equation*}
which completes the proof.
\end{proof}

\subsection{Explicit representative of Galois cohomology and orbits}

In this subsection, we give an explicit correspondence of the Galois cohomology and representatives of orbits.  

\begin{lemma}\label{Lem-(t,t)=(t,1)gamma(t,1)}
    Let $\tau \in \regR_{n}$, and $T(\tau)$ be the subset of $  T_{n}^{+} \times  T_{n}^{-}$ defined in Definition \ref{Def-T-tau-flag}. Then, for any $(t,t) \in T(\tau)$, we have
    \begin{equation*}
        (t,t) = (t,\unitmatrix_{n})^{-1} \gamma(t,\unitmatrix_{n}),
    \end{equation*}
    where $\gamma$ is the involution given in \eqref{eq-def-gamma-caseA}.
\end{lemma}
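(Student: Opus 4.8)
The statement is a straightforward computation once we unwind the definition of $\gamma$ in Case \caseA from \eqref{eq-def-gamma-caseA}, namely $\gamma(g) = J_n^{-1}(g^*)^{-1}J_n$, restricted to the subgroup $T_n^+ \times T_n^-$ of diagonal matrices. First I would record what $\gamma$ does to an element of $T_n^+ \times T_n^-$: write $(t,\unitmatrix_n) \in T_n^+ \times T_n^-$ as the diagonal matrix $\diag(t,\unitmatrix_n) \in \GL_{2n}(\C)$, and compute $\gamma(\diag(t,\unitmatrix_n)) = J_n^{-1}\,\overline{\diag(t,\unitmatrix_n)}\,J_n$ (using that for diagonal $g$ one has $(g^*)^{-1} = \overline{g}$ when... more carefully, $(g^*)^{-1} = \transpose{\overline{g}}^{-1} = \overline{g}^{-1}$ for diagonal $g$, but I must track this). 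So the key sub-step is to evaluate how conjugation by $J_n = \sqrt{-1}\,\mattwo{0}{\unitmatrix_n}{-\unitmatrix_n}{0}$ acts on block-diagonal matrices $\diag(a,b)$: a direct block computation gives $J_n^{-1}\diag(a,b)J_n = \diag(b,a)$. Hence $\gamma(\diag(t,\unitmatrix_n)) = \diag(\unitmatrix_n, \overline{t}^{-1})$ — or the appropriate variant depending on the precise inversion/conjugation bookkeeping, which I will verify carefully.

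Once the action of $\gamma$ on $T_n^+ \times T_n^-$ is pinned down, the claim reduces to a finite check on $T(\tau)$. By Definition \ref{Def-T-tau-flag}, an element $(t,t) \in T(\tau)$ has $t = \diag(t_1,\dots,t_n)$ with $t_i \in \{\pm 1\}$ for $i \in K_1$ and $t_i = 1$ otherwise; in particular $t$ is a real diagonal matrix with $t = \overline{t} = t^{-1}$. Then I would simply substitute: $(t,\unitmatrix_n)^{-1}\gamma(t,\unitmatrix_n) = (t^{-1},\unitmatrix_n)\cdot\gamma(t,\unitmatrix_n)$, and using the formula for $\gamma$ on diagonal elements together with $t^{-1} = t$ and $\overline{t} = t$, this collapses to $(t,t)$. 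The whole argument is essentially two short matrix identities ($J_n^{-1}\diag(a,b)J_n = \diag(b,a)$, and $t^{\pm1} = \overline{t} = t$ for $t \in T(\tau)$) chained together.

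There is essentially no serious obstacle here; the only thing requiring care is the conventions — whether $\gamma$ on the pair $(a,b) \in T_n^+\times T_n^-$, thought of as $\diag(a,b)$, produces $\diag(\overline{b}^{-1}, \overline{a}^{-1})$, and then which coordinate of that matches the $T_n^+$ versus $T_n^-$ factor, and how the pair notation $(t,\unitmatrix_n)$ relates to $\diag(t,\unitmatrix_n)$. I would state the computation of $\gamma$ on $T_n^+\times T_n^-$ as a displayed equation at the start of the proof, namely something like
\[
    \gamma\bigl(\diag(a,b)\bigr) = \diag\bigl(\overline{b}^{\,-1}, \overline{a}^{\,-1}\bigr) \qquad (a,b \in \GL_n(\C)\text{ diagonal}),
\]
cite the block identity $J_n^{-1}\diag(a,b)J_n = \diag(b,a)$ for its justification, and then the final line is immediate since for $(t,t)\in T(\tau)$ the matrix $t$ satisfies $\overline{t} = t = t^{-1}$, giving $(t,\unitmatrix_n)^{-1}\gamma(t,\unitmatrix_n) = \diag(t^{-1},\unitmatrix_n)\cdot\diag(\unitmatrix_n, \overline{t}^{\,-1}) = \diag(t^{-1}, t^{-1}) = \diag(t,t) = (t,t)$. (If the bookkeeping places the nontrivial block in the other slot, the same one-line substitution works after swapping; I will adopt whichever matches the conventions of Section \ref{Section-Preliminary-caseA}.) The Case \caseB version, where $\gamma$ is plain complex conjugation, is even simpler — $\gamma(t,\unitmatrix_n) = (\overline{t},\unitmatrix_n) = (t,\unitmatrix_n)$ since $t$ is real, and one checks $(t,\unitmatrix_n)^{-1}(t,\unitmatrix_n)$... which would need the analogous statement phrased with the correct $\gamma$; but as the lemma is stated only for \eqref{eq-def-gamma-caseA} I would just do Case \caseA.
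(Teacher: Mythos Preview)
Your proposal is correct and follows essentially the same approach as the paper: the paper's proof is the two-line observation that $t^{-1} = t = \overline{t}$ since the entries of $t$ lie in $\{\pm 1\}$, after which the identity follows directly from the definition of $\gamma$. Your version simply spells out the intermediate computation $\gamma(\diag(a,b)) = \diag(\overline{b}^{\,-1},\overline{a}^{\,-1})$ (via $J_n^{-1}\diag(c,d)J_n = \diag(d,c)$) that the paper leaves implicit.
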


\begin{proof}
Note that $t^{-1} = t = \overline{t}$ because for any $(t,t) \in T(\tau)$, $t$ is a diagonal matrix whose entries belong to $\{\pm 1\}$.
Thus, this lemma is clear by the definition of $\gamma$.
\end{proof}

\begin{proposition}\label{Prop-T(tau)=H1=H1=BO}
     Let $[\tau] \in \regR_{n} / \Sgroup_{n}$. Then, we have bijections 
     \begin{equation*}
     \begin{array}{ccccccc}
          T([\tau]) & \xrightarrow{\sim} & H^{1}(\R, (T_{n}^{+} \times T_{n}^{-})_{[\tau]}) & \xrightarrow{\sim} & H^{1}(\R, (B_{n}^{+} \times B_{n}^{-})_{[\tau]}) & \xrightarrow{\sim} & 
          B_{n} \backslash \mathcal{O}_{[\tau]}(\R) \\
          \rotatebox{90}{$\in$} && \rotatebox{90}{$\in$} & & \rotatebox{90}{$\in$} && \rotatebox{90}{$\in$} \\
          (t,t) & \mapsto & [(t,t)]_{\gamma} & \mapsto & [(t,t)]_{\gamma} & \mapsto & B_{n} \left((t,\unitmatrix_{n}) \cdot [\tau] \right),
     \end{array}
     \end{equation*}
     where $\mathcal{O}_{[\tau]} \coloneqq (B_{n}^{+} \times B_{n}^{-})/(B_{n}^{+} \times B_{n}^{-})_{[\tau]} \simeq (B_{n}^{+} \times B_{n}^{-})\cdot [\tau] \in (B_{n}^{+} \times B_{n}^{-}) \backslash \Mat_{2n,n} /\GL_{n}(\C)$ is the $(B_{n}^{+} \times B_{n}^{-})$-orbit corresponding to $[\tau]$.
\end{proposition}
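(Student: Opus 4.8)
The plan is to assemble the three claimed bijections from results already established, and then verify that the explicit formulas on representatives are compatible with these abstract isomorphisms. The first arrow $T([\tau]) \xrightarrow{\sim} H^{1}(\R, (T_{n}^{+} \times T_{n}^{-})_{[\tau]})$ is exactly Lemma \ref{Lem-map-from-Ttau-to-Galois-flag} (= Lemma \ref{Lem-Ttau=H^1(TT,R)-flag}), whose well-definedness relies on the inclusion $T(\tau) \subset (T_{n}^{+} \times T_{n}^{-})_{[\tau]}^{-\gamma}$ and the identification $H^{1}(\R, (T_{n}^{+} \times T_{n}^{-})_{[\tau]}) \simeq (T_{n}^{+} \times T_{n}^{-})_{[\tau]}^{-\gamma}/\sim_{\gamma}$ from Lemma \ref{Lem-Galois-coh=gamma-conjugation-flag}. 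The second arrow is Lemma \ref{Lem-H^1(BB-tau)=H^1(TT-tau)-flag}, which comes from Corollary \ref{Cor-Galois-cohomology-of-stabilizer-subgroup-of-B-flag} applied to the Levi decomposition of $(B_{n}^{+} \times B_{n}^{-})_{[\tau]}$ proved in Lemma \ref{Lem-Levi-decomp-of-(B+-B-)-tau}; I should note that this isomorphism $H^{1}(\R, (T_{n}^{+} \times T_{n}^{-})_{[\tau]}) \to H^{1}(\R, (B_{n}^{+} \times B_{n}^{-})_{[\tau]})$ is induced by the inclusion $(T_{n}^{+} \times T_{n}^{-})_{[\tau]} \hookrightarrow (B_{n}^{+} \times B_{n}^{-})_{[\tau]}$, hence on cocycle representatives it literally sends $[(t,t)]_{\gamma}$ to $[(t,t)]_{\gamma}$, which is what the diagram claims.

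For the third arrow, I would invoke Lemma \ref{Lem-Galois-isom-real-points-of-orbit-for-hom-sp-case-flag} (or directly Lemma \ref{lemma-real-orbits-via-Galois-cohomology-non-homogeneous-case-flag}) applied to the single $B_{H_{\C}} = B_{n}^{+} \times B_{n}^{-}$-orbit $\mathcal{O}_{[\tau]}$ in $\regMat_{2n,n}/\GL_{n}(\C)$; this is legitimate precisely because $H^{1}(\R, B_{n}^{+} \times B_{n}^{-}) = 1$ by Lemma \ref{Fact-caseAB-H^1(B,R)=H^1(P,R)=1}, and because $\mathcal{O}_{[\tau]}(\R) \neq \emptyset$ by Theorem \ref{theorem-Galois-RDFV} \eqref{theorem-Galois-RDFV:item:1:nonempty.R.pts}, with $[\tau]$ itself (an integer matrix) serving as a distinguished real base point. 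That lemma gives the bijection $B_{n} \backslash \mathcal{O}_{[\tau]}(\R) \xrightarrow{\sim} H^{1}(\R, (B_{n}^{+} \times B_{n}^{-})_{[\tau]})$ sending $B_{n}\bigl(g \cdot [\tau]\bigr)$ to $[g^{-1}\gamma(g)]_{\gamma}$; I want the inverse direction, so I need to produce, for $[(t,t)]_{\gamma}$, an element $g \in B_{n}^{+} \times B_{n}^{-}$ with $g^{-1}\gamma(g) = (t,t)$. This is supplied by Lemma \ref{Lem-(t,t)=(t,1)gamma(t,1)}: taking $g = (t, \unitmatrix_{n})$ we have $g^{-1}\gamma(g) = (t,\unitmatrix_{n})^{-1}\gamma(t,\unitmatrix_{n}) = (t,t)$, so the orbit $B_{n}\bigl((t,\unitmatrix_{n})\cdot[\tau]\bigr)$ is exactly the preimage of $[(t,t)]_{\gamma}$. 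Composing the three maps then yields the stated chain, and the composite formula $(t,t) \mapsto B_{n}\bigl((t,\unitmatrix_{n})\cdot[\tau]\bigr)$ drops out immediately.

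The only genuine subtlety — the step I expect to be the main obstacle — is bookkeeping the \emph{compatibility of base points and $\gamma$-actions} across the three identifications: the first two arrows use the complex conjugation $\gamma$ on the groups $T_{n}^{\pm}$, $B_{n}^{\pm}$ given by \eqref{eq-def-gamma-caseA}, while the third uses the same $\gamma$ but now acting on $\mathcal{O}_{[\tau]} \subset \regMat_{2n,n}/\GL_{n}(\C)$, and one must check the base point $[\tau]$ is $\gamma$-fixed (which holds since $\tau$ has integer entries and the conjugation on $\GL_{2n}(\C)/P_{(n,n)}$ restricts to entrywise conjugation on integer matrices, as noted in the proof of Theorem \ref{theorem-Galois-RDFV} \eqref{theorem-Galois-RDFV:item:1:nonempty.R.pts}), so that $(B_{n}^{+} \times B_{n}^{-})_{[\tau]}$ is indeed a $\gamma$-stable subgroup and all three $H^{1}$'s are computed with respect to mutually consistent conjugations. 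Once this is in place the argument is a formal chase; I would spell it out in a short paragraph rather than recompute anything. Finally I would remark that the composite $T([\tau]) \to B_{n}\backslash\mathcal{O}_{[\tau]}(\R)$, combined with Theorem \ref{Thm-complex-orbit-decomp-AIII-and-CI} and Theorem \ref{theorem-Galois-RDFV} \eqref{theorem-Galois-RDFV:item:1:nonempty.R.pts}, recovers the global decomposition $H\backslash\dblFV \simeq \coprod_{\tau \in \regR_{n}/\Sgroup_{n}} \{\pm1\}^{d(\tau)}$ via Remark \ref{Rem-Ttau=dtau}, tying this proposition back to the statement of Theorem \ref{theorem-Galois-RDFV}.
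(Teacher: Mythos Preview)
Your proposal is correct and follows essentially the same approach as the paper: the paper's proof likewise assembles the three bijections by citing Lemma~\ref{Lem-Ttau=H^1(TT,R)-flag}, Lemma~\ref{Lem-H^1(BB-tau)=H^1(TT-tau)-flag}, and the map~\eqref{eq-BX=H1(RBx)-flag}, and uses Lemma~\ref{Lem-(t,t)=(t,1)gamma(t,1)} to identify $g=(t,\unitmatrix_n)$ as the element with $g^{-1}\gamma(g)=(t,t)$. Your added remarks on base-point compatibility and the link back to Theorem~\ref{theorem-Galois-RDFV} are correct elaborations; the paper leaves these implicit.
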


\begin{proof}
    By Lemmas \ref{Lem-Ttau=H^1(TT,R)-flag}, \ref{Lem-(t,t)=(t,1)gamma(t,1)} and \ref{Lem-H^1(BB-tau)=H^1(TT-tau)-flag}, we have
     \begin{equation*}
     \begin{array}{ccccc}
          T([\tau]) & \xrightarrow{\sim} & H^{1}(\R, (T_{n}^{+} \times T_{n}^{-})_{[\tau]}) & \xrightarrow{\sim} & 
          H^{1}(\R, (B_{n}^{+} \times B_{n}^{-})_{[\tau]})  \\
          \rotatebox{90}{$\in$} && \rotatebox{90}{$\in$} & & \rotatebox{90}{$\in$} \\
          (t,\unitmatrix_{n})^{-1} \gamma(t,\unitmatrix_{n})& \mapsto & [(t,\unitmatrix_{n})^{-1} \gamma(t,\unitmatrix_{n})]_{\gamma} &\mapsto & [(t,\unitmatrix_{n})^{-1} \gamma(t,\unitmatrix_{n})]_{\gamma}.
     \end{array}
     \end{equation*}
     Moreover, by the isomorphism \eqref{eq-BX=H1(RBx)-flag}, we have
     \begin{equation*}
     \begin{array}{ccc}
          H^{1}(\R, (B_{n}^{+} \times B_{n}^{-})_{[\tau]}) & \xrightarrow{\sim} & 
          B_{n} \backslash \mathcal{O}_{[\tau]}(\R) \\
          \rotatebox{90}{$\in$} && \rotatebox{90}{$\in$} \\
          {[(t,\unitmatrix_{n})^{-1} \gamma(t,\unitmatrix_{n})]_{\gamma}} & \mapsto & B_{m} \left((t,\unitmatrix_{n}) \cdot [\tau] \right),
     \end{array}
     \end{equation*}
     which completes the proof.
\end{proof}

    Recall that we have $\Z_{2}^{n} \backslash \regOmega_{n} / (\Z_{2}^{n} \rtimes \Sgroup_{n}) \simeq B_{H} \backslash G /P_{G}$ by Theorem \ref{Thm-Gamma(n)=DFVR}. 
    Moreover, for any $B_{H_{\C}}$-orbit $\mathcal{O}$ on $G_{\C} / P_{\C}$, the set of real points $\mathcal{O}(\R)$ is nonempty if and only if $\mathcal{O}$ corresponds to $[\tau] \in \regR_{n} / \Sgroup_{n}$ (Theorem \ref{theorem-Galois-RDFV} (\ref{theorem-Galois-RDFV:item:1:nonempty.R.pts})). 
    Thus, by Proposition \ref{Prop-T(tau)=H1=H1=BO} and considering how $\mathcal{O}_{[\tau]}(\R)$ decomposes into $B_{H}$-orbits, we have a bijection
    \begin{equation*}
        \begin{array}{ccc}
            \coprod_{\tau \in \regR_{n}/ \Sgroup_{n}} T(\tau) & \xrightarrow{\sim} & \Z_{2}^{n} \backslash \regOmega_{n} / (\Z_{2}^{n} \rtimes \Sgroup_{n}) \\
            \rotatebox{90}{$\in$} & & \rotatebox{90}{$\in$}\\
            (t,t) & \mapsto & (t,\unitmatrix_{n}) \cdot [\tau].
        \end{array}
    \end{equation*}
From this, the signs in the coordinates of $ \tau' \in \regOmega_n $ (we denoted it by $ \tau $ in the former sections, but we want to avoid confusions here) do not matter when we generate the complex orbit from the real orbit corresponding to $ \tau' $.  Thus, if we define a map $ \varphi : \regOmega_n \to \regR_n $ by sending $ \tau' \in \regOmega_n $ to $ \tau \in \regR_n $ by removing all the signs, this establishes the correspondence of real orbits and complex orbits.
Therefore, we have a commutative diagram:
\begin{equation}\label{eq-comm-digagram-Omega-regR}
\xymatrix@C=50pt{
\Z_{2}^{n} \backslash \regOmega_{n} / (\Z_{2}^{n} \rtimes \Sgroup_{n}) 
\ar[d]^-{\wr \:\:\: \text{Thm.~\ref{Thm-Gamma(n)=DFVR}}}_-{[\tau'] \mapsto B_{H}[\tau']}
\ar[r]^-{\varphi}
&
\regR_{n}/ \Sgroup_{n}.
\ar@{_{(}->}[d]_{[\tau]\mapsto B_{H_{\C}}[\tau]}^-{\text{Thm.~\ref{Thm-complex-orbit-decomp-AIII-and-CI}}}
\\
B_{H} \backslash G / P 
\ar[r]_-{\mathcal{O} \mapsto B_{H_{\C}}\mathcal{O}} &
B_{H_{\C}} \backslash G_{\C} / P_{G_{\C}}
}
\end{equation}
Recall that we also have a commutative diagram:
\begin{equation}\label{eq-comm-digagram-DFVR-DFVC}
    \xymatrix@C=50pt{
B_{H} \backslash G / P 
\ar[d]_{\wr}
\ar[r]^-{\mathcal{O} \mapsto B_{H_{\C}}\mathcal{O}}
&
B_{H_{\C}} \backslash G_{\C} / P_{G_{\C}}.
\ar[d]_{\wr}
\\
H \backslash \dblFV 
\ar[r]^-{\mathcal{O} \mapsto H_{\C}\mathcal{O}} &
H_{\C} \backslash \dblFV_{\C}
\\
}
\end{equation}

\begin{corollary}\label{Cor-dim-R=dim-C}
    Let $\tau' \in \regOmega_{n} $ and $\scrorbitR_{\tau'} \in H \backslash \dblFV$ the corresponding $H$-orbit through the diagrams \eqref{eq-comm-digagram-Omega-regR} and \eqref{eq-comm-digagram-DFVR-DFVC}. 
    Set $\tau \coloneqq \varphi (\tau') \in \regR_{n}$ and $\scrorbitC_{\tau} \in H_{\C} \backslash \dblFV_{\C}$ corresponding $H_{\C}$-orbit.
    Then, we have $ \dim_{\R} \scrorbitR_{\tau'} = \dim_{\C} \scrorbitC_{\tau} $.
\end{corollary}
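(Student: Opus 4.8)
The plan is to compute both dimensions explicitly in terms of the combinatorial data of $\tau' \in \regOmega_n$ (equivalently $\tau = \varphi(\tau') \in \regR_n$) and check that the two counts agree. The complex side $\dim_\C \scrorbitC_\tau$ is already accessible from \cite{Fresse.N.2021}: since $\scrorbitC_\tau \simeq (B_{H_\C} \times \GL_n(\C)) \cdot \tau$ inside $\Mat_{2n,n}(\C)/\GL_n(\C)$ with stabilizer described (in the Borel-times-parabolic picture) by Lemma \ref{Lem-Levi-decomp-of-(B+-B-)-tau}, we have $\dim_\C \scrorbitC_\tau = \dim_\C (B_n^+ \times B_n^-) - \dim_\C (B_n^+\times B_n^-)_{[\tau]}$. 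The real side is governed by the \emph{same} stabilizer computation over $\R$: by the reduction in Section~\ref{Section-Orbit-decomposition}, a representative of $\scrorbitR_{\tau'}$ sits in $\mathrm{LM}^I_{2n,n}$ and its $B_H$-orbit corresponds to a $B_m(\R)$-orbit (resp.\ $B_m(\C)$-orbit) on $\Sym_m(\R)$ (resp.\ $\Her_m(\C)$) via Lemma \ref{Lem-(B-Her)=(B-LM-I/GL)}. So the first step is to assemble, from the $K_c \sqcup K_d \sqcup K_1 \sqcup K_2$ decomposition of Notation~\ref{Notation-K-for-tau-in-regR}, a closed formula for $\dim (B_n^+ \times B_n^-)_{[\tau]}$ — this drops out of the explicit conditions \eqref{eq-bc-contained-in-stabilizer-in-case-regR} in the proof of Lemma~\ref{Lem-Levi-decomp-of-(B+-B-)-tau}, which are defined by vanishing of matrix blocks and equations of the form $b_m \eta c_m^{-1} = \eta$; crucially these equations are ``defined over $\Z$'' and impose the same number of (complex vs.\ real) linear conditions.

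The second step is to run the parallel computation for the real orbit. Here the subtlety is that $B_H$ in Case~\caseA is the \emph{real} group $B_n(\C) \simeq B_{H}$ acting on the real variety $\HLGr(\C^{2n})$, so one cannot simply say ``real dimension equals complex dimension.'' Instead I would argue: $\scrorbitR_{\tau'}$ decomposes as $\varphi_I$ identifies it with a $B_m(\C)$-orbit on $\Her_m(\C)$ (Case~\caseA) or a $B_m(\R)$-orbit on $\Sym_m(\R)$ (Case~\caseB), and in either case the orbit is $B_m \cdot \tau_0$ for a signed partial involution $\tau_0 \in \SPI_m$, with $\dim_\R (B_m \cdot \tau_0) = \dim_\R B_m - \dim_\R (B_m)_{\tau_0}$. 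One then shows $\dim_\R B_m - \dim_\R (B_m)_{\tau_0}$ equals $\dim_\C B_m^{\C\text{-analog}} - \dim_\C(\text{stabilizer})$ appearing on the complex side. The cleanest route is probably to avoid separate bookkeeping for the two cases and instead invoke the already-established diagrams \eqref{eq-comm-digagram-Omega-regR} and \eqref{eq-comm-digagram-DFVR-DFVC} together with the Galois-cohomological description: $\scrorbitC_\tau(\R)$ is a finite disjoint union of $H$-orbits, each of which is open in $\scrorbitC_\tau(\R)$; but for any real algebraic variety $Y$ defined over $\R$ with $Y(\R) \neq \emptyset$, every nonempty open (real) subset of the real points of an irreducible component has real dimension equal to the complex dimension of that component (since $\scrorbitC_\tau$ is smooth and irreducible, being a single orbit, $\scrorbitC_\tau(\R)$ is a smooth real manifold of dimension $\dim_\C \scrorbitC_\tau$). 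Hence each $H$-orbit inside $\scrorbitC_\tau(\R)$ — in particular $\scrorbitR_{\tau'}$ — has $\dim_\R = \dim_\C \scrorbitC_\tau$.

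Concretely, I would structure the proof as follows. First, note $\scrorbitC_\tau$ is a $B_{H_\C}$-orbit (via Theorem~\ref{Thm-complex-orbit-decomp-AIII-and-CI} and the identification $H_\C \backslash \dblFV_\C \simeq B_{H_\C}\backslash G_\C/P_{G_\C}$), hence a smooth irreducible locally closed subvariety of $G_\C/P_{G_\C}$, defined over $\R$. Second, since $\tau' \in \regOmega_n$ has integer entries, $[\tau'] \in \scrorbitC_\tau(\R)$ is a genuine real point, so $\scrorbitC_\tau(\R) \neq \emptyset$; by smoothness, $\scrorbitC_\tau(\R)$ is a (nonempty) real-analytic manifold of real dimension $\dim_\C \scrorbitC_\tau$ near each of its points. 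Third, by the commutative diagrams \eqref{eq-comm-digagram-Omega-regR} and \eqref{eq-comm-digagram-DFVR-DFVC}, $\scrorbitR_{\tau'}$ is one of the finitely many $B_H$-orbits partitioning $\scrorbitC_\tau(\R)$; since each such orbit is open in $\scrorbitC_\tau(\R)$ (finitely many orbits of a real algebraic group action on a real algebraic variety are locally closed, and here finiteness forces openness of each), we get $\dim_\R \scrorbitR_{\tau'} = \dim_\R \scrorbitC_\tau(\R) = \dim_\C \scrorbitC_\tau$. The main obstacle I anticipate is making the ``each $B_H$-orbit is open in $\scrorbitC_\tau(\R)$'' step fully rigorous: one needs that $B_{H} = (B_{H_\C})(\R)$ acts on $\scrorbitC_\tau(\R)$ with finitely many orbits (which follows from Theorem~\ref{theorem-complex-finite-imply-real-finite} applied to the homogeneous space $\scrorbitC_\tau = B_{H_\C}/(B_{H_\C})_{[\tau]}$), and that finitely many orbits of a Lie group acting smoothly and transitively-up-to-finitely-many-pieces on a manifold are each open — which holds because the orbits are immersed submanifolds of equal-or-lower dimension covering a space of dimension $\dim_\R \scrorbitC_\tau(\R)$, so at least one is open, and by homogeneity under the algebraic-group action combined with the fact that all orbits have the same structure one concludes they all are; alternatively, cite that $B_H$-orbits on the real points of a $B_{H_\C}$-homogeneous space are exactly the fibers of the map to $H^1(\R, (B_{H_\C})_{[\tau]})$ (Lemma~\ref{lemma-real-orbits-via-Galois-cohomology-non-homogeneous-case-flag}) and each such fiber is open. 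I would present the dimension-count version as the primary argument and remark that it can also be seen by directly comparing the stabilizer computations of Lemma~\ref{Lem-Levi-decomp-of-(B+-B-)-tau} over $\R$ and $\C$.
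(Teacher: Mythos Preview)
Your proposal is correct but takes a more circuitous route than the paper. The paper's proof is essentially a one-liner via stabilizers: since the diagrams give $H_{\C}\,\scrorbitR_{\tau'} = \scrorbitC_{\tau}$, pick any $x \in \scrorbitR_{\tau'} \subset \dblFV$; then $\scrorbitR_{\tau'} \simeq H/H_x$ and $\scrorbitC_{\tau} \simeq H_{\C}/(H_{\C})_x$, and because $x$ is a real point the complex stabilizer $(H_{\C})_x$ is defined over $\R$ with real form $H_x$, so $\dim_{\R} H_x = \dim_{\C} (H_{\C})_x$ and the dimension equality follows immediately. Your argument via smoothness of $\scrorbitC_{\tau}(\R)$ and openness of the finitely many $B_H$-orbits inside it is valid and has the pleasant feature of making visible that \emph{all} real orbits inside a given complex orbit share the same dimension; but the ``openness'' step you worry about is in fact bypassed entirely by the stabilizer observation above, which applies to each real orbit separately without ever needing to compare orbits to one another. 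The explicit combinatorial computation through Lemma~\ref{Lem-Levi-decomp-of-(B+-B-)-tau} and the $K_c,K_d,K_1,K_2$ decomposition that you sketch first is overkill here --- the paper does not invoke it for this corollary at all.
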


\begin{proof}
    By the commutative diagram \eqref{eq-comm-digagram-Omega-regR}, we have $H_{\C}\scrorbitR_{\tau'} = \scrorbitC_{\tau}$. Thus, there exists $x \in \dblFV$ such that we have
    \begin{equation*}
        \scrorbitR_{\tau'} \simeq H / H_{x},\quad
        \scrorbitC_{\tau}  \simeq H_{\C} / (H_{\C})_{x}.
    \end{equation*}
    Because $x \in \dblFV$ implies $\dim_{\C} (H_{\C})_{x} = \dim_{\R} H_{x}$, we have
    \begin{equation*}
        \dim_{\R} \scrorbitR_{\tau'} = \dim_{\R} H -\dim_{\R} H_{x} = \dim_{\C} H_{\C} -\dim_{\C} (H_{\C})_{x}=\dim_{\C} \scrorbitC_{\tau},
    \end{equation*}
    which completes the proof.
\end{proof}

\begin{remark}
    In \cite[Thm.~2.2 (2)]{Fresse.N.2023}, the dimension of the orbits in $ H_{\C} \backslash \dblFV_{\C} $ is given by a combinatorial expression using the parameter $ \tau \in \regR_n $ in Case \caseA. Thus, the dimension of the orbits in $ H \backslash \dblFV$ can also be obtained by a combinatorial expression using the parameter $ \tau \in \regOmega_{n} $ by Corollary \ref{Cor-dim-R=dim-C}. Moreover, if $\scrorbitC \in H_{\C} \backslash \dblFV_{\C}$ and 
    $\calorbit \in B_{H_{\C}} \backslash G_{\C} / P_{G_{\C}}$ correspond each other via the isomorphism in the diagram \eqref{eq-comm-digagram-DFVR-DFVC}, then we have
    \begin{equation*}
        \dim_{\C} \scrorbitC = \dim_{\C} \calorbit + \dim_{\C} H_{\C} / B_{H_{\C}}.
    \end{equation*}
\end{remark}

\section{Families of flag varieties and Matsuki duality}\label{Section-Matsuki-duality}

Recall that we have isomorphisms 
\begin{align*}
H \backslash \dblFV \simeq B_H \backslash G / P_S 
\simeq B_{H} \backslash \LregMat_{2n,n} / \GL_{n}(\C) = \coprod_{I\subset[n]} B_{H}\backslash \mathrm{LM}_{2n,n}^{I} / \GL_{n}(\C)
\end{align*}
in Case \caseA (see the results in \S \ref{Section-Orbit-decomposition}).  
Let us consider the double coset space indexed by $ I $ in the last expression.  
If we put $ m = \# I $, 
it is isomorphic to $ B_m(\C) \backslash \Her_m(\C) $  (Lemma \ref{Lem-(B-Her)=(B-LM-I/GL)}).
We will reduce the orbit classification of $B_{m}(\C) \backslash \Her_{m}(\C)$ first to 
those of smaller full flag varieties for the action of 
smaller indefinite unitary/orthogonal groups  
(we have already stated the precise statement in Theorem \ref{theorem-DFV-contain-KGB}).  
By Matsuki duality, the classification boils down 
to the KGB classification over complex flag varieties (see below).    
In the setting of Case \caseA, there exists a well known combinatorial description of $ K_{\C} $-orbits by Matsuki-Oshima in terms of ``clans'' (see \cite{Matsuki.Oshima.1990}).  
For Case \caseB, we will get a new interpretation of ``clans'' for the double coset space $ B_m(\R) \backslash \GL_m(\R) / \OO(p,q) $.  

In this way, we get yet another classification of the orbits in the double flag variety $ \dblFV $.  

We will mainly explain the classification of orbits in detail for Case \caseA, and the first part of this section also gives a proof of Theorem \ref{theorem-DFV-contain-KGB} for both cases \caseA and \caseB.

\subsection{Reduction to smaller flag varieties}

In this subsection, we give a proof of Theorem \ref{theorem-DFV-contain-KGB}. 
The results obtained in the course of the proof will be used in the latter section for giving another proof of Proposition \ref{Prop-SPI=(B-Her)} (which is already proved in Section \ref{Section-Gaussian-elimination} by using a version of Gaussian elimination).
As in the previous sections, we only treat Case \caseA in this subsection because we can apply the same proof \textit{mutatis mutandis} to Case \caseB by rereading the symbols according to the list in Notation \ref{Notation-caseA-and-caseB-2} below.

\begin{notation}\label{Notation-caseA-and-caseB-2}
The corresponding table of notations for Case \caseA and case \caseB of Setting \ref{Setting-DFVR} is as follows:
\begin{equation*}
    \begin{array}{|c|c|c|c|c|c|c|c|c|c|}\hline
         &  & B_{m} &  & a^{*}    & &
        \\ \hline
        \text{\caseA} & \C & B_{n}(\C) & \Her_{n}(\C) & a^{*} & \Her_{m}^{(p,q;r)}(\C)  & \U(p,q) 
        \\ \hline
        \text{\caseB} & \R  & B_{n}(\R) & \Sym_{n}(\R) & \transpose{a}   & \Sym_{m}^{(p,q;r)}(\R) & \OO(p,q) 
        \\ \hline
    \end{array}
\end{equation*}
where $\Her_{m}^{(p,q;r)}(\C)$ is defined in Definition \ref{Def-Her-pqr}, and $\Sym_{m}^{(p,q;r)}(\R)$ is defined similarly:
\begin{equation*}
	\Sym_{m}^{(p,q;r)}(\R) \coloneqq \{Z \in \Sym_{m}(\R)  \mid \sign Z = (p,q,r) \},
\end{equation*}
where $\sign Z$ denotes the signature of a real symmetric matrix, i.e., $ p $ denotes the number of positive eigenvalues, $ q $ for negative ones, and $ r $ denotes the multiplicity of the eigenvalue $ 0 $.
\end{notation}

In order to know the orbit decomposition $B_{m} \backslash \Her_{m}(\C) $, we first divide $\Her_{m}(\C)$ into $B_{m}$-stable subsets.

\begin{definition}\label{Def-Her-pqr}
Let $p,q \geq 0$ satisfying $p+q\leq m$ and set $r\coloneqq m-(p+q)$.
We define a subset $\Her_{m}^{(p,q;r)}(\C)$ of $\Her_{m}(\C)$ by
\begin{equation*}
	\Her_{m}^{(p,q;r)}(\C) \coloneqq \{Z\in \Her_{m}(\C)  \mid \sign Z = (p,q,r) \} .
\end{equation*}
\end{definition}

Let us denote
\begin{equation}\label{eq-def-I_p,q,r}
	I_{p,q;r} \coloneqq
	\begin{pmatrix}
		I_{p,q} & 0 
		\\
		0 & 0 
	\end{pmatrix}  \in  \Her_{m}^{(p,q;r)}(\C),
\quad \text{ where we set } \quad 
	I_{p,q}
    \coloneqq
    \begin{pmatrix}
	   \unitmatrix_{p} & 0 \\
	   0 & -\unitmatrix_{q}
    \end{pmatrix}.
\end{equation}


\begin{lemma}\label{Lem-Her=coprod-Her-rst}\label{Lem-Hep-qrt-stable}\label{Lem-B-Her=coprod-B-Her-rst}
We have a natural bijection
\begin{equation*}
	B_{m} \backslash \Her_{m}(\C)
	=
	\coprod_{0\leq p+q \leq m}
	B_{m} \backslash \Her_{m}^{(p,q;r)} (\C).
\end{equation*}	
\end{lemma}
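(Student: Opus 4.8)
\textbf{Proof plan for Lemma \ref{Lem-Her=coprod-Her-rst}.}
The plan is to observe that the decomposition of $\Her_m(\C)$ into the pieces $\Her_m^{(p,q;r)}(\C)$ is simply the partition of Hermitian matrices according to their signature, so the only thing to check is that each piece is $B_m$-stable under the action $b\cdot z = b\,z\,b^*$. First I would note the obvious set-theoretic disjoint decomposition
\begin{equation*}
\Her_m(\C) = \coprod_{0\le p+q\le m} \Her_m^{(p,q;r)}(\C),\qquad r = m-(p+q),
\end{equation*}
which holds because every Hermitian matrix has a well-defined signature $(p,q,r)$ with $p+q+r=m$, and these triples are in bijection with pairs $(p,q)$ satisfying $0\le p+q\le m$.

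The key step is then to verify invariance: for $b\in B_m$ (in particular $b\in\GL_m(\C)$) and $z\in\Her_m(\C)$, the matrix $b\,z\,b^*$ is again Hermitian, and by Sylvester's law of inertia it has the same signature as $z$, since $b\mapsto b\,z\,b^*$ is a congruence transformation by the invertible matrix $b$. Hence $z\in\Her_m^{(p,q;r)}(\C)$ implies $b\cdot z\in\Her_m^{(p,q;r)}(\C)$, so each $\Her_m^{(p,q;r)}(\C)$ is a union of $B_m$-orbits. Taking the quotient by $B_m$ of the disjoint decomposition above then yields the claimed bijection
\begin{equation*}
B_m\backslash \Her_m(\C) = \coprod_{0\le p+q\le m} B_m\backslash \Her_m^{(p,q;r)}(\C).
\end{equation*}

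There is no real obstacle here; the statement is essentially a bookkeeping lemma and the only substantive input is Sylvester's law of inertia. For Case \caseB one argues identically with $\Sym_m(\R)$ in place of $\Her_m(\C)$ and the congruence action $b\cdot z = b\,z\,\transpose{b}$, again invoking Sylvester's law of inertia for real symmetric matrices; this is covered by the \textit{mutatis mutandis} convention of Notation \ref{Notation-caseA-and-caseB-2}. I would keep the written proof to two or three sentences.
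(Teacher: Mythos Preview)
Your proof is correct and follows exactly the same approach as the paper: the paper's proof simply records the $B_m$-stable decomposition $\Her_m(\C)=\coprod_{0\le p+q\le m}\Her_m^{(p,q;r)}(\C)$, and the stability is precisely the invariance of the signature under congruence (Sylvester's law of inertia) that you spell out.
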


\begin{remark}\label{remark:Her-pqr.Bm.isom.SPIst-pqr}
In Proposition \ref{Prop-SPI=(B-Her)}, 
we have described the representatives $ \SPIst_m $ of $ B_{m} \backslash \Her_{m}(\C) $.  
If we consider also the signature, this is refined as follows.
\begin{equation*}
    B_{m} \backslash \Her_{m}^{(p,q;r)} (\C) \simeq  \{ \tau \in \SPIst_m \mid \sign \tau = (p,q, r) \} .
\end{equation*}
Even if we replace $ \Her_{m}^{(p,q;r)} (\C) $ by $ \Sym_{m}^{(p,q;r)} (\R) $ with the action of the real Borel subgroup, we still get the same parameter set in the right hand side.
\end{remark}

\begin{proof}
This follows from a $ B_m $-stable decomposition 
\begin{equation*}
	\Her_{m}(\C)
	=
	\coprod_{0\leq p+q \leq m}
	\Her_{m}^{(p,q;r)}(\C).
\end{equation*}
\end{proof}

\skipover{
\begin{proof}
Regarding $\sign $ as a function $\sign \colon \Her_{m}(\C)\to \Z_{\geq 0}^{3}$,
we have $\Her_{m}^{(p,q;r)} (\C)  = \sign ^{-1}(p,q,r)$,
which completes the proof.
\end{proof}

\begin{lemma}\label{Lem-Hep-qrt-stable}
$\Her_{m}^{(p,q;r)}(\C)$ is $B_{m}$-invariant, where the action is given by \eqref{eq-def-B-action-Her}.
\end{lemma}

\begin{proof}
The signature of a Hermitian matrix is $\GL_{m}(\C)$-invariant, where the action is given by
\begin{equation*}
    g\cdot Z\coloneqq gZg^{*}.
\end{equation*}
In particular, it is invariant under the action of $B_{m}\subset \GL_{m}(\C)$, which completes the proof.
\end{proof}

\begin{lemma}\label{Lem-B-Her=coprod-B-Her-rst}
We have
\begin{equation*}
	B_{m} \backslash \Her_{m}(\C)
	=
	\coprod_{0\leq p+q \leq m}
	B_{m} \backslash \Her_{m}^{(p,q;r)} (\C).
\end{equation*}	
\end{lemma}

\begin{proof}
This follows from Lemmas \ref{Lem-Her=coprod-Her-rst} and \ref{Lem-Hep-qrt-stable}.
\end{proof}
}

In order to consider $B_{m} \backslash \Her_{m}^{(p,q;r)}(\C)$, 
we realize $\Her_{m}^{(p,q;r)}(\C)$ as a homogeneous space.

\begin{definition}\label{Def-P-(p,q;r)}
Define a subgroup $Q_{(p,q;r)}$ of $\GL_{m}(\C)$ by
\begin{equation}
	Q_{(p,q;r)}
    \coloneqq
	\left\{
	   \begin{pmatrix}
		   u & m \\
	       0 & g
	   \end{pmatrix}
	\Bigm| 
	   \begin{array}{c}
	       u\in \U(p,q)\\
		   m\in \mathrm{M}_{p+q,r}(\C)\\
		   g\in \GL_{r}(\C)
	   \end{array}
	\right\}.
\label{eq-def-P-(p,q;r)}
\end{equation}
\end{definition}

\begin{lemma}\label{Lem-G-P=Her-rst}\label{Lem-B-GL-P=B-Her-rst}
Consider a map
\begin{equation}
\begin{array}{ccc}
    \GL_{m}(\C) & \to & \Her_{m}^{(p,q;r)}(\C) \\
	\rotatebox{90}{$\in$} && \rotatebox{90}{$\in$} \\
	g & \mapsto & gI_{p,q;r}g^{*}.
\end{array}
\label{eq-def-psi}
\end{equation}
This map 
induces a $B_{m}$-equivariant isomorphism:
\begin{equation*}
\begin{array}{ccc}
    \GL_{m}(\C) / Q_{(p,q;r)} & \xrightarrow{\sim} & \Her_{m}^{(p,q;r)} (\C) \\
	\rotatebox{90}{$\in$} && \rotatebox{90}{$\in$} \\
	gQ_{(p,q;r)} & \mapsto & gI_{p,q;r}g^{*},
\end{array}
\end{equation*}
where $I_{p,q;r}$ is defined in \eqref{eq-def-I_p,q,r}.  
As a consequence, we get a bijection 
\begin{equation*}
	B_{m} \backslash \GL_{m}(\C) / Q_{(p,q;r)} \xrightarrow{\sim} B_{m}\backslash \Her_{m}^{(p,q;r)} (\C).
\end{equation*}
\end{lemma}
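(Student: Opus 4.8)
The plan is to verify that the map $g\mapsto gI_{p,q;r}g^{*}$ is surjective onto $\Her_m^{(p,q;r)}(\C)$ and has fibers exactly the right cosets of $Q_{(p,q;r)}$, and then to descend these facts to the quotient by $B_m$. First I would check well-definedness and surjectivity: for $g\in\GL_m(\C)$ the matrix $gI_{p,q;r}g^{*}$ is Hermitian, and since congruence preserves signature (Sylvester's law of inertia), it lies in $\Her_m^{(p,q;r)}(\C)$; conversely, given $Z\in\Her_m^{(p,q;r)}(\C)$, the spectral theorem (or Gram–Schmidt style reduction) produces $g\in\GL_m(\C)$ with $gI_{p,q;r}g^{*}=Z$, proving surjectivity.

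Next I would identify the fibers. If $gI_{p,q;r}g^{*}=g'I_{p,q;r}g'^{*}$, then with $h\coloneqq g^{-1}g'$ we get $hI_{p,q;r}h^{*}=I_{p,q;r}$, so the fiber over a point is a single orbit of the stabilizer $\Stab_{\GL_m(\C)}(I_{p,q;r})=\{h\mid hI_{p,q;r}h^{*}=I_{p,q;r}\}$. Writing $h$ in block form relative to the partition $m=(p+q)+r$ and using that $I_{p,q;r}$ has the invertible block $I_{p,q}$ in the top-left corner and zero elsewhere, a direct computation shows the equation $hI_{p,q;r}h^{*}=I_{p,q;r}$ forces the lower-left block of $h$ to vanish, the top-left block to preserve the form $I_{p,q}$ (hence lie in $\U(p,q)$), and leaves the top-right and bottom-right blocks free (with the bottom-right block in $\GL_r(\C)$ by invertibility of $h$). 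This identifies the stabilizer precisely with $Q_{(p,q;r)}$ from Definition~\ref{Def-P-(p,q;r)}, so $g\mapsto gI_{p,q;r}g^{*}$ descends to a bijection $\GL_m(\C)/Q_{(p,q;r)}\xrightarrow{\sim}\Her_m^{(p,q;r)}(\C)$, which is $B_m$-equivariant because $B_m$ acts by $b\cdot Z=bZb^{*}$ on the target and by left translation on the source, and $(bg)I_{p,q;r}(bg)^{*}=b(gI_{p,q;r}g^{*})b^{*}$. Passing to $B_m$-orbits then yields the asserted bijection $B_m\backslash\GL_m(\C)/Q_{(p,q;r)}\xrightarrow{\sim}B_m\backslash\Her_m^{(p,q;r)}(\C)$.

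The main obstacle I anticipate is the block-matrix computation of the stabilizer: one must carefully expand $hI_{p,q;r}h^{*}=I_{p,q;r}$ in the $3\times3$ block form induced by $m=p+q+r$ (or really the $2\times2$ form with the $(p+q)$ and $r$ blocks, refining the first block further into the $\U(p,q)$ structure) and check that the vanishing of the off-diagonal blocks together with invertibility of $h$ exactly pins down the parabolic shape in \eqref{eq-def-P-(p,q;r)}, with no extra relations among the remaining free blocks. This is routine linear algebra but needs to be done with attention to which blocks are constrained. The descent to $B_m$-quotients is then purely formal, since equivariant bijections of $B_m$-sets induce bijections on orbit spaces. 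Finally, I would note that Case \caseB is obtained \textit{mutatis mutandis} by replacing $\C$ with $\R$, Hermitian with symmetric, $a^{*}$ with $\transpose{a}$, and $\U(p,q)$ with $\OO(p,q)$, according to Notation~\ref{Notation-caseA-and-caseB-2}, with all steps above going through verbatim.
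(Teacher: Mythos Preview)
Your proposal is correct and follows essentially the same approach as the paper: the paper's proof also reduces to showing transitivity of the $\GL_m(\C)$-action via Sylvester's law of inertia and computing the isotropy at $I_{p,q;r}$, which it dismisses as ``an easy calculation'' while you spell out the block-matrix argument in detail. The $B_m$-equivariance and descent to orbit spaces are treated as immediate in both.
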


\begin{proof}
The map \eqref{eq-def-psi} defines an action of $\GL_{m}(\C)$ on $\Her_{m}^{(p,q;r)} (\C)$.
Therefore, it is sufficient to prove that the action is transitive, 
and that the isotropy subgroup of $\GL_{m}(\C)$ at $I_{p,q;r}$ is equal to $Q_{(p,q;r)}$.
The former claim follows from Sylvester's law of inertia (\cite[Chap.~IX, \S 7, n$^{\mathrm{o}}$2, Thm.~1]{Bourbaki.Algebra.9}),
which states that any element of $\Her_{m}^{(p,q,;r)}(\C)$ is transformed to $I_{p,q;r}$ by the action of $\GL_{m}(\C)$.
The latter claim follows from an easy calculation.
\end{proof}

Now we analyze the double coset space $B_{m} \backslash \GL_{m}(\C) / Q_{(p,q;r)}$.  
We will describe the space $\GL_{m}(\C) / Q_{(p,q;r)}$ as a total space of a certain principal fiber bundle,
and use this structure to reduce the classification of the orbit decomposition of $\GL_{m}(\C) / Q_{(p,q;r)}$
to that of the fibers of this principal fiber bundle.

\begin{definition}\label{Def-Q-(p+q,r)}
Define $P_{(p+q,r)}$ by the maximal parabolic subgroup of  $\GL_{m}(\C)$ corresponding to the partition $m=(p+q)+r$:
\begin{equation*}
	P_{(p+q,r)} \coloneqq
	\left\{
	   \begin{pmatrix}
		  g_{1} & u \\
		  0 & g_{2}
	   \end{pmatrix}
	\Bigm| 
	   \begin{array}{c}
		  u\in \Mat_{p+q,r}(\C) \\
		  g_{1}\in \GL_{p+q}(\C), g_{2}\in \GL_{r}(\C)
	   \end{array}
	\right\}.
\end{equation*}
\end{definition}

Since $ Q_{(p,q;r)} \subset P_{(p+q,r)} $, 
we get a natural projection 
\begin{equation}\label{eq-proj-GL/P-to-GL/Q}
	\begin{array}{ccc}
		\pi\colon \GL_{m}(\C) / Q_{(p,q;r)} & \to &  \GL_{m}(\C) / P_{(p+q,r)} \\
		\rotatebox{90}{$\in$} & & \rotatebox{90}{$\in$} \\
		gQ_{(p,q;r)} & \mapsto & gP_{(p+q,r)}, 
	\end{array}
\end{equation}
which is a fiber bundle with the fiber 
$  P_{(p+q,r)} / Q_{(p,q;r)} \simeq \GL_{p+q}(\C) / \U(p,q) $.  
The $B_{m}$-orbit decomposition of the base space $ \GL_{m}(\C) / P_{(p+q,r)} $ of this fiber bundle \eqref{eq-proj-GL/P-to-GL/Q} is well known as the generalized Bruhat decomposition.

\begin{theorem}[{\cite[Chap.~IV, \S14, 14.16]{Borel.1991}}]\label{Thm-generalized-Bruhat-decomp}
We have
\begin{equation*}
	B_{m} \backslash \GL_{m}(\C) / P_{(p+q,r)}
	\simeq 
	\coprod_{\sigma \in \Sgroup_{m} /(\Sgroup_{p+q}\times \Sgroup_{r})}
	B_{m} \sigma P_{(p+q,r)}.
\end{equation*}
\end{theorem}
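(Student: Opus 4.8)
The statement is the classical generalized Bruhat decomposition: for a connected reductive group $\GL_m(\C)$, a Borel subgroup $B_m$, and a parabolic subgroup $P_{(p+q,r)}$ containing $B_m$, the double cosets $B_m \backslash \GL_m(\C) / P_{(p+q,r)}$ are in bijection with $W_{P}\backslash W$, where $W = \Sgroup_m$ is the Weyl group and $W_P = \Sgroup_{p+q}\times\Sgroup_r$ is the parabolic subgroup of $W$ corresponding to $P_{(p+q,r)}$. Since the excerpt cites \cite[Chap.~IV, \S14, 14.16]{Borel.1991}, I would simply invoke that reference; but if a self-contained argument is wanted, the plan is as follows. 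First I would recall the ordinary Bruhat decomposition $\GL_m(\C) = \coprod_{w\in\Sgroup_m} B_m w B_m$, which for $\GL_m$ can be proved directly by Gaussian elimination (row and column operations): every $g$ can be reduced by left multiplication by upper-triangular matrices and right multiplication by upper-triangular matrices to a unique permutation matrix. This is in the same spirit as Section \ref{Section-Gaussian-elimination} of this paper.

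Next I would use the fact that $P_{(p+q,r)} = \coprod_{w'\in \Sgroup_{p+q}\times\Sgroup_r} B_m w' B_m$, i.e. the parabolic $P_{(p+q,r)}$ is the union of those Bruhat cells indexed by the parabolic subgroup $W_P = \Sgroup_{p+q}\times\Sgroup_r \subset \Sgroup_m$; this follows because $P_{(p+q,r)}$ is generated by $B_m$ together with the root subgroups for the simple roots lying in the Levi block, hence is a union of $B_m$-double cosets indexed exactly by $W_P$. Then I would compute $B_m g P_{(p+q,r)}$ for $g = w$ a permutation matrix: $B_m w P_{(p+q,r)} = B_m w B_m P_{(p+q,r)} = \bigcup_{w'\in W_P} B_m w B_m w' B_m$. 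Using the standard multiplication rule for Bruhat cells $B_m w B_m w' B_m \subseteq B_m w w' B_m$ when $\ell(ww') = \ell(w)+\ell(w')$ (and more generally $\subseteq \bigcup B_m w'' B_m$ over a controlled set), one shows $B_m w P_{(p+q,r)}$ depends only on the coset $W_P w$ — here I would use the standard decomposition $W = \coprod_{\sigma} W_P\,\sigma$ into minimal-length coset representatives, and argue that distinct minimal representatives give disjoint double cosets by tracking $\ell$.

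The main technical point — and the step I'd expect to require the most care — is the disjointness: showing that if $\sigma_1,\sigma_2$ are distinct minimal-length representatives of cosets in $W_P\backslash W$ then $B_m \sigma_1 P_{(p+q,r)} \cap B_m \sigma_2 P_{(p+q,r)} = \emptyset$. The cleanest route is to observe that $B_m\sigma P_{(p+q,r)}$ is a union of ordinary Bruhat cells $B_m u B_m$ with $u$ ranging over $\{\sigma w' : w'\in W_P\}$ when $\sigma$ is the minimal representative (this uses that $\ell(\sigma w') = \ell(\sigma) + \ell(w')$ for $\sigma$ minimal in $W_P\sigma$... with the appropriate side conventions), so the sets of indices are disjoint, and then invoke the disjointness of ordinary Bruhat cells. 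Given that this is entirely standard and the paper already cites Borel, I would in the actual writeup keep this brief: state that the result is \cite[Chap.~IV, \S14, 14.16]{Borel.1991}, perhaps adding one sentence recalling the bijection sends $\sigma \in \Sgroup_m/(\Sgroup_{p+q}\times\Sgroup_r)$ to $B_m \sigma P_{(p+q,r)}$ and that it follows from the Bruhat decomposition together with the $W_P$-cell description of the parabolic. No genuinely new mathematics is needed here; the content of the section lies in how this decomposition is subsequently combined with the fiber-bundle structure of $\GL_m(\C)/Q_{(p,q;r)}$ and Matsuki duality.
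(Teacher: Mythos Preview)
Your proposal is correct. The paper treats this theorem purely as a citation: the statement is labeled with \cite[Chap.~IV, \S14, 14.16]{Borel.1991} and no proof is given (a one-line proof saying ``This is the generalized Bruhat decomposition'' is present in the source but commented out via \texttt{\textbackslash skipover}). Your plan to simply invoke the reference matches the paper exactly, and your additional self-contained sketch via the ordinary Bruhat decomposition and minimal coset representatives is standard, correct, and more than the paper itself provides.
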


\skipover{
\begin{proof}
This is the generalized Bruhat decomposition \cite[Chap.~IV, \S14, 14.16]{Borel.1991}.
\end{proof}
}

Thus, the structure of $B_{m} \backslash \GL_{m}(\C) / Q_{(p,q;r)}$ is reduced to that of the orbit structure of the fibers of \eqref{eq-proj-GL/P-to-GL/Q} by 
a general lemma given below.
In the following lemma, we use a general notation 
for $ G, P, Q $ and $ B $ apart from the usage of the rest of this paper.

\begin{lemma}\label{Lem-orbit-decomp-fiber-bundle}
Let $G$ be a group, $Q\subset P$ subgroups of $G$, and
\begin{equation*}
	\pi \colon G/Q \to G/P
\end{equation*}
a natural projection.
Moreover, let $B$ be a subgroup of $G$, and $S\subset G$ a set of representatives of $B\backslash G/P$:
\begin{equation*}
	B\backslash G/P
	\simeq 
	\coprod_{s\in S} B s P.
\end{equation*}
Then, there exists a natural bijection between the double coset spaces:
%
\begin{equation*}
\begin{array}{ccc}
	\coprod\limits_{s\in S} (P\cap B^{s^{-1}}) \backslash P/Q & \xrightarrow{\;\sim\;} &  B\backslash G/Q \\
    \rotatebox{90}{$\in$} && \rotatebox{90}{$\in$} \\
    (P\cap B^{s^{-1}} ) \, x \, Q & \mapsto & B \, s x \, Q.
\end{array}
\end{equation*}
\end{lemma}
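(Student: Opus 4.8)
The statement is a purely group-theoretic fact about double coset spaces and a fibration $\pi\colon G/Q\to G/P$, so I would prove it directly by unwinding definitions, with no geometry needed. The plan is to construct the map, check it is well-defined, and then exhibit an explicit inverse.

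First I would set up the map. Given $s\in S$ and $x\in P$, send the double coset $(P\cap B^{s^{-1}})\,x\,Q$ to $B\,sx\,Q$. To see this is well-defined I need: if $x' = b' x q$ with $b'\in P\cap B^{s^{-1}} = P\cap s^{-1}Bs$ and $q\in Q$, then $B\,sx'\,Q = B\,s b' x q\,Q$; and since $sb's^{-1}\in B$, we get $sb'x q = (sb's^{-1})(sx)q \in B\,sx\,Q$. Good. Second, I would check surjectivity. Any $g\in G$ lies in some $BsP$ by the hypothesis on $S$, so $g = b s p$ with $b\in B$, $p\in P$; then $B\,g\,Q = B\,bsp\,Q = B\,sp\,Q$, which is the image of $(P\cap B^{s^{-1}})\,p\,Q$. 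Third, injectivity: suppose $B\,sx\,Q = B\,s'x'\,Q$ with $s,s'\in S$, $x,x'\in P$. Then $sx = b s' x' q$ for some $b\in B$, $q\in Q\subset P$. Reducing modulo $P$ gives $sP = bs'P$, i.e. $BsP = Bs'P$, so $s = s'$ because $S$ is a set of representatives. Hence $s^{-1}b s \cdot x' q = x$, and since $x, x'q \in P$ we get $s^{-1}bs \in P$; combined with $b\in B$ this yields $s^{-1}bs \in P\cap s^{-1}Bs = P\cap B^{s^{-1}}$. Therefore $x = (s^{-1}bs)\,x'\,q$ exhibits $(P\cap B^{s^{-1}})\,x\,Q = (P\cap B^{s^{-1}})\,x'\,Q$ inside the $s$-component, which is injectivity.

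The only subtlety is the book-keeping of which conjugation convention $B^{s^{-1}}$ refers to (the paper uses $B^{\sigma^{-1}} = \sigma^{-1}B\sigma$ in Lemma~\ref{Lem-B-p-cap-GL-times-GL}), so I would be careful to match $B^{s^{-1}} = s^{-1}Bs$ throughout the argument above; with that reading every step is a one-line manipulation. I do not expect any genuine obstacle here — the statement is elementary, essentially the standard fact that orbits upstairs of a fibration are indexed by (orbit downstairs, orbit in the fiber with respect to the reduced stabilizer). The main thing is simply to present the well-definedness, surjectivity, and injectivity cleanly, and to note that the two sides are manifestly compatible with the decomposition over $s\in S$ so that it suffices to work one component at a time.

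Finally, I would remark that the map and its inverse are mutually inverse by construction: starting from $(P\cap B^{s^{-1}})\,x\,Q$, forming $B\,sx\,Q$, and then pulling back via the surjectivity recipe (writing an element of $BsxQ$ as $bs p$ with $p = x$) returns the original coset by the injectivity computation. This closes the proof.
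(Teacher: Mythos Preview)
Your proof is correct and is essentially the same argument as the paper's, just packaged differently: the paper factors the bijection through an intermediate step $\coprod_{s} B_{sP}\backslash sP/Q$ by first proving an abstract lemma about $B$-equivariant maps $f\colon X\to Y$ (Lemma~\ref{Lem-B-eq-X-Y}) and then identifying $B_{sP}\backslash sP/Q \simeq (P\cap B^{s^{-1}})\backslash P/Q$ via conjugation by $s$. Your direct well-definedness/surjectivity/injectivity verification unwinds exactly these two steps in one pass---in particular, your injectivity computation (reduce modulo $P$ to force $s=s'$, then observe $s^{-1}bs\in P\cap B^{s^{-1}}$) is precisely the content of their abstract lemma specialized to $\pi\colon G/Q\to G/P$.
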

\begin{proof}
Although this lemma seems to be well known, we give a proof in Appendix \ref{Section-proof-of-Lem-orbit-decomp-fiber-bundle} for the sake of completeness.
\end{proof}

In order to apply Lemma \ref{Lem-orbit-decomp-fiber-bundle} to the map \eqref{eq-proj-GL/P-to-GL/Q},
we need to specify the representatives of  $\Sgroup_{m} / (\Sgroup_{p+q} \times  \Sgroup_{r})$ in Theorem \ref{Thm-generalized-Bruhat-decomp}.

\begin{definition}\label{Def-sigma-J-bar}
Let $J\coloneqq\{j_{1}<j_{2}<\dots<j_{p+q}\}\subset \{1,2,\dots,m\}$ and $\{1,2,\dots,m\}\backslash J \eqqcolon \{j'_{1}<j'_{2}<\dots<j'_{r}\}$.
Then, we define $\overline{\sigma}_{J} \in \Sgroup_{m} $ by
\begin{equation*}
    \overline{\sigma}_{J}
    \coloneqq
    \begin{pmatrix}
        1 & \dots & p+q & p+q+1 & \dots & m\\
        j_{1} & \dots & j_{p+q} & 
        j'_{1} & \dots & j'_{r}
    \end{pmatrix}, 
\end{equation*}
which is called a Grassmannian permutation.
Note that we have $\sigma_{I}=\overline{\sigma}_{J} \in \Sgroup_{n}$ if $n=m$, where $\sigma_{I}$ is given in Definition \ref{Def-sigma-I}.
\end{definition}

Let us recall the notation 
\begin{equation*}
    \dbinom{[m]}{k} 
    = \{ J \subset [m] \mid \# J = k \}, 
    \quad
    \text{where $ [m] = \{ 1, \dots, m \} $.}
\end{equation*}

\begin{lemma}\label{Lem-sigma-J-representative}
Let $\overline{\sigma}_{J}$ be 
the Grassmannian permutation defined above.
Then,
\begin{equation*}
    \Bigl\{ \overline{\sigma}_{J} \in \Sgroup_{m}
    \Bigm| 
    J \in \dbinom{[m]}{p + q}
    \Bigr\}
\end{equation*}
is a set of representatives of $ \Sgroup_{m}/ (\Sgroup_{p+q}\times  \Sgroup_{r})$.
\end{lemma}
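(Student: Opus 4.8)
\textbf{Proof plan for Lemma \ref{Lem-sigma-J-representative}.}

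The claim is the standard description of minimal-length coset representatives for $\Sgroup_m/(\Sgroup_{p+q}\times\Sgroup_r)$, so the plan is to verify directly that the map $J\mapsto\overline{\sigma}_J$ is a bijection from $\binom{[m]}{p+q}$ onto a complete and irredundant set of representatives. First I would recall that the parabolic subgroup $\Sgroup_{p+q}\times\Sgroup_r\subset\Sgroup_m$ consists exactly of those $w\in\Sgroup_m$ that stabilise the two blocks $\{1,\dots,p+q\}$ and $\{p+q+1,\dots,m\}$ setwise; consequently two elements $\sigma,\sigma'\in\Sgroup_m$ lie in the same left coset $\sigma(\Sgroup_{p+q}\times\Sgroup_r)=\sigma'(\Sgroup_{p+q}\times\Sgroup_r)$ if and only if they send the first block $\{1,\dots,p+q\}$ to the same subset of $[m]$, i.e.\ $\sigma(\{1,\dots,p+q\})=\sigma'(\{1,\dots,p+q\})$. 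This reduces the lemma to the assertion that the assignment $\sigma\mapsto\sigma(\{1,\dots,p+q\})$ descends to a bijection between $\Sgroup_m/(\Sgroup_{p+q}\times\Sgroup_r)$ and $\binom{[m]}{p+q}$, together with the observation that $\overline{\sigma}_J$ is one chosen preimage of $J$.

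Concretely, the key steps are: (1) From the definition of $\overline{\sigma}_J$ in Definition \ref{Def-sigma-J-bar}, observe $\overline{\sigma}_J(\{1,\dots,p+q\})=\{j_1,\dots,j_{p+q}\}=J$, so distinct $J$ give cosets with distinct images under $\sigma\mapsto\sigma(\{1,\dots,p+q\})$; by the equivalence in the previous paragraph the $\overline{\sigma}_J$ therefore lie in pairwise distinct cosets, giving irredundancy. (2) Given an arbitrary $\sigma\in\Sgroup_m$, set $J:=\sigma(\{1,\dots,p+q\})$; then $\sigma$ and $\overline{\sigma}_J$ agree on the two blocks up to the block-preserving permutations (because both map $\{1,\dots,p+q\}$ onto $J$ and $\{p+q+1,\dots,m\}$ onto $[m]\setminus J$), so $\overline{\sigma}_J^{-1}\sigma\in\Sgroup_{p+q}\times\Sgroup_r$, i.e.\ $\sigma\in\overline{\sigma}_J(\Sgroup_{p+q}\times\Sgroup_r)$; this gives completeness. (3) A cardinality check: $\#\binom{[m]}{p+q}=\binom{m}{p+q}=m!/((p+q)!\,r!)=\#\bigl(\Sgroup_m/(\Sgroup_{p+q}\times\Sgroup_r)\bigr)$ confirms (and in fact, given irredundancy, already implies) that we have a full set of representatives.

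There is essentially no obstacle here; the statement is elementary combinatorics of cosets in the symmetric group, and the only point requiring any care is being precise about the correspondence between left cosets of the Young subgroup and subsets, namely that $w(\Sgroup_{p+q}\times\Sgroup_r)\mapsto w(\{1,\dots,p+q\})$ is well defined and injective. I would state this as the single small verification and then conclude. If the paper prefers, one can alternatively invoke the general fact \cite[Chap.~IV, \S14]{Borel.1991} (already cited for Theorem \ref{Thm-generalized-Bruhat-decomp}) that Grassmannian permutations form the minimal-length representatives of $\Sgroup_m/(\Sgroup_{p+q}\times\Sgroup_r)$, but the self-contained argument above is short enough to include directly.
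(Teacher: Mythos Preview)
Your proposal is correct and follows essentially the same approach as the paper: the paper's proof simply states that the map $\sigma \mapsto \{\sigma(1),\dots,\sigma(p+q)\}$ induces a bijection $\Sgroup_m/(\Sgroup_{p+q}\times\Sgroup_r)\xrightarrow{\sim}\binom{[m]}{p+q}$ and that this implies the lemma. Your argument is a more detailed unpacking of exactly this bijection, together with the explicit verification that $\overline{\sigma}_J$ maps to $J$, so there is no meaningful difference in strategy.
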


\begin{remark}\label{Remark-sigma-J-correspond-coprod}
   The index $J$ in the isomorphism in Theorem \ref{theorem-DFV-contain-KGB} corresponds to $J \subset [m] $ in Lemma \ref{Lem-sigma-J-representative}. 
   Thus, we can interpret $J$ as the representatives of $ \Sgroup_{m}/ (\Sgroup_{p+q}\times  \Sgroup_{r})$, 
   which is the Weyl group corresponding to the Grassmannian $\GL_{m}(\C) / P_{(p+q,r)}$.
\end{remark}

\begin{proof}[Proof of Lemma \ref{Lem-sigma-J-representative}]
We have an isomorphism
\begin{equation*}
	\begin{array}{ccc}
		\Sgroup_{m} / (\Sgroup_{p+q} \times \Sgroup_{r}) & \xrightarrow{\sim}  & \dbinom{[m]}{p + q}  
        \\
		\rotatebox{90}{$\in$} && \rotatebox{90}{$\in$} \\
		\sigma & \mapsto & \{\sigma(1), \sigma(2), \dots, \sigma(p+q)\},
	\end{array}
\end{equation*}
which implies the lemma.
\end{proof}

Thus, we can apply Lemma \ref{Lem-orbit-decomp-fiber-bundle}
to the fiber bundle \eqref{eq-proj-GL/P-to-GL/Q} and get the following lemma.

\begin{lemma}\label{Lem-B-GL-P=coprod-Q-cap-B-Q-P}
We have an isomorphism:
\begin{equation*}
\begin{array}{ccc}
\displaystyle
	\coprod_{
    J \in \binom{[m]}{p + q} 
	}
	( P_{(p+q,r)} \cap B_{m}^{\overline{\sigma}_{J}^{-1}} ) \backslash P_{(p+q,r)} / Q_{(p,q;r)}
    &
    \xrightarrow{\sim}
    & 
	B_{m} \backslash \GL_{m}(\C) / Q_{(p,q;r)}
    \\
    \rotatebox{90}{$\in$} && \rotatebox{90}{$\in$}
    \\
	( P_{(p+q,r)} \cap  B_{m}^{\overline{\sigma}_{J}^{-1}} ) \, x \, Q_{(p,q;r)}
    & \mapsto &
    B_{m} \overline{\sigma}_{J} \, x \, Q_{(p,q;r)}.
\end{array}
\end{equation*}
\end{lemma}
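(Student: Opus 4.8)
The plan is to apply the general fiber-bundle lemma (Lemma \ref{Lem-orbit-decomp-fiber-bundle}) directly to the projection \eqref{eq-proj-GL/P-to-GL/Q}, which is the map $\pi\colon \GL_m(\C)/Q_{(p,q;r)} \to \GL_m(\C)/P_{(p+q,r)}$. The hypotheses of that lemma are exactly what we have assembled: $Q_{(p,q;r)} \subset P_{(p+q,r)}$ are subgroups of $G = \GL_m(\C)$, and $B = B_m$ is a subgroup of $G$. What remains is to supply an explicit set $S$ of representatives for the double cosets $B_m \backslash \GL_m(\C) / P_{(p+q,r)}$.

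First I would invoke Theorem \ref{Thm-generalized-Bruhat-decomp} (the generalized Bruhat decomposition) to get $B_m \backslash \GL_m(\C) / P_{(p+q,r)} \simeq \coprod_{\sigma \in \Sgroup_m/(\Sgroup_{p+q}\times\Sgroup_r)} B_m \sigma P_{(p+q,r)}$. Then I would use Lemma \ref{Lem-sigma-J-representative}, which says that the Grassmannian permutations $\{\overline{\sigma}_J \mid J \in \binom{[m]}{p+q}\}$ form a complete set of representatives for $\Sgroup_m/(\Sgroup_{p+q}\times\Sgroup_r)$. Taking $S = \{\overline{\sigma}_J \mid J \in \binom{[m]}{p+q}\}$ in Lemma \ref{Lem-orbit-decomp-fiber-bundle}, the lemma yields the bijection
\begin{equation*}
\coprod_{J \in \binom{[m]}{p+q}} \bigl(P_{(p+q,r)} \cap B_m^{\overline{\sigma}_J^{-1}}\bigr) \backslash P_{(p+q,r)} / Q_{(p,q;r)} \;\xrightarrow{\;\sim\;}\; B_m \backslash \GL_m(\C) / Q_{(p,q;r)},
\end{equation*}
with the coset $(P_{(p+q,r)} \cap B_m^{\overline{\sigma}_J^{-1}})\, x\, Q_{(p,q;r)}$ mapping to $B_m \overline{\sigma}_J\, x\, Q_{(p,q;r)}$, which is precisely the asserted formula. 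One should just double-check that the conjugation convention $B_m^{\overline{\sigma}_J^{-1}} = \overline{\sigma}_J^{-1} B_m \overline{\sigma}_J$ matches the convention used in Lemma \ref{Lem-orbit-decomp-fiber-bundle} (where $B^{s^{-1}}$ appears for $s \in S$), so that the indexing is consistent.

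There is essentially no obstacle here: the statement is a direct specialization of Lemma \ref{Lem-orbit-decomp-fiber-bundle} once the representative set from Lemma \ref{Lem-sigma-J-representative} is plugged in, together with the observation $Q_{(p,q;r)} \subset P_{(p+q,r)}$ noted just before \eqref{eq-proj-GL/P-to-GL/Q}. The only point requiring a moment's care — and the closest thing to a ``main step'' — is verifying that $\pi$ is genuinely the projection $G/Q \to G/P$ in the sense of the general lemma (it is, by construction) and that the bijection's explicit description transports correctly under this identification. I would write the proof in two or three sentences: cite $Q_{(p,q;r)} \subset P_{(p+q,r)}$, cite Theorems \ref{Thm-generalized-Bruhat-decomp} and Lemma \ref{Lem-sigma-J-representative} for the representatives, and then apply Lemma \ref{Lem-orbit-decomp-fiber-bundle}.

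\begin{proof}
Recall that $Q_{(p,q;r)} \subset P_{(p+q,r)}$, so we have the natural projection \eqref{eq-proj-GL/P-to-GL/Q}. By Theorem \ref{Thm-generalized-Bruhat-decomp} together with Lemma \ref{Lem-sigma-J-representative}, the set $\{\overline{\sigma}_J \mid J \in \binom{[m]}{p+q}\}$ is a complete set of representatives for $B_m \backslash \GL_m(\C) / P_{(p+q,r)}$. Applying Lemma \ref{Lem-orbit-decomp-fiber-bundle} with $G = \GL_m(\C)$, $P = P_{(p+q,r)}$, $Q = Q_{(p,q;r)}$, $B = B_m$ and $S = \{\overline{\sigma}_J \mid J \in \binom{[m]}{p+q}\}$ gives the desired bijection, with the coset $(P_{(p+q,r)} \cap B_m^{\overline{\sigma}_J^{-1}})\, x\, Q_{(p,q;r)}$ corresponding to $B_m \overline{\sigma}_J\, x\, Q_{(p,q;r)}$.
\end{proof}
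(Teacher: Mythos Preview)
Your proposal is correct and matches the paper's approach exactly: the paper's (omitted) proof simply states that it suffices to apply Lemma~\ref{Lem-orbit-decomp-fiber-bundle} to the projection~\eqref{eq-proj-GL/P-to-GL/Q}, using Theorem~\ref{Thm-generalized-Bruhat-decomp} and Lemma~\ref{Lem-sigma-J-representative} for the representatives.
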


\skipover{
\begin{proof}
It is sufficient to apply Lemma \ref{Lem-orbit-decomp-fiber-bundle} to the map \eqref{eq-proj-GL/P-to-GL/Q}
by using Fact \ref{Thm-generalized-Bruhat-decomp} and Lemma \ref{Lem-sigma-J-representative}.
\end{proof}

Let us consider $( P_{(p+q,r)} \cap  B_{m}^{\overline{\sigma}_{J}^{-1}}) \backslash P_{(p+q,r)} / Q_{(p,q;r)} $.
}
Our next goal is 
to identify the double coset space $(P_{(p+q,r)} \cap B_{m}^{\overline{\sigma}_{J}^{-1}}) \backslash P_{(p+q,r)} / Q_{(p,q;r)} $
with $ B_{p+q} \backslash \GL_{p+q}(\C) / \U(p,q) $.
Before doing so, we need one more lemma.

\begin{lemma}\label{Lem-representative-Q-cap-B}
Let $ \overline{\sigma}_{J} \in \Sgroup_{m}$ be a Grassmannian permutation defined in Definition \ref{Def-sigma-J-bar}.
Then, there exists a subspace $N_{J}\subset \Mat_{p+q,r}(\C)$ satisfying
\begin{equation*}
P_{(p+q,r)}\cap B_{m}^{\overline{\sigma}_{J}^{-1}}
=
\left\{
    \begin{pmatrix}
        b_{p+q} & n \\
        0 & b_{r}
    \end{pmatrix}
\Bigm| 
    \begin{array}{c}
        b_{p+q} \in B_{p+q} \\
        b_{r} \in B_{r}     \\
        n\in N_{J}  	
    \end{array}
\right\}.
\end{equation*}
\end{lemma}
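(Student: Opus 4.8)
\textbf{Proof plan for Lemma \ref{Lem-representative-Q-cap-B}.}
The plan is to compute the intersection $P_{(p+q,r)}\cap B_{m}^{\overline{\sigma}_{J}^{-1}}$ on the level of Lie algebras first, in the same spirit as the proof of Lemma \ref{Lem-B-p-cap-GL-times-GL}, and then argue that both sides are connected so that they agree as groups. Fix the standard diagonal torus $T\subset\GL_m(\C)$ with Lie algebra $\lie{t}$. Both $\lie{p}_{(p+q,r)}$ and $\fb_m^{\overline{\sigma}_J^{-1}}=\overline{\sigma}_J^{-1}\fb_m\overline{\sigma}_J$ contain $\lie{t}$ and are $\lie{t}$-stable, hence are direct sums of $\lie{t}$ and a collection of root spaces $\lie{g}_\alpha$. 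Concretely, writing $\alpha=\alpha_{ij}$ for the root attached to the $(i,j)$ matrix entry ($i\neq j$), we have $\lie{g}_{\alpha_{ij}}\subset\lie{p}_{(p+q,r)}$ iff it is not the case that $i\in[p+q+1,m]$ and $j\in[1,p+q]$ (the lower-left block is excluded), and $\lie{g}_{\alpha_{ij}}\subset\fb_m^{\overline{\sigma}_J^{-1}}$ iff $\overline{\sigma}_J(i)>\overline{\sigma}_J(j)$. So the intersection consists of $\lie{t}$ together with exactly those $\lie{g}_{\alpha_{ij}}$ for which both conditions hold.

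The next step is to read off the block structure of this set of roots relative to the partition $m=(p+q)+r$. Within the upper-left $(p+q)\times(p+q)$ block one gets precisely the roots $\alpha_{ij}$ with $i,j\le p+q$ and $\overline{\sigma}_J(i)>\overline{\sigma}_J(j)$; since $\overline{\sigma}_J$ restricted to $\{1,\dots,p+q\}$ is the increasing enumeration $j_1<\dots<j_{p+q}$, this condition is simply $i>j$, i.e. we recover exactly $\fb_{p+q}$ (lower triangular, matching the convention used for $B_{p+q}$ elsewhere — I will state whichever triangularity the paper uses and be consistent). Similarly the lower-right $r\times r$ block gives exactly $\fb_r$, because $\overline{\sigma}_J$ restricted to $\{p+q+1,\dots,m\}$ is again increasing. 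Finally the upper-right $(p+q)\times r$ block gives some $T$-stable subspace $\lie{n}_J\subset\Mat_{p+q,r}(\C)$, namely the span of those coordinate lines $\C E_{ij}$ ($i\le p+q<j$) with $\overline{\sigma}_J(i)>\overline{\sigma}_J(j)$; the lower-left block contributes nothing. Thus
\[
\lie{p}_{(p+q,r)}\cap\fb_m^{\overline{\sigma}_J^{-1}}
=
\left\{
\begin{pmatrix}
x & n\\
0 & y
\end{pmatrix}
\Bigm|
x\in\fb_{p+q},\; y\in\fb_r,\; n\in\lie{n}_J
\right\},
\]
and I set $N_J$ to be the subspace of $\Mat_{p+q,r}(\C)$ exponentiating $\lie{n}_J$ (equivalently, $N_J=\lie{n}_J$ as a subspace, since the relevant unipotent exponential is linear on each such coordinate line).

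The last step is to pass from Lie algebras back to groups. The subgroup $P_{(p+q,r)}\cap B_m^{\overline{\sigma}_J^{-1}}$ is an intersection of a parabolic subgroup with a Borel subgroup of $\GL_m(\C)$, hence is a connected solvable group (it is the Borel subgroup of the Levi-type subgroup $P_{(p+q,r)}$ cut out by $B_m^{\overline{\sigma}_J^{-1}}$; more directly, any intersection of parabolics of a connected reductive group containing a common maximal torus is connected). Therefore it is determined by its Lie algebra, and the explicit description above gives exactly the block-upper-triangular form in the statement with the prescribed blocks $b_{p+q}\in B_{p+q}$, $b_r\in B_r$, $n\in N_J$. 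I expect the only mildly delicate point to be bookkeeping the two competing conventions — the ``increasing vs.\ decreasing'' direction of $\overline{\sigma}_J$ and the ``upper vs.\ lower triangular'' convention for the $B$'s — so that the recovered blocks are genuinely $B_{p+q}$ and $B_r$ as defined in the paper rather than their opposites; this is routine but must be done carefully. Connectedness and the root-space computation themselves are completely standard.
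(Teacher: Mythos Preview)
Your proposal is correct and follows essentially the same approach as the paper, which simply says ``The proof is similar to that of Lemma \ref{Lem-B-p-cap-GL-times-GL}. So we omit it.'' Your root-space computation using the fact that $\overline{\sigma}_J$ is increasing on each of $\{1,\dots,p+q\}$ and $\{p+q+1,\dots,m\}$ is exactly the intended argument; just note that with the paper's convention $B_m$ is \emph{upper} triangular, so the correct condition is $\overline{\sigma}_J(i)<\overline{\sigma}_J(j)$ rather than $>$, yielding $B_{p+q}$ and $B_r$ as upper-triangular Borels (you flagged this bookkeeping yourself).
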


The proof is similar to that of Lemma \ref{Lem-B-p-cap-GL-times-GL}.  So we omit it.
Now let us prove

\begin{lemma}\label{Lem-orbit-decomp-B-GL-U=BB-Q-P}
Let $J \subset [m] $ with $\# J =p+q$.
Then, we have an isomorphism
\begin{equation}
\begin{array}{ccc}
	B_{p+q} \backslash \GL_{p+q}(\C) / \U(p,q) & \xrightarrow{\;\;\sim\;\;}&
	( P_{(p+q,r)} \cap B_{m}^{\overline{\sigma}_{J}^{-1}} )  \backslash P_{(p+q,r)} / Q_{(p,q;r)}
	\\
	\rotatebox{90}{$\in$}  && \rotatebox{90}{$\in$}
	\\
	B_{p+q}\,g \,\U(p,q)  & \mapsto &
	( P_{(p+q,r)} \cap B_{m}^{\overline{\sigma}_{J}^{-1}} )
	\begin{pmatrix}
		g & 0
		\\
		0 & \unitmatrix_{r}
	\end{pmatrix}  Q_{(p,q;r)}.
\end{array}
\label{eq-def-psi''_J-bar}
\end{equation}	
\end{lemma}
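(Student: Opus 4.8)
The plan is to exhibit the map in \eqref{eq-def-psi''_J-bar} as a genuine isomorphism of double coset spaces by checking, in order, that it is well-defined, surjective, and injective, using the block descriptions of $P_{(p+q,r)} \cap B_{m}^{\overline{\sigma}_{J}^{-1}}$ and $Q_{(p,q;r)}$ from Lemmas \ref{Lem-representative-Q-cap-B} and \ref{Def-P-(p,q;r)}. Concretely, writing an element $g \in \GL_{p+q}(\C)$ and its block embedding $\widetilde{g} = \diag(g, \unitmatrix_r) \in P_{(p+q,r)}$, the first task is to show that replacing $g$ by $b\, g\, u$ with $b \in B_{p+q}$ and $u \in \U(p,q)$ does not change the double coset on the right. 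For the left action this is immediate since $\diag(b, \unitmatrix_r)$ sits inside $P_{(p+q,r)} \cap B_{m}^{\overline{\sigma}_{J}^{-1}}$ (take $n = 0$ in Lemma \ref{Lem-representative-Q-cap-B}); for the right action $\diag(u, \unitmatrix_r) \in Q_{(p,q;r)}$ by the very definition \eqref{eq-def-P-(p,q;r)} (take $m = 0$, $g = \unitmatrix_r$ there). So well-definedness is essentially bookkeeping.

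For surjectivity, I would take an arbitrary $\begin{pmatrix} g_1 & u \\ 0 & g_2 \end{pmatrix} \in P_{(p+q,r)}$ and reduce it. Using the right $Q_{(p,q;r)}$-action I can clear $g_2$ to $\unitmatrix_r$ (multiply by $\diag(\unitmatrix_{p+q}, g_2^{-1})$) and then clear the off-diagonal block $u$ entirely (multiply by $\begin{pmatrix} \unitmatrix_{p+q} & -g_1^{-1}u \\ 0 & \unitmatrix_r\end{pmatrix}$, which lies in $Q_{(p,q;r)}$ since its $\U(p,q)$-corner is the identity); what remains is $\diag(g_1, \unitmatrix_r)$, which is in the image. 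Injectivity is the step I expect to require the most care: suppose $\diag(g, \unitmatrix_r)$ and $\diag(g', \unitmatrix_r)$ lie in the same $(P_{(p+q,r)} \cap B_{m}^{\overline{\sigma}_{J}^{-1}}, Q_{(p,q;r)})$-double coset, so $\diag(g', \unitmatrix_r) = \begin{pmatrix} b_{p+q} & n \\ 0 & b_r \end{pmatrix} \diag(g, \unitmatrix_r) \begin{pmatrix} u & m' \\ 0 & h \end{pmatrix}$ for some $b_{p+q} \in B_{p+q}$, $b_r \in B_r$, $n \in N_J$, $u \in \U(p,q)$, $m' \in \Mat_{p+q,r}(\C)$, $h \in \GL_r(\C)$. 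Multiplying out the blocks and comparing the $(2,2)$, $(2,1)$, $(1,2)$ and $(1,1)$ entries of the two sides should force $b_r h = \unitmatrix_r$, then $h = \unitmatrix_r$ hence $b_r = \unitmatrix_r$, then $n$ and $m'$ to be compatible, and finally $g' = b_{p+q}\, g\, u$, i.e. $B_{p+q}\, g\, \U(p,q) = B_{p+q}\, g'\, \U(p,q)$. The mild subtlety is keeping track of where $n \in N_J$ enters; since the relevant comparisons happen in the bottom block row where $N_J$ does not interfere (it only occupies the $(1,2)$ corner), the argument goes through cleanly.

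Finally I would assemble the statement: combining Lemma \ref{Lem-orbit-decomp-B-GL-U=BB-Q-P} (stated here) with Lemma \ref{Lem-B-GL-P=coprod-Q-cap-B-Q-P}, Lemma \ref{Lem-B-GL-P=B-Her-rst}, Lemma \ref{Lem-B-Her=coprod-B-Her-rst}, Lemma \ref{Lem-(B-Her)=(B-LM-I/GL)} and Lemma \ref{Lem-orbit-decomp-B-LM-GL=coprod-B-LMI-GL} chains all the reductions together and yields the disjoint union over $m$, over $(p,q)$ with $p+q \le m$, and over $I \in \binom{[n]}{m}$, $J \in \binom{[m]}{p+q}$ of the double coset spaces $B_{p+q}(\C) \backslash \GL_{p+q}(\C) / \U(p,q)$, which is exactly Theorem \ref{theorem-DFV-contain-KGB}(1); Case \caseB follows by the substitution table in Notation \ref{Notation-caseA-and-caseB-2}. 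The main obstacle, as noted, is simply the careful block-matrix entry comparison in the injectivity step — everything else is a routine application of the homogeneous-space and fiber-bundle lemmas already set up.
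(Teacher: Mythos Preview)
Your proposal is correct and follows essentially the same approach as the paper: well-definedness and surjectivity are routine (the paper in fact just declares them clear), and injectivity comes from the block-matrix comparison, where the $(1,1)$-block alone already yields $g' = b_{p+q}\,g\,u$. One small slip: from $b_r h = \unitmatrix_r$ you cannot conclude $h = \unitmatrix_r$ (only $h = b_r^{-1}$), but this is irrelevant since the $(1,1)$-entry gives injectivity without any input from the other blocks.
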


\begin{proof}
The map is well-defined and its surjectivity is clear.  
So let us prove the injectivity.  
\skipover{
We will prove well-definedness, surjectivity, and injectivity of the map \eqref{eq-def-psi''_J-bar} in this order.
We note that the proof given below is similar to that of Lemma \ref{Lem-(B-Her)=(B-LM-I/GL)}, but much easier.

First, we prove the well-definedness of the map \eqref{eq-def-psi''_J-bar}.
Let $g\in \GL_{p+q}(\C)$, $b_{p+q}\in B_{p+q}$, and $u\in \U(p,q)$.
Then, it is sufficient to prove that the image of $g$ by the map \eqref{eq-def-psi''_J-bar} is equal to that of $b_{p+q}gu$.
The image of $b_{p+q}gu$ by the map \eqref{eq-def-psi''_J-bar} is equal to
\begin{align*}
	&
	( P_{(p+q,r)} \cap  B_{m}^{\overline{\sigma}_{J}^{-1}} )
	\begin{pmatrix}
		b_{p+q}gu & 0
		\\
		0 & \unitmatrix_{r}
	\end{pmatrix} Q_{(p,q;r)}	
	\\
	=&
	( P_{(p+q,r)} \cap  B_{m}^{\overline{\sigma}_{J}^{-1}} )
	\begin{pmatrix}
		b_{p+q} & 0
		\\
		0 & \unitmatrix_{r}
	\end{pmatrix}
	\begin{pmatrix}
		g & 0
		\\
		0 & \unitmatrix_{r}
	\end{pmatrix}
	\begin{pmatrix}
		u & 0
		\\
		0 & \unitmatrix_{r} 
	\end{pmatrix} Q_{(p,q;r)}.
\end{align*}
Lemma \ref{Lem-representative-Q-cap-B} and the definition of $Q_{(p,q;r)}$ (Definition \ref{Def-P-(p,q;r)}) imply that this is equal to
\begin{equation*}
	=
	( P_{(p+q,r)} \cap  B_{m}^{\overline{\sigma}_{J}^{-1}} )
	\begin{pmatrix}
		g & 0
		\\
		0 & \unitmatrix_{r}
	\end{pmatrix} Q_{(p,q;r)},
\end{equation*}
which is equal to the image of $g\in \GL_{p+q}(\C)$ by the map \eqref{eq-def-psi''_J-bar}.
This completes the proof of the well-definedness of the map \eqref{eq-def-psi''_J-bar}.

Next, we prove the surjectivity of the map \eqref{eq-def-psi''_J-bar}.
Let $q\in P_{(p+q,r)}$.
Then, by definition of $P_{(p+q,r)}$ (Definition \ref{Def-Q-(p+q,r)}),
there exist $g_{1}\in \GL_{p+q}(\C), g_{2}\in \GL_{p}(\C)$, and $n\in \Mat_{p+q,r}(\C)$ such that
\begin{equation*}
	q =
	\begin{pmatrix}
		g_{1} & n 
		\\
		0 & g_{2}
	\end{pmatrix}.
\end{equation*}
On the other hand, Definition \ref{eq-def-P-(p,q;r)} implies
\begin{equation*}
	\begin{pmatrix}
		\unitmatrix_{p+q} & -g_{1}^{-1}ng_{2}^{-1} \\
		0 & g_{2}^{-1}
	\end{pmatrix} \in Q_{(p,q;r)}.
\end{equation*}
Thus, we have
\begin{equation*}
	\begin{pmatrix}
		g_{1} & n 
		\\
		0 & g_{2}
	\end{pmatrix}
	\begin{pmatrix}
		\unitmatrix_{p+q} & -g_{1}^{-1}ng_{2}^{-1} \\
		0 & g_{2}^{-1}
	\end{pmatrix}
	=
	\begin{pmatrix}
		g_{1} & 0
		\\
		0 & \unitmatrix_{r}
	\end{pmatrix},
\end{equation*}
which completes the proof of the surjectivity of the map \eqref{eq-def-psi''_J-bar}.

Finally, we prove the injectivity of the map \eqref{eq-def-psi''_J-bar}.
Suppose that $g,g'\in \GL_{p+q}(\C)$ satisfy
\begin{equation*}
    (B_{p+q}\times B_{r})
	\begin{pmatrix}
		g & 0
		\\
		0 & \unitmatrix_{r}
	\end{pmatrix} Q_{(p,q;r)}
=
	(B_{p+q}\times B_{r})
	\begin{pmatrix}
		g' & 0
		\\
		0 & \unitmatrix_{r}
	\end{pmatrix} Q_{(p,q;r)}.
\end{equation*}
}
%
Suppose that 
(by Lemma \ref{Lem-representative-Q-cap-B}) 
there exist
\begin{equation*}
	b_{p+q}\in B_{p+q},
	\quad
	b_{r}\in B_{r},
	\quad
	n\in N_{J},
	\quad
	u\in \U(p,q),
	\quad
	m\in \Mat_{p+q,r}(\C),
	\quad
	h\in \GL_{p}(\C)
\end{equation*}
such that
\begin{equation}
	\begin{pmatrix}
		b_{p+q} & n
		\\
		0 & b_{r}
	\end{pmatrix}
	\begin{pmatrix}
		g & 0
		\\
		0 & \unitmatrix_{r}
	\end{pmatrix}
	\begin{pmatrix}
		u & m
		\\
		0 & h
	\end{pmatrix}
	=
	\begin{pmatrix}
		g' & 0
		\\
		0 & \unitmatrix_{r}
	\end{pmatrix}.
\label{eq-BBgP=BBgP}
\end{equation}
The left-hand side of this equation is equal to
\begin{align*}
	\begin{pmatrix}
		b_{p+q} & n
		\\
		0 & b_{r}
	\end{pmatrix}
	\begin{pmatrix}
		g & 0
		\\
		0 & \unitmatrix_{r}
	\end{pmatrix}
	\begin{pmatrix}
		u & m
		\\
		0 & h
	\end{pmatrix}
	&=
	\begin{pmatrix}
		b_{p+q}gu & b_{p+q}gm+nh
		\\
		0 & b_{r}h
	\end{pmatrix}.
\end{align*}
Thus, comparing the $(1,1)$-entry of \eqref{eq-BBgP=BBgP}, we have
\begin{equation*}
	B_{p+q}g \U(p,q)
	=
	B_{p+q}g' \U(p,q),
\end{equation*}
which completes the proof.
\end{proof}

Summarizing the discussion so far (Lemmas \ref{Lem-Her=coprod-Her-rst}, \ref{Lem-B-GL-P=coprod-Q-cap-B-Q-P} and \ref{Lem-orbit-decomp-B-GL-U=BB-Q-P}), we obtain the following lemma.

\begin{lemma}\label{Lem-B-Her=BGLU}
For $ p, q \geq 0 $ satisfying $ p + q \leq m $, 
we have a bijection
    \begin{equation*}
        B_{m} \backslash \Her_{m}^{(p,q;r)} (\C) 
\simeq  \coprod_{J \in \binom{[m]}{p + q}}
        B_{p+q} \backslash \GL_{p+q}(\C) / \U(p,q) .
    \end{equation*}
Here, for different $ J $'s, the summands are all the same in the right hand side, and 
it means the disjoint union of the same set with multiplicity $ \binom{m}{p + q} $. 
\end{lemma}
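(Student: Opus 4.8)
The plan is to chain together the three structural results just proved: Lemma \ref{Lem-B-GL-P=coprod-Q-cap-B-Q-P}, which decomposes $B_{m} \backslash \GL_{m}(\C) / Q_{(p,q;r)}$ as a disjoint union over $J \in \binom{[m]}{p+q}$ of the smaller double coset spaces $(P_{(p+q,r)} \cap B_{m}^{\overline{\sigma}_{J}^{-1}}) \backslash P_{(p+q,r)} / Q_{(p,q;r)}$; Lemma \ref{Lem-orbit-decomp-B-GL-U=BB-Q-P}, which identifies each such summand with $B_{p+q} \backslash \GL_{p+q}(\C) / \U(p,q)$; and Lemma \ref{Lem-G-P=Her-rst}, which gives the $B_m$-equivariant bijection $B_{m} \backslash \GL_{m}(\C) / Q_{(p,q;r)} \xrightarrow{\sim} B_{m} \backslash \Her_{m}^{(p,q;r)}(\C)$. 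Composing these three bijections yields exactly the asserted bijection
\begin{equation*}
B_{m} \backslash \Her_{m}^{(p,q;r)}(\C) \simeq \coprod_{J \in \binom{[m]}{p+q}} B_{p+q} \backslash \GL_{p+q}(\C) / \U(p,q),
\end{equation*}
and since the right-hand summand does not depend on $J$, the union is that of $\binom{m}{p+q}$ copies of the same set, which is the multiplicity claim.

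First I would invoke Lemma \ref{Lem-B-GL-P=coprod-Q-cap-B-Q-P} to write $B_{m} \backslash \GL_{m}(\C) / Q_{(p,q;r)}$ as the stated coproduct over $J$. Then I would apply Lemma \ref{Lem-orbit-decomp-B-GL-U=BB-Q-P} termwise to replace each summand $(P_{(p+q,r)} \cap B_{m}^{\overline{\sigma}_{J}^{-1}}) \backslash P_{(p+q,r)} / Q_{(p,q;r)}$ by $B_{p+q} \backslash \GL_{p+q}(\C) / \U(p,q)$; this step requires $\#J = p+q$, which holds by construction since $J$ ranges over $\binom{[m]}{p+q}$. Finally I would compose with the inverse of the bijection of Lemma \ref{Lem-B-GL-P=B-Her-rst} to land on $B_{m} \backslash \Her_{m}^{(p,q;r)}(\C)$. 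Each of these is already established in the excerpt, so the proof is essentially a one-line concatenation, and there is no genuine obstacle: the only thing worth spelling out is that the right-hand sides of the termwise bijections are literally the same set $B_{p+q} \backslash \GL_{p+q}(\C) / \U(p,q)$ regardless of $J$, so the coproduct is a disjoint union of $\binom{m}{p+q}$ identical copies. The analogous statement for Case \caseB follows verbatim by rereading symbols according to Notation \ref{Notation-caseA-and-caseB-2}, replacing $\Her$ by $\Sym$, $\C$ by $\R$, and $\U(p,q)$ by $\OO(p,q)$; I would remark this rather than repeat the argument.

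Thus the proof reads, in full:

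\begin{proof}
By Lemma \ref{Lem-B-GL-P=coprod-Q-cap-B-Q-P}, we have a bijection
\begin{equation*}
B_{m} \backslash \GL_{m}(\C) / Q_{(p,q;r)}
\simeq
\coprod_{J \in \binom{[m]}{p+q}}
(P_{(p+q,r)} \cap B_{m}^{\overline{\sigma}_{J}^{-1}}) \backslash P_{(p+q,r)} / Q_{(p,q;r)}.
\end{equation*}
For each $J \in \binom{[m]}{p+q}$ we have $\#J = p+q$, so Lemma \ref{Lem-orbit-decomp-B-GL-U=BB-Q-P} applies and gives a bijection
\begin{equation*}
(P_{(p+q,r)} \cap B_{m}^{\overline{\sigma}_{J}^{-1}}) \backslash P_{(p+q,r)} / Q_{(p,q;r)}
\simeq
B_{p+q} \backslash \GL_{p+q}(\C) / \U(p,q),
\end{equation*}
where the right-hand side no longer depends on $J$. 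Finally, Lemma \ref{Lem-B-GL-P=B-Her-rst} identifies $B_{m} \backslash \GL_{m}(\C) / Q_{(p,q;r)}$ with $B_{m} \backslash \Her_{m}^{(p,q;r)}(\C)$. Composing these three bijections yields
\begin{equation*}
B_{m} \backslash \Her_{m}^{(p,q;r)}(\C)
\simeq
\coprod_{J \in \binom{[m]}{p+q}}
B_{p+q} \backslash \GL_{p+q}(\C) / \U(p,q),
\end{equation*}
which is a disjoint union of $\binom{m}{p+q}$ copies of the same set, as claimed. Case \caseB is proved in the same way, rereading the symbols according to Notation \ref{Notation-caseA-and-caseB-2}.
\end{proof}
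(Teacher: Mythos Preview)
Your proof is correct and follows exactly the paper's approach: the paper states the lemma as a summary of Lemmas \ref{Lem-G-P=Her-rst}, \ref{Lem-B-GL-P=coprod-Q-cap-B-Q-P}, and \ref{Lem-orbit-decomp-B-GL-U=BB-Q-P}, and your argument simply spells out that concatenation explicitly.
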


\skipover{
\begin{proof}
By Lemmas \ref{Lem-B-GL-P=coprod-Q-cap-B-Q-P} and \ref{Lem-orbit-decomp-B-GL-U=BB-Q-P}, we have
\begin{equation}
     \coprod_{\substack{ J\subset \{1,2,\dots,m\} \\  \#J=p+q }}
     B_{p+q} \backslash \GL_{p+q}(\C) / \U(p,q)  
     \xrightarrow{\sim}  
     B_{m} \backslash \GL_{m}(\C) / Q_{(p,q;r)}.
\label{eq-Lem-B-Her=BGLU-map-in-proof}
\end{equation}
On the other hand, by Lemmas \ref{Lem-B-Her=coprod-B-Her-rst} and \ref{Lem-B-GL-P=B-Her-rst}, we have
\begin{equation*}
     \coprod_{0\leq p+q \leq m} B_{m} \backslash \GL_{m}(\C) / Q_{(p,q;r)}  
     \xrightarrow{\sim} 
     \coprod_{0\leq p+q \leq m} B_{m} \backslash \Her_{m}^{(p,q;r)} (\C) = B_{m}\backslash \Her_{m}(\C).
\end{equation*}
Thus, by taking the coproduct of the map \eqref{eq-Lem-B-Her=BGLU-map-in-proof}, we have the lemma.
\end{proof}
}

Now Lemmas \ref{Lem-B-G-P_S=coprod-B-Her}, 
\ref{Lem-B-Her=coprod-B-Her-rst} and \ref{Lem-B-Her=BGLU} prove Theorem \ref{theorem-DFV-contain-KGB} 
for Case \caseA.

\begin{theorem}\label{theorem-DFV-contaion-KGB-caseA}
Orbits in the double flag variety have a decomposition 
    \begin{equation*}
H \backslash \dblFV \simeq 
        B_{H} \backslash G /P_{S} 
= \coprod_{m = 0}^n \coprod_{I \in \binom{[n]}{m}} B_{H}\backslash \mathrm{LM}_{2n,n}^{I} /\
 \GL_{n}(\C) ,
\end{equation*}
and for each $ I \in \binom{[n]}{m} $, we have an isomorphism 
\begin{equation*}
B_{H}\backslash \mathrm{LM}_{2n,n}^{I} /\
 \GL_{n}(\C) \simeq B_m(\C) \backslash \Her_m(\C)
\simeq \coprod_{0 \leq p + q \leq m} \coprod_{J \in \binom{[m]}{p + q} }
	    B_{p+q}(\C)  \backslash  \GL_{p+q}(\C) / \U(p,q).
    \end{equation*}
In the expression on the right hand side, the summands only depend on $ p $ and $ q $, 
and the indices $ \{ J \} $ only represent multiplicities.  
\end{theorem}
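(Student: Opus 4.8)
The plan is to assemble Theorem~\ref{theorem-DFV-contaion-KGB-caseA} purely from the chain of bijections already established, tracking carefully which lemma supplies which arrow. First I would invoke Lemma~\ref{Lem-B-G-P_S=coprod-B-Her}, which gives the identification $B_{H}\backslash G/P_{S}\simeq B_{H}\backslash\LregMat_{2n,n}/\GL_{n}(\C)=\coprod_{I\subset[n]}B_{H}\backslash\mathrm{LM}_{2n,n}^{I}/\GL_{n}(\C)$; grouping the index set $\{I\subset[n]\}$ by cardinality $m=\#I$ rewrites this as $\coprod_{m=0}^{n}\coprod_{I\in\binom{[n]}{m}}B_{H}\backslash\mathrm{LM}_{2n,n}^{I}/\GL_{n}(\C)$, and the leftmost isomorphism $H\backslash\dblFV\simeq B_{H}\backslash G/P_{S}$ is the standard one recalled in the Introduction. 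This yields the first displayed equation.

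For the second displayed equation, fix $I\in\binom{[n]}{m}$. Lemma~\ref{Lem-(B-Her)=(B-LM-I/GL)} (with $\varphi_{I}$ explicit) gives $B_{H}\backslash\mathrm{LM}_{2n,n}^{I}/\GL_{n}(\C)\simeq B_{m}(\C)\backslash\Her_{m}(\C)$. Next, Lemma~\ref{Lem-B-Her=coprod-B-Her-rst} decomposes $B_{m}(\C)\backslash\Her_{m}(\C)=\coprod_{0\le p+q\le m}B_{m}\backslash\Her_{m}^{(p,q;r)}(\C)$ according to signature (with $r=m-p-q$), and finally Lemma~\ref{Lem-B-Her=BGLU} identifies each summand $B_{m}\backslash\Her_{m}^{(p,q;r)}(\C)\simeq\coprod_{J\in\binom{[m]}{p+q}}B_{p+q}(\C)\backslash\GL_{p+q}(\C)/\U(p,q)$. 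Concatenating these three steps gives exactly the claimed $B_{m}(\C)\backslash\Her_{m}(\C)\simeq\coprod_{0\le p+q\le m}\coprod_{J\in\binom{[m]}{p+q}}B_{p+q}(\C)\backslash\GL_{p+q}(\C)/\U(p,q)$. The remark that the summands depend only on $(p,q)$ and that $J$ contributes only a multiplicity is visible directly: in Lemma~\ref{Lem-B-Her=BGLU} the target $B_{p+q}(\C)\backslash\GL_{p+q}(\C)/\U(p,q)$ does not involve $J$ at all, $J$ merely indexing the fibers of the bundle $\pi$ of \eqref{eq-proj-GL/P-to-GL/Q} via the Grassmannian permutations $\overline{\sigma}_{J}$ (cf.\ Remark~\ref{Remark-sigma-J-correspond-coprod}); similarly $I$ enters only through $\sigma_{I}$ and thus parametrizes the coarser decomposition by $m$ (Remark~\ref{Remark-sigma-I-correspond-coprod}). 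The proof is then one line: ``This follows from Lemmas~\ref{Lem-B-G-P_S=coprod-B-Her}, \ref{Lem-B-Her=coprod-B-Her-rst} and~\ref{Lem-B-Her=BGLU}.''

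There is essentially no obstacle here; the work has all been done in the preceding subsection, and the theorem is a bookkeeping statement collecting it. The one point requiring a sentence of care is the claim about multiplicities: one must check that the bijection of Lemma~\ref{Lem-B-GL-P=coprod-Q-cap-B-Q-P} together with Lemma~\ref{Lem-orbit-decomp-B-GL-U=BB-Q-P} really produces $\binom{m}{p+q}$ disjoint copies of the \emph{same} double coset space, i.e.\ that for distinct $J,J'$ the sets $(P_{(p+q,r)}\cap B_{m}^{\overline\sigma_{J}^{-1}})\backslash P_{(p+q,r)}/Q_{(p,q;r)}$ are abstractly isomorphic (both being $B_{p+q}(\C)\backslash\GL_{p+q}(\C)/\U(p,q)$ by Lemma~\ref{Lem-orbit-decomp-B-GL-U=BB-Q-P}) but land in disjoint fibers of $\pi$ over the distinct Bruhat cells $B_{m}\overline\sigma_{J}P_{(p+q,r)}$. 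This is immediate from the generalized Bruhat decomposition (Theorem~\ref{Thm-generalized-Bruhat-decomp}) and Lemma~\ref{Lem-sigma-J-representative}: distinct $J$'s give distinct cosets in $\Sgroup_{m}/(\Sgroup_{p+q}\times\Sgroup_{r})$, hence disjoint cells, hence disjoint preimages under $\pi$, so the union over $J$ is genuinely disjoint while each piece is isomorphic to the single space $B_{p+q}(\C)\backslash\GL_{p+q}(\C)/\U(p,q)$. With this observation recorded, the statement of Theorem~\ref{theorem-DFV-contaion-KGB-caseA} (and likewise its Case~\caseB counterpart, obtained by the translation table of Notation~\ref{Notation-caseA-and-caseB-2}) is established, completing also the proof of Theorem~\ref{theorem-DFV-contain-KGB}.
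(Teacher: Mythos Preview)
Your proposal is correct and matches the paper's own proof essentially verbatim: the paper simply states that Lemmas~\ref{Lem-B-G-P_S=coprod-B-Her}, \ref{Lem-B-Her=coprod-B-Her-rst} and~\ref{Lem-B-Her=BGLU} prove Theorem~\ref{theorem-DFV-contain-KGB} for Case~\caseA, exactly the one-line argument you anticipated. Your additional paragraph justifying the multiplicity claim via the disjointness of Bruhat cells is a welcome elaboration the paper leaves implicit.
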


\begin{remark}
When $ p = q = 0 $ in the above formula, we get $ J = \emptyset $.  
We regard $ B_{p+q}(\C)  \backslash  \GL_{p+q}(\C) / \U(p,q) = \{ \ast \} $ (as the one point set) if $ p = q = 0 $.  
There are $ 2^n $ of such orbits and they contain all closed orbits.
\end{remark}

\skipover{
\begin{proof}
By Lemmas \ref{Lem-B-G-P_S=coprod-B-Her} and \ref{Lem-B-Her=BGLU}, we have 
    \begin{equation*}
        B_{H} \backslash G /P_{S}
        \simeq 
        \coprod_{I\subset [n]} 
        B_{\# I} \backslash \Her_{\# I}(\C)
        \simeq
        \coprod_{ \substack{ I \subset [n] \\ J\subset \{1,2,\dots, \# I\} \\  \#J=p+q } }
        B_{p+q} \backslash \GL_{p+q}(\C) / \U(p,q),
    \end{equation*}
    which completes the proof.
\end{proof}
}

The results in this subsection stays valid for Case \caseB with minor changes.  
We only state the main result below.  

\begin{theorem}\label{theorem-DFV-contaion-KGB-caseB}
Let us assume we are in Case \caseB 
so that $ G = \Sp_{2n}(\R) $.   
%
Orbits in the double flag variety have a decomposition 
    \begin{equation*}
H \backslash \dblFV \simeq 
        B_{H} \backslash G /P_{S} 
= \coprod_{m = 0}^n \coprod_{I \in \binom{[n]}{m}} B_{H}\backslash \mathrm{LM}_{2n,n}^{I}(\R) /\
 \GL_{n}(\R) ,
\end{equation*}
where we set $\mathrm{LM}_{2n,n}^{I}(\R) \coloneqq \mathrm{LM}_{2n,n}^{I} \cap \Mat_{2n,n}(\R)$,
and for each $ I \in \binom{[n]}{m} $, we have an isomorphism 
\begin{align*}
B_{H}\backslash \mathrm{LM}_{2n,n}^{I}(\R) /\
 \GL_{n}(\R) &\simeq B_m(\R) \backslash \Sym_m(\R)
\\
&\simeq \coprod_{0 \leq p + q \leq m} \coprod_{J \in \binom{[m]}{p + q} }
	    B_{p+q}(\R)  \backslash  \GL_{p+q}(\R) / \OO(p,q).
\end{align*}
In the expression on the right hand side, the summands only depend on $ p $ and $ q $, 
and the indices $ \{ J \} $ only represent multiplicities. 
\end{theorem}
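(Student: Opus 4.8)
\textbf{Proof proposal for Theorem \ref{theorem-DFV-contaion-KGB-caseB}.}

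The plan is to transport the entire chain of reductions already carried out for Case \caseA in \S\ref{Section-Orbit-decomposition}--\S\ref{Section-Matsuki-duality} to Case \caseB, exactly as advertised by Notation \ref{Notation-caseA-and-caseB} and Notation \ref{Notation-caseA-and-caseB-2}: replace $\C$ by $\R$, $\U(n,n)$ by $\Sp_{2n}(\R)$, $\GL_n(\C)$ by $\GL_n(\R)$, $P_S^J$ by $P_S^\Sigma$, $a^*$ by $\transpose a$, $\Her$ by $\Sym$, and $\U(p,q)$ by $\OO(p,q)$, and check that each lemma goes through verbatim. Concretely, first I would observe that Lemma \ref{Lem-G/P_S=LM/GL} has its real-symplectic analogue: $G/P_S^\Sigma \simeq \LGr(\R^{2n})$ is realized as $\mathrm{LM}^\circ_{2n,n}(\R)/\GL_n(\R)$, where $\mathrm{LM}^\circ_{2n,n}(\R) \coloneqq \mathrm{LM}^\circ_{2n,n} \cap \Mat_{2n,n}(\R)$, using the condition $\transpose C D \in \Sym_n(\R)$ in place of $C^*D \in \Her_n(\C)$; this yields $B_H \backslash G / P_S^\Sigma \simeq B_H \backslash \mathrm{LM}^\circ_{2n,n}(\R) / \GL_n(\R)$. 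Then the échelon decomposition (Lemma \ref{Lem-orbit-decomp-B-LM-GL=coprod-B-LMI-GL}) is purely combinatorial in the ambient matrices, hence stays valid, giving $\coprod_{m=0}^n \coprod_{I \in \binom{[n]}{m}} B_H \backslash \mathrm{LM}^I_{2n,n}(\R) / \GL_n(\R)$.

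Next I would run the $\sigma_I$-conjugation argument of Lemma \ref{Lem-(B-Her)=(B-LM-I/GL)} over $\R$ to get $B_H \backslash \mathrm{LM}^I_{2n,n}(\R) / \GL_n(\R) \simeq B_m(\R) \backslash \Sym_m(\R)$ with $m = \#I$; the only input needed is the real analogue of Lemma \ref{Lem-B-p-cap-GL-times-GL}, i.e.\ that $B_n(\R)^{\sigma_I^{-1}} \cap (\GL_m(\R) \times \GL_{n-m}(\R)) = B_m(\R) \times B_{n-m}(\R)$, which is proved by the identical root-space computation (the restricted root system of $\GL_n(\R)$ is again of type $A_{n-1}$, and $Z = \diag(1,\dots,n)$ still separates the roots). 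Having reduced to $B_m(\R) \backslash \Sym_m(\R)$, I would then apply the Sylvester/fiber-bundle machinery of \S\ref{Section-Matsuki-duality} in its real form: decompose $\Sym_m(\R) = \coprod_{0 \le p+q \le m} \Sym_m^{(p,q;r)}(\R)$ by signature (Lemma \ref{Lem-B-Her=coprod-B-Her-rst}); realize $\Sym_m^{(p,q;r)}(\R) \simeq \GL_m(\R)/Q_{(p,q;r)}^\R$ where $Q_{(p,q;r)}^\R$ is the block-upper-triangular group with $\OO(p,q)$ in the corner, using Sylvester's law of inertia over $\R$ in place of \cite[Chap.~IX, \S 7]{Bourbaki.Algebra.9} — this is the real analogue of Lemma \ref{Lem-B-GL-P=B-Her-rst}; push forward along $\pi\colon \GL_m(\R)/Q_{(p,q;r)}^\R \to \GL_m(\R)/P_{(p+q,r)}(\R)$ and invoke the generalized Bruhat decomposition (Theorem \ref{Thm-generalized-Bruhat-decomp}) for $\GL_m(\R)$, which holds over any field, with the same Grassmannian-permutation representatives $\overline\sigma_J$; and finally apply the abstract fiber-bundle Lemma \ref{Lem-orbit-decomp-fiber-bundle} together with the real version of Lemma \ref{Lem-representative-Q-cap-B} and Lemma \ref{Lem-orbit-decomp-B-GL-U=BB-Q-P} to collapse $(P_{(p+q,r)}(\R) \cap B_m(\R)^{\overline\sigma_J^{-1}}) \backslash P_{(p+q,r)}(\R) / Q_{(p,q;r)}^\R \simeq B_{p+q}(\R) \backslash \GL_{p+q}(\R) / \OO(p,q)$. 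Assembling the coproducts gives the asserted bijection with multiplicity $\binom{m}{p+q}$ coming from the choice of $J \in \binom{[m]}{p+q}$, and the first displayed decomposition over $I \in \binom{[n]}{m}$ follows from the échelon step.

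I do not expect a genuine obstacle here: every step is a transcription of an argument whose only field-dependent ingredients are (i) Witt's theorem / Sylvester's law of inertia — available for real symmetric forms just as for Hermitian forms — and (ii) the structure theory of Borel and parabolic subgroups of $\GL_n$, which is field-independent in characteristic zero. The one point requiring a little care is that over $\R$ the group $\Sp_{2n}(\R)$ acts transitively on Lagrangians of $\R^{2n}_\Sigma$ and $P_S^\Sigma$ is the correct stabilizer (so that $G/P_S^\Sigma \simeq \LGr(\R^{2n})$ genuinely), but this is already recorded in \S\ref{Section-Preliminary-caseB}. A second mild subtlety is that $\OO(p,q)$, unlike $\U(p,q)$, is disconnected, so one should note that the isotropy of $\GL_m(\R)$ acting on $\Sym_m^{(p,q;r)}(\R)$ at $I_{p,q;r}$ is the full group $Q_{(p,q;r)}^\R$ containing $\OO(p,q)$ (not merely $\SO(p,q)$) — this is what the signature being a complete $\GL_m(\R)$-invariant guarantees, so the realization as a homogeneous space is unaffected. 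Thus the proof reduces to stating the real analogues of Lemmas \ref{Lem-G/P_S=LM/GL}, \ref{Lem-(B-Her)=(B-LM-I/GL)}, \ref{Lem-B-GL-P=B-Her-rst}, \ref{Lem-representative-Q-cap-B}, \ref{Lem-orbit-decomp-B-GL-U=BB-Q-P} and \ref{Lem-B-Her=BGLU}, whose proofs are obtained from those in the text by the substitution table in Notation \ref{Notation-caseA-and-caseB-2}, and chaining the resulting bijections; I would present it as such, indicating the substitutions rather than rewriting the computations.
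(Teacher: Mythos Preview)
Your proposal is correct and follows exactly the approach the paper takes: the paper does not give a separate proof for Case \caseB but simply states that ``the results in this subsection stay valid for Case \caseB with minor changes,'' relying on the substitution table in Notation~\ref{Notation-caseA-and-caseB-2} to transport the entire chain of lemmas from \S\ref{Section-Orbit-decomposition} and \S\ref{Section-Matsuki-duality}. Your identification of the field-dependent ingredients (Sylvester's law, the Bruhat decomposition, the structure of parabolics in $\GL_n$) and your remarks on the two mild subtleties are apt and go slightly beyond what the paper makes explicit.
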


\subsection{KGB decomposition and the Matsuki duality}\label{subsec:KGB.Matsuki.duakity}

\skipover{
\cite{Mirkovic.Uzawa.Vilonen.1992}
\cite{Matsuki.Mduality.1988}: all claims
\cite{Matsuki.1991}: ICM, except closure relation
}

For a moment, 
let $ G_{\C} $ denote an arbitrary connected complex reductive group, 
$ G_{\R} $ its real form.  
Choose a maximal compact subgroup $ K_{\R} $ of $ G_{\R} $ 
and let  
$ K_{\C} $ be its complexification.  

Take a Borel subgroup $ B_{\C} $ of $ G_{\C} $ 
and consider the full flag variety $ G_{\C}/B_{\C} $.  
Then there are only finitely many $ K_{\C} $-orbits on 
$ G_{\C}/B_{\C} $ and there is a classification of $ K_{\C} $-orbits using Weyl group 
(see \cite{Richardson.Springer.1990,Richardson.Springer.1993}).  We call the classification as \emph{the KGB classification}.  
Then \emph{Matsuki correspondence} (also called \emph{Matsuki duality}) claims that 
there is a bijective correspondence between 
$ K_{\C} $-orbits and $ G_{\R} $-orbits in $ X = G_{\C}/B_{\C} $, 
\begin{equation*}
    K_{\C} \backslash G_{\C}/B_{\C} \xleftrightarrow[]{\;\;\sim\;\;}
    G_{\R} \backslash G_{\C}/B_{\C} , 
\end{equation*}
which reverses the closure ordering (\cite{Matsuki.Mduality.1988}; 
also see \cite[Theorem 1.2]{Mirkovic.Uzawa.Vilonen.1992}). 
The correspondence is defined as follows.  
Consider a $ G_{\R} $-orbit $ \calorbit \subset X $ and 
a $ K_{\C} $-orbit $ \bborbit \subset X $.  
They are in correspondence if  
$ \calorbit \cap \bborbit $ is compact (and, in this case, it turns out that the intersection is a single $ K_{\R} $-orbit).

The Matsuki correspondence holds for a connected real semisimple group $ G $ in general.  
Let us explain it briefly.  

Let $ P \subset G $ be a minimal parabolic subgroup, and 
$ X = G/P $ a flag variety.  
Let $ \sigma $ be an involution of $ G $ and 
take a Cartan involution $ \theta $ commuting with $ \sigma $.  
We denote $ K = G^{\theta} $, a maximal compact subgroup of $ G $.  

Put $ \tau = \sigma \theta $.  
Then $ \tau $ is another involution which commutes with $ \sigma $ and $ \theta $.  
Let us denote $ H_{\sigma} $ a symmetric subgroup of $ \sigma $, i.e., it satisfies 
$ G^{\sigma}_0 \subset H_{\sigma} \subset G^{\sigma} $.  
Similarly we take $ H_{\tau} $ for $ \tau $, and we assume that 
$ H_{\sigma} \cap K = H_{\tau} \cap K $.  Let us denote the common intersection 
by
\begin{equation*}
U = H_{\sigma} \cap K = H_{\tau} \cap K = H_{\sigma} \cap H_{\tau} \cap K ,
\end{equation*}
which is a subgroup of $ K $.

For any $ x \in X $, the stabilizer of $ x $ in $ G $ is also a minimal parabolic subgroup, and we denote it by $ P_x $.   
Put $ \mathscr{C} = \{ x \in X \mid P_x^{-\theta} \text{ is $ \sigma $-stable} \} $, 
where $ P_x^{-\theta} = \{ g \in P_x \mid \theta(g) = g^{-1} \} $ as usual.  

According to \cite[Th.1.2]{Mirkovic.Uzawa.Vilonen.1992}, 
there exists a $ U $-invariant flow $ \varphi: \R \times X \to X $ on $ X $ satisfying 
the conditions given in Theorem 1.2 of \cite{Mirkovic.Uzawa.Vilonen.1992}.  
The set $ \mathscr{C} $ is the set of critical points of this flow 
(see also \cite[Th.3(2)]{Matsuki.1991}), or 
\begin{equation*}
    \mathscr{C} = \{ x \in X \mid \varphi(t, x) = x \;\; (\forall t \in \R) \} .
\end{equation*}
Note that $ \mathscr{C} $ is $ U $-invariant.  

\begin{theorem}[Matsuki \cite{Matsuki.Mduality.1988}]\label{Thm-Matsuki-Mduality-1998}
There is a unique bijective correspondence between $ H_{\sigma} $-orbits in $ X $ and 
$ H_{\tau} $-orbits in $ X $, which satisfies the following.  

\begin{itemize}
\item[\MIP] 
$ \calorbit \in H_{\sigma} \backslash X $ and $ \bborbit \in H_{\tau} \backslash X $ are in correspondence if and only if 
$ \calorbit \cap \bborbit $ is nonempty and closed (hence compact).
\end{itemize}

The bijection is called as the Matsuki correspondence.  
The Matsuki correspondence further satisfies the following properties.
\begin{penumerate}
    \item The bijection of the orbits reverses the closure relations.
    \item If $ \calorbit $ and $ \bborbit $ are in correspondence as above, then $ \calorbit \cap \bborbit $ is a single $ U $-orbit contained in $ \mathscr{C} $.
    \item Three subsets $ H_{\sigma} \backslash X $, $ H_{\tau} \backslash X $, and $ \mathscr{C}/U $ are in bijection.
\end{penumerate}
\end{theorem}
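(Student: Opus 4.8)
The plan is to deduce the correspondence from the $U$-invariant flow $\varphi \colon \R \times X \to X$ supplied by \cite[Th.~1.2]{Mirkovic.Uzawa.Vilonen.1992}, whose set of fixed points is precisely the critical set $\mathscr{C}$. This $\varphi$ is (up to reparametrization) the gradient flow of a $U$-invariant Morse--Bott function $f$ on $X$ built from the Cartan decomposition attached to $\theta$, and its critical manifolds are exactly the $U$-orbits contained in $\mathscr{C}$. First I would record the structural facts that drive everything. For each $x \in X$ the limits $\lim_{t\to\pm\infty}\varphi(t,x)$ exist and lie in $\mathscr{C}$, so $X$ decomposes into the stable manifolds $W^{+}_{c}$ and, independently, into the unstable manifolds $W^{-}_{c}$ indexed by the critical $U$-orbits $c \subset \mathscr{C}$. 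Moreover each $H_\sigma$-orbit $\calorbit$ coincides with a stable manifold $W^{+}_{c}$ (equivalently: it is a union of forward flow lines and flows under $\varphi$ onto a single critical $U$-orbit $c_\sigma(\calorbit) \subset \mathscr{C}$), while each $H_\tau$-orbit $\bborbit$ coincides with an unstable manifold $W^{-}_{c}$ and flows backwards onto a single critical $U$-orbit $c_\tau(\bborbit)$; here one uses that $\tau = \sigma\theta$, so that $H_\sigma$ and $H_\tau$ are adapted to the forward and backward directions of the $\theta$-flow, together with the $U$-invariance of $\varphi$ and the properness of $f$ on each orbit.

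Granting these facts, I would define the correspondence by declaring $\calorbit \in H_\sigma \backslash X$ and $\bborbit \in H_\tau \backslash X$ associated when $c_\sigma(\calorbit) = c_\tau(\bborbit)$ as $U$-orbits in $\mathscr{C}$. Since every critical $U$-orbit $c$ is the limit orbit of exactly one $H_\sigma$-orbit (the union of forward flow lines emanating from $c$) and of exactly one $H_\tau$-orbit (the union of backward flow lines), the assignments $\calorbit \mapsto c_\sigma(\calorbit)$ and $\bborbit \mapsto c_\tau(\bborbit)$ are bijections onto $\mathscr{C}/U$, and composing one with the inverse of the other gives a bijection $H_\sigma \backslash X \xrightarrow{\sim} H_\tau \backslash X$; this is statement (3) at the same time. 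To identify this bijection with the intersection criterion \MIP{} I would prove that $\calorbit \cap \bborbit$ is nonempty and closed (hence compact, since $X$ is compact) if and only if $c_\sigma(\calorbit) = c_\tau(\bborbit)$. If the limit orbits agree, that common $U$-orbit $c$ lies in $\calorbit \cap \bborbit$, and a local analysis at $c$ using the transversality of $W^{+}_{c}$ and $W^{-}_{c}$ along $c$ shows $\calorbit \cap \bborbit = c$ exactly — this is a single $U$-orbit, giving statement (2); if the limit orbits differ, then $f$ separates the two orbits and the closure of any point of the intersection would have to meet a strictly smaller critical value of $f$, so the intersection cannot be closed. Uniqueness of the correspondence is then automatic: \MIP{} forces $\bborbit$ to be the unique $H_\tau$-orbit meeting $\calorbit$ in a nonempty closed set.

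Finally, the closure reversal (1) would follow from the Morse-theoretic ordering of critical values: $\calorbit' \subset \overline{\calorbit}$ forces $c_\sigma(\calorbit')$ to lie at a critical value no larger than that of $c_\sigma(\calorbit)$ and in the boundary of $W^{+}_{c_\sigma(\calorbit)}$, and applying the same picture to the time-reversed flow (which swaps the roles of $\sigma$ and $\tau$) yields $\bborbit \subset \overline{\bborbit'}$ for the partners. I expect the main obstacle to be exactly the local model along $\mathscr{C}$ needed for the facts in the first paragraph and for pinning down $\calorbit \cap \bborbit$ as a single $U$-orbit: one must show that the negative and positive normal directions of a critical $U$-orbit $c$ are tangent, respectively, to the $H_\sigma$- and $H_\tau$-directions transverse to $c$, which is where the interplay of the three commuting involutions $\sigma,\theta,\tau$ and the definition of $\mathscr{C}$ via $\sigma$-stability of $P_x^{-\theta}$ genuinely enters. (Matsuki's original proof \cite{Matsuki.Mduality.1988} instead runs an intricate induction on parabolic subgroups and split tori; the flow-theoretic route sketched here, following \cite{Mirkovic.Uzawa.Vilonen.1992}, is the one that fits the setup of this section and is already quoted above.)
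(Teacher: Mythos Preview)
Your sketch is essentially the Mirkovi\'c--Uzawa--Vilonen argument and is correct in outline, but it does far more work than the paper does. The paper treats Theorem~\ref{Thm-Matsuki-Mduality-1998} as a known result, citing \cite{Matsuki.Mduality.1988} and \cite{Mirkovic.Uzawa.Vilonen.1992} for (1)--(3) and for the existence of the flow $\varphi$; the only thing the paper actually proves is the one implication needed to reconcile their formulation \MIP{} with the cited statements, namely that if $\calorbit \cap \bborbit$ is nonempty and closed then $\calorbit \cap \bborbit \subset \mathscr{C}$ (and is a single $U$-orbit). Their argument for this is shorter and more direct than your transversality/Morse-value reasoning: take $x \in \calorbit \cap \bborbit$; since $\varphi$ preserves both $H_\sigma$- and $H_\tau$-orbits, $\varphi(t,x) \in \calorbit \cap \bborbit$ for all $t$; the limit $x_\infty = \lim_{t\to\infty}\varphi(t,x)$ exists by \cite[Thm.~1.2(2)]{Mirkovic.Uzawa.Vilonen.1992}, lies in the closed set $\calorbit \cap \bborbit$, and is a fixed point of $\varphi$, hence $x_\infty \in \mathscr{C}$; then $U$-stability of $\mathscr{C}$ and \cite[Thm.~1.3]{Mirkovic.Uzawa.Vilonen.1992} finish. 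So your structural picture (stable/unstable manifolds, local transversality along $c$) is not wrong, but it is not needed here: the paper simply black-boxes those facts via the citations and extracts the missing implication with a three-line flow-limit argument.
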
 

The theorem above is proved in \cite{Matsuki.Mduality.1988} and \cite{Mirkovic.Uzawa.Vilonen.1992}, 
but the claim of the present theorem is slightly different from the original ones.  
If we prove the implication ``$ \calorbit \cap \bborbit $ is nonempty and closed $\Rightarrow$ $ \calorbit \cap \bborbit \subset \mathscr{C}$'', then the other statements are clear from 
Theorem in \cite{Matsuki.Mduality.1988} and Theorem 1.2 in \cite{Mirkovic.Uzawa.Vilonen.1992}.  
Thus let us give a proof of the implication above in the following lemma for the readers' convenience.

\skipover{
\begin{lemma}
    Let $ \calorbit \in H_{\sigma} \backslash X $ and $ \bborbit \in H_{\tau} \backslash X $. (We do NOT assume that they are in correspondence.) If the intersection $\calorbit \cap \bborbit$ is a single $U$-orbit, then we have $\calorbit \cap \bborbit \subset \mathscr{C} \coloneqq \{\text{fixed points of flow}\}$, namely, $\calorbit$  and $\bborbit$ are in correspondence.
\end{lemma}

\begin{proof}
    By \cite[Thm.~1.2]{Mirkovic.Uzawa.Vilonen.1992}, there exists an $U$-invariant flow $\varphi\colon \R\times X \to X$. By definition, we have
    \begin{equation*}
        \mathscr{C} = \{ x \in X \mid \forall t\in \R, \: \varphi(t,x)=x\}.
    \end{equation*}
    
    Fix $x \in \calorbit \cap \bborbit$, which implies $\calorbit \cap \bborbit = U\cdot x$ by the assumption that $\calorbit \cap \bborbit$ is a single $U$-orbit. 
    By \cite[Thm.~1.2 (1)]{Mirkovic.Uzawa.Vilonen.1992}, $\varphi\colon \R \times X \to X$ preserves the orbits of $H_{\sigma}$ and $H_{\tau}$.
    Thus, $x \in \calorbit \cap \bborbit$ implies
    \begin{equation*}
        \forall t\in \R, \: \varphi(t,x) \in  \calorbit \cap \bborbit = U \cdot x.
    \end{equation*}
    We note that there exists $\lim_{t \to \infty} \varphi(t,x)$ by \cite[Thm.~1.2 (2)]{Mirkovic.Uzawa.Vilonen.1992}. Thus, the closedness of $U \cdot x \subset X$ (which follows from the compactness of $U$) implies 
    \begin{equation*}
        \mathscr{C} \ni \lim_{t \to \infty} \varphi(t,x) \in U \cdot x.
    \end{equation*}
    (Note that $\mathscr{C} \ni \lim_{t \to \infty}\varphi(t,x)$ follows from 
    \begin{equation*}
        \varphi(s,\lim_{t \to \infty} \varphi(t,x)) = \lim_{t \to \infty} \varphi(s,\varphi(t,x)) 
        = \lim_{t \to \infty} \varphi(s + t,x) =  \lim_{t \to \infty} \varphi(t,x).)
    \end{equation*}
    Because $\mathscr{C}$ is $U$-stable, this implies $\mathscr{C} \supset U \cdot x = \calorbit \cap \bborbit$, which completes the proof.
\end{proof}
}

\begin{lemma}
    Let $ \calorbit \in H_{\sigma} \backslash X $ and $ \bborbit \in H_{\tau} \backslash X $. (We do NOT assume that they are in correspondence.) If the intersection $\calorbit \cap \bborbit$ is nonempty and closed, then we have $\calorbit \cap \bborbit \subset \mathscr{C}$. Moreover, this subset $\calorbit \cap \bborbit \subset \mathscr{C}$ is a single $U$-orbit.
\end{lemma}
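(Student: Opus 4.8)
The statement to be proved is the key implication underlying Theorem \ref{Thm-Matsuki-Mduality-1998}: if $\calorbit \cap \bborbit$ is nonempty and closed, then $\calorbit \cap \bborbit \subset \mathscr{C}$, and moreover it is a single $U$-orbit. The plan is to exploit the $U$-invariant flow $\varphi \colon \R \times X \to X$ produced by \cite[Thm.~1.2]{Mirkovic.Uzawa.Vilonen.1992}, using three of its properties: (1) $\varphi(t,\cdot)$ preserves every $H_\sigma$-orbit and every $H_\tau$-orbit; (2) for every $x \in X$ the limit $\lim_{t\to\infty}\varphi(t,x)$ exists in $X$; and (3) the critical set $\mathscr{C}$ coincides with the fixed-point set $\{x \mid \varphi(t,x)=x \;\forall t\}$, and $\mathscr{C}$ is $U$-invariant.

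First I would fix a point $x \in \calorbit \cap \bborbit$. By property (1), $\varphi(t,x)$ stays in $\calorbit$ (since $x\in\calorbit$ and $\varphi(t,\cdot)$ preserves $H_\sigma$-orbits) and likewise stays in $\bborbit$; hence the whole trajectory $\{\varphi(t,x) : t \in \R\}$ lies in $\calorbit\cap\bborbit$. By property (2) the limit point $x_\infty := \lim_{t\to\infty}\varphi(t,x)$ exists; since $\calorbit\cap\bborbit$ is closed by hypothesis, $x_\infty \in \calorbit\cap\bborbit$. Next I would check that $x_\infty$ is a fixed point of the flow, hence $x_\infty\in\mathscr{C}$: this follows from the semigroup/continuity identity
\begin{equation*}
\varphi(s, x_\infty) = \varphi\bigl(s, \lim_{t\to\infty}\varphi(t,x)\bigr) = \lim_{t\to\infty}\varphi(s, \varphi(t,x)) = \lim_{t\to\infty}\varphi(s+t,x) = x_\infty
\end{equation*}
for every $s\in\R$. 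Thus $\calorbit\cap\bborbit$ meets $\mathscr{C}$.

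To upgrade this to $\calorbit\cap\bborbit \subset \mathscr{C}$, I would invoke the structure already available from \cite{Matsuki.Mduality.1988} and \cite[Thm.~1.2]{Mirkovic.Uzawa.Vilonen.1992}: those results give, for the \emph{matching} pair containing $x_\infty$, that the corresponding intersection equals a single $U$-orbit inside $\mathscr{C}$. Concretely, $x_\infty \in \mathscr{C}$ determines an $H_\sigma$-orbit $\calorbit' \ni x_\infty$ and an $H_\tau$-orbit $\bborbit' \ni x_\infty$ which are Matsuki-dual, with $\calorbit'\cap\bborbit' = U\cdot x_\infty \subset \mathscr{C}$; but $x_\infty \in \calorbit \cap \bborbit$ forces $\calorbit'=\calorbit$ and $\bborbit'=\bborbit$ (orbits through a common point coincide). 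Therefore $\calorbit\cap\bborbit = U\cdot x_\infty \subset \mathscr{C}$, which simultaneously gives the inclusion into $\mathscr{C}$ and the single-$U$-orbit conclusion. A small point to handle carefully is well-definedness of the assignment $x_\infty \mapsto (\calorbit', \bborbit')$ and the fact that $\mathscr{C}/U$ is in bijection with both orbit sets; this is exactly the content cited from \cite{Matsuki.Mduality.1988, Mirkovic.Uzawa.Vilonen.1992}, so I would simply quote it rather than reprove it.

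The main obstacle, such as it is, is bookkeeping rather than substance: one must be precise about which assertions are genuinely taken as input from \cite{Mirkovic.Uzawa.Vilonen.1992} (existence of the flow, invariance of the real and ``$\tau$'' orbits, convergence of trajectories, and the identification of $\mathscr{C}$ with the fixed set) versus what is being proved here, and one must make sure the limit argument is applied to a trajectory that genuinely remains in the closed set $\calorbit\cap\bborbit$. The compactness of $U$ (so that $U$-orbits are closed) is what makes $x_\infty$ land in a single $U$-orbit once it is known to lie in $\mathscr{C}$; I would state that explicitly. No delicate estimate or new geometric construction is needed beyond the flow already supplied.
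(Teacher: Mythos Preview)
Your proposal is correct and follows essentially the same route as the paper's proof: use the flow $\varphi$ to keep the trajectory inside $\calorbit\cap\bborbit$, pass to the limit $x_\infty$ using closedness, verify $x_\infty\in\mathscr{C}$ via the semigroup identity, and then invoke \cite[Thm.~1.3]{Mirkovic.Uzawa.Vilonen.1992} to identify $\calorbit\cap\bborbit$ with the single $U$-orbit $U\cdot x_\infty$. The only minor quibble is your closing remark about compactness of $U$: it is not compactness that forces $\calorbit\cap\bborbit$ to be a single $U$-orbit, but rather the cited structural result; the argument stands without that aside.
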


\begin{proof}
    Consider the $U$-invariant flow $\varphi\colon \R\times X \to X$ and the critical set 
    \begin{equation*}
        \mathscr{C} = \{ x \in X \mid \forall t\in \R, \: \varphi(t,x)=x\}
    \end{equation*}
    above. 
    Take $x \in \calorbit \cap \bborbit$.
    By \cite[Thm.~1.2 (1)]{Mirkovic.Uzawa.Vilonen.1992}, $\varphi\colon \R \times X \to X$ preserves the orbits of $H_{\sigma}$ and $H_{\tau}$.
    Thus, $x \in \calorbit \cap \bborbit$ implies
    \begin{equation*}
        \forall t\in \R, \quad \varphi(t,x) \in  \calorbit \cap \bborbit.
    \end{equation*}
    We note that the limit $\lim_{t \to \infty} \varphi(t,x)$ exists by \cite[Thm.~1.2 (2)]{Mirkovic.Uzawa.Vilonen.1992},  
and from the assumption that $\calorbit \cap \bborbit$ is closed we conclude  
    \begin{equation*}
        \lim_{t \to \infty} \varphi(t,x) \in \calorbit \cap \bborbit.
    \end{equation*}
    The limit $ x_{\infty} := \lim_{t \to \infty}\varphi(t,x)$ belongs to 
    $\mathscr{C} $, since 
    \begin{equation*}
        \varphi(s, x_{\infty}) = \lim_{t \to \infty} \varphi(s,\varphi(t,x)) 
        = \lim_{t \to \infty} \varphi(s + t,x) =  x_{\infty} .
    \end{equation*}
    Thus, we have $x_{\infty} \in \mathscr{C} \cap (\calorbit \cap \bborbit)$,
    which implies that $U \cdot x_{\infty} \subset \mathscr{C}$ because $\mathscr{C}$ is $U$-stable.
    Therefore, it is sufficient to prove that $U \cdot x_{\infty} = \calorbit \cap \bborbit $, namely $\calorbit \cap \bborbit $ 
is a single $U$-orbit.
For this, we note that $\calorbit $ corresponds to $\bborbit $ under the correspondence of \cite[Thm.~1.3 (2)]{Mirkovic.Uzawa.Vilonen.1992}, 
since $H_{\sigma} \cdot x_{\infty} = \calorbit$ and $H_{\tau} \cdot x_{\infty} = \bborbit$.   
    Thus, by \cite[Thm.~1.3 (3)]{Mirkovic.Uzawa.Vilonen.1992}, we conclude the intersection $\calorbit \cap \bborbit $ is a single $U$-orbit.
\end{proof}

\subsection{Orbit decomposition in the double flag varieties and the Matsuki-Oshima's clans}

In this subsection, we discuss the relations between the orbit decomposition in the double flag varieties and Matsuki duality, KGB decompositions and Matsuki-Oshima's notion of clans.

We begin with Case \caseA.

\begin{theorem}[{\cite{Matsuki.1979} and \cite[\S4]{Matsuki.Oshima.1990}}]\label{thm-Matsuki-duality}\label{Thm-Matsuki-Oshima-classification}
There exist bijections:
\begin{equation*}
    B_{p+q} \backslash \GL_{p+q}(\C)/ \U(p,q) 
    \simeq
    B_{p+q} \backslash \GL_{p+q}(\C)  /  (\GL_{p}(\C) \times \GL_{q}(\C))
    \simeq \Gamma(p,q),
\end{equation*}
where $\Gamma(p,q)$ is the set of $(p,q)$-clans (Definition \ref{Def-Gamma(p,q)}).
\end{theorem}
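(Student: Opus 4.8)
The statement is the combination of two classical facts, so the plan is to invoke them in the right form and glue them together.
First I would recall that $\U(p,q)$ is a real form of $\GL_{p+q}(\C)$ whose complexification, sitting inside $\GL_{p+q}(\C)\times\GL_{p+q}(\C)$ and cut out by the diagonal, has as its associated maximal compact $U(p)\times U(q)$, whose complexification is $\GL_p(\C)\times\GL_q(\C)$.
Hence the pair $(\GL_{p+q}(\C),\U(p,q))$ is ``Matsuki dual'' to the pair $(\GL_{p+q}(\C),\GL_p(\C)\times\GL_q(\C))$ in the sense of Theorem \ref{Thm-Matsuki-Mduality-1998} (equivalently Theorem \ref{theorem-Matsuki-Duality-Intro}), applied with $G_\C=\GL_{p+q}(\C)$, $B_\C=B_{p+q}$, $\sigma$ the involution with fixed points $\U(p,q)$ and $\tau=\sigma\theta$ the Cartan-adjusted involution with fixed points $\GL_p(\C)\times\GL_q(\C)$. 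This yields the first bijection
\begin{equation*}
B_{p+q}\backslash\GL_{p+q}(\C)/\U(p,q)\;\simeq\;B_{p+q}\backslash\GL_{p+q}(\C)/(\GL_p(\C)\times\GL_q(\C)),
\end{equation*}
reversing the closure order. Here one should note a minor technical point: the Matsuki correspondence as stated concerns orbits of a symmetric subgroup acting on $G_\C/B_\C$ with $B_\C$ a Borel, rather than double cosets $B\backslash G/H$; but passing between $H\backslash(G/B)$ and $B\backslash G/H$ is just the standard ``switch the sides'' observation, so this is immediate.

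For the second bijection I would simply cite \cite[\S4]{Matsuki.Oshima.1990}, which is exactly Theorem \ref{theorem-KGB-clan} of the introduction: the $B_{p+q}$-orbits on the Grassmannian-type variety $\GL_{p+q}(\C)/(\GL_p(\C)\times\GL_q(\C))$ are parametrized by $(p,q)$-clans $\Gamma(p,q)$. There is nothing new to prove here; the content is already packaged in the cited reference and restated earlier in the paper.

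Concatenating the two displayed bijections gives the claimed chain, so the proof is a two-line argument once the references are lined up. The one place that requires a small amount of care — and which I expect to be the only real obstacle — is checking that the involutions $\sigma$ (with $G^\sigma=\U(p,q)$) and $\tau=\sigma\theta$ (with $G^\tau=\GL_p(\C)\times\GL_q(\C)$) indeed commute with a common Cartan involution $\theta$ and satisfy the compatibility hypothesis $H_\sigma\cap K=H_\tau\cap K$ required in Theorem \ref{Thm-Matsuki-Mduality-1998}; this is standard for the symmetric pair of type AIII but should be spelled out, e.g.\ by taking $\theta$ to be conjugation by $I_{p,q}$ and observing that $\U(p,q)^\theta=\U(p)\times\U(q)=(\GL_p(\C)\times\GL_q(\C))\cap\U(p+q)$. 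After that, I would close by remarking that in view of Theorems \ref{theorem-DFV-contaion-KGB-caseA} and \ref{theorem-DFV-contaion-KGB-caseB} this gives the combinatorial description of $H\backslash\dblFV$ advertised in the introduction, and that the analogous statement for Case \caseB (with $\U(p,q)$ replaced by $\OO(p,q)$ and $\GL_p(\C)\times\GL_q(\C)$ by $\GL_p(\R)\times\GL_q(\R)$) follows verbatim from the real-group version of Matsuki duality together with Notation \ref{Notation-caseA-and-caseB-2}.
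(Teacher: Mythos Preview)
Your approach is essentially the same as the paper's: both proofs simply invoke the Matsuki correspondence for the first bijection and the Matsuki--Oshima clan classification for the second, with no new argument beyond lining up the references. The paper phrases the first step via the KGB version in \S\ref{subsec:KGB.Matsuki.duakity} (taking $G_{\R}=\U(p,q)$, $K_{\R}=\U(p)\times\U(q)$, $K_{\C}=\GL_p(\C)\times\GL_q(\C)$), whereas you route through the general real-group statement Theorem~\ref{Thm-Matsuki-Mduality-1998}; these are equivalent.

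One small slip to fix: in your compatibility check you write ``taking $\theta$ to be conjugation by $I_{p,q}$''. If you are applying Theorem~\ref{Thm-Matsuki-Mduality-1998} with $G=\GL_{p+q}(\C)$ as a real group, then $\theta$ must be the Cartan involution $g\mapsto (g^*)^{-1}$ (so $K=\U(p+q)$), while conjugation by $I_{p,q}$ is $\tau$, not $\theta$. Your verification $\U(p,q)\cap\U(p+q)=\U(p)\times\U(q)=(\GL_p(\C)\times\GL_q(\C))\cap\U(p+q)$ is nonetheless correct and is exactly what is needed. The remark about Case~\caseB is extraneous here, since the theorem as stated concerns only Case~\caseA; the paper handles Case~\caseB separately in Theorem~\ref{Thm-Matsuki-Oshima-classification:CaseB}.
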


\begin{proof}
    The first isomorphism is a consequence of the Matsuki correspondence.  
If we put $ G = G_{\R} = \U(p,q) $, a maximal compact subgroup of $ G_{\R} $ is $ K_{\R} = \U(p) \times \U(q) $ and its complexification is $ K_{\C} = \GL_{p}(\C) \times \GL_{q}(\C) $.  
So the situation is the same as that explained in \S\ref{subsec:KGB.Matsuki.duakity}  (KGB decomposition).

The second isomorphism is a consequence of the Matsuki-Oshima's classification of the orbits by clans.  
In their notation, the present case is denoted as AI, and there are symbols $ \pm, \circ $, called boys/girls and adults.  The orbits are classified by the sequences of symbols, where adults will appear in pair.  
If we draw an arc for such a pair of adults ``$ \circ $'', we get our $ (p,q) $-clans.
\end{proof}

Let us consider Case \caseB.  
Matsuki-Oshima's classification is available only for KGB decomposition, so there is a slight difference from Case \caseA.

\begin{theorem}\label{Thm-Matsuki-Oshima-classification:CaseB}
There exist bijections:
\begin{equation*}
    B_{p+q}(\R) \backslash \GL_{p+q}(\R)/ \OO(p,q) 
    \simeq
    B_{p+q}(\R) \backslash \GL_{p+q}(\R)  /  (\GL_{p}(\R) \times \GL_{q}(\R))
    \simeq \Gamma(p,q),
\end{equation*}
where $\Gamma(p,q)$ is the set of $(p,q)$-clans (Definition \ref{Def-Gamma(p,q)}).
\end{theorem}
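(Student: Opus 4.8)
The plan is to mirror the proof of Theorem \ref{thm-Matsuki-duality} (Case \caseA), replacing the complex groups by their split real forms and tracking the minor combinatorial bookkeeping. For the first bijection, I would put $G = G_{\R} = \OO(p,q)$ (or rather work with $\GL_{p+q}(\R)$ acting and $\OO(p,q)$ as a symmetric subgroup arising from the involution $g \mapsto I_{p,q}\,{}^{t}g^{-1}\,I_{p,q}$ on $\GL_{p+q}(\R)$). A Cartan involution $\theta$ commuting with this $\sigma$ has fixed-point group $K = \OO(p+q)$; the product $\tau = \sigma\theta$ is then the involution whose symmetric subgroup is $\GL_{p}(\R) \times \GL_{q}(\R)$ (the stabilizer of the decomposition $\R^{p+q} = \R^p \oplus \R^q$). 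Since the minimal parabolic subgroup of $\GL_{p+q}(\R)$ is exactly the upper triangular Borel $B_{p+q}(\R)$, the flag variety $X = \GL_{p+q}(\R)/B_{p+q}(\R)$ carries commuting actions of $H_\sigma = \OO(p,q)$ and $H_\tau = \GL_p(\R) \times \GL_q(\R)$, and Theorem \ref{Thm-Matsuki-Mduality-1998} (the Matsuki correspondence, which holds for any connected real semisimple — more generally reductive — group) gives the bijection
\begin{equation*}
    B_{p+q}(\R) \backslash \GL_{p+q}(\R)/ \OO(p,q)
    \simeq
    B_{p+q}(\R) \backslash \GL_{p+q}(\R)/ (\GL_{p}(\R) \times \GL_{q}(\R)),
\end{equation*}
reversing the closure order. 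One should note the action side: $H \backslash \GL_{p+q}(\R)/B_{p+q}(\R) \simeq B_{p+q}(\R) \backslash \GL_{p+q}(\R)/H$ via $g \mapsto g^{-1}$, so the statement about double cosets is the same as the statement about $H$-orbits on a full flag variety to which Theorem \ref{Thm-Matsuki-Mduality-1998} applies.

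For the second bijection, the classification of $(\GL_p(\R)\times\GL_q(\R))$-orbits on the flag variety of $\GL_{p+q}(\R)$ is precisely the KGB-type classification in \cite[\S4]{Matsuki.Oshima.1990}: in their notation this is the symmetric pair of type AI (real general linear group with an orthogonal symmetric subgroup, equivalently the split case), and the orbits are indexed by sequences of symbols $+,-,\circ$ with the adults $\circ$ occurring in matched pairs and the signed count of boys/girls equal to $p-q$. Drawing an arc for each matched pair of adults $\circ$ converts such a sequence into an element of $\Gamma(p,q)$ as in Definition \ref{Def-Gamma(p,q)}, and this is a bijection onto $\Gamma(p,q)$. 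Composing, we obtain $B_{p+q}(\R)\backslash\GL_{p+q}(\R)/\OO(p,q) \simeq \Gamma(p,q)$, which is the assertion.

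The main point to be careful about — and the place where Case \caseB genuinely differs from Case \caseA — is that here the Matsuki-Oshima combinatorial classification is only invoked for the \emph{second} double coset space (the $K_\C$-type side $\GL_p \times \GL_q$), not directly for the $\OO(p,q)$-side; the $\OO(p,q)$-side is reached only through the Matsuki correspondence, which is why the theorem is stated with both isomorphisms rather than a single combinatorial identification. So the argument is really: (i) identify the relevant triple $(H_\sigma, H_\tau, K)$ inside $\GL_{p+q}(\R)$ and check $H_\sigma \cap K = H_\tau \cap K$ (this is the routine verification that $\OO(p,q)\cap\OO(p+q) = (\OO(p)\times\OO(q)) = (\GL_p(\R)\times\GL_q(\R))\cap\OO(p+q)$, so the common group $U$ is $\OO(p)\times\OO(q)$); (ii) invoke Theorem \ref{Thm-Matsuki-Mduality-1998}; (iii) invoke \cite[\S4]{Matsuki.Oshima.1990} for type AI together with the arc-drawing dictionary. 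I expect step (iii) — pinning down exactly which list in \cite[\S4]{Matsuki.Oshima.1990} is the split AI case and checking that the bookkeeping of boys/girls/adults matches rules (arc) and (sgn) of Definition \ref{Def-Gamma(p,q)} — to be the only place requiring real care; steps (i)–(ii) are essentially formal once the involutions are written down, exactly as in the proof of Theorem \ref{thm-Matsuki-duality}.
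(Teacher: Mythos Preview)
Your first bijection is fine and matches the paper: it is exactly the Matsuki correspondence (Theorem~\ref{Thm-Matsuki-Mduality-1998}) applied to the symmetric pair $(\GL_{p+q}(\R),\OO(p,q))$ with associated pair $(\GL_{p+q}(\R),\GL_p(\R)\times\GL_q(\R))$.

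The gap is in your second bijection. The Matsuki--Oshima clan classification in \cite[\S4]{Matsuki.Oshima.1990} parametrizes $K_{\C}$-orbits on the \emph{complex} flag variety $G_{\C}/B_{\C}$ (the KGB decomposition), not orbits of a real symmetric subgroup on a real flag variety. The paper says this explicitly just before the theorem: ``Matsuki--Oshima's classification is available only for KGB decomposition, so there is a slight difference from Case~\caseA.'' What you need is a parametrization of $(\GL_p(\R)\times\GL_q(\R))$-orbits on $\GL_{p+q}(\R)/B_{p+q}(\R)$, and this is not supplied by \cite{Matsuki.Oshima.1990}; passing from complex to real orbits is in general a Galois-cohomology question, and you have not addressed it. (Your ``type AI'' identification is also off: type~AI is the pair $(\GL_n,\OO_n)$, whereas the clans $\Gamma(p,q)$ arise for type~AIII, the pair $(\GL_{p+q},\GL_p\times\GL_q)$.)

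The paper closes this gap differently. It takes Yamamoto's explicit matrix representatives (from \cite{Yamamoto.Atsuko.1997}) for the complex orbits in Theorem~\ref{Thm-Matsuki-Oshima-classification}, observes that for each such representative $g$ the matrix $gI_{p,q}g^{*}$ is a signed partial involution in $\SPIst_{p+q}$, and checks that these representatives are ``special points'' in the sense of \cite{Matsuki.1991} (so the Matsuki correspondence is the identity on them). The same $g$'s have real entries and are special points for the real flag variety as well; then Remark~\ref{remark:Her-pqr.Bm.isom.SPIst-pqr} --- that $B_m(\C)\backslash\Her_m^{(p,q;r)}(\C)$ and $B_m(\R)\backslash\Sym_m^{(p,q;r)}(\R)$ share the parameter set $\{\tau\in\SPIst_m:\sign\tau=(p,q,r)\}$, already established via the Gaussian elimination of Section~\ref{Section-Gaussian-elimination} --- forces the same representatives to exhaust the real orbits. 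In short, the paper does not invoke a real analogue of \cite{Matsuki.Oshima.1990}; it transports the complex clan parametrization to $\R$ through $\SPIst_m$ and Proposition~\ref{Prop-SPI=(B-Her)}.
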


\begin{proof}
The first isomorphism is a consequence of the Matsuki duality as in Theorem \ref{Thm-Matsuki-Oshima-classification}.

Let us consider the second isomorphism.  
In \cite[Thm.~2.2.14]{Yamamoto.Atsuko.1997}, 
Yamamoto explicitly gives representatives of the left-hand side of Theorem \ref{Thm-Matsuki-Oshima-classification} in matrix form, which are constructed by using the diagrams of clans in $\Gamma(p,q)$.  
If we denote one of her representatives by $ g $, 
then $ g I_{p,q} g^* $ matches exactly with a signed partial permutation $ \tau \in \SPIst_m \; (m = p + q) $.  
   Moreover, one can verify that such representatives are ``special points'' in $ \mathscr{C} \subset B_{p+q} \backslash \GL_{p+q}(\C)$ in the sense of \cite[Def.~in 1p.]{Matsuki.1991} (see also \S \ref{subsec:KGB.Matsuki.duakity}).
   For such special points, the bijection of Theorem \ref{thm-Matsuki-duality} is given by the ``identity morphism'', namely, the bijection of Theorem \ref{thm-Matsuki-duality} is given by
\begin{equation*}
\begin{array}{ccc}
    B_{p+q} \backslash \GL_{p+q}(\C)/ \U(p,q) &\xrightarrow{\sim}& B_{p+q} \backslash \GL_{p+q}(\C)  /  (\GL_{p}(\C) \times \GL_{q}(\C)) \\
    \rotatebox{90}{$\in$} && \rotatebox{90}{$\in$} \\
    B_{p+q} g \U(p,q) & \mapsto & B_{p+q} g (\GL_{p}(\C) \times \GL_{q}(\C)),
\end{array}	
\end{equation*}
    if $B_{p+q} g \in B_{p+q} \backslash \GL_{p+q}(\C)$ is special in the sense of \cite[Def.~in 1p.]{Matsuki.1991}.

The exactly same representatives of Yamamoto are also special points in the real flag variety 
$ B_{p+q}(\R) \backslash \GL_{p+q}(\R) $ in Case \caseB and they are corresponding to $ \Gamma(p,q) $.  
If we take Remark \ref{remark:Her-pqr.Bm.isom.SPIst-pqr} into account, 
we can conclude the same representatives from $ \Gamma(p,q) $ classify the orbits 
$ B_{p+q}(\R) \backslash \GL_{p+q}(\R)  /  (\GL_{p}(\R) \times \GL_{q}(\R)) $.
\end{proof}



By Theorem \ref{thm-Matsuki-duality}, we can give a combinatorial description of the orbit decomposition $B_{m}\backslash \Her_{m}(\C)$.

\begin{corollary}\label{Cor-DVR=gamma(p,q)}
We have an isomorphism
\begin{equation*}
	B_{m}\backslash \Her_{m}(\C)
	\simeq 
    B_{m}(\R) \backslash \Sym_{m}(\R)
	\simeq 
    \coprod_{0 \leq p + q \leq m}
	\coprod_{
    J \in \binom{[m]}{p + q}
	       }
	\Gamma(p,q).
\end{equation*}
\end{corollary}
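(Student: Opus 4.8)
\textbf{Proof proposal for Corollary \ref{Cor-DVR=gamma(p,q)}.}
The plan is to simply assemble the building blocks already established in this section. By Lemma \ref{Lem-B-Her=coprod-B-Her-rst}, the orbit space $B_m \backslash \Her_m(\C)$ decomposes as a disjoint union $\coprod_{0 \leq p+q \leq m} B_m \backslash \Her_m^{(p,q;r)}(\C)$ over signatures, where $r = m - (p+q)$. For each such piece, Lemma \ref{Lem-B-Her=BGLU} gives a bijection $B_m \backslash \Her_m^{(p,q;r)}(\C) \simeq \coprod_{J \in \binom{[m]}{p+q}} B_{p+q} \backslash \GL_{p+q}(\C) / \U(p,q)$, where the summands are mutually identical and the index $J$ merely records the multiplicity $\binom{m}{p+q}$. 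Finally, Theorem \ref{thm-Matsuki-duality} (via the Matsuki correspondence, then the Matsuki--Oshima clan classification) identifies $B_{p+q} \backslash \GL_{p+q}(\C) / \U(p,q)$ with $\Gamma(p,q)$. Chaining these three bijections yields
\begin{equation*}
    B_m \backslash \Her_m(\C) \simeq \coprod_{0 \leq p+q \leq m} \coprod_{J \in \binom{[m]}{p+q}} \Gamma(p,q),
\end{equation*}
which is the first displayed isomorphism.

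For the middle term $B_m(\R) \backslash \Sym_m(\R)$, the same chain of reasoning applies verbatim after substituting the real counterparts throughout: the signature decomposition $\Sym_m(\R) = \coprod \Sym_m^{(p,q;r)}(\R)$ is $B_m(\R)$-stable by Sylvester's law over $\R$; the analogue of Lemma \ref{Lem-B-Her=BGLU} (asserted to hold \textit{mutatis mutandis} in Case \caseB, per the discussion around Notation \ref{Notation-caseA-and-caseB-2} and Theorem \ref{theorem-DFV-contaion-KGB-caseB}) gives $B_m(\R) \backslash \Sym_m^{(p,q;r)}(\R) \simeq \coprod_{J \in \binom{[m]}{p+q}} B_{p+q}(\R) \backslash \GL_{p+q}(\R) / \OO(p,q)$; and Theorem \ref{Thm-Matsuki-Oshima-classification:CaseB} identifies the latter with $\Gamma(p,q)$. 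This proves the first $\simeq$ in the statement. Alternatively, and more cheaply, one may invoke Remark \ref{remark:Her-pqr.Bm.isom.SPIst-pqr}, which already records that $B_m \backslash \Her_m^{(p,q;r)}(\C)$ and $B_m(\R) \backslash \Sym_m^{(p,q;r)}(\R)$ share the common parameter set $\{\tau \in \SPIst_m \mid \sign \tau = (p,q,r)\}$; summing over signatures gives the identification of the first two terms directly from Proposition \ref{Prop-SPI=(B-Her)}.

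I do not expect any genuine obstacle here: the corollary is a bookkeeping consequence of Lemmas \ref{Lem-B-Her=coprod-B-Her-rst}, \ref{Lem-B-Her=BGLU} and Theorems \ref{thm-Matsuki-duality}, \ref{Thm-Matsuki-Oshima-classification:CaseB}. The only point requiring a sentence of care is the coherence of the two routes to the middle term --- i.e.\ checking that the bijection $B_m \backslash \Her_m(\C) \simeq B_m(\R) \backslash \Sym_m(\R)$ obtained by comparing the right-hand sides agrees with the one coming from $\SPIst_m$ being a common system of representatives. This compatibility is immediate from the explicit form of the maps in Lemma \ref{Lem-B-Her=BGLU} together with Remark \ref{remark:Her-pqr.Bm.isom.SPIst-pqr}, so a single remark suffices and the proof can be kept to a few lines, essentially ``combine Lemmas \ref{Lem-B-Her=coprod-B-Her-rst}, \ref{Lem-B-Her=BGLU} with Theorems \ref{thm-Matsuki-duality} and \ref{Thm-Matsuki-Oshima-classification:CaseB}.''
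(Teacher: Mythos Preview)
Your proposal is correct and follows essentially the same route as the paper: the paper's proof is the one-liner ``follows from Lemma \ref{Lem-B-Her=BGLU} and Theorem \ref{Thm-Matsuki-Oshima-classification},'' leaving the signature decomposition (Lemma \ref{Lem-B-Her=coprod-B-Her-rst}) and the Case \caseB analogue (Theorem \ref{Thm-Matsuki-Oshima-classification:CaseB}) implicit, while you spell these out explicitly. Your additional remark on the compatibility of the two routes to the middle term via $\SPIst_m$ is a nice sanity check but not needed for the bare statement.
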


\begin{proof}
The statement follows from Lemma \ref{Lem-B-Her=BGLU} and 
Theorem \ref{Thm-Matsuki-Oshima-classification}.
\end{proof}

\subsection{Combinatorial descriptions of orbits by graphs}

The right-hand side of Corollary \ref{Cor-DVR=gamma(p,q)} has another interpretation.



\begin{lemma}\label{Lem-coprod-Gamma(p,q)=Gamma'(m)}
    For $m\in \Z_{\geq 0}$, we have
    \begin{equation}\label{eq-isom-coprod-Gamma(p,q)-to-Gamma'(m)}
    \coprod_{0 \leq p + q \leq m}
	\coprod_{
    J \in \binom{[m]}{p + q}
	       }
	   \Gamma(p,q) 
       \simeq
       \Gamma'([m]),
    \end{equation}
where $\Gamma'([m])$ is defined in \eqref{eq-Def-Gamma'(I)}.
\end{lemma}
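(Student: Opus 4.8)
\textbf{Proof proposal for Lemma \ref{Lem-coprod-Gamma(p,q)=Gamma'(m)}.}
The plan is to construct an explicit bijection between the two sides by reading off, from a pair $(J, \gamma)$ with $J \in \binom{[m]}{p+q}$ and $\gamma \in \Gamma(p,q)$, a decorated graph $\gamma' \in \Gamma'([m])$, and conversely recovering $(J, \gamma)$ from $\gamma'$. Recall from \eqref{eq-Def-Gamma'(I)} that an element of $\Gamma'([m])$ is a map $\gamma' \colon [m] \to [m] \cup \{+,-,c\}$ satisfying the (arc) condition, and that a $(p,q)$-clan is a map $\gamma \colon [p+q] \to [p+q] \cup \{+,-\}$ satisfying (arc) together with the signature condition $\#(\gamma^{-1}(+)) - \#(\gamma^{-1}(-)) = p - q$. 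The key observation is that, given $\gamma' \in \Gamma'([m])$, the vertices of $\gamma'$ decorated by $c$ form a distinguished subset, and its complement $J \coloneqq [m] \setminus (\gamma')^{-1}(c)$ carries the data of a clan: restricting $\gamma'$ to $J$ gives a map into $J \cup \{+,-\}$ which still satisfies (arc) (no arc can connect to a $c$-vertex, since $c$ is not in the image of an arc), so after the order-preserving identification $J \simeq [\#J]$ it becomes an element of $\Gamma(p,q)$ where $p,q$ are \emph{defined} by $p - q = \#((\gamma')^{-1}(+)) - \#((\gamma')^{-1}(-))$ and $p + q = \#J$. Conversely, from $(J,\gamma)$ we transport $\gamma$ along the order isomorphism $[\#J] \simeq J$ and extend by setting $\gamma'(k) = c$ for $k \in [m] \setminus J$.

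First I would check that these two assignments are well-defined: the forward map lands in $\bigcup_{p+q \le m} \bigcup_{J} \Gamma(p,q)$ because $\#J = \#((\gamma')^{-1}(+)) + \#((\gamma')^{-1}(-)) + \#(\text{arc-endpoints in } J) $, so $p+q \le \#J \le m$ and the numbers $p,q$ defined above are nonnegative integers with $p+q = \#J$; and the restriction of $\gamma'$ to $J$ inherits property (arc) verbatim. The backward map lands in $\Gamma'([m])$ because the extension by $c$ on $[m]\setminus J$ does not create any arc and does not disturb the (arc) condition on $J$. Then I would verify the two maps are mutually inverse: starting from $\gamma'$, the set $J$ is exactly the non-$c$ vertices, which is recovered, and the clan on $J$ is recovered by restriction; starting from $(J,\gamma)$, the non-$c$ vertices of the extended graph are exactly $J$, and restricting back gives $\gamma$ again. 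Both verifications are immediate from the definitions.

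The only genuinely substantive point — and the one I would state carefully rather than gloss over — is the bookkeeping that the pair $(p,q)$ is \emph{not} part of the input data on the left-hand side in an independent way: the union on the left is over all $p, q \geq 0$ with $p + q \leq m$ and over all $J \in \binom{[m]}{p+q}$, so a single element of the left-hand side is really a triple $(p, q, J, \gamma)$, but $p + q$ is forced to be $\#J$ and then $p - q$ is forced by the signature of $\gamma$; hence $(p,q)$ is determined by $(J,\gamma)$ and no information is lost or double-counted. I expect this indexing subtlety to be the main thing to get right; everything else is a direct unwinding of Definitions \ref{Def-Gamma(p,q)}, \ref{Def-Gamma(n)} and the definition \eqref{eq-Def-Gamma'(I)} of $\Gamma'([m])$. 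I would close by remarking that combining \eqref{eq-isom-coprod-Gamma(p,q)-to-Gamma'(m)} with Corollary \ref{Cor-DVR=gamma(p,q)} and the identification $\Gamma'([m]) \subset \Gamma(n)$ (for $m = \#I$, $I \subset [n]$) recovers, after summing over $I$, the bijection $B_H \backslash G/P_S \simeq \Gamma(n)$ of Theorem \ref{theorem-H-G-P_G=Gamma(n)}, giving the promised second, Matsuki-theoretic proof of that description.
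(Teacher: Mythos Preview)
Your proposal is correct and follows essentially the same construction as the paper: both transport a $(p,q)$-clan $\gamma$ along the unique order-preserving bijection $[\,\#J\,]\simeq J$ and extend by $c$ on $[m]\setminus J$, then observe that the inverse is obtained by stripping off the $c$-vertices. Your write-up is in fact more thorough than the paper's (which simply asserts that ``the map $\gamma\mapsto\gamma'$ gives the desired isomorphism''); the only slip is the inequality $p+q\le\#J$ in your well-definedness check, which should be the equality $p+q=\#J$ you already stated a few lines earlier.
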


\begin{proof}
Let $\gamma \in \Gamma(p,q)$ be an element of the left-hand side of \eqref{eq-isom-coprod-Gamma(p,q)-to-Gamma'(m)} corresponding to the summand with parameter $J\subset \{1,2,\dots,m\}$.
We write $\phi \colon \{1,2,\dots,\#J\} \xrightarrow{\sim} J$ for the unique order-preserving isomorphism.
Then, we define $\gamma' \in \Gamma'([m])$ by (see Example \ref{Ex-Gamma(p,q)-to-Gamma'(m)})
\begin{equation}
    \gamma'(i)
    \coloneqq
    \begin{cases}
        \gamma(j) & \text{if $\phi(j)=i$} \\
        c         & \text{if $i \not\in J$}.
    \end{cases}
\label{eq-correspondence-from-Gamma(p,q)-to-Gamma'(m)}
\end{equation}
The map $\gamma \mapsto \gamma'$ gives the desired isomorphism.
\end{proof}

\begin{example}\label{Ex-Gamma(p,q)-to-Gamma'(m)}
    Let $\gamma \in \Gamma(3,2)$ be the same as in Example \ref{Ex-Gamma(p,q)} and 
    \begin{equation*}
        J\coloneqq \{1,3,4,5,6\} = [6] \backslash \{2\} \subset [6] \coloneqq \{1,2,3,4,5,6\}.  
    \end{equation*}
    Then, in this case, the unique order-preserving isomorphism $\phi \colon \{1,2,\dots,\# J\} \xrightarrow{\sim} J$ in the proof of Lemma \ref{Lem-coprod-Gamma(p,q)=Gamma'(m)} is given by
     \begin{equation*}
         \phi(1) = 1,\quad 
         \phi(2) = 3,\quad 
         \phi(3) = 4,\quad 
         \phi(4) = 5,\quad 
         \phi(5) = 6.
     \end{equation*}
     Let $\gamma' \in \Gamma([6])$ correspond to this $\gamma$ 
     (regarded as an element of the left-hand side of \eqref{eq-isom-coprod-Gamma(p,q)-to-Gamma'(m)} corresponding to the summand with parameter $J$) in via \eqref{eq-correspondence-from-Gamma(p,q)-to-Gamma'(m)}.
     Then, the graphical representations of $\gamma$ and $\gamma'$ are given by
     \begin{equation*}
     \begin{array}{c}
        \xymatrix{
            \gamma = &
            1_{-} &
            \: &
            2 \ar@{-}@/^10pt/[rr] &
            3_{+} &
            4     &
            5_{-}, \\
            \gamma' = &
            1_{-} &
            2_{c} &
            3 \ar@{-}@/^10pt/[rr] &
            4_{+} &
            5     &
            6_{-}.
        }
     \end{array}
     \end{equation*}    
\end{example}

Moreover, this $\Gamma'([m])$ gives a combinatorial interpretation of $\SPIst_{m}$.
 
\begin{lemma}\label{Lem-Gamma'(m)=SPI^st}
    For $m\in\Z_{\geq 0}$, we have
    \begin{equation*}
        \Gamma'([m]) \simeq \SPIst_{m},
    \end{equation*}
    where $\SPIst_{m}$ is defined in Definition \ref{Def-SPI-and-SPIst}.
\end{lemma}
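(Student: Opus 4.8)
The plan is to exhibit an explicit bijection between the set $\Gamma'([m])$ of graphs on $m$ vertices decorated by $\{+,-,c\}$ or arcs (with arc endpoints distinct) and the set $\SPIst_m$ of signed partial involutions with nonnegative off-diagonal entries. The correspondence is the same assignment used in the proof of Theorem~\ref{Thm-Gamma(n)=DFVR} (steps (i)--(iv) there), restricted to the case where the decorations $d$ do not occur and all nonzero entries are arranged to be $+1$ on the off-diagonal. Concretely, given $\gamma' \in \Gamma'([m])$, define $\tau = \tau(\gamma') \in \Mat_m(\Z)$ by: if $\gamma'(k)=+$, put $\tau_{kk}=1$; if $\gamma'(k)=-$, put $\tau_{kk}=-1$; if $\gamma'(k)=c$, put the $k$-th row and column of $\tau$ equal to zero; and if $\gamma'(i)=j$, $\gamma'(j)=i$ with $i\neq j$, put $\tau_{ij}=\tau_{ji}=1$; all other entries are $0$.

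First I would check that $\tau(\gamma')$ indeed lies in $\SPIst_m$: the arc condition $\gamma'(i)=j \Rightarrow i\neq j$ guarantees that each row and each column has at most one nonzero entry (a diagonal entry from a $\pm$ vertex, or an off-diagonal $1$ from an arc), so $\tau \in \SPP_m$; symmetry $\tau = {}^t\tau$ is built into the definition, so $\tau \in \SPP_m \cap \Sym_m(\Z) = \SPI_m$; and the off-diagonal entries are all $0$ or $1$, hence nonnegative, so $\tau \in \SPIst_m$. Conversely, I would describe the inverse: given $\tau \in \SPIst_m$, read off $\gamma'$ vertex by vertex --- $\gamma'(k)=+$ or $-$ according to $\tau_{kk}=\pm 1$, $\gamma'(k)=c$ if the $k$-th row and column of $\tau$ both vanish, and $\gamma'(i)=j$ whenever $\tau_{ij}=1$ with $i\neq j$ (note $\tau_{ij}=1 \Rightarrow \tau_{ji}=1$ by symmetry, and then row/column $i$ and $j$ have no other nonzero entry, so this is consistent and the arc condition $\gamma'(j)=i$ holds). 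One must note that a column (equivalently row) of $\tau \in \SPIst_m$ is either zero, or has its unique nonzero entry on the diagonal equal to $\pm1$, or has it off the diagonal equal to $1$ --- this trichotomy is exactly what makes the decoding well-defined, and it follows from $\tau^2$ being diagonal with $\{0,1\}$ entries together with the sign constraint.

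Then I would verify that these two maps are mutually inverse, which is immediate from the case-by-case definitions, and conclude that $\Gamma'([m]) \simeq \SPIst_m$. An alternative, less computational route would be to chain together bijections already established: Lemma~\ref{Lem-coprod-Gamma(p,q)=Gamma'(m)} gives $\Gamma'([m]) \simeq \coprod_{0\leq p+q\leq m}\coprod_{J}\Gamma(p,q)$, Corollary~\ref{Cor-DVR=gamma(p,q)} together with Lemma~\ref{Lem-B-Her=BGLU} identifies the right-hand side with $B_m(\C)\backslash \Her_m(\C)$, and Proposition~\ref{Prop-SPI=(B-Her)} identifies that with $\SPIst_m$; composing gives the claimed bijection. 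I expect the direct approach to be cleanest and would present it, using the chain of bijections only as a consistency remark. The only real subtlety --- hardly an obstacle --- is bookkeeping the trichotomy of columns of an element of $\SPIst_m$ to see the decoding map is well-defined and lands in $\Gamma'([m])$; everything else is a routine matching of the four decoration types against the four shapes of columns.
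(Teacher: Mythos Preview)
Your proposal is correct and follows essentially the same approach as the paper: the paper defines the identical explicit map $\gamma' \mapsto \tau$ entrywise (arc $\mapsto$ off-diagonal $1$'s, $\pm \mapsto$ diagonal $\pm1$, $c \mapsto$ zero row/column) and simply asserts it is the desired bijection, whereas you additionally spell out the verification that $\tau \in \SPIst_m$ and describe the inverse. Your alternative chained-bijection route is not used in the paper's proof of this lemma, but is consistent with the surrounding discussion.
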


\begin{proof}
    Let $\gamma'\in\Gamma'([m])$.
    Then, we define $\tau \in \SPIst_{m}$ by (See Example \ref{Ex-Gamma'(M)=SPIst})
    \begin{equation*}
        \tau_{ij}
        =
        \begin{cases}
            1 & \text{if $\gamma'(i)=j$} \\
            1 & \text{if $\gamma'(i)=+$ and $i=j$}\\
            -1& \text{if $\gamma'(i)=-$ and $i=j$}\\
            0 & \text{otherwise},
        \end{cases}
    \end{equation*}
    where $\tau_{ij}$ is the $(i,j)$-th entry of $\tau \in \SPIst_{m}$.
    The map $\gamma' \mapsto \tau$ is the desired isomorphism.
\end{proof}

\begin{example}\label{Ex-Gamma'(M)=SPIst}
    Let $\gamma'\in\Gamma'([6])$ be the same as Example \ref{Ex-Gamma(p,q)-to-Gamma'(m)}.
    Then, the $\tau \in \SPIst_{m}$ constructed in the proof of Lemma \ref{Lem-Gamma'(m)=SPI^st} is given by
    \begin{equation*}
        \tau =
        \begin{pmatrix}
            -1 & 0 & 0 & 0 & 0 & 0 \\
            0  & 0 & 0 & 0 & 0 & 0 \\
            0  & 0 & 0 & 0 & 1 & 0 \\
            0  & 0 & 0 & 1 & 0 & 0 \\
            0  & 0 & 1 & 0 & 0 & 0 \\
            0  & 0 & 0 & 0 & 0 & -1 \\
        \end{pmatrix}.
    \end{equation*}
\end{example}

\section{Closure relations}\label{section:proof.closure.relations}

In this section, we prove Theorem \ref{Thm-closure-relation}.  
As we already claimed in Introduction, the closure relation of $ H $-orbits in $ \dblFV $ and that of $ B_H $-orbits in $ G/P_G $ are the same (cf.~\cite[Lemma 3.1]{Fresse.N.2023}).  So we will prove the theorem for $B_{H} \backslash G / P_{G}$.

\begin{proof}[Proof of Theorem \ref{Thm-closure-relation}]
Let $\gamma, \gamma ' \in \Gamma (n)$ (Definition \ref{Def-Gamma(n)}) and $\mathcal{O}_{\gamma}, \mathcal{O}_{\gamma'} \in H \backslash \dblFV$ corresponding orbits under the isomorphism $\Gamma(n) \simeq H \backslash \dblFV $ of Theorem \ref{theorem-H-G-P_G=Gamma(n)}, and we write $\overline{\mathcal{O}}_{\gamma}$ for the closure of $\mathcal{O}_{\gamma}$ in $\dblFV$.
We have to show
$\overline{\mathcal{O}}_{\gamma} \supset \mathcal{O}_{\gamma'}$ if one of the conditions in Theorem \ref{Thm-closure-relation} holds.
The proof is almost the same for all the cases. Namely, we construct a one-parameter family $b(t)\colon \R^{\times} \to B_{H}$, and show that the limit $\lim_{t\to 0} b(t) \cdot z_{\gamma}$ is contained in $\mathcal{O}_{\gamma '}$, where $z_{\gamma}$ is an element of $\mathcal{O}_{\gamma}$. Therefore, we will only prove the case (v)$_{c}$.
Moreover, we only treat the case $n=3$ because the general case can be reduced to this case by considering appropriate submatrices.
Thus, we have to prove that if 
\begin{equation*}
    \gamma = 
    \xymatrix@R=5pt@C=5pt{
    1_{c}&2\ar@{-}@/^10pt/[r] & 3 \\
    },  \qquad
    \gamma' =
    \xymatrix@R=5pt@C=5pt{
    1_{\varepsilon}&2_{-\varepsilon} & 3_{c}, \\
    }   \qquad (\varepsilon \in \{-,+\}), 
\end{equation*}
then $\overline{\mathcal{O}}_{\gamma} \supset \mathcal{O}_{\gamma '}$ holds.
Recall that $\gamma, \gamma' \in \Gamma(3)$ produce representatives of the $B_{H}$-orbits in $ G / P_{G}$ 
in the following way.  
\begin{equation*}
\gamma \leftrightarrow    
\begin{pmatrix}
        0 & 0 & 0 \\ 0 & 0 & 1 \\ 0 & 1 & 0 \\ \hline 1 & 0 & 0 \\ 0 & 1 & 0 \\ 0& 0 & 1
    \end{pmatrix},
    \quad
\gamma' \leftrightarrow    
    \begin{pmatrix}
        \varepsilon & 0 & 0 \\ 0 & -\varepsilon & 0 \\ 0 & 0 & 0 \\ \hline 1 & 0 & 0 \\ 0 & 1 & 0 \\ 0& 0 & 1
    \end{pmatrix}  \quad\in \LregMat_{6,3},
\end{equation*}
where $\LregMat_{6,3}$ is defined in Definition \ref{Def-LM-circ}.
Therefore it is sufficient to construct a one-parameter family $b(t)\colon \R^{\times} \to B_{H}$ satisfying
\begin{equation*}
    b(t) 
    \begin{pmatrix}
        0 & 0 & 0 \\ 0 & 0 & 1 \\ 0 & 1 & 0 
    \end{pmatrix}
    b(t)^{*}
    \xrightarrow{t \to 0 }
    \begin{pmatrix}
        \varepsilon & 0 & 0 \\ 0 & -\varepsilon & 0 \\ 0 & 0 & 0 
    \end{pmatrix}.
\end{equation*}
For this, we take 
\begin{equation*}
b(t) =         
\begin{pmatrix}
                1 & 1 & \varepsilon/2 \\
                0 & 1 & -\varepsilon/2 \\
                0 & 0 & t
\end{pmatrix}
\end{equation*}
and get
\begin{equation*}
b(t) 
	\begin{pmatrix}
		0 & 0 & 0 \\
		0 & 0 & 1 \\
		0 & 1 & 0
	\end{pmatrix}
b(t)^*
	=
	\begin{pmatrix}
		\varepsilon & 0 & t \\
		0 & -\varepsilon & t \\
		t & t & 0
	\end{pmatrix}
	\xrightarrow{t\to 0}
	\begin{pmatrix}
		\varepsilon & 0 & 0 \\
		0 & -\varepsilon & 0 \\
		0 & 0 & 0
	\end{pmatrix},
\end{equation*}
which completes the proof.
\end{proof}

\appendix

\section{Orbit decomposition of a fiber bundle}\label{Section-proof-of-Lem-orbit-decomp-fiber-bundle}

In this appendix, we give a proof of Lemma \ref{Lem-orbit-decomp-fiber-bundle} for the sake of completeness. More precisely, we prove Lemma \ref{lemma:2023-11-15} below, which is completely general.
We do \emph{not} require any additional structure on groups 
$G,P,Q, B$, etc.
appearing in Appendix \ref{Section-proof-of-Lem-orbit-decomp-fiber-bundle}.

\begin{lemma}\label{Lem-B-eq-X-Y}
    Let $B$ be a group, $f \colon X \to Y$ a $B$-equivariant map between two $B$-sets $X,Y$, and $S \subset Y$ be a representative of the orbit decomposition $B \backslash Y$:
    \begin{equation}\label{eq-BY=Bs}
        B \backslash Y \simeq \coprod_{s \in S} B \cdot s.
    \end{equation}
    Then, we have a bijection
\begin{equation}\label{eq-BsX=BX}
\begin{array}{ccc}
\displaystyle
        \coprod_{s\in S} B_{s}\backslash f^{-1}(s)
        & \xrightarrow{\sim} &
        B\backslash X
        \\
        \rotatebox{90}{$\in$} && \rotatebox{90}{$\in$}
        \\
        \calorbit
        & \mapsto &
        B \calorbit.
\end{array}
\end{equation}
\end{lemma}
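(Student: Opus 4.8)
\textbf{Proof plan for Lemma \ref{Lem-B-eq-X-Y}.}
The plan is to exhibit the inverse map explicitly and then check the two composites are identities. First I would observe that both sides of \eqref{eq-BsX=BX} are honest quotients by group actions, so a point of the left-hand side is a pair $(s, B_s\cdot x)$ with $s\in S$ and $x\in f^{-1}(s)$, while a point of the right-hand side is an orbit $B\cdot x$ for $x\in X$. The map in the statement sends $(s,B_s\cdot x)$ to $B\cdot x$; the first thing to verify is that this is well defined, i.e. independent of the representative $x$ within its $B_s$-orbit, which is immediate since $B_s\subset B$.

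Next I would construct a candidate inverse. Given $x\in X$, set $y=f(x)\in Y$. By \eqref{eq-BY=Bs} there is a unique $s\in S$ and some $b\in B$ with $y=b\cdot s$; then $b^{-1}\cdot x\in f^{-1}(s)$ by $B$-equivariance of $f$. Send $B\cdot x$ to $(s, B_s\cdot(b^{-1}\cdot x))$. The key step is to check this is well defined: (i) $s$ is uniquely determined because $S$ is a set of representatives; (ii) if $b'$ is another element with $y=b'\cdot s$, then $b^{-1}b'\in B_s$, so $b^{-1}\cdot x$ and $b'^{-1}\cdot x=(b^{-1}b')\cdot(b^{-1}\cdot x)\cdot$--- more precisely $b'^{-1}\cdot x = (b'^{-1}b)\cdot(b^{-1}\cdot x)$ and $b'^{-1}b\in B_s$, so the two land in the same $B_s$-orbit of $f^{-1}(s)$; (iii) replacing $x$ by $b_0\cdot x$ for $b_0\in B$ changes $y$ to $b_0\cdot y$ and one checks the resulting $s$ is the same and the $B_s$-orbit is unchanged.

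Finally I would verify the two composites are the identity: starting from $(s,B_s\cdot x)$ with $x\in f^{-1}(s)$, we have $f(x)=s$, so in the backward map we may take $b=e$, recovering $(s,B_s\cdot x)$; starting from $B\cdot x$, the forward map of $(s,B_s\cdot(b^{-1}\cdot x))$ is $B\cdot(b^{-1}\cdot x)=B\cdot x$. This completes the argument. The only genuinely delicate point is the well-definedness of the inverse, specifically item (ii) above, where one uses that the stabilizer $B_s$ is precisely the ambiguity in writing $y$ as a $B$-translate of $s$; everything else is bookkeeping. I would present the argument in roughly this order: well-definedness of the given map, construction of the inverse, well-definedness of the inverse, and the two straightforward verifications that the composites are identities.
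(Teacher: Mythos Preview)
Your proof is correct and essentially the same as the paper's: the paper proves the map is a bijection by checking injectivity and surjectivity directly, while you package the same observations as the construction of an explicit inverse, but the underlying steps (using $B$-equivariance of $f$ to force $b\in B_s$, and using that $S$ is a set of representatives to pin down $s$) are identical. Your treatment of well-definedness is in fact slightly more careful than the paper's, which does not explicitly spell out why orbits coming from distinct $s,s'\in S$ cannot coincide in $B\backslash X$.
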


\begin{proof}
    First, we prove the injectivity of the map \eqref{eq-BsX=BX}.
    Let $x, x' \in f^{-1}(s)$ with $B \cdot x = B \cdot x'$. 
    Then, there exists $b \in B$ such that $bx = x'$. Applying the $B$-equivariant map $f \colon X \to Y$ to this equality, we have $bs=s$ (because $x, x' \in f^{-1}(s)$), which implies $b \in B_{s}$. Therefore, $B_{s} \cdot x = B_{s} \cdot x'$, which 
    implies the injectivity of the map \eqref{eq-BsX=BX}.

    Next, let us prove the surjectivity.   
    Pick any $x\in X$. Then, 
    there exist $s\in S$ and $b\in B$ such that $ s = b f(x) = f(b x) $ by 
    the $B$-equivariance of $f\colon X \to Y$. Thus, $ b x \in f^{-1}(s)$, hence $B_{s} (b x) \in B_s \backslash f^{-1}(s) $ maps to $ B B_s (b x) = B \cdot x $ 
    by the map of \eqref{eq-BsX=BX}, and we have done.
\end{proof}

\begin{lemma}\label{lemma:2023-11-15}
Let $G$ be a group, $Q\subset P$ subgroups of $G$,
\begin{equation*}
    \pi \colon G/Q \to G/P
\end{equation*}
a natural projection.
Moreover, let $B$ be a subgroup of $G$, and $S\subset G$ a set of representatives of the orbit decomposition $B\backslash G/P$:
\begin{equation}
    B\backslash G/P
    \simeq 
    \coprod_{s\in S} B s P.
\label{eq-Lem-orbit-decomp-base}
\end{equation}
Then, we have isomorphisms
\begin{equation}
\begin{array}{ccccc}
\displaystyle
        \coprod_{s \in S} (P\cap B^{s^{-1}} )\backslash P/Q        
        & \xrightarrow{\sim} &
\displaystyle
        \coprod_{s\in S} B_{s P}\backslash sP/Q
        & \xrightarrow{\sim} &
        B\backslash G/ Q
        \\
        \rotatebox{90}{$\in$} && \rotatebox{90}{$\in$} && \rotatebox{90}{$\in$}
        \\
        (P\cap B^{s^{-1}} ) p Q
        & \mapsto &
        B_{s P} s p Q
        & \mapsto &
        B s p Q
\end{array}
\label{eq-BGU=B_gopi^-1(goP)-11-15}
\end{equation}
where $B_{s P}$ is the isotropy subgroup of $B$ at $s P \in G/P$ and $B^{s^{-1}} \coloneqq s^{-1}Bs$.
\end{lemma}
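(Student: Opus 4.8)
The statement decomposes into two bijections. The second one, $\coprod_{s\in S} B_{sP}\backslash sP/Q \xrightarrow{\sim} B\backslash G/Q$, is exactly an application of Lemma~\ref{Lem-B-eq-X-Y} to the $B$-equivariant map $\pi\colon G/Q \to G/P$, together with the observation that $\pi^{-1}(sP) = sP/Q$ (the set of cosets $gQ$ with $gP = sP$) and that the stabilizer of $sP\in G/P$ in $B$ is $B_{sP}$. The first bijection, $\coprod_{s\in S}(P\cap B^{s^{-1}})\backslash P/Q \xrightarrow{\sim} \coprod_{s\in S} B_{sP}\backslash sP/Q$, is a summand-by-summand identification: I would fix $s\in S$ and produce a bijection $(P\cap B^{s^{-1}})\backslash P/Q \xrightarrow{\sim} B_{sP}\backslash sP/Q$ sending $(P\cap B^{s^{-1}})\,p\,Q$ to $B_{sP}\,sp\,Q$.

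\textbf{Key steps.} First I would record the elementary identity $B_{sP} = B \cap sPs^{-1} = s\,(s^{-1}Bs \cap P)\,s^{-1} = s\,(B^{s^{-1}}\cap P)\,s^{-1}$, so that conjugation by $s$ carries the subgroup $P\cap B^{s^{-1}}$ of $P$ onto the subgroup $B_{sP}$ of $sPs^{-1}$. Next, the map $p\mapsto sp$ is a bijection $P\to sP$ which intertwines the left action of $P\cap B^{s^{-1}}$ on $P$ with the left action of $B_{sP}$ on $sP$ (by the identity just recorded) and is compatible with the right $Q$-action; hence it descends to the claimed bijection of double coset spaces. Well-definedness, injectivity, and surjectivity of the descended map all follow formally from the fact that $p\mapsto sp$ is an equivariant bijection. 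Then I would apply Lemma~\ref{Lem-B-eq-X-Y} as described above for the second arrow. Finally, composing the two bijections gives the map $(P\cap B^{s^{-1}})\,p\,Q \mapsto B\,sp\,Q$, which is the statement of Lemma~\ref{Lem-orbit-decomp-fiber-bundle} (with $B^{s^{-1}}$ in place of the notation $B^{s}$ there — I would note the conjugation convention matches).

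\textbf{Main obstacle.} There is no serious obstacle here; the content is entirely bookkeeping about how orbit spaces of a fiber bundle $G/Q\to G/P$ stratify over the base orbits. The one point that needs care is the bijection in Lemma~\ref{Lem-B-eq-X-Y} itself, specifically checking that the fibers $f^{-1}(s)$ for $s\in S$ exhaust $B\backslash X$ without overlap: distinct $s\in S$ lie in distinct $B$-orbits of $Y$, so their preimages lie in distinct $B$-saturated subsets of $X$, giving disjointness, while $B$-equivariance of $f$ together with \eqref{eq-BY=Bs} gives that every $x\in X$ can be moved by $B$ into some $f^{-1}(s)$, giving surjectivity; the injectivity within a single fiber uses that if $bx = x'$ with $x,x'\in f^{-1}(s)$ then $bs = s$, i.e. $b\in B_s$. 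I would also make explicit the identification $\pi^{-1}(sP)\cong sP/Q$ and that $B_s$ for the base point $s = sP$ is $B_{sP}$, so that the abstract Lemma~\ref{Lem-B-eq-X-Y} applies verbatim. With these remarks in place the proof is a short concatenation of the two lemmas.
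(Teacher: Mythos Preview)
Your proposal is correct and matches the paper's proof essentially line for line: the paper also derives the second arrow by applying Lemma~\ref{Lem-B-eq-X-Y} to $\pi\colon G/Q\to G/P$, and handles the first arrow via the same conjugation-by-$s$ equivariance (phrased there as a commutative diagram $(b,pQ)\mapsto(sbs^{-1},spQ)$). Your explicit computation $B_{sP}=s(P\cap B^{s^{-1}})s^{-1}$ is exactly the content of that diagram.
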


\begin{proof}
The second isomorphism of \eqref{eq-BGU=B_gopi^-1(goP)-11-15} is a consequence of Lemma \ref{Lem-B-eq-X-Y} (by taking $\pi\colon G/Q \to G/P$ as $f\colon X\to Y$).

For the first isomorphism of \eqref{eq-BGU=B_gopi^-1(goP)-11-15}, we have a commutative diagram:
\begin{equation*}
    \xymatrix{
    (P \cap B^{s^{-1}} )
    \times  
    P / Q  
    \ar[rrr]^-{(b,pQ )\mapsto (sbs^{-1}, s p Q)}_-{\sim}
    \ar[d]^-{\text{action}} 
    & & &
    B_{sP} 
    \times  
    sP/Q
    \ar[d]^-{\text{action}}
    \\
    P / Q
    \ar[rrr]^-{pQ \mapsto s p Q}_-{\sim}
    &
    & 
    &
    s P / Q,
    }
\end{equation*}
which gives the desired isomorphism.
\end{proof}


\skipover{
\begin{proof}
[Proof that the second map of \eqref{eq-BGU=B_gopi^-1(goP)-11-15} is an isomorphism]
For any $s\in S$, we write 
\begin{equation*}
    \pi_{s} \colon \pi^{-1}(B s P) \to B s P \subset  G/P
\end{equation*}
for the restriction of the projection $\pi\colon G/Q \to G/P$ to $\pi^{-1}(B s P) \subset G/Q$.
\begin{equation}
	\xymatrix{
	\pi^{-1}(B s P)
	\ar[d]^-{\pi_{s}}
	\ar@{^{(}->}[r]
	&
	\displaystyle
	\coprod_{s\in S}\pi^{-1}(B s P)
	\ar[d]^-{\coprod\pi_{s}}
	\ar@{=}[r]
	&
	G/Q
	\ar[d]^-{\pi}
	\\
	B s P
	\ar@{^{(}->}[r]
	&
	\displaystyle
	\coprod_{s\in S}
	B s P
	\ar@{=}[r]^-{\eqref{eq-Lem-orbit-decomp-base}}
	&
	G/P.
	}
\label{eq-diagram-proof-lemma-orbit-decomp}
\end{equation}
By the upper right horizontal equality of the diagram \eqref{eq-diagram-proof-lemma-orbit-decomp}, we have
\begin{equation*}
	\coprod_{s\in S} B \backslash \pi^{-1}(B s P)  =  B \backslash G/Q.
\end{equation*}
Therefore, in order to prove that the second map of \eqref{eq-BGU=B_gopi^-1(goP)-11-15} is an isomorphism,
it is sufficient to prove
\begin{equation*}
    B \backslash \pi^{-1}(B s P)
    \simeq
    B_{s P} \backslash \pi^{-1}(s P)
\end{equation*}
because $\pi^{-1}(s P)= s P/Q \subset G/Q$ for any $s\in S$.
Let $\mathcal{O}\in B \backslash \pi^{-1}(B s P)$.
Define
\begin{equation}
    \mathcal{O}_{s} \coloneqq \mathcal{O} \cap \pi^{-1}(s P).
\label{eq-def-O_{s}}
\end{equation}
Then, first, we prove that
\begin{equation}
    \mathcal{O}_{s} \text{ is a $B_{s P}$-orbit. }
\label{eq-O_s-is-a-B_sQ-orbit}
\end{equation}

For this purpose, let $x, y \in \mathcal{O}_{s}=\mathcal{O} \cap \pi^{-1}(s P)$.
Then, because $\mathcal{O}$ is a $B$-orbit, there exists $b\in B$ such that 
\begin{equation}
    b x = y.
    \label{eq-bp=q-For-Lemma}
\end{equation}
By applying $\pi$ to this equation \eqref{eq-bp=q-For-Lemma}, we have
\begin{equation*}
    \pi(bx)=\pi(y).
\end{equation*}
Then, because $\pi$ is $G$-equivariant, this equation is equivalent to
\begin{equation*}
    b\pi(x)=\pi(y).
\end{equation*}
Therefore, because $x,y\in \pi^{-1}(s P)$ by definition, we have
\begin{equation*}
    b s P = s P,
\end{equation*}
which show that
\begin{equation*}
    b \in B_{s P}.
\end{equation*}
This equation and \eqref{eq-bp=q-For-Lemma} imply that $x$ and $y$ are in the same $B_{s P}$-orbit.
This completes the proof of \eqref{eq-O_s-is-a-B_sQ-orbit}.

Therefore,
\begin{equation}
    \begin{array}{ccc}
        B\backslash\pi^{-1}(B s P) & \to & B_{s P} \backslash \pi^{-1}(s P)
        \\
        \rotatebox{90}{$\in$} && \rotatebox{90}{$\in$}
        \\
        \mathcal{O} & \mapsto & \mathcal{O}_{s} \coloneqq \mathcal{O} \cap \pi^{-1}( s P).
    \end{array}
\label{eq-map-O-to-O_{s}}
\end{equation}
is a well-defined map.

Next, we show that \eqref{eq-map-O-to-O_{s}} is surjective.
Let $x\in \pi^{-1}(s P)$.
Then, the $B$-orbit $B x$ through $x$ is mapped to the $B_{s P}$-orbit through $x$ by \eqref{eq-map-O-to-O_{s}}.

Finally, we show that \eqref{eq-map-O-to-O_{s}} is injective.
Let $\mathcal{O},\mathcal{O}' \in B \backslash \pi^{-1}(B s Q)$.
If $\mathcal{O}_{s}=\mathcal{O}'_{s}$, then, by definition \eqref{eq-def-O_{s}}, $\mathcal{O}\cap \mathcal{O}'\neq \emptyset$,
which means that $\mathcal{O}=\mathcal{O}'$.
This completes the proof.
\end{proof}

Next, we prove that the first map of \eqref{eq-BGU=B_gopi^-1(goP)-11-15} is an isomorphism.

\begin{proof}[Proof that the first map of \eqref{eq-BGU=B_gopi^-1(goP)-11-15} is an isomorphism]
First, we have a bijection:
\begin{equation*}
\begin{array}{ccc}
    \chi\colon
    B_{s P} = s P s^{-1} \cap B & \xrightarrow{\sim} & P  \cap  s^{-1} B s = P \cap B^{s^{-1}}
    \\
    \rotatebox{90}{$\in$} && \rotatebox{90}{$\in$}
    \\
    b & \mapsto & s^{-1} b s.
\end{array}
\end{equation*}
We also have a bijection:
\begin{equation*}
\begin{array}{ccc}
    \eta \colon
    s P/Q = \pi^{-1}(s P) & \xrightarrow{\sim} & P/Q
    \\
    \rotatebox{90}{$\in$} && \rotatebox{90}{$\in$}
    \\
    s p Q & \mapsto & p Q,
\end{array}   
\end{equation*}
which is a $\chi$-morphism, in other words, we have
\begin{equation*}
    \eta(b s p Q)
    =
    \eta(s (s^{-1} b s) p Q)
    =
    (s^{-1} b s) p Q
    =
    \chi(b) \eta(s p Q),
\end{equation*}
where $b\in B_{s P}$ and $s p Q \in s P/Q$.
This completes the proof by Lemma \ref{Lem-phi-morphism} given below.
\end{proof}

\begin{lemma}
\label{Lem-phi-morphism}
Let $G$ (resp.~$H$) be a group, $X$ (resp.~$Y$) a set with $G$-action (resp.~$H$-action).
Suppose that there exist isomorphisms $\chi\colon G\xrightarrow{\sim} H$ and $\eta\colon X\xrightarrow{\sim} Y$ such that $\eta$ is a $\chi$-morphism:
\begin{equation*}
	\eta(gx)
	=
	\chi(g)\eta(x).
\end{equation*}
Then, $\eta$ induces an bijection
\begin{equation*}
	\overline{\eta}\colon
	G\backslash X
	\simeq
	H\backslash Y.
\end{equation*}
\end{lemma}

\begin{proof}
First, we prove the the well-definedness of $\overline{\eta}\colon G\backslash X \to H \backslash Y$.
Let $x,x'\in X$ and $g\in G$ satisfy
\begin{equation*}
	gx=x'.
\end{equation*}
Note that $\eta\colon X\xrightarrow{\sim} Y$ is a $\chi$-morphism.
Thus, applying $\eta\colon X\xrightarrow{\sim} Y$, we have
\begin{equation*}
	\chi(g)\eta(x)=\eta(y),
\end{equation*}
which shows that $\chi(x)=\chi(x')\in H\backslash Y$ because $\eta(g)\in H$.

Next, the surjectivity of $\overline{\eta} \colon G\backslash X \to H \backslash Y$ follows from the surjectivity of $\eta \colon X\to Y$.

Finally, we prove the injectivity of $\overline{\eta} \colon G\backslash X \to H \backslash Y$.
Let $x,x'\in X$ satisfy
\begin{equation*}
    \eta(x)=h\eta(x')	
\end{equation*}
Because $\chi\colon G\to Y$ is an isomorphism, there exists $g \in G$ such that $h=\chi(g)$,
which implies
\begin{equation*}
    \eta(x)=\chi(g)\eta(x').
\end{equation*}
Thus, because $\eta$ is a $\chi$-morphism, we have
\begin{equation*}
    \eta(x)=\eta(gx').
\end{equation*}
This implies $x=gx'$ because $\eta$ is an isomorphism.
\end{proof}

}

\section{Real points of double coset decomposition}\label{Section-real-points-of-double-coset-decomposition}

In this section, we give a proof of Proposition \ref{fact-real-orbits-via-Galois-cohomology}. More precisely, we prove 
Proposition \ref{Prop-real-orbits-via-Galois-cohomology-in-Appx} below in more general setting, from which Proposition \ref{fact-real-orbits-via-Galois-cohomology} immediately follows.  
Moreover, we also discuss the properties of real points of double coset decomposition.

\begin{proposition}\label{Prop-real-orbits-via-Galois-cohomology-in-Appx}
Let $G$ be a real algebraic group, $H$ and $L$ its real algebraic subgroups.
Write $G_{\C}, H_{\C}$, and $L_{\C}$ for complexifications $G,H$ and $L$, respectively, and write $X_{\C} \coloneqq G_{\C} / L_{\C}$.
Let $\Xi\subset H_{\C}\backslash X_{\C} $ be a subset of $H_{\C}$-orbits on $X_{\C}$ having nonempty $\R$-rational points:
\begin{equation*}
	\Xi\coloneqq
	\{
        \mathcal{O} \in H_{\C} \backslash X_{\C}
    \mid
	   \mathcal{O}(\R)\neq \emptyset
    \}.
\end{equation*}
Suppose that the first Galois cohomologies $H^{1}(\R, H_{\C})$ and $H^{1}(\R, L_{\C})$ are trivial, i.e., 
\begin{equation*}
    H^{1}(\R, H_{\C}) = 1, \quad H^{1}(\R, L_{\C}) = 1.
\end{equation*}
Then, we have a bijection
\begin{equation*}
	H \backslash G / L 
	\simeq 
	\coprod_{\mathcal{O}\in\Xi}
	H^{1}(\R, (H_{\C})_{x_{\mathcal{O}}}),
\end{equation*}
where $(H_{\C})_{x_{\mathcal{O}}}$ denotes the stabilizer in $H_{\C}$ at $x_{\mathcal{O}} \in \mathcal{O}(\R)$.
\end{proposition}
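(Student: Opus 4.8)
The plan is to derive Proposition \ref{Prop-real-orbits-via-Galois-cohomology-in-Appx} from the general machinery of Galois cohomology applied to the transitive (homogeneous) pieces, exactly mirroring how Lemma \ref{lemma-real-orbits-via-Galois-cohomology-non-homogeneous-case-flag} was obtained from Lemma \ref{Lem-Galois-isom-real-points-of-orbit-for-hom-sp-case-flag}, but now taking into account that $X_{\C} = G_{\C}/L_{\C}$ is itself homogeneous under $G_{\C}$ rather than under $H_{\C}$.

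First I would reduce to the orbit level. Since the set of real points $X = X_{\C}(\R) = (G_{\C}/L_{\C})(\R)$ decomposes $H$-equivariantly into the $H$-stable pieces $\mathcal{O}(\R)$ for $\mathcal{O} \in \Xi$ (those complex $H_{\C}$-orbits that meet the real points), we get
\begin{equation*}
    H \backslash X = \coprod_{\mathcal{O} \in \Xi} H \backslash \mathcal{O}(\R).
\end{equation*}
So it suffices to show $H \backslash \mathcal{O}(\R) \simeq H^{1}(\R, (H_{\C})_{x_{\mathcal{O}}})$ for each $\mathcal{O}\in\Xi$, and then to identify $X = (G_{\C}/L_{\C})(\R)$ with $G/L$. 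The first of these is precisely Lemma \ref{Lem-Galois-isom-real-points-of-orbit-for-hom-sp-case-flag} applied to the real algebraic group $H$ acting on the homogeneous space $\mathcal{O}_{\C} = H_{\C}/(H_{\C})_{x_{\mathcal{O}}}$: here $x_{\mathcal{O}} \in \mathcal{O}(\R)$ is a chosen real base point (which exists by the definition of $\Xi$), $(H_{\C})_{x_{\mathcal{O}}}$ is defined over $\R$ because $x_{\mathcal{O}}$ is $\gamma$-fixed, and the hypothesis $H^{1}(\R, H_{\C}) = 1$ is in force. This gives a bijection $H \backslash \mathcal{O}(\R) \xrightarrow{\sim} H^{1}(\R, (H_{\C})_{x_{\mathcal{O}}})$ sending $Hgx_{\mathcal{O}}$ to $[g^{-1}\gamma(g)]_{\gamma}$.

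The remaining point — and the one place where the hypothesis $H^{1}(\R, L_{\C}) = 1$ enters, which did not appear in Lemma \ref{lemma-real-orbits-via-Galois-cohomology-non-homogeneous-case-flag} — is the identification of $X = (G_{\C}/L_{\C})(\R)$ as an $H$-set. I would invoke the long exact cohomology sequence \cite[Chap.~I, \S 5.4, Prop.~36]{Serre-Galois-Cohomology} for $1 \to L_{\C} \to G_{\C} \to G_{\C}/L_{\C} \to 1$ evaluated over $\R$:
\begin{equation*}
    1 \to L \to G \to (G_{\C}/L_{\C})(\R) \to H^{1}(\R, L_{\C}) \to H^{1}(\R, G_{\C}).
\end{equation*}
Since $H^{1}(\R, L_{\C}) = 1$, the natural map $G \to (G_{\C}/L_{\C})(\R)$ is surjective, and its fibers are the $L$-cosets, so $G/L \xrightarrow{\sim} (G_{\C}/L_{\C})(\R)$ as left $G$-sets, hence a fortiori as left $H$-sets. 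Combining this with the orbit-wise bijections above yields $H \backslash G / L \simeq \coprod_{\mathcal{O}\in\Xi} H^{1}(\R, (H_{\C})_{x_{\mathcal{O}}})$, and one checks the map is independent of the choice of representatives $x_{\mathcal{O}}$ up to the canonical isomorphisms of Galois cohomology under inner twisting (\cite[Chap.~I, \S 5.4]{Serre-Galois-Cohomology}). Finally, Proposition \ref{fact-real-orbits-via-Galois-cohomology} follows by specializing $(G,H,L)$ to $(G, B, P)$ in the notation there, noting that $H^{1}(\R, P_{\C}) = 1$ whenever $P$ is a real parabolic subgroup (a standard fact, reducible via Lemma \ref{Lem-for-solvableB-H^1(B,R)=H^1(T,R)-flag} to the Levi, where it is classical), so that only the explicitly stated hypothesis $H^{1}(\R, B_{\C}) = 1$ needs to be assumed.

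The main obstacle is not conceptual but bookkeeping: one must be careful that all the subgroups in play $((H_{\C})_{x_{\mathcal{O}}}, L_{\C})$ are genuinely defined over $\R$ with respect to the given conjugation $\gamma$, that the base points $x_{\mathcal{O}}$ can be chosen $\gamma$-invariant, and that the two identifications — "$H \backslash \mathcal{O}(\R)$ via Lemma \ref{Lem-Galois-isom-real-points-of-orbit-for-hom-sp-case-flag}" and "$X = G/L$ via the long exact sequence" — are compatible, i.e.\ that the composite bijection really is the one claimed. All of this is routine given the lemmas already in the excerpt, so I would keep the write-up short, citing Lemma \ref{Lem-Galois-isom-real-points-of-orbit-for-hom-sp-case-flag} and \cite[Chap.~I, \S 5.4, Prop.~36]{Serre-Galois-Cohomology} for the two ingredients and \cite[Cor.~A.6]{Nishiyama.Tauchi.2024} as a parallel reference.
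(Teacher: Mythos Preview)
Your proof of Proposition \ref{Prop-real-orbits-via-Galois-cohomology-in-Appx} is correct and follows exactly the paper's approach: apply the long exact sequence (equivalently, Lemma \ref{Lem-Galois-isom-real-points-of-orbit-for-hom-sp-case-flag}) orbit-by-orbit using $H^{1}(\R,H_{\C})=1$, then use the long exact sequence for $L_{\C}\subset G_{\C}$ and $H^{1}(\R,L_{\C})=1$ to identify $(G_{\C}/L_{\C})(\R)$ with $G/L$.

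One correction to your closing aside on deducing Proposition \ref{fact-real-orbits-via-Galois-cohomology}: the claim ``$H^{1}(\R,P_{\C})=1$ whenever $P$ is a real parabolic subgroup'' is \emph{false} in general. Lemma \ref{Lem-for-solvableB-H^1(B,R)=H^1(T,R)-flag} does reduce to the Levi, but the Levi can have nontrivial Galois cohomology (the paper itself gives the example $G=\U(p,p+1)$ with a Borel whose Levi contains $S^{1}$-factors). The paper instead bypasses this hypothesis entirely: for a \emph{real} parabolic $P$ in a real reductive $G$, one has $(G_{\C}/P_{\C})(\R)=G/P$ directly by \cite[Thm.~4.13]{Borel.Tits.1965} (or \cite[Prop.~20.5]{Borel.1991}), without any appeal to $H^{1}(\R,P_{\C})$. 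So in specializing to Proposition \ref{fact-real-orbits-via-Galois-cohomology} you should invoke that result rather than the incorrect vanishing claim.
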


\begin{proof}
    Applying \cite[Chap.~I, \S 5.4, Prop.~36]{Serre-Galois-Cohomology} to $(H_{\C})_{x_{\mathcal{O}}} \subset H_{\C}$, we have a long exact sequence (of pointed sets):
    \begin{equation*}
        1 \to H_{x_{\mathcal{O}}} \to H \to (H_{\C} / (H_{\C})_{x_{\mathcal{O}}})(\R)
        \to H^{1}(\R, (H_{\C})_{x_{\mathcal{O}}}) \to H^{1}(\R, H_{\C}).
    \end{equation*}
    Thus, the first assumption $H^{1}(\R, H_{\C}) = 1$ implies 
    $H \backslash \mathcal{O}(\R) \simeq 
    H \backslash (H_{\C} / (H_{\C})_{x_{\mathcal{O}}})(\R) \simeq
    H^{1}(\R, (H_{\C})_{x_{\mathcal{O}}})$ for any $\mathcal{O} \in \Xi$.
    Taking the coproduct over $ \mathcal{O} \in \Xi$, we have
    \begin{equation*}
        H \backslash (G_{\C} / L_{\C})(\R) 
	    \simeq 
	    \coprod_{\mathcal{O}\in\Xi}
	    H^{1}(\R, (H_{\C})_{x_{\mathcal{O}}}).
    \end{equation*}
    In the same way, 
    applying \cite[Chap.~I, \S 5.4, Prop.~36]{Serre-Galois-Cohomology} to $L_{\C} \subset G_{\C}$, we have a long exact sequence (of pointed sets):
    \begin{equation*}
        1 \to L \to G \to (G_{\C} / L_{\C})(\R)
        \to H^{1}(\R, L_{\C}) \to H^{1}(\R, G_{\C}).
    \end{equation*}
    Thus, the second assumption $H^{1}(\R, L_{\C}) = 1$ implies
$(G_{\C}/L_{C})(\R) = G/L$.  
Combining these two, we get 
the proposition.  
\end{proof}

\begin{remark}
To apply this proposition to get Proposition \ref{fact-real-orbits-via-Galois-cohomology}
we take $ L $ as the parabolic subgroup $ P $ of $ G $.  
Although the assumption $ H^1(\R, P_{\C}) = 1 $ is \emph{not} assumed in Proposition \ref{fact-real-orbits-via-Galois-cohomology}, we can elude the condition.  
Namely, 
if we assume $ P $ is a \emph{real parabolic subgroup} of a real reductive group $ G $, 
the conclusion $(G_{\C}/P_{\C})(\R) = G/P$ always holds 
(see \cite[Thm.~4.13]{Borel.Tits.1965} or \cite[Prop.~20.5]{Borel.1991}).
Thus we do not need the assumption $ H^{1}(\R, P_{\C}) = 1$ in Proposition \ref{fact-real-orbits-via-Galois-cohomology} (and in fact, this last condition may fail in general).  
This fact was pointed out to us by Takuma Hayashi.  We thank him for his comment.
\end{remark}

    Suppose that we are in the setting of Proposition \ref{Prop-real-orbits-via-Galois-cohomology-in-Appx}.  
We write $X_{\C}' \coloneqq  H_{\C} \backslash G_{\C}  $ and let 
    $\Xi' \subset  X_{\C}' /L_{\C} $ be a subset of $L_{\C}$-orbits on $X_{\C}'$ having nonempty $\R$-rational points:
\begin{equation*}
	\Xi' \coloneqq
	\{
        \mathcal{O}' \in  X_{\C}' / L_{\C}
    \mid
	   \mathcal{O}' (\R)\neq \emptyset
    \}.
\end{equation*}
We may and do regard $\Xi$ and $\Xi'$ as subsets of the double coset space $H_{\C} \backslash G_{\C} /L_{\C}$.  
It seems interesting to know whether $ \Xi = \Xi' $ holds or not.  

In fact, the equality $ \Xi = \Xi' $ is not always true.  
For example, let us consider the case $(G,H,L) = (\GL_{2}(\R), B_{2}(\R), \OO_{2}(\R))$, where $B_{2}(\R)$ is the subgroup of upper triangular matrices of $\GL_{2}(\R)$ and $\OO_{2}(\R)$ is the orthogonal group.  
Then, we have $(G_{\C},H_{\C},L_{\C}) = (\GL_{2}(\C), B_{2}(\C), \OO_{2}(\C))$, 
where $ H^{1}(\R, L_{\C}) =H^{1}(\R, \OO_{2}(\C)) \neq 1 $ (so that it does not satisfy our assumption).  In this case, we get 
\begin{equation*}
    \begin{split}
        X_{\C}' =B_{2}(\C) \backslash \GL_{2}(\C)\simeq \mathbb{P}^{1}(\C), \quad
        X_{\C} = \GL_{2}(\C) / \OO_{2}(\C) \simeq \Sym_{2}(\C) \cap \GL_{2}(\C),
    \end{split}
\end{equation*}
where $\mathbb{P}^{1}(\C)$ is the projective space and $\Sym_{2}(\C)$ is the space of symmetric matrices.
Thus, an easy computation shows
\begin{equation*}
\begin{split}
    X'_{\C} / \OO_{2}(\C) 
    &\simeq 
    \left( \C \backslash\{-\sqrt{-1}, \sqrt{-1}\}\right)
    \coprod \{-\sqrt{-1}, \sqrt{-1}\},
    \\
    B_{2}(\C) \backslash X_{\C} 
    &\simeq 
    B_{2}(\C) \cdot \begin{pmatrix}
        1 & 0 \\ 0 & 1
    \end{pmatrix} 
    \coprod 
    B_{2}(\C) \cdot \begin{pmatrix}
        0 & 1 \\ 1 & 0
    \end{pmatrix},
\end{split}
\end{equation*}
where we regard $-\sqrt{-1},\sqrt{-1} \in \C \subset \mathbb{P}^{1}(\C) $.
It is clear that 
the second element of $X'_{\C} / \OO_{2}(\C)$ has no $\R$-rational points 
and that the both of two elements of $B_{2}(\C) \backslash X_{\C}$ have an $\R$-rational point,
which shows that
\(
    \Xi \neq \Xi'
\).

However, under the assumption of Proposition \ref{Prop-real-orbits-via-Galois-cohomology-in-Appx}, $\Xi$ coincides with $\Xi'$.
More generally, the following holds.

\begin{lemma}
	Let $G$ be a real algebraic group, 
	$H$ and $L$ its real algebraic subgroups.
    Write $G_{\C}, H_{\C}$ and $L_{\C}$ for complexifications of $G,H$ and $L$, respectively.  Put 
	\begin{equation*}
		X_{\mathbb{C}}' \coloneqq H_{\mathbb{C}} \backslash G_{\mathbb{C}},
		\quad
		X_{\mathbb{C}} \coloneqq G_{\mathbb{C}} / L_{\mathbb{C}},
	\end{equation*} 
	and 
	\begin{equation*}
		\Xi' \coloneqq \{ \mathcal{O}' \in X_{\mathbb{C}}' / L_{\mathbb{C}}\mid \mathcal{O}'(\mathbb{R}) \neq \emptyset\},
		\qquad
		\Xi \coloneqq \{ \mathcal{O} \in H_{\mathbb{C}} \backslash X_{\mathbb{C}} \mid \mathcal{O}(\mathbb{R}) \neq \emptyset\}.
	\end{equation*}
	Suppose that 
	\begin{math}
		H^{1}(\mathbb{R}, H_{\mathbb{C}})=1.
    \end{math}
	Then, we have
	\begin{math}
		\Xi' \subset \Xi
	\end{math}.  
Thus, if moreover $ H^1(\R, L_{\C}) = 1 $ holds, 
we get the equality $ \Xi' = \Xi $.  
\end{lemma}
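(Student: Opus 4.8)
The plan is to set up the standard Galois-cohomology diagram relating a point, its $ H_{\C} $-orbit, and its $ L_{\C} $-orbit, and then chase $ \R $-rational points through it. Concretely, suppose $ \mathcal{O}' \in \Xi' $, so there is $ g \in G_{\C} $ with $ H_{\C} g \in X_{\C}' $ an $ \R $-rational point of $ X_{\C}' / L_{\C} $; that is, $ H_{\C} g L_{\C} $ is stable under the Galois conjugation $ \gamma $ coming from the real structure. I would first translate ``$ H_{\C} g L_{\C} $ is $ \gamma $-stable'' into the cocycle-type statement $ \gamma(g) \in H_{\C} g L_{\C} $, i.e.\ $ \gamma(g) = h g \ell $ for some $ h \in H_{\C} $, $ \ell \in L_{\C} $. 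The goal is to produce from $ g $ a new representative $ g' = h_0 g \ell_0 $ of the \emph{same} double coset which is $ \gamma $-fixed, because then $ g' L_{\C} \in X_{\C} $ is a genuine $ \R $-point lying in the $ H_{\C} $-orbit $ H_{\C} g L_{\C} = \mathcal{O} $, giving $ \mathcal{O} \in \Xi $. This is exactly where the hypothesis $ H^1(\R, H_{\C}) = 1 $ enters.

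The key steps, in order: (1) From $ \gamma(g) = h g \ell $, apply $ \gamma $ again and use $ \gamma^2 = \mathrm{id} $ to derive a compatibility relation (a ``twisted cocycle condition'') on the pair $ (h, \ell) $; this is the computation $ g = \gamma(h)\gamma(g)\gamma(\ell) = \gamma(h) h g \ell \gamma(\ell) $, hence $ \gamma(h) h = \mathbf{1} $ modulo the ambiguity that $ g $ has in its double coset — more precisely $ \gamma(h) h \in g L_{\C} g^{-1} \cap H_{\C} $ after absorbing. (2) Replace $ g $ by $ g \ell_1 $ for a suitable $ \ell_1 \in L_{\C} $ to normalize $ \ell $, reducing to the case where the obstruction lives purely in $ H_{\C} $; then $ c := \gamma(g) g^{-1} \in H_{\C} $ satisfies $ \gamma(c) c = \mathbf{1} $, i.e.\ $ c $ defines a class in $ H^1(\R, H_{\C}) $. (3) Invoke $ H^1(\R, H_{\C}) = 1 $: there is $ h_0 \in H_{\C} $ with $ c = h_0^{-1} \gamma(h_0) $, so $ g' := h_0 g $ satisfies $ \gamma(g') = \gamma(h_0)\gamma(g) = \gamma(h_0) c g = h_0 g = g' $ (up to the normalization from step (2)), whence $ g' $ is $ \gamma $-fixed and $ g' L_{\C} \in \mathcal{O}(\R) $. (4) Conclude $ \Xi' \subset \Xi $; the final sentence $ \Xi' = \Xi $ under the extra hypothesis $ H^1(\R, L_{\C}) = 1 $ then follows by symmetry — apply the inclusion just proved to the transposed situation $ (G^{\mathrm{op}}, L, H) $, or equivalently run the same argument on the right to get $ \Xi \subset \Xi' $.

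For references I would cite the long exact cohomology sequence \cite[Chap.~I, \S 5.4, Prop.~36]{Serre-Galois-Cohomology} and the description of $ H^1(\R, -) $ via $ \gamma $-twisted conjugacy classes from Lemma \ref{Lem-Galois-coh=gamma-conjugation-flag}, together with Lemma \ref{Lem-Galois-isom-real-points-of-orbit-for-hom-sp-case-flag}; indeed, once the bookkeeping is phrased correctly, the statement is essentially the functoriality of ``having a rational point'' along the fibration $ X_{\C}' \to H_{\C}\backslash G_{\C}/L_{\C} \leftarrow X_{\C} $ combined with vanishing of $ H^1 $ of the acting group. The main obstacle I anticipate is bookkeeping the two-sided ambiguity cleanly: the element $ c = \gamma(g)g^{-1} $ is only well-defined modulo the relevant stabilizer, and I must check that the cocycle condition $ \gamma(c)c = \mathbf{1} $ genuinely holds on the nose (not merely modulo $ g L_{\C} g^{-1} $) after the normalization in step (2), so that the hypothesis $ H^1(\R, H_{\C}) = 1 $ can be applied to the honest class of $ c $. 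A clean way to avoid this is to phrase everything through the exact sequence $ (G_{\C}/L_{\C})(\R) \to H^1(\R, L_{\C}) \to H^1(\R, G_{\C}) $ and its $ H_{\C} $-counterpart simultaneously, so that the passage between the two pictures is automatic; I would choose whichever formulation keeps the $ \gamma $-equivariance most transparent and relegate the verification that $ c $ is a genuine cocycle to a short explicit computation.
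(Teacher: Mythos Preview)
Your step (3) is exactly the heart of the matter and matches the paper, but you have misread the hypothesis at the outset, and this manufactures the difficulty you flag in step (2). The condition $\mathcal{O}'\in\Xi'$ says $\mathcal{O}'(\R)\neq\emptyset$ as a subset of $X'_\C=H_\C\backslash G_\C$: there is a $\gamma$-fixed \emph{point} $H_\C g$ of $X'_\C$ lying in $\mathcal{O}'$. That means $H_\C\gamma(g)=H_\C g$, i.e.\ $\gamma(g)=hg$ for some $h\in H_\C$ --- with \emph{no} $\ell$ on the right. Your equation $\gamma(g)=hg\ell$ is instead the strictly weaker condition that the double coset $H_\C g L_\C$ be $\gamma$-stable, which is not what ``$\mathcal{O}'(\R)\neq\emptyset$'' asserts (a $\gamma$-stable orbit need not have a real point). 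With the correct reading, your step (2) evaporates: $c:=\gamma(g)g^{-1}\in H_\C$ is already an honest cocycle, since $\gamma(c)c=g\gamma(g)^{-1}\cdot\gamma(g)g^{-1}=1$ on the nose, and your step (3) then runs verbatim to produce $g'\in G=G_\C^\gamma$ with $H_\C g'=H_\C g$, whence $g'L_\C\in\mathcal{O}(\R)$.

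The paper executes precisely this argument but packages your step (3) in one line via the long exact sequence \cite[Chap.~I, \S5.4, Prop.~36]{Serre-Galois-Cohomology}: the hypothesis $H^1(\R,H_\C)=1$ gives the isomorphism $H\backslash G\xrightarrow{\sim}X'_\C(\R)$, so any real point of $X'_\C$ is already of the form $H_\C g$ with $g\in G$; then $gL_\C$ is trivially $\gamma$-fixed since $g$ is. Your step (4) by symmetry is exactly how the paper concludes $\Xi'=\Xi$ under the additional hypothesis $H^1(\R,L_\C)=1$.
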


\begin{proof}
First note that the assumption $H^{1}(\mathbb{R}, H_{\mathbb{C}})=1$ implies that we have the isomorphism 
\begin{equation*}
	\begin{array}{cccc}
		H \backslash G 
		& \xrightarrow{\sim} & X_{\mathbb{C}}'(\mathbb{R}) &\subset X_{\mathbb{C}}' = H_{\mathbb{C}} \backslash G_{\mathbb{C}}
		\\
		\rotatebox{90}{$\in$} && \rotatebox{90}{$\in$} &
		\\
		H g 
		& \mapsto & H_{\mathbb{C}} g &
	\end{array}
\end{equation*}
by the long exact sequence of the Galois cohomology:
    \begin{equation*}
        1 \to H \to G \to ( H_{\C} \backslash G_{\C} )(\R)
        \to H^{1}(\R, H_{\C}) \to H^{1}(\R, G_{\C}),
    \end{equation*}
which is obtained by applying 
\cite[Chap.~I, \S 5.4, Prop.~36]{Serre-Galois-Cohomology} to $H_{\C} \subset G_{\C}$.

Now let us prove $\Xi' \subset \Xi$.  
Pick $\mathcal{O}' \in \Xi'$ and choose 
$x \in \mathcal{O}'(\mathbb{R}) \subset X_{\mathbb{C}}'(\mathbb{R})$.
Then, the above isomorphism implies that
\begin{equation*}
	\text{there exists }\: g \in G
	\quad
	\text{ such that }
	\quad
	\mathcal{O}'(\mathbb{R}) \ni x = H_{\mathbb{C}} g,
\end{equation*}
and thus $\mathcal{O}' = H_{\mathbb{C}} g L_{\mathbb{C}}$.
By definition, the corresponding $H_{\mathbb{C}}$-orbit $\mathcal{O}$ on $X_{\mathbb{C}}$ satisfies $H_{\mathbb{C}} g L_{\mathbb{C}}=\mathcal{O}$.
Therefore, $\mathcal{O}$ contains the point $g L_{\mathbb{C}} \in X_{\mathbb{C}}$, which is an $\mathbb{R}$-rational point because if we write $\gamma\colon G_{\C} \to G_{\C}$ for the action of the nontrivial element of the Galois group $\Gal(\C/\R)$, we have
\begin{equation*}
	\gamma(g L_{\mathbb{C}}) = \gamma(g)L_{\mathbb{C}} = gL_{\mathbb{C}}
\end{equation*}
by $g \in G = G_{\mathbb{C}}^{\gamma}$.
This implies $\mathcal{O} \in \Xi$, hence the lemma.
\end{proof}

\bibliographystyle{alpha}  %
\bibliography{bib_RDFV.bib}

\def\cprime{$'$} \def\Dbar{\leavevmode\lower.6ex\hbox to 0pt{\hskip-.23ex \accent"16\hss}D}
\begin{thebibliography}{HNOO13}

\bibitem[Bor91]{Borel.1991}
Armand Borel.
\newblock {\em Linear algebraic groups}, volume 126 of {\em Graduate Texts in Mathematics}.
\newblock Springer-Verlag, New York, second edition, 1991.

\bibitem[Bou07]{Bourbaki.Algebra.9}
N.~Bourbaki.
\newblock {\em \'El\'ements de math\'ematique. {A}lg\`ebre. {C}hapitre 9}.
\newblock Springer-Verlag, Berlin, 2007.
\newblock Reprint of the 1959 original.

\bibitem[BT65]{Borel.Tits.1965}
Armand Borel and Jacques Tits.
\newblock Groupes r\'eductifs.
\newblock {\em Inst. Hautes \'Etudes Sci. Publ. Math.}, (27):55--150, 1965.

\bibitem[DKKS21]{Delorme.Knop.Kroetz.Schrichtkrull.2021}
Patrick Delorme, Friedrich Knop, Bernhard Kr\"{o}tz, and Henrik Schlichtkrull.
\newblock Plancherel theory for real spherical spaces: construction of the {B}ernstein morphisms.
\newblock {\em J. Amer. Math. Soc.}, 34(3):815--908, 2021.

\bibitem[FGT09]{Finkelbrg.Ginzburg.Travkin.2009}
Michael Finkelberg, Victor Ginzburg, and Roman Travkin.
\newblock Mirabolic affine {G}rassmannian and character sheaves.
\newblock {\em Selecta Math. (N.S.)}, 14(3-4):607--628, 2009.

\bibitem[FN16]{Fresse.N.2016}
Lucas Fresse and Kyo Nishiyama.
\newblock On the exotic {G}rassmannian and its nilpotent variety.
\newblock {\em Represent. Theory}, 20:451--481, 2016.
\newblock [Paging previously given as 1--31].

\bibitem[FN20]{Fresse.N.2020}
Lucas Fresse and Kyo Nishiyama.
\newblock {A Generalization of Steinberg Theory and an Exotic Moment Map}.
\newblock {\em International Mathematics Research Notices}, 05 2020.
\newblock rnaa080.

\bibitem[FN21]{Fresse.N.2021}
Lucas Fresse and Kyo Nishiyama.
\newblock Orbit embedding for double flag varieties and {S}teinberg maps.
\newblock In {\em Lie groups, number theory, and vertex algebras}, volume 768 of {\em Contemp. Math.}, pages 21--42. Amer. Math. Soc., Providence, RI, 2021.

\bibitem[FN22]{Fresse.N.2022.arXiv}
Lucas Fresse and Kyo Nishiyama.
\newblock Action of {Hecke} algebra on the double flag variety of type {AIII}.
\newblock {\em Adv. in Appl. Math. {\upshape(to appear)}}, 2022.

\bibitem[FN23a]{Fresse.N.2023}
Lucas Fresse and Kyo Nishiyama.
\newblock On generalized {S}teinberg theory for type {AIII}.
\newblock {\em Algebr. Comb.}, 6(1):165--195, 2023.

\bibitem[FN23b]{Fresse.Nishiyama.arXiv2309.17085}
Lucas Fresse and Kyo Nishiyama.
\newblock Overview on the theory of double flag varieties for symmetric pairs, 2023.

\bibitem[HNOO13]{HNOO.2013}
Xuhua He, Kyo Nishiyama, Hiroyuki Ochiai, and Yoshiki Oshima.
\newblock On orbits in double flag varieties for symmetric pairs.
\newblock {\em Transf. Groups}, 18:1091--1136, 2013.

\bibitem[Hom21]{Homma.2021}
Hiroki Homma.
\newblock Double flag varieties and representations of quivers.
\newblock {\em {\upshape\texttt{arXiv: 2103.\-14509}}}, 2021.

\bibitem[KKS17]{Kroetz.Schrichtkrull.2017}
Friedrich Knop, Bernhard Kr\"{o}tz, and Henrik Schlichtkrull.
\newblock The tempered spectrum of a real spherical space.
\newblock {\em Acta Math.}, 218(2):319--383, 2017.

\bibitem[KM14]{Kobayashi.Mastuki.2014}
T.~Kobayashi and T.~Matsuki.
\newblock Classification of finite-multiplicity symmetric pairs.
\newblock {\em Transform. Groups}, 19(2):457--493, 2014.

\bibitem[Kno95]{Knop.CMH1995}
Friedrich Knop.
\newblock On the set of orbits for a {B}orel subgroup.
\newblock {\em Comment. Math. Helv.}, 70(2):285--309, 1995.

\bibitem[KO13]{Kobayashi.Oshima.2013}
Toshiyuki Kobayashi and Toshio Oshima.
\newblock Finite multiplicity theorems for induction and restriction.
\newblock {\em Adv. Math.}, 248:921--944, 2013.

\bibitem[KS15]{Kobayashi.Speh.2015}
Toshiyuki Kobayashi and Birgit Speh.
\newblock Symmetry breaking for representations of rank one orthogonal groups.
\newblock {\em Mem. Amer. Math. Soc.}, 238(1126):v+110, 2015.

\bibitem[KS18]{Kobayashi.Speh.2018}
Toshiyuki Kobayashi and Birgit Speh.
\newblock {\em Symmetry breaking for representations of rank one orthogonal groups {II}}, volume 2234 of {\em Lecture Notes in Mathematics}.
\newblock Springer, Singapore, 2018.

\bibitem[Mat79]{Matsuki.1979}
Toshihiko Matsuki.
\newblock The orbits of affine symmetric spaces under the action of minimal parabolic subgroups.
\newblock {\em J. Math. Soc. Japan}, 31(2):331--357, 1979.

\bibitem[Mat88]{Matsuki.Mduality.1988}
Toshihiko Matsuki.
\newblock Closure relations for orbits on affine symmetric spaces under the action of parabolic subgroups. {I}ntersections of associated orbits.
\newblock {\em Hiroshima Math. J.}, 18(1):59--67, 1988.

\bibitem[Mat91]{Matsuki.1991}
Toshihiko Matsuki.
\newblock Orbits on flag manifolds.
\newblock In {\em Proceedings of the {I}nternational {C}ongress of {M}athematicians, {V}ol.\ {I}, {II} ({K}yoto, 1990)}, pages 807--813. Math. Soc. Japan, Tokyo, 1991.

\bibitem[Mat13]{Matsuki.2013}
Toshihiko Matsuki.
\newblock An example of orthogonal triple flag variety of finite type.
\newblock {\em J. Algebra}, 375:148--187, 2013.

\bibitem[Mat15]{Matsuki.2015}
Toshihiko Matsuki.
\newblock Orthogonal multiple flag varieties of finite type {I}: {O}dd degree case.
\newblock {\em J. Algebra}, 425:450--523, 2015.

\bibitem[Mat19]{Matsuki.arXiv2019}
Toshihiko Matsuki.
\newblock {Orthogonal multiple flag varieties of finite type II : even degree case}.
\newblock {\em {\upshape\texttt{arXiv:1903.06335}}}, 2019.

\bibitem[MO90]{Matsuki.Oshima.1990}
Toshihiko Matsuki and Toshio \={O}shima.
\newblock Embeddings of discrete series into principal series.
\newblock In {\em The orbit method in representation theory ({C}openhagen, 1988)}, volume~82 of {\em Progr. Math.}, pages 147--175. Birkh\"{a}user Boston, Boston, MA, 1990.

\bibitem[MUV92]{Mirkovic.Uzawa.Vilonen.1992}
I.~Mirkovi\'c, T.~Uzawa, and K.~Vilonen.
\newblock Matsuki correspondence for sheaves.
\newblock {\em Invent. Math.}, 109(2):231--245, 1992.

\bibitem[MWZ99]{MWZ.1999}
Peter Magyar, Jerzy Weyman, and Andrei Zelevinsky.
\newblock Multiple flag varieties of finite type.
\newblock {\em Adv. Math.}, 141(1):97--118, 1999.

\bibitem[MWZ00]{MWZ.2000}
Peter Magyar, Jerzy Weyman, and Andrei Zelevinsky.
\newblock Symplectic multiple flag varieties of finite type.
\newblock {\em J. Algebra}, 230(1):245--265, 2000.

\bibitem[NO11]{NO.2011}
Kyo Nishiyama and Hiroyuki Ochiai.
\newblock Double flag varieties for a symmetric pair and finiteness of orbits.
\newblock {\em J. Lie Theory}, 21(1):79--99, 2011.

\bibitem[NOr18]{Nishiyama.Orsted.2018}
Kyo Nishiyama and Bent \O~rsted.
\newblock Real double flag varieties for the symplectic group.
\newblock {\em J. Funct. Anal.}, 274(2):573--604, 2018.

\bibitem[NT24]{Nishiyama.Tauchi.2024}
Kyo Nishiyama and Taito Tauchi.
\newblock Real double flag variety for the symmetric pair $(\mathrm{U}(p, p), \mathrm{GL}_{p}(\mathbb{C}))$ and galois cohomology.
\newblock volume 2912, page 012018. IOP Publishing, 12 2024.

\bibitem[RS90]{Richardson.Springer.1990}
R.~W. Richardson and T.~A. Springer.
\newblock The {B}ruhat order on symmetric varieties.
\newblock {\em Geom. Dedicata}, 35(1-3):389--436, 1990.

\bibitem[RS93]{Richardson.Springer.1993}
R.~W. Richardson and T.~A. Springer.
\newblock Combinatorics and geometry of {$K$}-orbits on the flag manifold.
\newblock In {\em Linear algebraic groups and their representations ({L}os {A}ngeles, {CA}, 1992)}, volume 153 of {\em Contemp. Math.}, pages 109--142. Amer. Math. Soc., Providence, RI, 1993.

\bibitem[Ser62]{Serre.1962}
Jean-Pierre Serre.
\newblock {\em Corps locaux}, volume VIII of {\em Publications de l'Institut de Math\'ematique de l'Universit\'e{} de Nancago}.
\newblock Hermann, Paris, 1962.
\newblock Actualit\'es Scientifiques et Industrielles, No. 1296. [Current Scientific and Industrial Topics].

\bibitem[Ser97]{Serre-Galois-Cohomology}
Jean-Pierre Serre.
\newblock {\em Galois cohomology}.
\newblock Springer-Verlag, Berlin, 1997.
\newblock Translated from the French by Patrick Ion and revised by the author.

\bibitem[Tra09]{Travkin.2009}
Roman Travkin.
\newblock Mirabolic {R}obinson-{S}chensted-{K}nuth correspondence.
\newblock {\em Selecta Math. (N.S.)}, 14(3-4):727--758, 2009.

\bibitem[Yam97]{Yamamoto.Atsuko.1997}
Atsuko Yamamoto.
\newblock Orbits in the flag variety and images of the moment map for classical groups. {I}.
\newblock {\em Represent. Theory}, 1:329--404, 1997.

\end{thebibliography}


\end{document}